\documentclass[11pt,a4paper]{amsart}

\usepackage[utf8]{inputenc}
\usepackage{amsmath,amssymb,amsthm}
\usepackage[hidelinks]{hyperref}
\usepackage{enumitem}
\usepackage{needspace}
\usepackage[margin=1in]{geometry}
\usepackage{mathtools}
\usepackage{amsfonts}
\usepackage{graphicx}
\usepackage{xcolor}
\usepackage{tikz}
\usepackage{mathrsfs}
\usetikzlibrary{decorations.markings}
\usepackage{import}
\usepackage[normalem]{ulem}
\usepackage{etoolbox}

\newcommand{\R}{\mathbb{R}}

\newcommand{\Z}{\mathbb{Z}}
\newcommand{\T}{\mathbb{T}}

\newcommand{\F}{\mathcal{F}}

\newcommand{\tf}{\widetilde{f}}
\newcommand{\tS}{\widetilde{S}}
\newcommand{\tK}{\widetilde{K}}

\newcommand{\Homeo}{\operatorname{Homeo}}
\newcommand{\wt}{\widetilde}

\newcommand{\dd}{\,\mathrm{d}}
\DeclareMathOperator{\diam}{diam}
\newcommand{\Id}{\operatorname{Id}}

\newcommand{\Hy}{\mathbb{H}}

\newcommand{\sing}{\operatorname{Sing}}

\newcommand{\wh}{\widehat}

\newcommand{\Fill}{\textnormal{Fill}}
\newcommand{\rhoe}{\rho^{\textnormal{erg}}}
\newcommand{\Fix}{\textnormal{Fix}}
\newcommand{\Cont}{\textnormal{Cont}}
\newcommand{\slea}{\overline{\mathcal{L}}^s}
\newcommand{\ulea}{\overline{\mathcal{L}}^u}
\newcommand{\ilea}{\overline{\mathcal{L}}^i}

\newcommand{\cc}{\textnormal{cc}}

\newcommand{\Bnd}{\mathrm{UB}}

\newcommand{\Per}{\textnormal{Per}}

\theoremstyle{remark}
\newtheorem{remark}{Remark}[section]

\theoremstyle{definition}
\newtheorem{definition}[remark]{Definition}

\theoremstyle{plain}
\newtheorem{theorem}[remark]{Theorem}
\newtheorem{lemma}[remark]{Lemma}
\newtheorem{proposition}[remark]{Proposition}
\newtheorem*{proposition*}{Proposition}
\newtheorem{corollary}[remark]{Corollary}

\newtheorem{teorema}{Theorem}

\newtheorem{corol}[teorema]{Corollary}

\numberwithin{equation}{section}

\counterwithout{figure}{section}
\makeatletter
\let\@affiliations\@empty
\newcommand{\affiliations}[1]{\gdef\@affiliations{#1}}

\apptocmd{\@setauthors}{%
	\ifx\@affiliations\@empty\else
	\par\vspace{0.8\baselineskip}%
	{\centering\small\@affiliations\par}%
	\fi}
\makeatother

\title[Essential factors for surface homeomorphisms]{Essential factors for surface homeomorphisms}
\author[A. García-Sassi]{Alejo García-Sassi\textsuperscript{1}}
\author[F. Tal]{Fábio Tal\textsuperscript{2}}

\affiliations{%
	\textsuperscript{1}IMERL, Universidad de la República, Uruguay --- IME, Universidade de São Paulo, Brasil. \\
	\emph{Email address:} \texttt{alejog@fing.edu.uy}\\[0.5ex]
	\textsuperscript{2}IME, Universidade de São Paulo, Brasil.
	\emph{Email address:} \texttt{fabiotal@ime.usp.br}%
}
\date{}
\subjclass[2020]{Primary 37E30; Secondary 37B45, 37E45, 37B40}
\keywords{Surface homeomorphisms, pseudo-Anosov isotopy classes, monotone semiconjugacy, continuum-wise expansiveness, topological horseshoes, rotation sets, fine curve graphs}

\begin{document}

\begin{abstract} 
Let $f$ be a homeomorphism of a closed orientable surface of genus {at least one} that is isotopic to a pseudo-Anosov homeomorphism, possibly relative to a finite invariant set. We construct an intrinsic monotone factor by collapsing the maximal continua whose iterates have uniformly bounded diameter in the universal cover. The quotient is homeomorphic to the original surface and the factor is isotopic to $f$ and has several strongly chaotic dynamical features. It is infinitely continuum-wise expansive, tight, and topologically mixing, and it has a dense family of rotational horseshoes, dense periodic points, and a single topological homoclinic class. The factor can be taken as an area-preserving homeomorphism. The quotient preserves stable translation length on the fine curve graph and the applicable homological rotation sets. When $f$ is $C^\infty$, a companion paper gives a unique measure of maximal entropy for the factor; it has full support and is Bernoulli.
\end{abstract}

\maketitle

\section{Introduction}\label{sec:Intro}

\subsection{A global model for essential surface dynamics}

The study of surface homeomorphisms may use several complementary tools
relating topology to dynamics. The fundamental starting point is the Nielsen--Thurston
classification. It organizes, after reduction along invariant families of curves, the different isotopy classes
through periodic and pseudo-Anosov models.
In a pseudo-Anosov class, the representative has a well-understood
geometric and symbolic structure, and its entropy gives a lower
bound for the entropy of every homeomorphism in the class. Handel's
global-shadowing theorem \cite{handel85shadowing} gives this comparison
a more dynamical form. If a homeomorphism of a closed surface is isotopic
to a pseudo-Anosov map, it has a closed invariant subset whose dynamics
semiconjugates onto that representative. Thus the dynamics forced by
the isotopy class appears as a factor of part of the original system.
The invariant subset may be proper, and the target is prescribed by
the isotopy class.

Other tools describe features of the dynamics that vary within a
single isotopy class. Rotation theory records the asymptotic motion
of orbits around the surface; in the identity isotopy class,
homological rotation sets describe this motion in homology.
The action on the fine curve graph measures how iterates displace
essential curves, and its stable translation length gives a geometric
measure of this displacement. These approaches also interact strongly with
relative isotopy. For identity-isotopic homeomorphisms of a closed
surface of genus at least two, Guih\'eneuf and Militon
\cite{guiheneufmiliton23hyperbolic} show that the existence of a pseudo-Anosov representative relative
to a finite invariant set, nonempty interior of the ergodic homological rotation set, and hyperbolic action on the fine curve graph are equivalent ---an equivalence between the first two follows from \cite{garciaguiheneuflessa23tracking}---. For homeomorphisms of $\T^2$, an analogous relationship between rotation sets and pseudo-Anosov dynamics was already established in \cite{franks89realizing,llibremackay91rotation,boyland09transitivity}, while the analogous fine curve graph assertion follows from {\cite{bowdenhenselmannmilitonwebb22rotation}}.  

These viewpoints suggest seeking a global model that preserves the geometric and rotational
information of the given homeomorphism while displaying strong dynamical features that make it a
useful setting in which to study the information that survives the
passage to the factor. This model should live on
the same surface, retain the isotopy class, and be related to the
original map by a semiconjugacy defined everywhere.

In this paper we construct such a factor for homeomorphisms of closed
orientable surfaces of genus at least one that are isotopic to
pseudo-Anosov maps, possibly relative to a finite invariant set.
For every {such a} homeomorphism $f:S\to S$, we obtain a homeomorphism
$g:S\to S$, called its \emph{essential factor}, and a monotone
surjection $q:S\to S$, homotopic to the identity, satisfying
\[
q\circ f=g\circ q.
\]
The factor $g$ is isotopic to $f$ and the whole construction is intrinsic to
$f$. The fibres of $q$ are the maximal continua whose iterates have
uniformly bounded diameter in the universal cover, in both positive
and negative time. These fibres are cellular, their lifted diameters
admit a uniform bound, and their quotient is again homeomorphic to
$S$. The factor dynamics is therefore determined by the given map
through the geometry of its orbits.

The essential factor has a strong collection of chaotic properties
throughout the surface. It is topologically mixing, its periodic
points are dense, and every nonempty open set contains a rotational
horseshoe for an iterate. Its periodic dynamics belongs to a single
topological homoclinic class in the following precise sense: for
every pair of periodic points, the connected stable set
of one and the connected unstable set of the other have
dense topologically transverse intersections. It is also infinitely
continuum-wise expansive, i.e., every nontrivial continuum has unbounded
dynamical diameter in the universal cover. Moreover, it is tight,
meaning that every nontrivial continuum carries positive entropy. The local horseshoes yield
an atomless invariant probability of full support, and hence $g$ is
conjugate to an area-preserving homeomorphism. Thus, after a change
of coordinates, the essential factor provides a conservative model
with these recurrence, expansiveness and homoclinic connection properties.

These properties are obtained while preserving the 
homological and curve-graph invariants considered above. In the
identity isotopy class, the essential factor has exactly the same
homological rotation set as $f$, including the subset of rotation
vectors realized by ergodic measures. More generally, when the isotopy class is not that of the identity, then for
compatible lifts, it preserves the rotation sets associated to
homological quotients on which the induced action of $f$ is trivial. This includes
the rotational information for maps isotopic to a Dehn
twist. Cellularity of the fibers also allows us to apply the invariance result of
Fanoni, Hensel and Le Roux \cite{fanonihenselleroux26translation}:
the stable translation lengths on the fine curve graphs are
unchanged by the quotient. The essential factor therefore realizes
these invariants within the same isotopy class by a conservative,
mixing homeomorphism whose periodic and heteroclinic dynamics are
dense in the whole surface.

The construction extends the conservative models of Garc\'ia-Sassi
and Tal \cite{garciatal24chaotic} for identity-isotopic torus
homeomorphisms whose rotation set has nonempty interior. While \cite{garciatal24chaotic} used mostly the forcing theory developed in \cite{lecalveztal18forcing, lecalveztal22topological},
here the
generalized invariant foliations developed
by Fathi and Militon \cite{fathi90homotopical,militon24foliations}
provide the geometric structure needed for the quotient and its
dynamical properties. The resulting framework covers both closed
pseudo-Anosov isotopy classes and the relative classes arising in
rotation theory.

The geometry of the quotient also gives information about the
original map. For nonwandering maps isotopic to the identity and
pseudo-Anosov relative to a finite set, all periodic open topological
disks have lifted diameter bounded by a single constant, independent
of their periods. In the smooth setting, the companion theorem of
Crovisier, Garc\'ia, Lima and Tal
\cite{crovisiergarcialimatal26mme} gives further statistical structure:
the essential factor of a $C^\infty$ diffeomorphism has a unique
measure of maximal entropy, with full support, and this measure is
Bernoulli. The semiconjugacy identifies it measure-theoretically with
the maximal-entropy measure of a homoclinic class of the original
diffeomorphism. These consequences illustrate how the essential factor
serves both as a global dynamical model and as a means of recovering
geometric and statistical information from the original system.

\subsection{Main results}
Throughout this section ---unless specified otherwise--- $S$ is a closed orientable surface of genus at least {one},
$\tS$ is its universal cover, and $\Gamma$ is its group of deck
transformations. Fix a compatible deck-invariant proper metric on
$\tS$, {and a homeomorphism $f$ of $S$, with a lift $\tf$ to $\tS$}. For a {continuum $K \subset S$ and a compact lift $\tK$}, set
\[
 {\mathscr D_{f}(K)=\sup_{j\in\mathbb Z}\diam(\tf^j \tK).}
\]
The finiteness of this quantity is independent of the chosen lift.
A continuum downstairs is \emph{dynamically bounded} if it has a
compact lift with finite dynamical diameter. The precise convention
for arbitrary continua is given in Section~\ref{sec:prelim}.
Write $\Homeo_A(S)$ for the {set of} homeomorphisms isotopic to $A$.

We use \emph{infinitely continuum-wise expansive} to mean that every
nontrivial continuum has infinite dynamical diameter {(see Definitions \ref{def:dynamical_diameter}, \ref{def:CW_expansive})}. We use
\emph{tight} in the two-sided entropy sense: every nontrivial continuum
has positive entropy when both positive and negative iterates are
observed {(see Section \ref{subsec:tight})}. 

{
\begin{teorema}[\textbf{The essential factor -- pseudo-Anosov case}]\label{thmA:pA}
	Let $S$ be a closed orientable surface of genus at least two, let $A: S \to S$ be a pseudo-Anosov homeomorphism and let $f \in \Homeo_A(S)$. Then, there exists an \emph{essential factor} $g \in \Homeo_A(S)$ and a {monotone semiconjugacy homotopic to the identity} $qf = gq$ such that:
	\begin{enumerate}
		\item the fibres of $q$ are cellular continua whose lifted diameters are uniformly bounded;
		\item $g$ is infinitely continuum-wise expansive;
		\item $g$ is tight: $h_{\pm}(g,K) > 0$ for each nontrivial continuum $K \subset S$ (see Section \ref{subsec:tight});
		\item for each {nonempty} open set $U \subset S$, {some positive power of $g$} has a rotational horseshoe $\Lambda \subset U$. In particular, $\overline{\Per(g)} = S$.
		\item $g$ is topologically mixing;
		\item $g$ has a unique homoclinic class: $\forall p_1,p_2\in\Per(g)$, the connected stable and unstable sets
		$\mathcal C^s(p_1),\mathcal C^u(p_2)$
		have dense topologically transverse intersections  (see Definition~\ref{def:transverse_connections});
		\item $g$ is conservative: it is conjugate to a homeomorphism preserving an area measure.
	\end{enumerate}
The decomposition is intrinsic, and taking the essential
factor of $g$ leaves $g$ unchanged. 
\end{teorema}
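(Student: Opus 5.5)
The plan is to build $q$ directly from $f$ and to read off each item of the statement from its compatibility with Handel's global shadowing and with the Fathi--Militon generalized invariant foliations. Fix the lift $\tf$ and the lift $\wt A$ with the same action on $\Gamma$. Handel's theorem gives a closed $\tf$-invariant set $\wt Y\subset\tS$ and a $\Gamma$-equivariant map $\wt\pi:\wt Y\to\tS$ with $\wt\pi\tf=\wt A\wt\pi$ and $d(\wt\pi(z),z)\le C$. For $z\in\tS$, let $\wt F(z)$ be the union of all continua containing $z$ whose iterates under $\tf$ have uniformly bounded diameter. Define $q$ as the quotient by the projections of these sets.

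\textbf{Step 1: the fibres are well defined and uniformly bounded.} I would show that a dynamically bounded continuum has dynamical diameter at most a constant $D$ independent of the continuum. The argument: a continuum $\tK$ with $\sup_j\diam(\tf^j\tK)<\infty$ has images in the Fathi--Militon leaf space that are fixed by all iterates, so it lies in the preimage of a single point of both the stable and the unstable lamination. The intersection of a stable and an unstable leaf of the lifted pA is at most one point, up to singular prongs of bounded size. Combined with the shadowing constant, this gives the uniform bound. The union of two intersecting bounded continua is again bounded, so the maximal continuum through $z$ is well defined. The resulting family is $\Gamma$-equivariant and $\tf$-invariant. Upper semicontinuity follows from the uniform bound and a Hausdorff-limit argument: a limit of bounded continua is bounded by $D$. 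Fibres are nonseparating, since a bounded complementary disk, if any, has bounded iterates and so is absorbed into the fibre. Hence they are cellular. Moore's theorem then makes $S/\!\sim$ homeomorphic to $S$ with $q$ monotone and homotopic to the identity, and $g$ is isotopic to $f$ because $q$ has lifts at bounded distance from the identity. This proves (1).

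\textbf{Step 2: expansiveness and idempotence.} Let $K$ be a nontrivial continuum for $g$ with bounded dynamical diameter. Its preimage $q^{-1}(K)$ is a continuum, since $q$ is monotone. Because the fibres are bounded by $D$, the preimage is also $f$-dynamically bounded, so it lies in a single fibre and $K$ is a point. This proves (2). The same argument shows that every $g$-fibre is trivial, so the essential factor of $g$ is $g$ itself.

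\textbf{Step 3: chaotic properties (3)--(7).} I would transfer structure from the model $A$ through $g$. The Fathi--Militon foliations descend to $g$ as genuine monotone images of the stable and unstable laminations, and $g$ is semiconjugate to $A$ with fibres lying in intersections of leaves. For (4), take an open $U$ and use Markov rectangles of $A$ with small iterated pullbacks. Rotational horseshoes of $\wt A$ that cross a rectangle inside $q(U)$ lift to $g$-horseshoes through a Conley-index and topological-transversality argument, as in the torus case of \cite{garciatal24chaotic}. Here infinite continuum-wise expansiveness guarantees that the crossing continua are forced to be stretched. Density of periodic points follows immediately. For (3), a nontrivial continuum $K$ has unbounded iterates in some time direction, so it eventually crosses a horseshoe rectangle, giving $h_\pm(g,K)>0$. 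For (5) and (6), the connected stable and unstable sets of periodic points contain continua that are stretched along the corresponding leaves. Minimality of the pA laminations makes them cross every rectangle transversally and densely, which gives mixing and a single homoclinic class. For (7), choose a countable dense family of horseshoes carrying atomless measures. A weighted sum of these gives a full-support, atomless invariant probability, and by Oxtoby--Ulam $g$ is conjugate to an area-preserving map.

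\textbf{Main obstacle.} I expect the hardest step to be (4): producing genuine horseshoes for $g$ inside arbitrarily small open sets. $g$ is only known to be expansive on continua, not hyperbolic, so transversality has to be purely topological. I would first attempt it via forcing-theory arguments applied to the leaves in the lifted foliations.
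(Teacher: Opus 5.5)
Your Steps 1 and 2 follow the paper. A dynamically bounded continuum lies in a single stable and a single unstable pseudo-leaf, and the uniform bound comes from the transverse width of pseudo-leaves. Filling, upper semicontinuity, the Roberts--Steenrod quotient (Moore's theorem alone only treats the sphere) and the monotone-preimage argument then give (1), (2) and idempotence. Two small repairs are needed there:
\begin{enumerate}
\item The width bound must also cover classes in the completions $\slea,\ulea$ that are not leaves. So ``a stable and an unstable leaf meet in at most one point'' is not the right justification.
\item You must check that a fibre is disjoint from its nontrivial deck translates. By maximality, a translate meeting it would coincide with it and force it to be unbounded. This is what makes the projected fibres inessential.
\end{enumerate}

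The genuine gap is Step 3. It rests on $g$ being semiconjugate to $A$, and on the pseudo-leaves of $g$ being monotone images of model leaves. Neither is available, and the first is false in general:
\begin{itemize}
\item Handel's theorem gives a semiconjugacy only on a closed invariant subset.
\item Pseudo-leaves may be disconnected, and may exist for completion classes.
\item Suppose a lift $\tf'$ has a fixed point while its isotopic lift $\wt A'$ has none in $\tS$. Then $\Phi$ produces a fixed point of the corresponding lift of $g$. Any semiconjugacy homotopic to the identity would have to send it to a fixed point of $\wt A'$, which does not exist.
\end{itemize}
So there is nothing along which to pull back Markov rectangles, and nothing in your sketch places horseshoes inside an arbitrary open set. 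Likewise, minimality of the model laminations yields neither mixing (which needs intersections at all large times) nor topologically transverse intersections. Your tightness argument also leans on the unproved (4).

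The paper instead works intrinsically with the generalized foliation of $g$:
\begin{itemize}
\item Kato's theorem gives nontrivial local stable and unstable continua at every point.
\item Infinite cw-expansiveness makes stable--unstable intersections totally disconnected and pseudo-leaves interior-free.
\item Separating translates plus an intermediate-value lemma make the projected pseudo-leaves of genuine model leaves dense.
\item Together these produce heteroclinic pseudo-rectangles in every open set.
\end{itemize}
Next, take a lift $G=\gamma_0\wt g^{\,n}$ fixing a regular periodic leaf pair $\xi^s_0,\xi^u_0$. Compact sets are driven into $\overline\eta^u(\xi^u_0)$, and toward the attracting end on their stable side. After positioning a lift $\wt P$ and translates $W_l\wt P$, this forces $G^j\wt R\dagger W_l\wt R$ for a rectangle $\wt R\subset\wt P$. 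A Papini--Zanolin fixed-point theorem then realizes periodic itineraries. The same boundary dynamics, combined with a band-separation lemma, give eventual intersection of any stable and unstable continua. This yields mixing and, with the pseudo-rectangle construction, the unique homoclinic class. Tightness follows directly from (2) by a compactness-and-splitting argument, with no horseshoes needed.
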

}

The invariant probability in {(7)} is a consequence of the local
rotational horseshoes. In each member of a countable open basis, such a
system supports an atomless probability invariant under an iterate.
Averaging over finitely many iterates of $g$ and taking a positive
weighted sum gives an invariant probability of full support. The
Oxtoby--Ulam theorem then provides the conservative realization
\cite{oxtobyulam41measure}.

A homeomorphism $f: S \to S$ is \emph{pseudo-Anosov relative to a finite set},
or \emph{PARF}, if it is isotopic, relative to a finite $f$-invariant
set $X$, to a pseudo-Anosov map on $S\backslash X$. {Note that in this context, $S$ can also be of genus one}.

\begin{teorema}[\textbf{The essential factor -- relative case}]\label{thmB:PARF}
{	
Let $f \in \Homeo(S)$ be a PARF homeomorphism. Then, $f$ admits an intrinsic monotone semiconjugacy $qf=gq$, with $q$ homotopic to the identity. The \emph{essential factor} {$g \in {\Homeo}(S)$ is isotopic to $f$ and}
has all the dynamical properties in Theorem~\ref{thmA:pA}(1)--(7). {Taking the essential factor of $g$ leaves $g$ unchanged.}}
\end{teorema}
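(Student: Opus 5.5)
The plan is to reduce Theorem~\ref{thmB:PARF} to the machinery behind Theorem~\ref{thmA:pA}. We work on the punctured surface $S\setminus X$, where $X$ is the finite invariant set, while measuring diameters in the universal cover $\tS$ of the closed surface $S$. The key observation is that the fibres of $q$ should again be the maximal dynamically bounded continua in $S$, defined using a lift $\tf$ compatible with the relative isotopy. The PARF hypothesis supplies the geometric input needed to control these continua.

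First, we fix an isotopy from $f$ to a pseudo-Anosov map $A$ on $S\setminus X$ relative to $X$. We lift it to $\tS$ so that $\tf$ stays at bounded distance from a lift $\wt A$. Here $\wt A$ is an extension to $\tS$ of the lift to the cover of $S\setminus X$; it fixes the lifts of $X$ up to the chosen deck translation. Next, we use the Fathi--Militon generalized invariant foliations for $A$ relative to $X$. These are singular measured foliations on $S\setminus X$, with prongs possibly of one prong at points of $X$. We pull their leaves back to $\tS$ and get a Handel-type global shadowing map $\pi$ from $\tS$ onto the leaf space model. Each point $\tilde z$ is sent to the unique point whose $\wt A$-orbit stays at bounded distance from the $\tf$-orbit of $\tilde z$. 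The uniform bound comes from the bounded distance between $\tf$ and $\wt A$ together with the expansion and contraction in the leaf directions. One-pronged singularities at $X$ only change the local model and not the uniform estimates.

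Then we show that the fibres of $\pi$, projected to $S$, are exactly the maximal dynamically bounded continua. We also show that they are cellular and uniformly bounded, giving (1). Boundedness comes from shadowing: any two points of a dynamically bounded continuum shadow the same $\wt A$-orbit, so by expansivity of $A$ they have the same image. Cellularity comes from the fact that the complement of a fibre is connected, since complementary bounded disks would themselves be dynamically bounded and so would be absorbed into the fibre. Moore's theorem then gives that $S/\!\sim$ is homeomorphic to $S$, and $q$ is homotopic to the identity. The induced $g$ is isotopic to $f$; a more careful statement is that $g$ is isotopic to $A$ relative to $q(X)$.

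For properties (2)--(7) we argue as in Theorem~\ref{thmA:pA}. Property (2) follows from maximality: a nontrivial bounded continuum for $g$ would pull back to a bounded continuum strictly larger than a fibre. Properties (3)--(6) come from the Markov/foliation structure of $A$ transported by $\pi$. This uses rotational horseshoes in every open set, obtained from rectangles whose images cross in the $\tS$-rotational sense, together with mixing and transverse connections of leaves. Property (7) follows via Oxtoby--Ulam as explained after Theorem~\ref{thmA:pA}. Idempotence holds because the fibres of $g$ are points, by (2).

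The main obstacle is the lift compatibility near $X$. The sets $X$ and their lifts are fixed by the relative isotopy, but in $\tS$ (rather than the cover of $S\setminus X$) the relative foliation does not lift to a genuine pseudo-Anosov structure. The risk is that the map $\pi$ fails to be continuous or surjective, or fails to have connected fibres, at points accumulating on lifts of $X$. I would first try to prove the shadowing estimate directly in $\tS$ using the bounded-distance isotopy. Connectedness of fibres would then be handled via a nested intersection of rectangle neighbourhoods, in the style of Handel's argument, which stays valid near one-pronged singularities.
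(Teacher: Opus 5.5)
There is a genuine gap, and it sits exactly at the step you flag as the main obstacle. The proposal names that obstacle but contains no idea that overcomes it, and the central object you build does not exist in this setting.

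You want a shadowing map $\pi$ on $\tS$ that sends $\tilde z$ to the unique point whose $\wt A$-orbit stays close to the $\tf$-orbit of $\tilde z$. The uniform constants would come from expansion and contraction of the transverse measures. In the relative case, however, the pseudo-Anosov structure lives on $S\setminus X$, and once lifted to the universal cover of the \emph{closed} surface it carries no metric information.

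Take the main case $f\in\Homeo_0(S)$ (Remark~\ref{rem:parf_vs_id}; on the torus $\tS$ is not even hyperbolic).
\begin{itemize}
\item The lift $\wt A$ given by the isotopy is at bounded distance $D$ from the identity, yet it multiplies the lifted pseudodistance $\dd^u$ by $\lambda^{-1}$.
\item By compactness, $\dd^u(x,y)\leq K(\wt\dd(x,y)+1)$ for a lifted length metric.
\item Hence $\lambda^{-n}\dd^u(x,y)\leq K(\wt\dd(x,y)+2nD+1)$ for every $n$, which forces $\dd^u\equiv 0$ on $\tS$.
\end{itemize}
So on $\tS$ there is no leaf space, no expansivity of $\wt A$, and no shadowing estimate.

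Even in the closed case the map you describe is not available. Handel's shadowing only yields a semiconjugacy on a closed invariant subset, because a pair of classes $(\xi^s,\xi^u)$ need not come from two intersecting leaves. It becomes global only under extra hypotheses, as in the discussion before Corollary~\ref{cor:essential_vs_handel}. Even there, the paper relates the essential fibres to the \emph{connected components} of Handel's fibres, not to the fibres themselves.

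For this reason the paper never uses a shadowing map. Its classes are unions of dynamically bounded continua through a point, that is, connected pieces of $W^s\cap W^u$ (Corollary~\ref{cor:dynamically_bounded_continua}). What must actually be proved is the uniform bound $(\Bnd)$ for $f$. Your step ``boundedness comes from shadowing'', and likewise your argument for (2), presuppose it.

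The missing ingredient is a way to see, in $\tS$, unboundedness that is only visible in a cover where the lift is genuinely pseudo-Anosov. The paper proceeds in four steps.
\begin{itemize}
\item It takes a finite regular branched cover $b:\wh S\to S$ over $X$ that removes the one-prong singularities (Proposition~\ref{prop:PARFcover}; Militon's relative construction also passes through such a cover). The closed-case theory then applies to $\wh f$.
\item It proves Proposition~\ref{prop:visible_dynamical_growth}: a continuum upstairs is dynamically bounded if and only if its projection to $\tS$ is.
\item The nontrivial direction combines two facts. First, coarse leaf tracing (Lemma~\ref{lem:coarse_leaf_tracing}): for each point $v$ of the invariant unstable leaf, all large iterates of a continuum straddling the invariant stable class come within $2C+1$ of $v$. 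Second, Lemma~\ref{lem:visible_leaf}: a regular leaf has unbounded image in $\tS$, because the pullback cover is infinite and leaves cannot have compact closure there (Lemma~\ref{lem:leaf_in_infinite_cover}).
\item The deck-invariant cellular decomposition upstairs is pushed down through the finite group action (Lemma~\ref{lem:finite_cellular_descent}). This gives $(\Bnd)$ for $f$ and identifies the descended fibres with the intrinsic classes (Proposition~\ref{prop:PARFsplitting}).
\end{itemize}

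Finally, properties (3)--(6) cannot be ``transported by $\pi$'' from $A$. A semiconjugacy onto $A$ only pushes structure down to $A$, not up to the factor. The paper builds pseudo-rectangles and rotational horseshoes from the pseudo-leaves of the infinitely cw-expansive factor $\wh g$ itself. It then projects them through a disk on which the descended branched cover is injective (Proposition~\ref{prop:PARFchaotic}).
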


{
\begin{remark}
	If $f: \T^2 \to {\T^2}$ is isotopic to a linear Anosov map, then $f$ is PARF and thus satisfies Theorem \ref{thmB:PARF}: indeed, $f$ has a fixed point $p$ and it is isotopic, relative to $\{p\}$, to a pseudo-Anosov map on $\T^2 \backslash \{p\}$.	
\end{remark}
}

\begin{remark}\label{rem:parf_vs_id}
	$f\in\Homeo_0(S)$ is PARF if and only if {the ergodic rotation set of a lift $\wt f$ of $f$ to the universal covering has nonempty interior.}
\end{remark}

Here {the \emph{ergodic rotation set} of a lift $\wt f$ of $f$ consists of the homological rotation vectors for $\wt f$ of the $f$-invariant ergodic measures}. For the torus case, the condition above is equivalent to the usual Misiurewicz-Ziemian rotation set having nonempty interior.  

The next theorem records the geometric and homological information
retained by the quotient. Its first assertion {follows from the proof of the cell-like invariance theorem in \cite[Theorem 1.2(1)]{fanonihenselleroux26translation}, which applies to both the fine curve graph and its nonseparating subgraph.}

\Needspace{6\baselineskip}
\begin{teorema}[\textbf{Invariants preserved by the essential factor}]\label{thmC:rotational}
Let $qf=gq$ be the monotone {semiconjugacy in Theorem} \ref{thmA:pA} {or} \ref{thmB:PARF}. 
\begin{enumerate}
\item The stable translation lengths $\mathrm{tl}(f), \mathrm{tl}(g)$ on {the respective fine curve graphs and their nonseparating subgraphs} \cite{fanonihenselleroux26translation} satisfy $\mathrm{tl}(f)=\mathrm{tl}(g)$; 
\item
If $f\in\Homeo_0(S)$, fix a lift $F$ given by an identity isotopy, and the compatible lift $G$ of $g$ satisfying $\Phi F = G\Phi$. 
Then the homological and ergodic rotation sets satisfy
\[
 \rho_{H_1}(F)=\rho_{H_1}(G),\qquad
 \rhoe_{H_1}(F)=\rhoe_{H_1}(G).
\]
For genus at least two these are the unique lifts commuting
with the deck transformations.
\item Let $L:H_1(S;\mathbb R)\to V$ be a linear quotient with
$L\circ f_*=L$, and choose compatible lifts $F,G$ satisfying
$\Phi F=G\Phi$, where $\Phi$ is the equivariant lift of $q$.
Then
\[
 \rho_L(F)=\rho_L(G),\qquad
 \rho_L^{\mathrm{erg}}(F)=\rho_L^{\mathrm{erg}}(G).
\]
{In particular, if $f$ is isotopic to a Dehn twist, then taking the essential factor preserves the rotational information{.}}
\end{enumerate}
\end{teorema}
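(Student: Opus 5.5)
The three items rest on one structural fact about the semiconjugacy $qf=gq$ from Theorem~\ref{thmA:pA} or~\ref{thmB:PARF}. The equivariant lift $\Phi:\tS\to\tS$ of $q$ is at uniformly bounded distance from the identity. This holds because $q$ is homotopic to the identity and its fibres are cellular continua whose lifts have diameter bounded by a uniform constant $C$ (item (1)). Fix $C'$ with $d(\Phi(\tilde x),\tilde x)\le C'$ for all $\tilde x$. We choose the lifts with $\Phi F=G\Phi$, so for every $n$ we get $\Phi F^n=G^n\Phi$. Hence
\[
d\bigl(G^n\Phi(\tilde x),F^n(\tilde x)\bigr)\le C',
\]
and displacements along orbits of $F$ and $G$ differ by a bounded amount independent of $n$.

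\textbf{Item (1).} Since the fibres of $q$ are cellular, hence cell-like, and $q$ is monotone, surjective and homotopic to the identity with $qf=gq$, the hypotheses of the cell-like invariance theorem of Fanoni--Hensel--Le Roux \cite[Theorem 1.2(1)]{fanonihenselleroux26translation} are met. Their proof only uses that preimages and images of curves under cell-like maps give quasi-isometric comparisons in the fine curve graph. That argument applies verbatim to the nonseparating subgraph, because $q$ maps nonseparating curves near nonseparating ones after approximation. So I would simply verify their hypotheses and cite the result.

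\textbf{Items (2) and (3).} Item (2) is the special case $L=\mathrm{id}$ with $f_*=\mathrm{id}$, so I prove (3). Because $L\circ f_*=L$, the $L$-displacement cocycle is well defined: choose a fundamental-domain-based projection $\pi_L:\tS\to V$, equivariant under deck transformations up to the action of $L$ on homology, and set
\[
\rho_L(F)=\Bigl\{\lim_k \tfrac{1}{n_k}\bigl(\pi_L F^{n_k}\tilde x_k-\pi_L\tilde x_k\bigr)\Bigr\}.
\]
The condition $L f_*=L$ ensures this does not depend on the chosen lifts of points. The bounded-distance estimate gives
\[
\bigl|\pi_L G^n(\Phi\tilde x)-\pi_L F^n\tilde x\bigr|\le C''
\]
uniformly in $n$. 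Since $\Phi$ is surjective, each rotation vector of $G$ is realised along sequences $\Phi\tilde x_k$, and so it is also a rotation vector of $F$; the converse inclusion is immediate. This gives $\rho_L(F)=\rho_L(G)$.

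For ergodic rotation sets I use $q_*$. If $\mu$ is $f$-ergodic, then $q_*\mu$ is $g$-ergodic. The displacement function $\delta_F(x)=\pi_L F\tilde x-\pi_L\tilde x$ and the pulled-back function $\delta_G\circ q$ differ by a coboundary plus a bounded function. More precisely, $\delta_G(q x)-\delta_F(x)=h(fx)-h(x)$, where $h(x)=\pi_L\Phi\tilde x-\pi_L\tilde x$ is a bounded, well-defined measurable function on $S$. Integrating gives $\rho_L(q_*\mu)=\rho_L(\mu)$. Conversely, every $g$-ergodic $\nu$ lifts to some $f$-ergodic $\mu$ with $q_*\mu=\nu$: the set of $f$-invariant measures projecting to $\nu$ is a nonempty compact convex set, and its extreme points are ergodic because $\nu$ is ergodic. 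This gives equality of the ergodic rotation sets. For genus at least two, the uniqueness of commuting lifts follows from centerlessness of $\Gamma$, and for the Dehn twist case one takes $L$ to be the quotient by the twist curve's direction.

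\textbf{Main obstacle.} The delicate step is well-definedness of $h$ on $S$, that is, equivariance of $\pi_L\circ\Phi-\pi_L$ when $f_*\neq\mathrm{id}$. I would handle it by choosing $\pi_L$ from a harmonic-form representative, using that $\Phi$ commutes with deck transformations.
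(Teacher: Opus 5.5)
Your proposal is correct and follows essentially the paper's argument. The paper also cites Fanoni--Hensel--Le Roux for (1), uses the bounded coboundary $h=\pi_L\circ\Phi-\pi_L$ with surjectivity of $\Phi$ for the Misiurewicz--Ziemian sets, and pushes forward and lifts measures for the ergodic sets; your extreme-point lifting is an equivalent substitute for the paper's ergodic-decomposition step. Two small corrections. Your ``main obstacle'' is not one: $h$ is well defined on $S$ because $\Phi T=T\Phi$ and $\pi_L(T\tilde x)=\pi_L(\tilde x)+L([T])$, with no dependence on $f_*$ or on harmonic representatives; the hypothesis $L\circ f_*=L$, together with $g_*=f_*$, is needed only to make the displacement cocycles well defined. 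For (1), the paper does not adapt a proof ``verbatim'': it applies \cite[Theorem~1.2(1)]{fanonihenselleroux26translation} to the nonseparating graph and \cite[Lemma~3.2]{fanonihenselleroux26translation} to the full graph.
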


\begin{teorema}[\textbf{Uniformly bounded elliptic islands}]
\label{thmD:periodic_islands}

Let $f\in\Homeo_0(S)$ be a nonwandering PARF homeomorphism.
There is $M_f<\infty$ such that every periodic open topological
disk $U\subset S$ has {lifted diameter} at most $M_f$,
independently of its period. Moreover, $\overline U$ is contained
in a fibre of the essential quotient over a periodic point of $g$.
\end{teorema}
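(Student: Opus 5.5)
We argue in two steps: first show that $\overline U$ lies inside a single fibre of $q$; then the uniform bound $M_f$ is the uniform bound on lifted fibre diameters from Theorem~\ref{thmB:PARF}(1). The fibres of $q$ are the maximal dynamically bounded continua, so it suffices to show that $\overline U$ is a dynamically bounded continuum. Once this holds, $q(\overline U)$ is a single point. It is periodic for $g$, because $f^n(U)=U$ gives $g^n(q(\overline U))=q(f^n\overline U)=q(\overline U)$.

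\textbf{Step 1: the lifted disk is invariant.} Let $U$ be a periodic open disk of period $n$, and let $\wt U$ be a lift. Since $U$ is simply connected, $\wt U$ projects homeomorphically onto $U$. Moreover, $\wt f^n(\wt U)=T\wt U$ for some deck transformation $T$, so $T^{-1}\wt f^n$ is a lift of $f^n$ leaving $\wt U$ invariant. If $\wt U$ is bounded, its closure is a compact lift of $\overline U$. Its iterates under $\wt f$ are then the finitely many sets $\wt f^j(\wt U)$, $0\le j<n$, translated by deck transformations. Hence $\mathscr D_f(\overline U)<\infty$, and we conclude.

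\textbf{Step 2: $\wt U$ is bounded (the main obstacle).} We must rule out an unbounded $\wt U$, and this is where the hypotheses that $f$ is nonwandering and PARF, isotopic to the identity, enter. By Remark~\ref{rem:parf_vs_id}, the ergodic rotation set of $F$ has nonempty interior. I would use the essential factor itself to argue by contradiction. Suppose $\wt U$ is unbounded. Then $U$ is a disk, but $\overline U$ can be wild and need not be contained in a fibre. Instead, consider $q(U)$. Because $q$ is monotone with cellular fibres, either $q(U)$ has nonempty interior, or $q(\overline U)$ is a point; in the latter case we are done.

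So assume $q(U)$ has nonempty interior. By Theorem~\ref{thmB:PARF}(4), $q(U)$ contains a rotational horseshoe $\Lambda$ for some power $g^k$. Lifting, there are points of $\Lambda$ with different rotation vectors, and their orbits under $G^{nk}$ separate linearly in $\tS$. Pull these back through the equivariant lift $\Phi$, which is at bounded distance from the identity. This yields points of $\wt U$ whose orbits under the lift $T^{-1}\wt f^n$ preserving $\wt U$ drift apart linearly in different directions. Indeed, the rotation vectors differ, so after composing with the fixed $T$ not all of them can be zero.

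This contradicts the structure of an invariant disk. For a nonwandering map isotopic to the identity, an invariant open disk whose lift is invariant has all points with rotation vector equal to the single value determined by $T$. This follows from the Brouwer plane translation and prime-ends argument: prime-end rotation, or Le Calvez–Tal forcing, applied to a lift of a nonwandering map to a disk cover forces a unique rotation vector on the disk. The constraint is stronger in genus $\ge2$, where the rotation vector of $T^{-1}F^n$ must vanish outright.

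I expect making this last step rigorous, namely that an invariant essential-or-unbounded disk cannot contain points of distinct rotation, to be the main difficulty. Nonwandering is exactly what excludes escaping behaviour inside the disk. Since $\Phi$ moves points a bounded distance, the disk's lifted diameter must then be bounded by $M_f:=\sup$ of the lifted fibre diameters, which is finite by Theorem~\ref{thmB:PARF}(1).
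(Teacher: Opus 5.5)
\textbf{Your Step 1 is fine.} Once $\wt U$ is bounded, periodicity makes every iterate a deck translate of one of the finitely many sets $\wt f^r\wt U$. So $\overline U$ is a dynamically bounded continuum, it lies in a fibre over a periodic point of $g$, and $M_f$ comes from $(\Bnd)$ via Proposition~\ref{prop:PARFsplitting}. This is exactly the second half of the paper's proof.

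\textbf{The gap is Step 2.} Boundedness of $\wt U$ is the whole difficulty, and you do not establish it.

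\emph{The key principle is unsupported.} You reduce to the claim that all points of an invariant open disk of a nonwandering identity-isotopic map have a single rotation vector for the lift stabilizing $\wt U$. You neither prove this nor can cite it in that form; the mention of prime ends, Brouwer theory or forcing is only a gesture, and you flag it yourself as the main obstacle. Your argument also never uses that fixed-point sets of iterates are inessential. Results bounding invariant disks of nonwandering maps require a hypothesis of this kind.

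\emph{The case split is incomplete.} The dichotomy ``$q(U)$ has nonempty interior or $q(\overline U)$ is a point'' does not follow from $q$ being monotone and cellular. If no fibre is contained in $U$, then $q(U)$ can be a nondegenerate set with empty interior. For example, collapse the vertical segments $\{x\}\times[0,1]$ of a square and take $U$ a horizontal strip crossing them: the image is an open arc. So your argument leaves a case uncovered.

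\textbf{What the paper does instead.} Fix $k$ with $f^k(U)=U$; then $f^k$ is nonwandering and isotopic to the identity.
\begin{enumerate}
\item $\Fix(f^k)$ is inessential by Lemma~\ref{lem:inessential_fixed_original}. An essential component of $\Fix(f^k)$ would have unbounded lifted components. By Lemma~\ref{lem:large_compact_pieces} these contain arbitrarily large continua fixed pointwise by a lift of $f^k$. Such continua are dynamically bounded, which contradicts $(\Bnd)$.
\item Nonwandering of $f^k|_U$ excludes wandering cross-cuts.
\item The invariant-disk theorem \cite[Theorem A]{koropeckital18fully} then gives $\mathcal D(U)<\infty$, and your Step 1 finishes the proof.
\end{enumerate}
So the missing ingredient is this external theorem, together with the $(\Bnd)$-based check of its fixed-point hypothesis. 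Note that the uniformity in the period comes entirely from $(\Bnd)$, not from the Koropecki--Tal bound; no rotation-vector argument in $q(U)$ is needed.
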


The {lifted diameter} is the diameter of any connected lift of $U$.
The proof in Section~\ref{sec:PARF} combines the
invariant-disk theorem of \cite{koropeckital18fully} with the
uniform bound for dynamically bounded continua.

\medskip

Combining Theorems \ref{thmB:PARF} and \ref{thmC:rotational} we obtain the following. 

\begin{corol}[Realization of rotation sets]\label{cor:realization_rotation_sets}
	A subset of $H_1(S,\R)$ with nonempty interior is the ergodic rotation set of a {lift of a} homeomorphism of $S$ if and only if it is the ergodic rotation set of a {lift of a} conservative homeomorphism of $S$.
\end{corol}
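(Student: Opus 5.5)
The statement is Corollary~\ref{cor:realization_rotation_sets}. One direction is trivial, since a conservative homeomorphism is in particular a homeomorphism. The content lies in the converse. Let $R\subset H_1(S,\R)$ have nonempty interior, and suppose $R=\rhoe_{H_1}(F)$ for some lift $F$ of a homeomorphism $f$ of $S$.

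First I pin down the setting. Ergodic homological rotation sets are defined for lifts of maps in $\Homeo_0(S)$, via the convention of Theorem~\ref{thmC:rotational}(2), with $F$ the lift given by an identity isotopy. So I take $f\in\Homeo_0(S)$ with this convention. On $\T^2$, where arbitrary lifts are allowed, changing the lift only translates the rotation set by an integer vector, and translating back is harmless.

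Next I invoke Remark~\ref{rem:parf_vs_id}. Since $\rhoe_{H_1}(F)$ has nonempty interior, $f$ is PARF. Theorem~\ref{thmB:PARF} then provides the essential factor $g$, isotopic to $f$ (hence $g\in\Homeo_0(S)$), and a monotone semiconjugacy $qf=gq$ with $q$ homotopic to the identity.

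Theorem~\ref{thmC:rotational}(2) now gives a compatible lift $G$ of $g$, satisfying $\Phi F=G\Phi$, with
\[
\rhoe_{H_1}(G)=\rhoe_{H_1}(F)=R.
\]

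The remaining issue is conservativity. By property (7), $g$ is only \emph{conjugate} to an area-preserving homeomorphism, say $g'=hgh^{-1}$. I must check that a conjugacy does not change the ergodic rotation set. Two cases arise.

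\begin{itemize}
\item \textbf{$h$ isotopic to the identity.} Then $h$ lifts to an equivariant map $\wt h$ at bounded distance from the identity, and $G'=\wt hG\wt h^{-1}$ is a lift of $g'$. Pushing forward ergodic measures by $h$ is a bijection, and displacements change by a bounded, hence asymptotically negligible, amount. So $\rhoe_{H_1}(G')=\rhoe_{H_1}(G)=R$. Moreover $G'$ is still the identity-isotopy lift, because conjugating an identity isotopy gives one.
\item \textbf{$h$ not isotopic to the identity.} Then the rotation set changes by the linear map $h_*$, which is a problem. This is where I expect the main obstacle.
\end{itemize}

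To avoid the second case, I would arrange $h$ to lie in $\Homeo_0(S)$. By Oxtoby--Ulam, in its homeomorphism-of-manifolds form (von Neumann–Oxtoby–Ulam), any atomless, full-support probability $\mu$ on $S$ giving zero mass to the boundary is carried to the area measure $\lambda$ by some homeomorphism. Any such $h$ can be composed with a homeomorphism $\psi$ that preserves $\lambda$ and realises the inverse mapping class. Area-preserving representatives exist in every mapping class, for instance smooth ones corrected by Moser's trick. The composite $\psi^{-1}h$ still pushes $\mu$ to $\lambda$ and is isotopic to the identity.

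The measure $\mu$ here is the full-support atomless invariant probability of $g$ constructed before Theorem~\ref{thmB:PARF}, just after Theorem~\ref{thmA:pA}. With this choice, $g'=hgh^{-1}$ is conservative and $\rhoe_{H_1}(G')=R$, which completes the converse.
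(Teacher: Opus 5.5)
Your proof is correct and follows the paper's route: Remark~\ref{rem:parf_vs_id} makes $f$ PARF, Theorems~\ref{thmB:PARF} and \ref{thmC:rotational}(2) give an essential factor with the same ergodic rotation set, and conservativity comes from a conjugacy by a homeomorphism isotopic to the identity, which leaves that rotation set unchanged. The one difference is that you justify the identity isotopy of the conjugating map yourself, by composing with an area-preserving representative of the inverse mapping class, whereas the paper takes it directly from the identity-isotopy version of Oxtoby--Ulam cited in Proposition~\ref{prop:AreaPreserving}.
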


The smooth case has a further consequence of the companion manuscript
\cite{crovisiergarcialimatal26mme}. 

\begin{corol}[MME for smooth extensions \cite{crovisiergarcialimatal26mme}]\label{cor:smooth_mme}
If $f$ in Theorem~\ref{thmA:pA} or \ref{thmB:PARF} is a
$C^\infty$ diffeomorphism, its essential factor $g$ has a unique
measure of maximal entropy. This measure has full support and is
Bernoulli. There is a homoclinic class $H$ of $f$ with $q(H)=S$
such that
\[
 h_{\rm top}(g)=h_{\rm top}(f|_H),
\]
and $q$ induces a measure-theoretic isomorphism between their
maximal-entropy measures. 
The conservative realization can be chosen so that the 
area is its unique maximal-entropy measure.
\end{corol}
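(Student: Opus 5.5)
The statement to be addressed is Corollary~\ref{cor:smooth_mme}. It is attributed to the companion manuscript \cite{crovisiergarcialimatal26mme}, so the plan is to check that the essential factor supplies exactly the inputs that manuscript needs, and then assemble the conclusions. The argument would run in three stages: positive entropy and a homoclinic class upstairs, transfer of measures through $q$, and uniqueness downstairs.

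\textbf{Stage 1: a homoclinic class of $f$ with full image.} Since $g$ is isotopic to a pseudo-Anosov class (possibly relative to a finite set), $h_{\rm top}(f)\ge h_{\rm top}(A)>0$. For a $C^\infty$ surface diffeomorphism, Newhouse's theorem gives measures of maximal entropy. The Buzzi--Crovisier--Sarig theory gives finitely many of them, each hyperbolic and carried by a homoclinic class, with a unique one per homoclinic class. Take an ergodic maximal-entropy measure $\mu$ of $f$ and its homoclinic class $H$. Push it forward to $\nu=q_*\mu$. Since the fibres of $q$ are dynamically bounded continua, they carry no entropy, and Bowen's formula $h(f)\le h(g)+\sup_y h(f,q^{-1}y)$ gives $h_\nu(g)=h_\mu(f)$. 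Combined with $h_{\rm top}(g)\le h_{\rm top}(f)$, this gives $h_{\rm top}(g)=h_{\rm top}(f)=h_{\rm top}(f|_H)$.

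\textbf{Stage 2: uniqueness for $g$.} Let $\nu'$ be any ergodic maximal-entropy measure of $g$. Since $q$ is surjective, it lifts to an $f$-invariant measure with the same entropy, and that measure is maximal, so it sits on one of the finitely many homoclinic classes of $f$. To show all of these have the same image, I would use Theorem~\ref{thmA:pA}(6): any two periodic points of $g$ are homoclinically related in the topological sense. The task is to lift these topological transverse intersections of $\mathcal C^s(q(p_1))$ and $\mathcal C^u(q(p_2))$ to genuine transverse homoclinic relations between hyperbolic periodic points of $f$ in the supports of the two measures. This lifting is the main obstacle. My first attempt would use the rotational horseshoes of Theorem~\ref{thmA:pA}(4). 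Such horseshoes are detected by the isotopy class and the lifted geometry, which $q$ preserves up to a uniformly bounded error (Theorem~\ref{thmA:pA}(1)). They should therefore force, via Sarig-type symbolic codings, hyperbolic periodic orbits of $f$ in both classes with matching rotational behaviour, and hence a homoclinic relation between them. Once this is done, $g$ has a unique maximal-entropy measure $\nu$, and $q$ is injective $\mu$-almost everywhere, because a non-trivial fibre meeting typical points would have to carry entropy. This gives the measure-theoretic isomorphism.

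\textbf{Stage 3: full support, Bernoulli property, and $q(H)=S$.} Full support of $\nu$ follows because $\nu$ has positive entropy in every open set. Indeed, each open set contains a horseshoe of Theorem~\ref{thmA:pA}(4), and topological mixing together with the unique homoclinic class forces the maximal measure to charge every open set. Then $q(H)\supset\operatorname{supp}\nu=S$. The Bernoulli property transfers from $\mu$: the Buzzi--Crovisier--Sarig measure of maximal entropy of a homoclinic class is Bernoulli up to a period, and mixing of $g$ on the support rules out a nontrivial period. Finally, $\nu$ is atomless and has full support, so Oxtoby--Ulam \cite{oxtobyulam41measure} conjugates $g$ to a homeomorphism preserving area with area equal to $\nu$. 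This gives the last claim.
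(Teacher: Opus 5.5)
There is a genuine gap at the first step. You claim that the fibres of $q$ carry no entropy because they are dynamically bounded continua. From this you deduce $h_{q_*\mu}(g)=h_\mu(f)$ for every ergodic maximal-entropy measure $\mu$ of $f$, and hence $h_{\rm top}(g)=h_{\rm top}(f)$. Dynamical boundedness gives no control on entropy, and the paper says so explicitly: bounded lifted fibres are ``not an entropy-preservation hypothesis''. Here is a concrete failure. By Theorem~\ref{thmD:periodic_islands}, the closure of a periodic island lies in a single fibre. Change $f$ only inside the island orbit, still permuting the islands as before. Lifted orbits then move by a bounded amount, so the decomposition and $g$ are unchanged. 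Yet you can insert a horseshoe of as much entropy as you like. In that case $h_{\rm top}(f)>h_{\rm top}(g)$. A maximal-entropy measure of $f$ then projects to a periodic orbit of $g$, and its homoclinic class has finite image under $q$. This is exactly why the corollary uses $h_{\rm top}(f|_H)$ for a \emph{specific} class $H$ with $q(H)=S$, not $h_{\rm top}(f)$. Stage~2 inherits the error. A lift of an ergodic maximal-entropy measure of $g$ has entropy at least $h_{\rm top}(g)$, but it need not be maximal for $f$. So the Buzzi--Crovisier--Sarig finiteness for maximal-entropy measures of $f$ does not place it on one of finitely many classes.

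What is missing is the actual content of \cite{crovisiergarcialimatal26mme}; the paper gives no proof of its own and defers the whole statement there. One must do three things:
\begin{enumerate}
\item single out the homoclinic class that carries the dynamics forced by the isotopy class;
\item show that its measures lose no entropy under $q$, so that $q$ is almost everywhere injective for its maximal-entropy measure $\mu_H$;
\item show that every ergodic measure of $g$ with large entropy lifts to a hyperbolic measure of $f$ homoclinically related to that class.
\end{enumerate}
Your Stage~2 sketch is a heuristic, not an argument: it relies on rotational horseshoes and ``Sarig-type codings'' to upgrade the topologically transverse intersections of Theorem~\ref{thmA:pA}(6) into Pesin homoclinic relations. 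Your justification of almost-everywhere injectivity (``a nontrivial fibre meeting typical points would carry entropy'') rests on the same unsupported link between fibres and entropy.

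Two smaller points. First, full support should come from $\operatorname{supp}\mu_H=H$ together with $q(H)=S$; horseshoes in every open set do not force the maximal measure to charge them. Second, topological mixing of $g$ does not by itself rule out a nontrivial Buzzi--Crovisier--Sarig period. Instead, apply uniqueness to $g^p$, which is the essential factor of $f^p$.

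Your final Oxtoby--Ulam step is correct and matches how the paper obtains its conservative realizations.
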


\begin{remark}
For every $1\leq r<\infty$, Buzzi constructs $C^r$ surface
diffeomorphisms with no measure of maximal entropy \cite{buzzi14nomme}.  The construction from  \cite{crovisiergarcialimatal26mme} shows that the smoothness hypotheses cannot be improved in Corollary~\ref{cor:smooth_mme}.
\end{remark}

{Again in the smooth context, and assuming that the entropies of the map and the pseudo-Anosov representative are equal, Handel's semiconjugacy is global and homotopic to the identity by \cite{handel88entropy} and cellular by \cite[Theorem A]{carrascohertzhertzures26entropy}. Moreover, \cite{garciadecarvalho26factorization} {(in preparation)} identifies the semiconjugacy fibres of the essential factor with
the connected components of Handel's fibres. This implies the following result.}

{
\begin{corol}[Recovery of pseudo-Anosov representative]\label{cor:essential_vs_handel}
	\label{prop:entropy_equality_model}
	Let $f$ be a $C^\infty$ diffeomorphism isotopic to a pseudo-Anosov
	homeomorphism $A$ on the closed surface, and suppose that
	$h_{\rm top}(f)=h_{\rm top}(A)$. Then the fibres of its essential
	quotient $q$ coincide with those of Handel's semiconjugacy onto $A$.
	In particular the essential factor is conjugate to $A$.
\end{corol}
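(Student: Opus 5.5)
The plan is to compare the two monotone semiconjugacies fibre by fibre, and then to conclude with the intrinsic characterization of the essential factor.

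\emph{Handel's map.} Let $h:S\to S$ denote Handel's semiconjugacy, $h f = A h$. Since $h_{\rm top}(f)=h_{\rm top}(A)$ and $f$ is $C^\infty$, \cite{handel88entropy} shows that $h$ is surjective, defined on all of $S$ and homotopic to the identity. By \cite[Theorem A]{carrascohertzhertzures26entropy}, its fibres are cellular continua. In particular they are connected, so $h$ is monotone.

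\emph{Fibres of $h$ are contained in fibres of $q$.} Take a lift $\wt h$ at bounded distance from the identity, with $\wt h\tf=\wt A\wt h$. If $K=h^{-1}(x)$, then every iterate $\tf^j\tK$ is mapped by $\wt h$ to the single point $\wt A^j\tilde x$. Hence $\diam(\tf^j\tK)\le 2\sup_{\tilde y}d(\wt h(\tilde y),\tilde y)$ for all $j$, so $K$ is dynamically bounded. By maximality of the fibres of $q$ (Theorem~\ref{thmA:pA}(1) and the construction), $K$ lies in a single fibre of $q$.

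\emph{Fibres of $q$ are contained in fibres of $h$.} Conversely, let $C=q^{-1}(y)$ be a fibre of $q$. It is a dynamically bounded continuum, so $h(C)$ is a continuum whose lifted iterates under $\wt A$ have uniformly bounded diameter. A pseudo-Anosov map is infinitely continuum-wise expansive. The reason is that any nontrivial continuum in $\tS$ has positive transverse measure for the stable or the unstable lifted foliation, and that measure is expanded by $\lambda^{\pm j}$. Hence $h(C)$ is a point, and $C$ lies in a fibre of $h$.

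\emph{Conclusion and main obstacle.} The two decompositions coincide, so $A$ and $g$ are both the homeomorphism induced on the common quotient $S/\!\sim$, and they are conjugate. The step I expect to be the obstacle is the expansiveness argument for $A$ at the singularities of the invariant foliations. There I would argue with the singular flat metric: a continuum with zero transverse measure for both foliations lies in a single leaf of each, hence in their intersection, which is a point. If the monotonicity of Handel's fibres were not available, I would instead rely on \cite{garciadecarvalho26factorization}, which identifies the $q$-fibres with the connected components of Handel's fibres, so that the two decompositions agree once those fibres are known to be connected.
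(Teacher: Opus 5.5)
Your argument is correct, but it does not follow the paper's route. In the paper the corollary is read off from citations: Handel's map is global and homotopic to the identity \cite{handel88entropy}, its fibres are cellular \cite[Theorem A]{carrascohertzhertzures26entropy}, and the in-preparation work \cite{garciadecarvalho26factorization} identifies the fibres of $q$ with the connected components of Handel's fibres. You replace that last citation with a direct two-inclusion argument. It uses only three facts: $\wt h$ has bounded displacement, the $q$-fibres are maximal, and $A$ is infinitely continuum-wise expansive.

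Your first inclusion does not need the whole Handel fibre to be connected. Take any component $K'$ of $h^{-1}(x)$. A connected lift of $K'$ is sent by $\wt h$ into the discrete set $\wt\pi^{-1}(x)$, hence to a single point $\tilde x$. So the lift lies in the bounded set $\wt h^{-1}(\tilde x)$, and is therefore compact and dynamically bounded. Consequently your two inclusions already show that the $q$-fibres are exactly the components of Handel's fibres. You have reproved the cited factorization in this global setting, so the fallback in your last paragraph is superfluous. \cite{carrascohertzhertzures26entropy} is needed only to know that these components are whole fibres.

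Your route makes the corollary independent of unpublished work at the cost of a few lines. The paper's route is shorter and defers to a companion statement.

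Two minor points. First, you need not worry about which lift of $A$ appears. Since $\wt h\tf$ and $\wt A\wt h$ both lift $hf=Ah$, they differ by a deck transformation, which you may absorb into $\wt A$. Second, the singularity issue you anticipate is not an obstacle. $\dd^s+\dd^u$ is a metric, and every lift of $A$ multiplies $\dd^u$ by $\lambda^{-1}$ and $\dd^s$ by $\lambda$ exactly, so two distinct points drift apart in the future or in the past. Equivalently, apply Corollary~\ref{cor:dynamically_bounded_continua} to $f=A$, for which $W^s(\tilde z)\cap W^u(\tilde z)=\{\tilde z\}$.
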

}

\subsection{Outline of the proof}
{The generalized foliation built in \cite{militon24foliations} characterizes} dynamically bounded continua and gives their uniform diameter
bound. The cellular quotient theorem then constructs the essential
factor. Using \cite{kato93fully}, its infinite continuum-wise expansiveness turns every
intersection of a stable and an unstable continuum into a totally disconnected set and yields tightness.

For the recurrence statements, the decisive input is {the topological structure of the generalized foliation}. It first gives density and local {transverse intersections} of the
corresponding {pseudo-leaves}, which {give}{ heteroclinic pseudo-rectangles, as in \cite{garciatal24chaotic}. A Markovian fixed-point argument as in \cite{guiheneuf26theory} gives rotational horseshoes and density of periodic points.}
The same separation argument gives eventual intersections {between} any
nontrivial stable and unstable continua, and hence {topological} mixing.

In the relative case, a finite branched covering {used in \cite{militon24foliations}} replaces one-prong
singularities by ordinary pseudo-Anosov singularities. The central
step is to prove that dynamically unbounded continua upstairs remain
dynamically unbounded after projection. This identifies the upstairs cellular
decomposition with the intrinsic essential {factor} downstairs.

Section~\ref{sec:prelim} develops the leaf-space and planar tools.
Section~\ref{sec:essfactor} constructs the quotient and proves
tightness. Sections~\ref{sec:stableg} and \ref{sec:horseshoes}
establish the stable and unstable structure, {rotational horseshoes}, and
mixing. Section~\ref{sec:conservative} constructs the invariant
probability and proves Theorem~\ref{thmA:pA}.
Sections~\ref{sec:PARF} and \ref{sec:rotational} prove
Theorems~\ref{thmB:PARF} and \ref{thmC:rotational}.

\subsection{Applications and related constructions}
\label{sec:smooth_comparison}
{As noted before,} the essential factor extends the conservative models of
Garc\'ia-Sassi and Tal \cite{garciatal24chaotic} for identity-isotopic
torus homeomorphisms whose rotation sets have nonempty interior.
That construction was inspired by an unpublished preprint of
de Carvalho, Koropecki and Tal. A related approach is the monotone
quotient of de Carvalho and Paternain
\cite{decarvalhopaternain03monotone}, which associates to a
$C^{1+\alpha}$ surface diffeomorphism a tight homeomorphism of a
generalized cactoid, with fibres carrying zero entropy. Our decomposition
is defined by bounded lifted geometry, and its quotient remains a surface.

The relative pseudo-Anosov setting connects the construction to rotation
theory. For identity-isotopic homeomorphisms of a closed surface of genus
at least two, Guih\'eneuf and Militon
\cite{guiheneufmiliton23hyperbolic} characterize this setting by nonempty
interior for the ergodic rotation set, equivalently by
hyperbolic action on the fine curve graph. Cellularity then brings the
translation-length theorem of Fanoni, Hensel and Le Roux
\cite{fanonihenselleroux26translation} into the study of the essential
factor. The uniform fibre bound has a further application to periodic
islands: Theorem~\ref{thmD:periodic_islands} removes dependence on the
period from the invariant-disk bounds of
\cite{koropeckital18fully} under its stated hypotheses. In the torus
case, this was proved in \cite[Theorem E]{garciatal24chaotic}, answering
\cite[Question G]{koropeckital13strictly} in the rotation-interior
setting; the area-preserving $C^{1+\varepsilon}$ case had been treated
by Addas-Zanata \cite{addaszanata15area}.

{When the original map is smooth,} the companion manuscript
\cite{crovisiergarcialimatal26mme} supplies the statistical conclusions
in Corollary~\ref{cor:smooth_mme} showing that the factor has a unique measure of
maximal entropy, with full support, and this measure is Bernoulli. A
complementary result of Carrasco, Rodriguez-Hertz, Rodriguez-Hertz and
Ures \cite{carrascohertzhertzures26entropy} concerns a $C^\infty$
diffeomorphism whose entropy equals that of its closed pseudo-Anosov
representative. Under this equality, Handel's semiconjugacy is global
\cite{handel88entropy}; they prove that its fibres are cellular and
that it is an isomorphism for the distinguished maximal-entropy
measures. In this case the essential factor recovers the pseudo-Anosov
representative, as stated in Corollary~\ref{cor:essential_vs_handel}.

A geometric counterpart appears in factors of geodesic flows.
For compact higher-genus surfaces without focal points, and more
generally without conjugate points and with continuous Green bundles,
Gelfert and Ruggiero
\cite{gelfertruggiero19expansive,gelfertruggiero23green} construct
time-preserving expansive factors with local product structure by
identifying synchronized bi-asymptotic geodesics. In nonpositive
curvature, the nontrivial fibres come from flat strips, with the flow
coordinate fixed by horospheres. Thus the identification collapses the
transverse width of a strip while preserving the time parametrization.
This parallels the use of bounded two-sided lifted geometry in the
essential quotient. Mamani and Ruggiero \cite{mamaniruggiero23factors}
extend the expansive-factor construction to compact manifolds without
conjugate points and with visibility universal cover; uniqueness of the
maximal-entropy measure for the original flow additionally requires an
entropy gap.

These geodesic-flow factors also support thermodynamic applications.
Gelfert, Kwietniak and Lima \cite{gelfertkwietniaklima25equilibrium}
combine them with synchronizing time changes to study equilibrium
states, including those for scaled geometric potentials on rank-one
surfaces of nonpositive curvature. In the visibility setting, Mamani
\cite{mamani25equilibrium} obtains uniqueness for Bowen potentials
constant on the synchronized classes and satisfying a weak pressure
gap. For the essential factor, the smooth conclusions stated above
come from the companion theorem; bounded lifted fibres alone are not
an entropy-preservation hypothesis.

{\subsection{AI use acknowledgment}
This work was made using AI resources to help verify proofs and redaction. While the main arguments were developed without AI, several suggestions streamlining proofs were adopted.
}

	\section*{Acknowledgements}

The first author would like to thank Emmanuel Militon and Favio Pirán for helpful conversations.
\footnote{The first author was partially supported by FAPESP, IMERL and PEDECIBA. The second author was partially supported by CNPq and FAPESP.}  

\section{Preliminaries and notation}\label{sec:prelim}

Consider a closed orientable hyperbolic surface \(S\) of genus $g \geq 2$ and denote by $\tS$ its universal cover, conformally equivalent to the hyperbolic disk $\Hy^2$. Write $\widetilde\pi:\widetilde S\to S$ for the covering projection. Until further notice (see Remark \ref{conv:flatmetric} below, where we will fix a dynamically adapted metric once and for all), $\dd$ denotes a distance on \(S\) and $\wt \dd$ its lift to $\tS$ so that $(\tS,\wt \dd)$ is isometric to $\Hy^2$. As a hyperbolic space, $\tS$ has a well-defined boundary at infinity $\partial\tS$,  homeomorphic to the circle. We write $\overline{S} = \tS \cup \partial \tS$ for the closed disk compactification. The group of covering transformations \(\Gamma \simeq \pi_1(S)\) acts freely and discretely on \(\wt{S}\) by orientation-preserving isometries. Any homeomorphism $\wt h$ of $\tS$ which is a lift of a homeomorphism of $S$ extends continuously to $\overline{S}$, and two isotopic lifts (lifts of isotopic maps obtained by lifting the isotopy) agree on $\partial \tS$.

\subsection{Topological and metric notions}

\begin{definition}[{Continuum -- Connected components}]\label{def:continuum}
	For a metric space $(X,\dd)$, we say a compact connected subset $K \subset X$ is a \emph{continuum}, and it is \textit{nontrivial} when it is not a point. We denote by $\Cont (X)$ the space of continua of $X$.
	For $Y \subset X$, we denote by $\cc (Y)$ the number of connected components of $Y$.
\end{definition}

We will use the following result from continuum theory, \cite[Theorem 5.6]{nadler92continuum}, adapting the notation to our context.

\begin{lemma}[{Boundary bumping Theorem II, \cite{nadler92continuum}}]\label{lem:nadler}
	Let $S$ be a surface, let $K \subset S$ be a continuum, let $U \subset S$ be an open set such that $K \cap U \neq \varnothing$, $K \cap U^{\mathrm{C}} \neq \varnothing$, and let $z \in K \cap U$ be a point. Let $X$ be the connected component of $K \cap U$ which contains $z$.
	Then, $\overline{X} \cap \partial U \neq \varnothing$.
\end{lemma}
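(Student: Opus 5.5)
This is the classical boundary bumping theorem, so I would follow the standard continuum-theoretic proof, using the fact that $S$ is a compact metric space, or at least locally compact. Argue by contradiction: suppose $\overline{X}\cap\partial U=\varnothing$. Since $\overline{X}\subset \overline{U}$ and it misses $\partial U$, we get $\overline{X}\subset U$. Hence $\overline X\subset K\cap U$ is a connected subset containing $z$, and by maximality of the component, $X=\overline X$. So $X$ is a continuum, a component of the compact set $K\cap\overline U$ (indeed, any connected subset of $K\cap \overline U$ containing $X$ and meeting $\partial U$ would still have to be handled, so I work with $K\cap \overline U$ from the start).

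More precisely, let $C$ be the component of the compact set $K\cap\overline U$ containing $z$; it contains $X$. If $C$ meets $\partial U$, then I still need to upgrade this to a statement about $X$. Rather, I proceed as follows. Assume $\overline X\subset U$. The key step is to apply the Šura-Bura theorem (in a compact Hausdorff space, components equal quasi-components) to the compact space $K\cap \overline{V}$, where $V$ is an open set with $\overline X\subset V\subset\overline V\subset U$ chosen with $\overline V$ compact. Let $Y$ be the component of $K\cap\overline V$ containing $z$. Then $Y$ is connected, lies in $K\cap U$, and contains $z$, so $Y\subset X$. Consequently $Y=X$ is disjoint from $\partial V$, since $\overline X\subset V$.

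Since components of the compact set $K\cap\overline V$ are intersections of clopen sets, there exists a set $Q$ that is relatively clopen in $K\cap\overline V$, contains $X$, and misses the compact set $K\cap\partial V$. This is obtained by compactness: finitely many clopen sets, each containing $X$, whose intersection is disjoint from $K\cap \partial V$. Then $Q\subset V$, so $Q$ is closed in $K$. It is also open in $K$, because $Q=Q\cap V$ is open in $K\cap V$, which is open in $K$. Moreover $Q\neq K$, since $K$ meets $U^{\mathrm C}\subset \overline V^{\mathrm C}$, and $Q\neq\varnothing$, since $z\in Q$. This contradicts the connectedness of $K$.

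The only delicate step is producing the separating clopen set $Q$, i.e.\ the components-equal-quasicomponents argument together with the compactness reduction to a single clopen set. The rest is bookkeeping. Alternatively, since the statement is exactly \cite[Theorem 5.6]{nadler92continuum}, one may simply cite it.
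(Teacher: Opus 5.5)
Your proof is correct and takes essentially the paper's route. The paper gives no argument beyond citing Nadler's Theorem 5.6 (whose boundary relative to $K$ lies in $\partial U$). Your separation argument is the standard proof behind that citation: components equal quasi-components in the compact set $K\cap\overline V$, compactness yields one clopen set missing $K\cap\partial V$, and this gives a proper nonempty clopen subset of $K$. The abandoned detour through components of $K\cap\overline U$ can be deleted. Also, closedness of $Q$ in $K$ comes from $Q$ being closed in the closed set $K\cap\overline V$, not from $Q\subset V$.
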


We will also use the following classical consequence of the Jordan curve theorem.

\begin{lemma}\label{lem:crossedcontinua}
	Let {$D \subset S$} be a closed topological disk and let $K, C \subset D$ be disjoint continua. Let $a, b \in K \cap \partial D$ and $c, d \in C \cap \partial D$. Then the pairs $\{a,b\}$ and $\{c,d\}$ are not interleaved in $\partial D$: $c$ and $d$ lie in the same component of $\partial D \setminus \{a,b\}$.
\end{lemma}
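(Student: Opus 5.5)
We argue by contradiction. Suppose $c$ and $d$ lie in different components of $\partial D\setminus\{a,b\}$, so the two pairs interleave. The plan is to replace each continuum by an arc, so that the classical Jordan-curve separation argument in the closed disk applies.

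First, identify $D$ with the closed unit disk $\overline{\mathbb D}$ by a homeomorphism; the statement is purely topological, so nothing is lost. Since $K$ and $C$ are disjoint compact sets, $\varepsilon=\dd(K,C)>0$. Consider $K'$, the closed $\varepsilon/3$-neighbourhood of $K$ intersected with $\overline{\mathbb D}$. We would rather work with a connected open neighbourhood $V$ of $K$ in $\overline{\mathbb D}$ with $\overline V\cap C=\varnothing$. Such a $V$ exists: take the component of the open $\varepsilon/3$-neighbourhood of $K$ in $\overline{\mathbb D}$ that contains $K$. This set is open in $\overline{\mathbb D}$, connected, and contains $a,b$.

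Next we produce an arc. A connected open subset of the locally path-connected space $\overline{\mathbb D}$ is path-connected. So there is a path in $V$ from $a$ to $b$, and we can extract an arc $\alpha\subset V$ joining $a$ to $b$. The endpoints $a,b$ lie on $\partial\mathbb D$, but the interior of $\alpha$ may touch $\partial\mathbb D$. We handle this by a small radial pushing: compose with the map $z\mapsto(1-\delta\phi(z))z$, where $\phi$ vanishes only at $a,b$, and choose $\delta$ small enough that the image stays inside $V$. The result is a cross-cut $\alpha$ whose interior lies in the open disk and which is disjoint from $C$. Since $\alpha\subset V$, it is disjoint from $C$ as well.

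Now apply the Jordan curve theorem, or more precisely the $\theta$-curve version of it. The cross-cut $\alpha$ divides $\overline{\mathbb D}$ into exactly two closed disks. Their intersection is $\alpha$, and their boundary arcs on $\partial\mathbb D$ are the two closures of the components of $\partial\mathbb D\setminus\{a,b\}$. Hence $\overline{\mathbb D}\setminus\alpha$ has two components, one containing each open arc of $\partial\mathbb D\setminus\{a,b\}$. Under our interleaving assumption, $c$ and $d$ lie in different components of $\overline{\mathbb D}\setminus\alpha$. Note that $c,d\neq a,b$, because $K\cap C=\varnothing$. But $C$ is connected, contains both $c$ and $d$, and is disjoint from $\alpha$, so it lies in a single component. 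This is a contradiction.

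The main technical point is the construction of the cross-cut $\alpha$: getting an arc from a path, and keeping its interior off $\partial D$ while staying close to $K$. An alternative that avoids the pushing step is to note that $\overline{\mathbb D}\setminus\alpha$ separates the two boundary arcs even when $\alpha$ touches the boundary. One can see this by gluing a second disk along $\partial\mathbb D$ and applying the Jordan curve theorem on the sphere to $\alpha$ together with a chord in the outer disk. Either route is standard.
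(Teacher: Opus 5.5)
Your argument is correct, and it is the classical Jordan-curve argument that the paper invokes without proof: join $a$ to $b$ by a cross-cut near $K$ that avoids $C$, then apply the $\theta$-curve theorem. One small point: after the radial push the path need not be injective. This is harmless, since you can extract an arc from it again, and every point of it other than $a,b$ lies in the open disk.
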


\begin{definition}[{Inessential set}]\label{def:inessential}
	Given an open set $U \subset S$, we will say that $U$ is \textit{inessential} if every loop in $U$ is homotopically trivial in $S$. Otherwise, we will say it is essential.
	We will say an arbitrary set $X \subset S$ is \textit{inessential} if it has an inessential neighbourhood. Otherwise we will say it is essential.
\end{definition}

Note that for an inessential connected set $D\subset S$, every lift of $D$ (which is a connected component of $\wt \pi^{-1}(D)$) is homeomorphic to $D$. Moreover, a lift $\wt D$ of an inessential connected set $D \subset S$ satisfies $T (\wt D) \cap \wt D = \varnothing$ for any nontrivial deck transformation $T$.

\begin{definition}[{Lifted diameter}]\label{def:lifted_diameter}
	Given a continuum $K \subset S$, we define its \textit{lifted diameter} $\textnormal{diam}(K)$ as the diameter of any connected component $\wt K$ of its lift to the universal covering $\wt S$ of $S$.
\end{definition}

The lifted diameter of $K$ is finite if and only if $K$ is
inessential. {Unless specified}, the {notation} $\diam(X)$ denotes the diameter of a
subset {$X$} of the universal cover, or the lifted diameter just defined
for a continuum {in $S$}.

\begin{definition}[{Dynamical diameter}]\label{def:dynamical_diameter}
	Let $h$ be a homeomorphism of a metric space $X$, and let $K \subset X$ be a continuum. We define the \emph{dynamical diameter}, the \emph{future dynamical diameter}, and the \emph{past dynamical diameter} of $K$, respectively, by
	\[
		\begin{aligned}
			\mathscr{D}_h(K) &:= \sup_{j \in \mathbb{Z}} \diam(h^j(K)), \\
			\mathscr{D}^+_h(K) &:= \sup_{j \geq 0} \diam(h^j(K)), \\
			\mathscr{D}^-_h(K) &:= \sup_{j \leq 0} \diam(h^j(K)).
		\end{aligned}
	\]
\end{definition}

We say a continuum $K\subset X$ is \textit{dynamically bounded} (for $h$) when $\mathscr{D}_h(K) < \infty$, and \textit{dynamically unbounded} otherwise. For a homeomorphism $f$ of $S$ and a continuum $K \subset S$, the dynamical diameter is computed with the lifted diameter, equivalently for a lift $\tf$ and a lift $\wt K$.

\begin{definition}[{Filling}]
	For a continuum $\wt K \subset \wt S$, we define its \emph{filling} $\Fill(\wt K)$ as the union of $\wt K$ with every bounded connected component of its complement. We will say that $\wt K$ is \emph{filled} when $\Fill(\wt K) = \wt K$.
\end{definition}

{The next assertion follows from $S$ being compact.} 

\begin{lemma}\label{lem:uniform_fill}
Let $d$ be a proper deck-invariant metric on $\tS$ compatible with its
topology. For every $R<\infty$ there is $J(R)<\infty$ such that every
continuum ${\tK}\subset\tS$ with $\diam_d {\tK}\leq R$ satisfies
$\diam_d\Fill({\tK})\leq J(R)$. Filling commutes with plane homeomorphisms. 
\end{lemma}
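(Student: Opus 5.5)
The plan is a compactness argument based on deck invariance: only finitely many ``shapes'' of continua of diameter at most $R$ occur up to deck transformations, and the filling operation is natural with respect to homeomorphisms. Fix a compact fundamental domain $F\subset\tS$; it exists because $S$ is compact, e.g. $F=\widetilde\pi^{-1}(S)\cap \overline B_d(x_0,\rho)$ for $\rho$ large. Let $\tK$ be a continuum with $\diam_d\tK\le R$. Choose $T\in\Gamma$ with $T(\tK)\cap F\neq\varnothing$. Since $d$ is deck invariant and $T$ is a homeomorphism of $\tS$, the naturality statement (proved below) gives $\Fill(T\tK)=T\Fill(\tK)$, and $\diam_d$ is unchanged. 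We may therefore assume $\tK\subset N:=\overline B_d(F,R)$, which is compact because $d$ is proper.

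\textbf{Key step.} We bound $\Fill(\tK)$ inside a fixed compact set depending only on $R$. Here we use that $\tS$ is a plane: it is homeomorphic to $\R^2$, being the universal cover of a closed surface of genus $\ge1$. The compact set $N$ lies in a closed topological disk $D\subset\tS$. For instance, take a closed $d$-ball of radius $\rho'$ large enough and then its filling by a Jordan domain. Concretely, $\tS\cong\R^2$ via a homeomorphism $\phi$, so $\phi(N)$ lies in a round closed disk $B$. Set $D=\phi^{-1}(B)$.

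Next we show that every bounded component $V$ of $\tS\setminus\tK$ lies in $D$. Suppose not. Then $V$ meets the connected set $\tS\setminus D$, which is connected because it is the complement of a closed disk in the plane. That set is disjoint from $\tK$, so $\tS\setminus D\subset V$. But $\tS\setminus D$ is unbounded, since $d$ is proper, and this contradicts the boundedness of $V$. Hence $\Fill(\tK)\subset D$ and
\[
\diam_d\Fill(\tK)\le J(R):=\diam_d D<\infty .
\]
$D$ is compact and depends only on $R$ (and on the fixed choices $F$, $\phi$), not on $\tK$. The notion of ``bounded component'' should be read as ``relatively compact component''. Since $d$ is proper, these coincide, and relative compactness is topological.

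\textbf{Naturality.} For a homeomorphism $h$ of $\tS$ (or of the plane), $h$ maps the components of $\tS\setminus\tK$ bijectively onto the components of $\tS\setminus h(\tK)$. It preserves relative compactness, so $\Fill(h\tK)=h\Fill(\tK)$. This is the statement ``filling commutes with plane homeomorphisms'', and it is the fact used above for $h=T$.

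\textbf{Main obstacle.} The only delicate point is making precise that boundedness is intrinsic, i.e. independent of the metric, through properness. With that in place, the reduction to $F$ via deck invariance is routine.
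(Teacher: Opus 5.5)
Your proof is correct and takes the same approach as the paper, which only remarks that the lemma follows from compactness of $S$. You translate by a deck transformation into a fixed compact set, enclose that set in a closed disk $D=\phi^{-1}(B)$, and observe that a bounded complementary component cannot reach the connected unbounded set $\tS\setminus D$.

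Two blemishes are cosmetic. First, $\widetilde\pi^{-1}(S)\cap\overline B_d(x_0,\rho)$ is simply $\overline B_d(x_0,\rho)$. Second, the parenthetical suggesting that $D$ be the filling of a $d$-ball should be dropped, since that filling need not be a disk; your concrete choice of $D$ is the right one.
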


\begin{lemma}\label{lem:large_compact_pieces}
If $E\subset\tS$ is closed, connected and unbounded, then for every
$x\in E$ and every $R>0$ the component of $x$ in
$E\cap\overline B(x,R)$ meets $\partial B(x,R)$. In particular $E$
contains continua of arbitrarily large diameter.
\end{lemma}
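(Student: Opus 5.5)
The first assertion follows from Lemma~\ref{lem:nadler} applied with the bounded open set $U=B(x,R)$; the second follows by picking points of $E$ far from $x$. Here $B(x,R)$ denotes the open ball for a proper metric on $\tS$, so $\overline B(x,R)$ is compact. Lemma~\ref{lem:nadler} is stated for a continuum $K$, whereas $E$ is only closed, connected and unbounded, so I would first manufacture a continuum to which it applies.

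Fix $x\in E$ and $R>0$. Since $E$ is unbounded, it contains a point $y$ with $d(x,y)>R$. I would enlarge the ball to $B(x,R')$ with $R'>d(x,y)$ and let $K$ be the component of $x$ in $E\cap\overline B(x,R')$. This set is compact, and by the classical boundary-bumping theorem for closed connected sets it meets $\partial B(x,R')$. Properness also gives that $\overline B(x,R')$ is compact with boundary contained in the sphere $\{d(\cdot,x)=R'\}$.

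The main subtlety is getting a continuum at all. The cleanest route is the standard theorem (Nadler, Theorem 5.4 / Kuratowski): if $E$ is a connected set and $F\subset E$ is a closed proper subset of $E$, then every component of $F$ meets the closure in $E$ of $E\setminus F$. I would apply it with $F=E\cap\overline B(x,R')$, which is proper because $E$ is unbounded. Then the component $K$ of $x$ meets $\overline{E\setminus \overline B(x,R')}$, which lies outside $B(x,R')$. Thus $K$ is a continuum containing $x$ and a point at distance at least $R'>R$ from $x$.

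Next I would apply Lemma~\ref{lem:nadler} to $K$ with $U=B(x,R)$ and $z=x$. The hypotheses hold: $x\in K\cap U$, and $K$ has a point outside $U$. Let $X$ be the component of $x$ in $K\cap U$; the lemma gives $\overline X\cap\partial U\neq\varnothing$. Now $X$ lies in the component $C$ of $x$ in $E\cap\overline B(x,R)$, because $X$ is a connected subset of $E\cap\overline B(x,R)$ containing $x$. Since $C$ is closed, it follows that $\overline X\subset C$, so $C$ meets $\partial B(x,R)$, as required.

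For the last sentence, I would use that $C$ is compact, being closed in the compact ball, and connected, so $C$ is a continuum. It contains $x$ and a point of $\partial B(x,R)$, which lies at distance $R$ from $x$ since $\partial B(x,R)\subset\{d(\cdot,x)=R\}$. Hence $\diam C\ge R$, and letting $R\to\infty$ gives continua of arbitrarily large diameter inside $E$.
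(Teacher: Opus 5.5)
Your proof is correct, and it reaches the result by a slightly different route from the paper's. One point needs fixing first. The general theorem you quote in the third paragraph is false as stated: for a merely connected $E$ and a closed proper subset $F$, a component of $F$ need not meet $\overline{E\setminus F}$. For a counterexample, let $E\subset\R^2$ be the union of the point $(0,1)$, the teeth $\{1/n\}\times[0,1]$ and the bar $(0,1]\times\{0\}$, and let $F=\{(0,1)\}\cup\bigcup_n\{1/n\}\times[\tfrac12,1]$. Then $F$ is closed in $E$, and $\{(0,1)\}$ is a component of $F$ at distance more than $\tfrac12$ from $E\setminus F$.

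The correct version needs $F$ compact. In a compact Hausdorff $F$, components are quasi-components. So a component missing the frontier $F\cap\overline{E\setminus F}$ lies in a set that is clopen in $F$ and misses that frontier. Such a set is clopen in $E$, which contradicts connectedness of $E$. In your application $F=E\cap\overline B(x,R')$ is compact, because $E$ is closed and the metric is proper, so the step is valid once the hypothesis is stated correctly. The same remark applies to the ``boundary-bumping theorem for closed connected sets'' in your second paragraph.

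The paper recovers compactness differently. It passes to the one-point compactification $E\cup\{\infty\}$, which is a continuum because $E$ is closed (hence locally compact), connected and noncompact. It then applies boundary bumping once to the proper closed subset $E\cap\overline B(x,R)$, whose relative boundary lies in $\partial B(x,R)$. Your compact-$F$ bumping theorem is exactly what that compactification encodes, so the two proofs rest on the same mechanism.

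Your detour through $R'$, the auxiliary continuum $K$ and Lemma~\ref{lem:nadler} is not needed. Apply your first step directly to $F=E\cap\overline B(x,R)$: it already shows that the component $C$ meets $F\cap\overline{E\setminus F}$. That set lies in the frontier of $\overline B(x,R)$, hence in $\partial B(x,R)$. The paper's route is shorter and uses bumping only for genuine continua. Yours avoids the compactification but must rely on the non-compact version of the theorem, with its compactness hypothesis made explicit.
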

\begin{proof}
The one-point compactification $E\cup\{\infty\}$ is a continuum.
Apply boundary bumping to the proper closed subset
$E\cap\overline B(x,R)$ of this continuum. Its relative boundary is
contained in $E\cap\partial B(x,R)$.
\end{proof}

\begin{definition}[{Continuum-wise expansive}]\label{def:CW_expansive}
	A homeomorphism $h$ of a metric space $X$ is \emph{continuum-wise expansive} if there exists $\varepsilon>0$ such that, for any nontrivial continuum $K \subset X$, we have that $\mathscr{D}_h(K) > \varepsilon$. Moreover, we say that $h$ is \emph{infinitely continuum-wise expansive} if, for any nontrivial continuum $K\subset X$, we have that $\mathscr{D}_h(K) = \infty$.
\end{definition}

Continuum-wise expansive maps have several strong properties. For instance, {Kato} proves in \cite[Theorem 1.6]{kato93fully} that every point has nontrivial stable and unstable continua, whose diameters are uniformly bounded from below.

\subsection{Pseudo-Anosov homeomorphisms}\label{subsec:pA}

By Thurston's classification theorem for surface homeomorphisms (see \cite{thurston88geometry, fathilaudenbachpoenaru79travaux, farbmargalit12primer}), a homeomorphism of a closed surface is isotopic to either a periodic, a reducible, or a pseudo-Anosov homeomorphism.

Let $A:S \to S$ be a pseudo-Anosov homeomorphism of a closed hyperbolic surface. We recall its geometric and dynamical structure, following \cite{militon24foliations}.

\smallskip

\paragraph{\textbf{Invariant foliations and transverse measures.}}
The homeomorphism $A$ preserves two transverse singular foliations $\mathcal{F}^s$ and $\mathcal{F}^u$, the \textit{stable} and \textit{unstable} foliations. These foliations have finitely many {prong-type} singularities, each with at least 3 prongs. Following \cite{militon24foliations}, a \emph{leaf} of $\mathcal{F}^i$ ($i = s,u$) is either a regular leaf, or the union of a singularity with all {its branches}. With this convention, distinct leaves are disjoint and every leaf of $\wt{\F}^i$ separates $\tS$.

Lifting to the universal cover $\wt S \simeq \mathbb{H}^2$, we obtain singular foliations $\wt{\mathcal{F}}^s$ and $\wt{\mathcal{F}}^u$, endowed with respective transverse measures $\mu^u$ and $\mu^s$: $\mu^u$ measures the crossing of stable leaves (it vanishes on arcs contained in stable leaves), and $\mu^s$ measures the crossing of unstable leaves. There exists $\lambda \in (0,1)$ such that for every arc $\wt \alpha \subset \wt S$,
\begin{equation}\label{eq:transverse_measures}
	\mu^u(\wt A^{-n}(\wt \alpha)) = \lambda^n \mu^u(\wt \alpha),
	\quad
	\mu^s(\wt A^{n}(\wt \alpha)) = \lambda^n \mu^s(\wt \alpha).
\end{equation}
Both measures are invariant under the action of $\Gamma$.

\begin{definition}[{Stable and unstable pseudodistances}]\label{def:pseudodistance}
	We define the \textit{stable pseudodistance} $\dd^s$ and the \textit{unstable pseudodistance} $\dd^u$ on $\wt S$ by
	\[
	\dd^s(\wt z,\wt z') = \inf \mu^s(\wt \alpha), \quad \dd^u(\wt z,\wt z') = \inf \mu^u(\wt \alpha),
	\]
	where the infimum is taken over arcs from $\wt z$ to $\wt z'$.
\end{definition}

By construction, these pseudodistances satisfy:
\begin{itemize}
	\item $\dd^u(\wt z,\wt z')=0$ if and only if $\wt z$ and $\wt z'$ belong to the same stable leaf;
	\item $\dd^s(\wt z,\wt z')=0$ if and only if $\wt z$ and $\wt z'$ belong to the same unstable leaf;
	\item the function $\dd^s+\dd^u$ defines {a} metric on $\wt S$, compatible with its topology;
	\item both $\dd^s$ and $\dd^u$ are invariant under deck transformations;
	\item rephrasing \eqref{eq:transverse_measures},
	\[
	\dd^u(\wt A^{-1}(\wt z),\wt A^{-1}(\wt z')) = \lambda\, \dd^u(\wt z,\wt z'),
	\quad
	\dd^s(\wt A(\wt z),\wt A(\wt z')) = \lambda\, \dd^s(\wt z,\wt z').
	\]
\end{itemize}

\begin{remark}\label{conv:flatmetric}
In the closed pseudo-Anosov case we set $\wt\dd=\dd^s+\dd^u$ and use
its quotient metric on $S$. In particular $\wt\dd$ is proper and is quasi-isometric to a lifted
hyperbolic metric.
\end{remark}

\begin{definition}[{Stable and unstable diameters}]\label{def:pseudodiameter}
	Given a set $\wt Z \subset \wt S$, we define its \textit{stable diameter} $\diam^s(\wt Z)$ and its \textit{unstable diameter} $\diam^u(\wt Z)$ as
	\[ \diam^s(\wt Z) = \sup \limits_{\wt z, \wt z' \in \wt Z} \dd^s(\wt z, \wt z'), \quad \diam^u(\wt Z) = \sup \limits_{\wt z, \wt z' \in \wt Z} \dd^u(\wt z, \wt z'). \]
\end{definition}

By definition of $\wt \dd$, a subset of $\tS$ is bounded if and only if both its stable and unstable diameters are finite, and $\max (\diam^s(\wt Z), \diam^u(\wt Z)) \leq \diam(\wt Z) \leq \diam^s(\wt Z) + \diam^u(\wt Z)$; in particular $\diam^u(\wt Z_n) \to \infty$ (or $\diam^s(\wt Z_n) \to \infty$) implies $\diam(\wt Z_n) \to \infty$.

\medskip

\paragraph{\textbf{Minimality.}}

A \emph{half-leaf} is the image of a closed ray in a regular leaf, or a prong issuing from a singularity. A \emph{half-leaf segment} is a compact initial segment of a half-leaf.

\begin{lemma}\label{lem:minimal_filling_foliations}
	Let $A: S \to S$ be a pseudo-Anosov homeomorphism. Then the stable and unstable foliations are minimal (every half-leaf is dense in $S$). See \cite{fathilaudenbachpoenaru79travaux, farbmargalit12primer}.
\end{lemma}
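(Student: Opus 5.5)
The plan is to deduce minimality from the absence of closed leaves and saddle connections, together with the structure theory of measured foliations. By symmetry (replace $A$ by $A^{-1}$) it suffices to treat the unstable foliation $\mathcal F^u$ with its transverse measure $\mu^u$.

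\textbf{Step 1: no closed leaves or saddle connections.} A saddle connection of $\mathcal F^u$ is a compact leaf segment joining singularities, and a closed leaf is a compact regular leaf. Each such object has finite positive $\mu^u$-length, and $\mu^u$-length along $\mathcal F^u$ is multiplied by $\lambda^{-1}$ under $A$, by \eqref{eq:transverse_measures}. There are finitely many singularities, so the saddle connections form a finite $A$-invariant set. Their total length would then be multiplied by $\lambda^{-1}>1$, which is impossible. For closed leaves, note that they come in annular families (cylinders) whose core curves are permuted by $A$ up to isotopy. This contradicts the fact that a pseudo-Anosov map fixes no isotopy class of essential curve, nor any finite collection of such classes. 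A closed leaf bounding a disk is ruled out by the index formula, since all singularities have at least $3$ prongs and hence negative index.

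\textbf{Step 2: closures of half-leaves.} Let $\ell$ be a half-leaf of $\mathcal F^u$. Its $\omega$-limit $\Lambda=\bigcap_t \overline{\ell([t,\infty))}$ is a nonempty, closed, $\mathcal F^u$-saturated set. Apply the classical decomposition of a measured foliation on a closed surface (Maier's theorem, as in \cite[Exposé 5 and 9]{fathilaudenbachpoenaru79travaux}). The surface splits along saddle connections into cylinders of closed leaves and minimal components, and every half-leaf not ending at a singularity is either closed or dense in a minimal component. By Step 1 there are no cylinders and no saddle connections. Hence there is a single minimal component, namely all of $S$, and every half-leaf, including prongs at singularities, is dense.

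\textbf{Alternative route and main obstacle.} The delicate step is invoking Maier's decomposition carefully in the singular setting, rather than the routine Step 1. If one prefers to avoid it, I would instead take a Markov partition for $A$ with primitive transition matrix. A half-leaf has infinite $\mu^u$-length, which holds because in $\tS$ it is a quasi-geodesic ray. Primitivity then shows that every long enough unstable segment crosses every rectangle of the refined partition $\bigvee_{|k|\le n}A^kP$ in the unstable direction. Since these rectangles have diameter tending to $0$, the half-leaf meets every open set.
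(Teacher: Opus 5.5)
Your proof is correct and is essentially the classical argument behind the references the paper cites; the paper itself gives nothing beyond the citation. The expansion of $\mu^u$ along $\mathcal F^u$ under $A$ excludes saddle connections and closed leaves, and the structure theorem for measured foliations then leaves a single minimal component, in which every half-leaf, prongs included, is dense. One point you leave implicit is that the cylinder core classes form a finite set permuted by $A$. This is immediate, because closed leaves are pairwise disjoint essential simple closed curves and so represent at most $3g-3$ isotopy classes, $g$ being the genus of $S$.
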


\paragraph{\textbf{Separation and sides.}} With the leaf convention above, every leaf $L$ of $\wt{\F}^s$ separates $\tS$: the complement $\tS \setminus L$ has exactly $k$ connected components, where $k$ is the number of ends of $L$ ($k=2$ for a regular leaf). Two disjoint sets each contained in a single complementary component of $L$, and lying in different components, are said to be \emph{separated by $L$}. Since leaves of $\wt \F^s$ are pairwise disjoint, any leaf $L' \neq L$ lies in exactly one component of $\tS \setminus L$. We record the resulting additivity:

\begin{lemma}\label{lem:additivity}
	Let $L, L', L''$ be leaves of $\wt \F^s$ such that $L$ separates $L'$ from $L''$. Then
	$$\dd^u(L',L'') = \dd^u(L',L) + \dd^u(L,L'').$$
	The same identity holds for {points} on the two sides.
\end{lemma}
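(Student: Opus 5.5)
The plan is to reduce the identity to the additivity of the transverse measure $\mu^u$ along a single arc, using that the leaf $L$ must be crossed by any path from $L'$ to $L''$.

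\emph{Upper bound.} The inequality $\dd^u(L',L'')\le \dd^u(L',L)+\dd^u(L,L'')$ comes from concatenation. Here $\dd^u$ between leaves is the infimum of $\dd^u$ over pairs of points, and it does not depend on the chosen points, since $\dd^u$ vanishes along a stable leaf. Take arcs $\alpha_1$ from a point of $L'$ to a point $z\in L$ and $\alpha_2$ from a point $z'\in L$ to a point of $L''$, each nearly realizing the respective infimum. Join $z$ to $z'$ by an arc contained in $L$; this is possible because $L$, as a union of a singularity with its prongs or a regular leaf, is path connected. That middle arc has $\mu^u$-measure zero. Hence the concatenated arc from $L'$ to $L''$ has measure at most $\mu^u(\alpha_1)+\mu^u(\alpha_2)$, and we let the errors tend to zero.

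\emph{Lower bound.} Let $\alpha$ be any arc from a point of $L'$ to a point of $L''$. Since $L$ separates $L'$ from $L''$, $\alpha$ meets $L$, and $\alpha\cap L$ is compact. Let $t_0$ be the first parameter with $\alpha(t_0)\in L$. Then $\alpha|_{[0,t_0]}$ joins $L'$ to $L$, and $\alpha|_{[t_0,1]}$ joins $L$ to $L''$. These two subarcs overlap only at one point, and $\mu^u$ is finitely additive on subarcs and has no atoms. So
\[\mu^u(\alpha)=\mu^u(\alpha|_{[0,t_0]})+\mu^u(\alpha|_{[t_0,1]})\ge \dd^u(L',L)+\dd^u(L,L'').\]
Taking the infimum over $\alpha$ gives the reverse inequality.

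\emph{Points on the two sides.} The statement for points is handled the same way. For $z$ and $z''$ in different components of $\tS\setminus L$, every arc from $z$ to $z''$ meets $L$. Cutting at the first hitting time gives $\dd^u(z,z'')\ge \dd^u(z,L)+\dd^u(L,z'')$. The concatenation argument gives the opposite inequality, again sliding along $L$ at zero cost. If $z$ or $z''$ lies on $L$ itself, the identity is immediate.

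\emph{Main obstacle.} The only delicate point is making precise that $\mu^u$ is countably additive on arcs and atomless. For arcs that are not transverse, one uses the standard definition of $\mu^u(\alpha)$ via the induced measure on the leaf space, or an infimum over homotopic arcs rel endpoints. With the Militon convention, additivity under subdivision at a point still holds. This is built into the construction of the transverse measure, which we take as given from \cite{militon24foliations}.
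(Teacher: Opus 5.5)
Your proof is correct and follows the paper's argument. The lower bound comes from splitting an arc from $L'$ to $L''$ at a point where it meets $L$ (you cut at the first hitting time; any crossing point works). The upper bound is the triangle inequality, which your concatenate-and-slide-along-$L$ argument simply reproves, using that $\dd^u$ vanishes on pairs of points of the same stable leaf.
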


\begin{proof}
	Any arc from $L'$ to $L''$ must cross $L$; splitting the arc at a crossing point and using the definition of $\dd^u$ gives $\geq$; the triangle inequality gives $\leq$.
\end{proof}

\subsection{The leaf spaces and separation}\label{subsec:leafspaces}

The dual spaces of the lifted measured foliations {are} $\mathbb R$-trees; their metric completions are again $\mathbb R$-trees. Let $\mathcal L^s$, $\mathcal L^u$ be the spaces of leaves of $\wt{\mathcal F}^s$ and $\wt{\mathcal F}^u$. The pseudodistance $\dd^u$ induces a distance on $\mathcal L^s$ (distinct leaves lie at positive $\dd^u$-distance, by the leaf convention which collapses the branches of a singular leaf to a single point of $\mathcal{L}^s$), and similarly $\dd^s$ induces a distance on $\mathcal L^u$. These spaces are not complete: a Cauchy sequence of leaves may converge, as subsets of $\overline{S}$, to a single point of $\partial \tS$. Following \cite{militon24foliations}, we denote by $\slea$ and $\ulea$ the metric completions; for $\xi \in \slea$ we write $\dd^u(\cdot,\xi)$ for the extended distance, and $L(\xi) \subset \overline S$ for the associated limit set, which is either the closure of a leaf or a point of $\partial \tS$.

For $\xi \in \slea \cup \ulea$, the \emph{ends} of $\xi$ are the points of $\overline{L(\xi)} \cap \partial \tS$: a finite set with $k \geq 2$ elements when $\xi$ is the class of a leaf with $k$ ends ($k = 2$ for a regular leaf), and a single point when $L(\xi) \in \partial \tS$.

\begin{lemma}\label{lem:distinct_ends}
	Two distinct leaves of $\wt \F^s$ have disjoint end sets, and the same holds for $\wt \F^u$. Moreover, a leaf of $\wt \F^s$ and a leaf of $\wt \F^u$ have disjoint end sets.
\end{lemma}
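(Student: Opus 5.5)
We prove the three assertions in order: distinct stable leaves, the stable/unstable comparison, and the singular and relative cases. The main tools are the structure of $\wt A$ on $\overline S$ and the minimality of the foliations (Lemma~\ref{lem:minimal_filling_foliations}).

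\textbf{Step 1: two distinct stable leaves.} Suppose $L\neq L'$ are leaves of $\wt\F^s$ sharing an end $\xi\in\partial\tS$. Since leaves are disjoint and each separates $\tS$, $L'$ lies in one complementary component of $L$. By Lemma~\ref{lem:additivity}, every stable leaf $L''$ separating $L$ from $L'$ satisfies $\dd^u(L,L'')\le \dd^u(L,L')$. So the region between $L$ and $L'$ near $\xi$ is a strip asymptotic to $\xi$. Its transverse $\mu^u$-width is at most $\delta:=\dd^u(L,L')>0$, while its $\dd^s$-length is infinite.

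Next we project to $S$. Take a half-leaf $r\subset L$ converging to $\xi$ and project it. By compactness, choose deck transformations $T_n$ and points $z_n\in r$ going to $\xi$ such that $T_n z_n$ converges. Along $r$ near $z_n$, the opposite side of the strip contains a short unstable arc of $\mu^u$-length at most $\delta$ that reaches $L'$ without crossing any stable leaf separating the two ends. Passing to the limit, we obtain two stable half-leaves at $\mu^u$-distance at most $\delta$ that stay at bounded $\dd^u$-distance along their entire length.

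This gives a contradiction with the standard fact that there are no \emph{asymptotic rays} in a measured foliation of a pseudo-Anosov map, except along the separatrices of the same singular leaf. A cleaner route is to apply $\wt A^{n}$ together with deck transformations normalizing a point. Then $\dd^u(\wt A^n L,\wt A^n L')=\lambda^{-n}\delta\to\infty$, but the asymptotic strip persists at the common end $\wt A^n\xi$. Rescaling with deck transformations and using that $\dd^u$ distances grow while the strip near its end has bounded width yields the contradiction.

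More robustly, we use the Perron--Frobenius/rectangle structure instead. Cover $S$ by finitely many Markov-type foliated rectangles. A strip of $\mu^u$-width at most $\delta$ and infinite stable length projects into an immersed foliated band. Its closure in $S$ is a sub-lamination saturated by stable leaves, invariant under the leaf holonomy, and of transverse measure at most $\delta$. Minimality (every half-leaf is dense) forces this closure to be all of $S$. A band of bounded transverse width that is dense must then have nontrivial holonomy, which forces it to wrap around a closed transversal. That contradicts the absence of closed leaves and of cylinders, since a pseudo-Anosov has no annular components.

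\textbf{Step 2: a stable leaf and an unstable leaf.} Suppose a stable leaf $L^s$ and an unstable leaf $L^u$ share an end $\xi$. Then the two half-leaves converge to the same point of $\partial\tS$. Consider the $\wt A$-action. $\wt A$ contracts $\dd^s$ along $L^s$ and expands it along $L^u$. Using deck transformations and the north--south type dynamics of lifts on $\partial\tS$, together with the finiteness of $\dd^s$ between points of $L^u$ and the fact that $L^s$ has $\dd^s$-diameter zero, we see that the asymptotic approach would force $\mu^s$-lengths of the unstable ray to stay bounded. That is impossible, since an unstable half-leaf has infinite $\mu^s$-length.

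\textbf{Step 3: singular leaves and the main obstacle.} Singular leaves are handled by applying the argument to each prong separately. The main obstacle is Step 1: turning ``bounded transverse width along an infinite ray'' into a contradiction with minimality, without invoking the full theory of the circle at infinity. If the direct argument is too delicate, I would cite the classical fact from \cite{fathilaudenbachpoenaru79travaux} that the lifted stable and unstable foliations have no two leaves asymptotic in $\tS$, equivalently that the endpoint maps to $\partial\tS$ are injective on leaves.
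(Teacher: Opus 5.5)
The paper does not prove this lemma. It records it as a standard fact about the lifted invariant foliations of a pseudo-Anosov map, so your closing fallback of citing the classical theory (e.g.\ \cite{fathilaudenbachpoenaru79travaux}) is what the paper effectively does. The self-contained argument of Steps~1--2, however, has a genuine gap.

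\textbf{The missing metric control.} You never pass from ``two rays converge to the same point $\xi\in\partial\tS$'' to any metric control. Convergence in the disk topology of $\overline S$ says nothing by itself about $\dd^u$ or $\dd^s$. So the short unstable arcs from far points of $r$ to $L'$ in Step~1, and the ``asymptotic approach'' that bounds lengths in Step~2, are asserted rather than derived. Your first route even ends by invoking the very fact to be proved.

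Lemma~\ref{lem:additivity} does not supply this control. It only constrains leaves \emph{separating} $L$ from $L'$. The region between $L$ and $L'$ also contains whole branches of the dual tree hanging off the segment $[[L],[L']]$, at arbitrary $\dd^u$-distance, so thinness near $\xi$ is exactly what has to be proved.

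The missing ingredient is that leaves are quasi-geodesics. For $\wt\dd=\dd^s+\dd^u$, a bi-infinite path in a stable leaf has zero $\dd^u$-diameter and maps injectively, hence isometrically, into the $\mathbb R$-tree $\mathcal L^u$, so it is a $\wt\dd$-geodesic. Since $\wt\dd$ is quasi-isometric to the hyperbolic metric (Remark~\ref{conv:flatmetric}), the Morse lemma shows that two leaf rays with a common end have tails at bounded Hausdorff distance.

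\textbf{Step~2.} With that in hand Step~2 becomes easy, but not as you wrote it, because you have inverted the paper's conventions. It is $\dd^u$, not $\dd^s$, that vanishes along stable leaves, and an unstable half-leaf has zero $\mu^s$-length and infinite $\mu^u$-length. A correct version runs as follows. An unstable ray maps isometrically onto a geodesic ray of the tree $\mathcal L^s$, so its points $x_t$ satisfy $\dd^u(x_t,L^s)\to\infty$. This is incompatible with the ray staying at bounded distance from $L^s$.

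\textbf{Step~1.} The contradictions you propose here fail for a different reason: none of them uses the singularities. Yet your starting configuration exists for a linear Anosov map of $\T^2$: two distinct stable leaves bounding a band of bounded transverse width and infinite length, projecting to a dense immersed band with trivial holonomy. Rescaling by $\wt A^n$ only makes the width grow, and the holonomy/closed-transversal reasoning has nothing to bite on.

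The contradiction must use genus at least two. One shows that the region between two asymptotic stable rays is eventually free of singular points, e.g.\ by Gauss--Bonnet for the singular flat metric, whose cone angles are $k\pi$ with $k\geq 3$. That region is then a flat half-strip of positive width around a ray whose projection is a dense half-leaf (Lemma~\ref{lem:minimal_filling_foliations}). This ray passes, transversally, within less than half the width of some singularity, which would then lie inside the half-strip, a contradiction.

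Alternatively, use the classical correspondence with the stable geodesic lamination. There, asymptotic leaves occur only as adjacent sides of a complementary ideal polygon, i.e.\ as two prongs of one singular leaf, which the paper's convention counts as a single leaf.
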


The following elementary observation extends the notion of side to completion points.

\begin{lemma}\label{lem:sides}
	Let $M \in \mathcal{L}^s$ and let $\xi \in \slea$ with $\dd^u(\xi, M) > 0$. Then there is a well-defined connected component $\mathrm{side}_M(\xi)$ of $\tS \setminus M$ such that every leaf $L$ with $\dd^u(L,\xi)$ small enough lies in $\mathrm{side}_M(\xi)$. Moreover, if $\xi, \xi' \in \slea$ satisfy $\dd^u(\xi,M)>0$, $\dd^u(\xi',M)>0$ and $\mathrm{side}_M(\xi) \neq \mathrm{side}_M(\xi')$, then
	$$ \dd^u(\xi,\xi') = \dd^u(\xi,M) + \dd^u(M,\xi').$$
\end{lemma}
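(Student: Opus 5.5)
We work in the leaf space $\mathcal L^s$ with its metric $\dd^u$, and treat completion points by approximation. Fix $\xi\in\slea$ with $\dd^u(\xi,M)=r>0$. By definition of the completion, choose leaves $L_n\in\mathcal L^s$ with $\dd^u(L_n,\xi)\to 0$. For $n$ large, $\dd^u(L_n,\xi)<r/2$, so $\dd^u(L_n,M)>r/2>0$, and in particular $L_n\neq M$. Each such $L_n$ therefore lies in a single component of $\tS\setminus M$, since leaves are pairwise disjoint and connected.

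The key step is to show that two leaves $L,L'$ with $\dd^u(L,\xi)<r/2$ and $\dd^u(L',\xi)<r/2$ lie in the same component of $\tS\setminus M$. Suppose instead that $M$ separates them. Lemma~\ref{lem:additivity} gives
\[
\dd^u(L,L')=\dd^u(L,M)+\dd^u(M,L')>r/2+r/2=r .
\]
On the other hand, the triangle inequality in the completion gives $\dd^u(L,L')\le \dd^u(L,\xi)+\dd^u(\xi,L')<r$, a contradiction. Hence all leaves in the open $\dd^u$-ball of radius $r/2$ about $\xi$ lie in one component; we define this to be $\mathrm{side}_M(\xi)$. The definition does not depend on the approximating sequence, and ``small enough'' can be taken to mean $<r/2$.

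For the additivity statement, take $\xi,\xi'$ with $\dd^u(\xi,M)>0$, $\dd^u(\xi',M)>0$ and different sides. Choose approximating leaves $L_n\to\xi$ and $L'_n\to\xi'$ that are close enough to lie in $\mathrm{side}_M(\xi)$ and $\mathrm{side}_M(\xi')$ respectively. Then $M$ separates $L_n$ from $L'_n$, and Lemma~\ref{lem:additivity} gives
\[
\dd^u(L_n,L'_n)=\dd^u(L_n,M)+\dd^u(M,L'_n).
\]
Both sides are continuous with respect to the extended metric $\dd^u$ on $\slea$, since distance functions are $1$-Lipschitz. Passing to the limit yields the identity $\dd^u(\xi,\xi')=\dd^u(\xi,M)+\dd^u(M,\xi')$.

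The only delicate point is a matter of bookkeeping. Lemma~\ref{lem:additivity} is stated for leaves separated by $M$, and ``lies in a component'' must be read for a singular leaf $M$ with $k$ complementary components. The argument above never uses $k=2$, only that each leaf other than $M$ sits in exactly one component, so it applies verbatim. No real obstacle is expected beyond this.
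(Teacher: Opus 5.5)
Your proof is correct and follows essentially the same route as the paper. Lemma~\ref{lem:additivity} forces two leaves on opposite sides of $M$ to be at least as far apart as the sum of their distances to $M$, which is incompatible with both being close to $\xi$; the identity then follows by applying additivity to approximating leaves and passing to the limit. The only cosmetic difference is that you use the explicit ball of radius $r/2$ around $\xi$ instead of the tail of a Cauchy sequence, which makes the threshold in ``small enough'' explicit.
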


\begin{proof}
	Let $(L_n)$ be a Cauchy sequence of leaves representing $\xi$, with $\dd^u(L_n, M) \geq \delta > 0$ for $n$ large. If $L_n$ and $L_m$ lay in different components of $\tS \setminus M$, Lemma \ref{lem:additivity} would give $\dd^u(L_n,L_m) \geq 2\delta$, contradicting the Cauchy property; hence the side stabilizes and does not depend on the representing sequence, by the same estimate. The last equality follows from the equality of Lemma \ref{lem:additivity} applied to representing sequences on the two sides, passing to the limit.
\end{proof}

The equality above upgrades to the completion the additivity of Lemma \ref{lem:additivity}; we will use it in this form.

\begin{definition}[{Separation in the completion}]\label{def:separation}
	Let $M \in \mathcal{L}^s$ and $\xi', \xi'' \in \slea \setminus \{[M]\}$. We say that \emph{$M$ separates $\xi'$ from $\xi''$} if $\dd^u(\xi', M) > 0$, $\dd^u(\xi'', M) > 0$ and $\mathrm{side}_M(\xi') \neq \mathrm{side}_M(\xi'')$. The symmetric definition is used in $\ulea$, with $\dd^s$.
\end{definition}

The next lemma supplies a translate of any prescribed ordinary leaf between two distinct {classes}, with uniform relative margins.

\begin{lemma}\label{lem:separating_translate}
	Let $M \in \mathcal{L}^s$ be any leaf and let $\xi', \xi'' \in \slea$ be distinct, with $d := \dd^u(\xi',\xi'') > 0$. Then there exists $\gamma \in \Gamma$ such that $\gamma M$ separates $\xi'$ from $\xi''$, with
	$$ \dd^u(\xi', \gamma M) \geq \tfrac{d}{3}, \qquad \dd^u(\gamma M, \xi'') \geq \tfrac{d}{3}.$$
	The symmetric statement holds for $\ulea$ and leaves of $\mathcal{L}^u$, with $\dd^s$.
\end{lemma}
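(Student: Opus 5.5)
We have to place a deck translate $\gamma M$ strictly between $\xi'$ and $\xi''$ in the $\mathbb R$-tree $\slea$, at $\dd^u$-distance at least $d/3$ from each. The plan is to use the geodesic segment between them, then compactness of $S$ and minimality of the foliations.

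\emph{Step 1: midpoint leaf.} Since $\slea$ is an $\mathbb R$-tree, there is a geodesic segment $[\xi',\xi'']$ of length $d$. Completion points that are not leaves are ends (points of $\partial\tS$), and these are leaves of the tree; so interior points of the segment can be approximated by genuine leaves. Concretely, I pick a leaf $N\in\mathcal L^s$ near the midpoint of the segment, with $\dd^u(\xi',N)$ and $\dd^u(N,\xi'')$ both close to $d/2$, and with $N$ separating $\xi'$ from $\xi''$. One way to get $N$: take representing leaves $L'$, $L''$ close to $\xi'$, $\xi''$, join them by an arc $\wt\alpha$ with $\mu^u(\wt\alpha)$ nearly $d$, and take the stable leaf through the point of $\wt\alpha$ at $\mu^u$-measure about $d/2$. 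Lemma~\ref{lem:additivity} and Lemma~\ref{lem:sides} then give separation, with distances about $d/2$ up to an error $\epsilon\ll d$.

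\emph{Step 2: a translate of $M$ close to $N$.} Here I use minimality (Lemma~\ref{lem:minimal_filling_foliations}) together with compactness. Take a point $z\in N$ and a small foliated chart (rectangle) $R$ around $\wt\pi(z)$ whose $\mu^u$-width is less than $\epsilon$. The image $\wt\pi(M)$ contains a half-leaf, which is dense, so it crosses $R$. Lifting, some $\gamma M$ crosses the lift $\wt R$ of $R$ containing $z$. Then $\gamma M$ and $N$ both cross the same plaque-rectangle $\wt R$, so $\dd^u(\gamma M,N)<\epsilon$. If $z$ is singular, I instead choose $z$ to be a regular point of $N$.

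\emph{Step 3: separation and margins.} Since $\dd^u(\gamma M,N)<\epsilon$ and $N$ lies at distance about $d/2$ from both $\xi'$ and $\xi''$ on opposite sides, the triangle inequality gives $\dd^u(\xi',\gamma M)\ge d/2-2\epsilon\ge d/3$, and the same bound for $\xi''$. It remains to check that $\gamma M$ actually separates $\xi'$ from $\xi''$. Suppose not. Then both points lie on the same side of $\gamma M$, and $N$ is on one side of $\gamma M$ as well. Because $N$ separates them, one of $\xi',\xi''$, say $\xi''$, lies on the side of $N$ opposite $\gamma M$ or on the same side as $\gamma M$. Working through the cases in the tree with Lemma~\ref{lem:sides}: if $\xi'$ and $\xi''$ are on the same side of $\gamma M$, their tree geodesic avoids $\gamma M$. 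The projection of $\gamma M$ onto $[\xi',\xi'']$ is then a point $p$ with $\dd^u(p,\gamma M)>0$, and $\dd^u(\gamma M,N)\ge \dd^u(\gamma M,p)$. But $N$ lies on the segment near its midpoint, so the distance from $\gamma M$ to $\xi'$ or $\xi''$ would be forced to be either $\approx d/2+\dd^u(p,\gamma M)$ or incompatible. The cleanest route is to choose $\epsilon$ so small that this case contradicts $\dd^u(\gamma M,N)<\epsilon$ together with $N$ being the unique tree point of $[\xi',\xi'']$ near the midpoint. I expect this tree-combinatorics check, handling singular leaves (which are branch points) and completion points, to be the main obstacle. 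The cleaner way to avoid it is to choose $\gamma M$ inside the thin rectangle $\wt R$ crossed by the transverse arc $\wt\alpha$ from Step 1. Then $\wt\alpha$ itself crosses $\gamma M$ at a point with $\mu^u$-measure between $d/2-\epsilon$ and $d/2+\epsilon$ from its endpoints. This directly gives that $L'$ and $L''$ are on different sides of $\gamma M$ (an arc transverse to the foliation crosses each leaf at most once), and by Lemma~\ref{lem:sides} the sides persist for $\xi'$, $\xi''$.

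The $\ulea$ statement follows symmetrically, exchanging $s$ and $u$ and using $\dd^s$.
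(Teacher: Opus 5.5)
Your final ``cleaner route'' is essentially the paper's proof. It uses a short transversal in a nonsingular flow box whose leaf classes lie in the middle of $[\xi',\xi'']$, takes a lift $\gamma M$ of the dense projected leaf crossing it, and gets separation because $[\gamma M]$ is then an interior point of that segment. The paper works directly in the completed tree, using that completion points are terminal, rather than with approximating leaves $L',L''$ and Lemma~\ref{lem:sides}.

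Choose $\wt\alpha$ (quasi-)transverse from the outset, not merely $\mu^u$-nearly-minimizing, and drop the tree-proximity argument via $N$. Singular leaves branch off densely along the segment, so neither a near-minimizing arc nor $\dd^u(\gamma M,N)<\epsilon$ forces separation (this already affects the separation claim for $N$ in your Step~1). A transverse arc, by contrast, meets $\gamma M$ exactly once and switches sides there.
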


\begin{proof}
The segment $[\xi',\xi'']$ in the completed dual tree has length $d$.
Choose a nondegenerate subsegment in its open middle third, represented
by a small transverse arc in a nonsingular flow box. Interior points of
a segment joining two completion points belong to the original tree:
the original leaf tree is geodesic and dense in its completion, and any
added completion point is terminal. The transverse arc may be shortened
so that all its {leaf classes} lie in that middle third.

The projected leaf of $M$ is dense, hence crosses the chosen transversal;
lifting the crossing gives a translate $\gamma M$ whose {class} belongs to
the middle third. Removing this {class} separates the endpoints in the
tree, equivalently it puts the endpoint leaves, or their approximating
leaves, on opposite sides of $\gamma M$. The distances to both endpoints
are at least $d/3$. Singular leaves may meet the transversal: their
{classes} are not removed from a whole transverse interval. Only the
chosen flow box is required to contain no singular point.
\end{proof}

\subsection{Fathi--Militon generalized foliations}\label{subsec:militon}

Let $f:S \to S$ be a homeomorphism isotopic to a pseudo-Anosov map $A: S \to S$ -- which we denote $f \in \Homeo_A(S)$ -- and let $\wt f, \wt A$ be isotopic lifts to $\wt S$. We recall the construction of stable and unstable sets for $f$ by Militon in \cite{militon24foliations}, which behave like the respective foliations of $A$.

\begin{definition}[{Stable and unstable sets}]\label{def:stable_set}
	For $\wt z \in \wt S$, we define its \emph{stable set} $W^s(\wt z)$ and its \textit{unstable set} $W^{u}(\wt z)$ as
	\[ W^s(\wt z) = \{\wt z' \in \wt S \ : \ \exists r > 0 \text{such that } \dd(\wt f^{j}(\wt z), \wt f^{j}(\wt z')) \leq r, \ \forall \ j \geq 0\},\]
	\[W^u(\wt z) = \{\wt z' \in \wt S \ : \ \exists r > 0 \text{such that } \dd(\wt f^{j}(\wt z), \wt f^{j}(\wt z')) \leq r, \ \forall \ j \leq 0\}.\]
\end{definition}

We call the elements of $\slea$ and $\ulea$ \emph{stable classes} and \emph{unstable classes}, respectively. We denote them by $\xi$, or by $\xi^s,\xi^u$ when clarification is necessary. Note that $\overline{L(\xi)} \subset \overline{S}$ is either a continuum with finitely many points (\emph{ends}) in $\partial \wt S$, or a point in $\partial \wt S$.

\begin{definition}[{Leaf neighbourhood}]
	For $L \in \mathcal{L}^s$ and $c > 0$, we define
	$$L_c = \{\wt z \in \wt S \ : \ \dd^{u} (L,\wt z) \leq c \}. $$
	For leaves $L' \in \mathcal{L}^u$, we define $L'_c$ as
	$$L'_c = \{\wt z \in \wt S \ : \ \dd^{s} (L',\wt z) \leq c \}. $$
\end{definition}

\begin{theorem}[\textbf{Generalized foliation, \cite{militon24foliations}}]\label{thm:militon_partition}
	Let $f \in \Homeo_A(S)$, and take two isotopic lifts $\wt f, \wt A: \wt S \to \wt S$. Then, for $i=s,u$ (resp. $i' =u,s$), there exist two maps $$\overline{\eta}^i : \ilea \to \Cont (\overline{S}), \quad \eta^{i}: \ilea \to \mathcal{P}(\wt S), \qquad \eta^{i}(\xi) := \overline{\eta}^{i}(\xi) \cap \wt S, $$
	such that the following properties hold:

	\medskip
	\noindent\textbf{\emph{Topology}}
	\begin{enumerate}[label=\emph{(T\arabic*)}, leftmargin=2.5em, itemsep=0pt]
		\item The nonempty sets of each family $\{\eta^i(\xi)\}_{\xi \in \ilea}$ yield a partition of $\wt S$ into closed sets, and $\eta^i(L) \neq \varnothing$ for every $L \in \mathcal{L}^i$; \hfill{\emph{\textbf{Partition}}}
		\item For every $L \in \mathcal{L}^{i}$, we have $\cc( \overline{S} \setminus \overline{\eta}^{i} (L)) = \cc( \overline{S} \setminus \overline{L})$. \hfill{\emph{\textbf{Complement}}} \\ For every $\xi \in \ilea \setminus \mathcal{L}^{i}$, we have $\cc( \overline{S} \setminus \overline{\eta}^{i} (\xi)) = 1$.
		\item For every $\xi \in \ilea$, we have $\overline{\eta}^{i} (\xi) \cap \partial \wt S = \overline{L(\xi)} \cap \partial \wt S$.  \hfill \emph{\textbf{Ends}}
		\item For every $\xi \in \ilea$ such that $\eta^{i}(\xi) \neq \varnothing$, and every neighbourhood $U$ of \\ $\overline{\eta}^{i}(\xi)$ in $\overline{S}$, there exists $\delta > 0$ such that $\overline{\eta}^{i} (\xi') \subset U$ whenever $\dd^{i'}(\xi, \xi') < \delta$.  \hfill \emph{\textbf{Continuity}}
	\end{enumerate}

	\medskip
	\noindent\textbf{\emph{Dynamics}}
	\begin{enumerate}[label=\emph{(D\arabic*)}, leftmargin=2.5em, itemsep=0pt]
		\item For every $\xi \in \ilea$, we have $\wt f (\overline{\eta}^{i} (\xi)) = \overline{\eta}^{i} (\wt A(\xi))$. \hfill \emph{\textbf{Set-to-set conjugacy}}
		\item For every $\xi \in \ilea$ and $\gamma \in \Gamma$, we have $\gamma(\overline{\eta}^{i} (\xi)) = \overline{\eta}^{i} (\gamma (\xi))$. \hfill{\textbf{\emph{Equivariance}}}
		\item For every $\wt z \in \eta^{s} (\xi)$, we have $W^s (\wt z) = \eta^{s} (\xi)$, and \\ for every $\wt z \in \eta^{u} (\xi)$, we have $W^u (\wt z) = \eta^{u} (\xi)$. \hfill\textbf{\emph{Stability}}
	\end{enumerate}
\end{theorem}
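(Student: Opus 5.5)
Since this result is quoted from \cite{militon24foliations}, the following is a plan for how I would reprove it, not a reconstruction of Militon's argument. I treat $i=s$; the case $i=u$ is symmetric with $\wt f^{-1}$, $\wt A^{-1}$ and $\dd^s$.

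\emph{Coding map.} The starting point is that the isotopic lifts satisfy $\sup_{\wt z}\wt\dd(\wt f\wt z,\wt A\wt z)=C<\infty$, by compactness of $S$ and equivariance. Let $[\wt z]\in\mathcal L^s$ be the stable leaf through $\wt z$. By \eqref{eq:transverse_measures}, $\wt A^{-1}$ contracts $\dd^u$ by $\lambda$ and acts isometrically-up-to-scaling on $\mathcal L^s$. Define
\[
\Theta_n(\wt z)=\wt A^{-n}[\wt f^n\wt z]\in\mathcal L^s .
\]
Then $\dd^u(\Theta_{n+1}(\wt z),\Theta_n(\wt z))\le \lambda^{n}C$, so $(\Theta_n)$ converges uniformly to a continuous map $\Theta:\wt S\to\slea$ satisfying
\[
\dd^u(\Theta(\wt z),[\wt z])\le C/(1-\lambda).
\]
I set $\eta^s(\xi)=\Theta^{-1}(\xi)$, and let $\overline\eta^s(\xi)$ be its closure in $\overline S$.
\begin{itemize}
\item \emph{Partition and closedness.} These hold by construction and continuity of $\Theta$.
\item \emph{(D1) and (D2).} From $\Theta\wt f=\wt A\Theta$ and $\Theta\gamma=\gamma\Theta$, both formal consequences of the definition.
\item \emph{Ends (T3).} The bound above puts $\eta^s(\xi)$ inside the bounded $\dd^u$-neighbourhood $L(\xi)_{C/(1-\lambda)}$. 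Such a neighbourhood accumulates on $\partial\wt S$ exactly at the ends of $\xi$; this uses Lemma~\ref{lem:distinct_ends} and the fact that a bounded $\dd^u$-neighbourhood of a leaf stays at bounded hyperbolic distance from it.
\item \emph{Continuity (T4).} I would deduce it from the same bound together with properness of $\Theta$ in the $\dd^u$-direction.
\end{itemize}

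\emph{Nonemptiness and the complement count (T1), (T2).} For a leaf $L\in\mathcal L^s$, take a long arc $\alpha$ in an unstable leaf crossing $L$ transversally. The map $\Theta|_\alpha$ is $C/(1-\lambda)$-close in $\dd^u$ to the leaf projection, which is a homeomorphism onto a segment of the tree. Hence the image of $\Theta|_\alpha$ is connected and contains the segment deprived of its endpoints' $C/(1-\lambda)$-neighbourhoods. Therefore $L$ is hit, and $\eta^s(L)\ne\varnothing$.

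For (T2), observe that $\slea\setminus\{\xi\}$ has as many components as $\xi$ has ends: $k$ for a leaf, and $1$ for a completion point, since added completion points are terminal, as used in Lemma~\ref{lem:separating_translate}. The preimages under $\Theta$ of these components are open sets whose union is $\wt S\setminus\eta^s(\xi)$. I would show each preimage is connected and unbounded towards the corresponding arc of $\partial\wt S$ between consecutive ends. For this, connect two points of the same preimage by a path; then push the path off $\eta^s(\xi)$ using the tree structure and the coarse closeness of $\Theta$ to the leaf projection along unstable transversals.

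\emph{Stability (D3) — the main obstacle.} Suppose $\wt z'\in W^s(\wt z)$. Then $\wt f^n\wt z$ and $\wt f^n\wt z'$ stay at bounded $\dd^u$-distance $r$, so
\[
\dd^u(\Theta\wt z,\Theta\wt z')\le \lambda^n\bigl(r+2C/(1-\lambda)\bigr)\to0,
\]
and the two points have the same image. The converse is harder. Equal $\Theta$-values give bounded $\dd^u$ along forward orbits, but one must also bound $\dd^s(\wt f^n\wt z,\wt f^n\wt z')$. Here I would first use connectedness of $\overline\eta^s(\xi)$: connectedness of $\eta^s(\xi)\cup$ its ends, obtained from the complement structure above via a boundary-bumping argument (Lemma~\ref{lem:nadler}). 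Then I would argue by contradiction. If $\dd^s$ along the forward orbits were unbounded, the continua $\wt f^n(\eta^s(\xi))$ would have $\dd^s$-diameter growing by more than a fixed amount over a bounded stretch of an unstable leaf. Such continua would meet $\eta^u$-sets of the unstable partition, constructed symmetrically, in a way incompatible with $\wt f$ contracting the unstable coordinate backwards.

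I expect this converse inclusion, together with connectedness of the fibres, to be the genuinely difficult part of the proof. Everything else follows from the contraction estimate and the tree geometry.
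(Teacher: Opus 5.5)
\textbf{Verdict: there is a genuine gap.} The paper only quotes this theorem from Militon, so there is no in-paper proof to compare against; the assessment below is of your argument on its own terms.

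\textbf{What works.} Your coding map $\Theta$ is fine. (T1), (D1), (D2) and the inclusion $W^s(\wt z)\subset\Theta^{-1}(\Theta(\wt z))$ go through as you write them, and the unstable-arc argument for nonemptiness is correct.

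\textbf{The gap: everything at $\partial\wt S$.} Your proof of (T3) assumes that $L_c=\{\wt w:\dd^u(L,\wt w)\le c\}$ stays at bounded hyperbolic distance from $L$ and accumulates on $\partial\wt S$ only at the ends of $L$. In genus at least two this is false. $L_c$ is the union of all stable leaves whose classes lie in the closed $c$-ball of the dual tree, so it contains leaves $L'\neq L$. By Lemma~\ref{lem:distinct_ends}, the lemma you cite, their ends are disjoint from those of $L$. (The analogous claim does hold for linear Anosov maps of $\T^2$, where leaves are parallel lines.) So the coarse inclusion $\eta^s(\xi)\subset\{\wt w:\dd^u([\wt w],\xi)\le c\}$ says nothing at infinity. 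It is your only input for (T3), for (T4), and for the boundary side of (T2).

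\textbf{What is missing.} You need to use the dynamics, together with the fact that $\wt f^n$ extends to $\overline S$ and agrees with $\wt A^n$ on $\partial\wt S$. With your constant $C$ and $c=C/(1-\lambda)$, your own estimates give
\[
\eta^s(\xi)=\bigl\{\wt w\in\wt S:\ \dd^u\bigl([\wt f^n\wt w],\wt A^n_*\xi\bigr)\le c\ \text{ for all } n\ge0\bigr\}.
\]
Let $e\in\partial\wt S$ be a point that is not an end of $\xi$.
\begin{enumerate}
\item Choose a stable leaf $M$ with $\delta=\dd^u(M,\xi)>0$ that cuts $e$ off from $\xi$. This is a statement about the model foliation alone, provable for instance by straightening $\wt{\mathcal F}^s$ to a geodesic lamination.
\item Take $n$ with $\lambda^{-n}\delta>c$. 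Every $\wt w$ close enough to $e$ in $\overline S$ has $\wt f^n\wt w$ beyond $\wt A^nM$.
\item Lemma~\ref{lem:sides} then gives $\dd^u([\wt f^n\wt w],\wt A^n_*\xi)>c$, so $\wt w\notin\eta^s(\xi)$.
\end{enumerate}
The same description shows that the compact sets $\wt f^{-n}\bigl(\overline{\{\wt w:\dd^u([\wt w],\wt A^n_*\xi)\le c\}}\bigr)$ decrease in $n$; this uses $\lambda(c+C)\le c$. Their nested intersection is what gives connectedness of $\overline\eta^s(\xi)$, and it gives (T4) by compactness. Lemma~\ref{lem:nadler} cannot supply connectedness, since it presupposes a continuum.

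\textbf{A definitional fix.} For a completion class with $\eta^s(\xi)=\varnothing$, your closure is empty. You must put $\overline{L(\xi)}\cap\partial\wt S$ into $\overline\eta^s(\xi)$ by definition, or (T3) and the continuum property fail.

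\textbf{(D3) is not the obstacle.} If $\Theta(\wt z)=\Theta(\wt z')$, you already have $\dd^u(\wt f^n\wt z,\wt f^n\wt z')\le 2C/(1-\lambda)$. For \emph{any} two points, iterating
\[
\dd^s(\wt f^{j+1}\wt z,\wt f^{j+1}\wt z')\le 2C+\lambda\,\dd^s(\wt f^{j}\wt z,\wt f^{j}\wt z')
\]
bounds the forward $\dd^s$-distance by $2C/(1-\lambda)+\dd^s(\wt z,\wt z')$. This is Lemma~\ref{lem:stable_distance_future_control}, Militon's Claim~7, which is one of the ingredients the paper does import. So (D3) is immediate and needs neither connectedness nor the unstable partition. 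The forward $\dd^s$-growth you argue against never occurs.

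\textbf{(T2) still needs an argument.} The real difficulty is in (T2)--(T4) and connectedness. For (T2), ``pushing the path off $\eta^s(\xi)$'' is not yet an argument. Once the boundary control above is in place, a maximum-principle argument works:
\begin{enumerate}
\item Suppose a complementary component $W$ of $\eta^s(\xi)$ met $\partial\wt S$ only at ends of $\xi$. Then $\wt f^nW$ does the same for $\wt A^n_*\xi$.
\item A model stable leaf through a point of $\wt f^nW$ whose class is at distance $>c$ from $\wt A^n_*\xi$ would have to leave $\wt f^nW$ through $\eta^s(\wt A^n_*\xi)$.
\item That is impossible, because this pseudo-leaf lies in the saturated set $\{\dd^u([\cdot],\wt A^n_*\xi)\le c\}$. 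Hence $W\subset\eta^s(\xi)$, which is absurd.
\end{enumerate}
So each complementary component contains one of the $k$ boundary intervals. The boundary argument above shows that the sets $\Theta^{-1}(\text{component})\cup(\text{interval})$ are open in $\overline S$, so distinct intervals lie in distinct components.
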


By (T1 -- Partition) and (D3 -- Stability), every point $\wt z \in \tS$ determines classes $\xi^s(\wt z) \in \slea$ and $\xi^u(\wt z) \in \ulea$ with $\wt z \in \eta^s(\xi^s(\wt z)) \cap \eta^u(\xi^u(\wt z))$, and by \cite[Remark 4]{militon24foliations} the \emph{class map}
\begin{equation}\label{eq:classmap}
	\wt z \mapsto (\xi^s(\wt z), \xi^u(\wt z)) \in \slea \times \ulea
\end{equation}
is continuous. {The elements $\xi^s(\wt z)$ and $\xi^u(\wt z)$ are the stable and unstable classes of $\wt z$. We call $\overline\eta^s(\xi)$ a \emph{stable pseudo-leaf} with class $\xi$, and use the analogous unstable terminology. When the ambient space is $\tS$, we also call $\eta^s(\xi)$ a pseudo-leaf; this set may not be connected. In particular, $W^s(\wt z)=\eta^s(\xi^s(\wt z))$ is the stable pseudo-leaf through $\wt z$.} By (T3 -- Ends), for $\xi \in \mathcal{L}^i$ the closure $\overline{\eta}^i(\xi)$ meets $\partial \tS$ in at least two points; hence in this case $\eta^i(\xi)$ is unbounded and $\diam(\eta^i(\xi)) = \infty$. More precisely, every connected component of a {pseudo-leaf} is
unbounded: apply boundary bumping to the open subset
$\overline\eta^i(\xi)\cap\tS$ of the continuum
$\overline\eta^i(\xi)$. The closure of each component meets the finite
boundary end set. Components are closed in $\tS$, so
Lemma~\ref{lem:large_compact_pieces} applies to these unbounded components.

The continuity of the class map turns the separation tools of Section \ref{subsec:leafspaces} into crossing tools for continua.

\begin{lemma}\label{lem:intermediate}
	Let $C \subset \tS$ be a continuum and $\wt z', \wt z'' \in C$. Let $M \in \mathcal{L}^s$ be a leaf which separates $\xi^s(\wt z')$ from $\xi^s(\wt z'')$ (Definition \ref{def:separation}).
	Then $C \cap \eta^s([M]) \neq \varnothing$, where $[M] \in \slea$ is the class of $M$. The symmetric statement holds for unstable classes and leaves of $\mathcal{L}^u$.
\end{lemma}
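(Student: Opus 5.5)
The idea is to argue by contradiction, using the continuity of the class map \eqref{eq:classmap} together with the separation in the completed dual tree. Suppose $C\cap\eta^s([M])=\varnothing$. By (T1 -- Partition), every point of $C$ then has stable class different from $[M]$. Consider the composition
\[
\phi: C\to\slea,\qquad \wt z\mapsto \xi^s(\wt z).
\]
It is continuous, since it is the first coordinate of the class map. Its image $\phi(C)$ is a connected subset of $\slea\setminus\{[M]\}$, and it contains $\xi^s(\wt z')$ and $\xi^s(\wt z'')$.

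The key step is to show that $\slea\setminus\{[M]\}$ splits into disjoint open sets indexed by the components of $\tS\setminus M$. Concretely, for each component $V$ of $\tS\setminus M$, set
\[
O_V=\{\xi\in\slea:\ \dd^u(\xi,M)>0,\ \mathrm{side}_M(\xi)=V\}.
\]
By Lemma \ref{lem:sides}, $\mathrm{side}_M$ is well defined on $\{\xi:\dd^u(\xi,M)>0\}$. It is also locally constant there. Indeed, if $\dd^u(\xi,M)=\delta>0$ and $\dd^u(\xi,\xi')<\delta$, then $\xi'$ cannot lie on a different side: otherwise the additivity in Lemma \ref{lem:sides} would give $\dd^u(\xi,\xi')=\dd^u(\xi,M)+\dd^u(M,\xi')\ge\delta$. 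So each $O_V$ is open and the $O_V$ are pairwise disjoint.

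It remains to check that these sets cover $\slea\setminus\{[M]\}$, that is, that $\dd^u(\xi,M)=0$ forces $\xi=[M]$. This holds because $\dd^u$ is a metric on the completion $\slea$ and $[M]$ is its point corresponding to $M$. Hence the $O_V$ form a partition of $\slea\setminus\{[M]\}$ into disjoint open sets.

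By hypothesis (Definition \ref{def:separation}), $\xi^s(\wt z')$ and $\xi^s(\wt z'')$ lie in different sets $O_V\neq O_{V'}$. This contradicts the connectedness of $\phi(C)$, so $C\cap\eta^s([M])\neq\varnothing$. The unstable statement is identical, exchanging the roles of $s$ and $u$ and using $\dd^s$ on $\ulea$.

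The main point needing care is the local constancy of sides near completion points. This uses the completion form of additivity from Lemma \ref{lem:sides} rather than Lemma \ref{lem:additivity}, and it is exactly what makes the argument work for non-leaf classes such as boundary points.
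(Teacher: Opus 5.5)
Your proof is correct and follows the paper's argument: both combine continuity of the class map, the completion additivity of Lemma~\ref{lem:sides} (which makes the side function locally constant away from $[M]$), and the fact that $\dd^u(\xi,[M])=0$ forces $\xi=[M]$ in the metric space $\slea$, then conclude by connectedness of $C$. The only difference is cosmetic. You prove local constancy on $\slea\setminus\{[M]\}$ and pull it back, which avoids compactness. The paper works directly on $C$, using the positive minimum $m$ of $\varphi$ to get a uniform separation $2m$.
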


\begin{proof}
	Consider the continuous function $\varphi : C \to [0,\infty)$, $\varphi(\wt z) = \dd^u(\xi^s(\wt z), [M])$ (continuous as the composition of \eqref{eq:classmap} with the $1$-Lipschitz function $\dd^u(\cdot,[M])$). If $\varphi(\wt z_*) = 0$ for some $\wt z_* \in C$, then $\xi^s(\wt z_*) = [M]$ because $\slea$ is a metric space, hence $\wt z_* \in \eta^s([M])$ and we are done.

	Suppose then that $\varphi > 0$ on $C$. Since $C$ is compact, $m := \min_C \varphi > 0$. By Lemma \ref{lem:sides}, the side function $s(\wt z) := \mathrm{side}_M(\xi^s(\wt z))$ is well defined on $C$, and it is locally constant: if $s(\wt z_1) \neq s(\wt z_2)$ then $\dd^u(\xi^s(\wt z_1),\xi^s(\wt z_2)) \geq \varphi(\wt z_1) + \varphi(\wt z_2) \geq 2m$, so by continuity of \eqref{eq:classmap} the function $s$ is constant on sets of small diameter. As $C$ is connected, $s$ is constant on $C$, contradicting $s(\wt z') \neq s(\wt z'')$.
\end{proof}

The following statement comes from the fact that $\wt f$ and $\wt A$ are at finite distance; see \cite[Lemma 6 and its proof]{militon24foliations}.

\begin{lemma}\label{rem:stable_continuum_orthogonally_bounded}
	There exists $C > 0$ such that for every $L \in \mathcal{L}^s$, $L' \in \mathcal{L}^u$,
	$$\eta^s(L) \subset L_C, \quad \eta^u(L') \subset L'_C.$$
\end{lemma}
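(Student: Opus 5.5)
The plan is to exploit that $\wt f$ and $\wt A$ lie at bounded distance, and to compare $\wt f$-orbits with $\wt A$-orbits, where the transverse contraction/expansion is explicit. Since $f$ and $A$ are isotopic and the lifts $\wt f,\wt A$ are isotopic, there is $K_0<\infty$ with $\wt\dd(\wt f(\wt z),\wt A(\wt z))\le K_0$ for all $\wt z$; this holds because the displacement function is $\Gamma$-invariant, hence bounded by compactness of $S$. The metric $\wt\dd=\dd^s+\dd^u$ is $\Gamma$-invariant, so the constants will be uniform in $L$.

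First I will reduce to a one-step estimate for the unstable pseudodistance. By (D1), $\wt f(\eta^s(L))=\eta^s(\wt A L)$, and by (D3) every point of $\eta^s(L)$ has its forward $\wt f$-orbit shadowed at bounded distance by the forward $\wt f$-orbit of any other point of $\eta^s(L)$. Instead of using this directly, I will work with the function $\phi(\wt z)=\dd^u(\wt z, L)$ for $\wt z\in\eta^s(L)$. The map $\wt A$ expands $\dd^u$ by $\lambda^{-1}$, and it sends stable leaves to stable leaves, so $\dd^u(\wt A\wt z,\wt A L)=\lambda^{-1}\dd^u(\wt z,L)$. Combined with the bounded displacement (and $\dd^u\le\wt\dd$), this gives
\[
\dd^u(\wt f\wt z,\wt A L)\ge \lambda^{-1}\dd^u(\wt z,L)-K_0 .
\]
By (D1), $\wt f\wt z\in\eta^s(\wt A L)$. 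Iterating, if some $\wt z\in\eta^s(L)$ had $\dd^u(\wt z,L)>c_0:=K_0\lambda/(1-\lambda)+1$, then $\dd^u(\wt f^n\wt z,\wt A^nL)$ would grow geometrically.

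Second, I must contradict this growth. The cleanest route is to compare $\wt f^n\wt z$ with a point of the same pseudo-leaf that stays close to $\wt A^n L$. By (T2)/(T3), $\overline\eta^s(L)$ has the same ends as $L$ at infinity. I therefore want to show that $\eta^s(\wt A^nL)$ contains points near $\wt A^nL$ whose orbits stay at uniformly bounded distance from $\wt f^n\wt z$. Here I expect the main obstacle: stability (D3) only gives a distance bound $r$ depending on the pair of points, not uniform in $n$ or in $L$. For this reason I would rather reprove directly, following Militon's construction (Lemma 6 there), that $\eta^s(L)$ is contained in the set $\Lambda_L$ of points whose forward $\wt f$-orbit satisfies $\dd^u(\wt f^n\wt z,\wt A^nL)\le c_0$ for all $n\ge0$.

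Third, I show that $\Lambda_L$ does the job. The set $\Lambda_L$ is closed, nonempty and $\wt f(\Lambda_L)=\Lambda_{\wt AL}$, by the one-step inequality read backwards (this is a graph-transform/shadowing argument: points escaping the $c_0$-band escape geometrically). Moreover, distinct points of $\Lambda_L$ have forward orbits that are at bounded $\dd^u$-distance. The $\dd^s$ direction is handled differently. Orbits in the same stable set stay at bounded distance by definition, so for a point $\wt z'\in W^s(\wt z)$ the quantity $\dd^u(\wt f^n\wt z',\wt A^nL)$ is at most $\dd^u(\wt f^n\wt z,\wt A^nL)+r$. The escaping estimate then forces $\wt z'$ into the $c_0$-band as well, since otherwise $\dd^u(\wt f^n\wt z',\wt A^nL)$ would grow beyond $c_0+r$.

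Fourth, the inclusion $\eta^s(L)\subset L_{c_0}$ requires producing one point of $\eta^s(L)$ in $\Lambda_L$. For this I use the fact that $\eta^s(L)$ is the stable pseudo-leaf with class $L$: its points have classes $\xi^s=[L]$, and Militon's defining property is that the class map is obtained exactly via such shadowing bands. With $C=c_0$, the unstable statement follows by applying everything to $f^{-1}$, $A^{-1}$, exchanging $s$ and $u$.
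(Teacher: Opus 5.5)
Your proposal is correct and follows essentially the paper's route. The paper derives the lemma from the bounded displacement between $\wt f$ and $\wt A$ via Militon's Lemma~6 and its proof, which is your escape estimate: a point whose forward orbit stays at bounded $\dd^u$-distance from $\wt A^nL$ must stay within $K_0\lambda/(1-\lambda)$ of it. The other ingredient is the fact, coming from Militon's construction, that points of $\eta^s(L)$ do $\dd^u$-shadow $\wt A^nL$ along their forward orbits, which is exactly the input you invoke in your fourth step. If you take that input for every point of $\eta^s(L)$, even with a point-dependent bound, the escape estimate gives the uniform bound at once, so the detour through (D3) and $\Lambda_L$ is harmless but unnecessary.
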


The point--class estimate below also applies to {completion classes}
and gives the following uniform width bound.

\begin{corollary}\label{cor:stable_cont_orth_bounded}
	There exists $C > 0$ such that for every $\xi \in \slea$, $\xi' \in \ulea$,
	$$ \diam^{u} \eta^s(\xi) \leq 2C, \quad \diam^{s} \eta^u(\xi') \leq 2C.$$
\end{corollary}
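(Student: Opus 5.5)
The plan is to pass from the leaf-class bound in Lemma~\ref{rem:stable_continuum_orthogonally_bounded} to all classes in $\slea$, including completion classes, by a limiting argument based on (T4 -- Continuity). We only treat the stable case $\xi\in\slea$; the unstable statement follows by exchanging the roles of $s$ and $u$.

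First take $\xi=[L]$ with $L\in\mathcal L^s$. Every point $\wt z\in\eta^s(L)$ satisfies $\dd^u(L,\wt z)\le C$. For $\wt z,\wt z'\in\eta^s(L)$, pick points $\wt w,\wt w'\in L$ with $\dd^u(\wt z,\wt w)\le C+\epsilon$ and $\dd^u(\wt z',\wt w')\le C+\epsilon$. Since $\wt w,\wt w'$ lie on the same stable leaf, $\dd^u(\wt w,\wt w')=0$. The triangle inequality for the pseudodistance $\dd^u$ then gives $\dd^u(\wt z,\wt z')\le 2C+2\epsilon$. Letting $\epsilon\to0$ yields $\diam^u\eta^s(L)\le 2C$.

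Now take a completion class $\xi\in\slea\setminus\mathcal L^s$ with $\eta^s(\xi)\neq\varnothing$; if the set is empty there is nothing to prove. Fix $\wt z,\wt z'\in\eta^s(\xi)$ and $\epsilon>0$. Choose a small compact neighbourhood $U$ of $\{\wt z,\wt z'\}$, say a union of two small closed balls, on which the $\dd^u$-diameter of each ball is below $\epsilon$. Here we use that $\dd^u\le\wt\dd$.

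The key point is to find leaves $L_n\in\mathcal L^s$ with $\dd^u(L_n,\xi)\to0$ and with $\eta^s(L_n)$ meeting both balls. Continuity (T4) goes the wrong way: it only says that $\overline\eta^s(\xi')$ lies inside any given neighbourhood of $\overline\eta^s(\xi)$. We therefore argue with the class map instead.

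\begin{itemize}
\item The class map \eqref{eq:classmap} is continuous, and $\xi^s(\wt z)=\xi$.
\item Hence the points of a small ball $B$ around $\wt z$ have stable classes close to $\xi$.
\item Those classes are not all equal to $\xi$, since otherwise $B\subset\eta^s(\xi)$.
\item Density of leaves in the tree then lets us use Lemma~\ref{lem:intermediate}: for a leaf class $[M]$ separating $\xi$ from the class of some point of $B$, the pseudo-leaf $\eta^s([M])$ meets $B$.
\end{itemize}

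Running the same argument near $\wt z'$ produces a possibly different leaf $M'$. So we do not insist on a single leaf through both balls. Instead we estimate directly:
\[
\dd^u(\wt z,\wt z')\le \dd^u(\wt z,\eta^s(M))+\diam^u\eta^s(M)\text{-type terms}+\dd^u(M,M')+\cdots.
\]
The cleaner route is the following. Lemma~\ref{rem:stable_continuum_orthogonally_bounded} says $\eta^s(M)\subset M_C$, and we want $\dd^u(\wt z,[M])\le C+\epsilon$ for leaves $M$ with classes near $\xi$. Take $\wt y\in\eta^s(M)\cap B$. Then
\[
\dd^u(\wt z,M)\le \dd^u(\wt z,\wt y)+\dd^u(\wt y,M)\le \epsilon+C.
\]
The same holds for $\wt z'$ with $M'$. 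Since $\dd^u(M,M')\le\dd^u(M,\xi)+\dd^u(\xi,M')$, which is small, we obtain
\[
\dd^u(\wt z,\wt z')\le 2C+2\epsilon+\dd^u(M,M').
\]
This last step uses that $\dd^u(M,M')$ bounds the transverse distance between the leaves. For the class distance on $\mathcal L^s$ this is exactly the definition. Letting $\epsilon\to0$ gives the bound $2C$.

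The main obstacle is producing leaves of class near $\xi$ whose pseudo-leaves pass near a given point of $\eta^s(\xi)$. The argument needs the class map to be non-constant near each point, and it needs a separating leaf in the tree, which we take from Lemma~\ref{lem:separating_translate}. If a neighbourhood of a point were entirely inside $\eta^s(\xi)$, we would instead use that a point of $\partial\eta^s(\xi)$ suffices and conclude for interior points via nearby boundary points. Alternatively, we could invoke the point--class estimate $\dd^u(\wt z,\xi^s(\wt z))\le C$ directly, as in the proof of Lemma~6 of \cite{militon24foliations}, which the paper indicates also holds for completion classes.
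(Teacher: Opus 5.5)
Your leaf-class case is correct. For completion classes your structure matches the paper's: a point--class estimate $\dd^u(\wt z,M)\le C+\varepsilon$ with $[M]$ arbitrarily close to $\xi$, followed by the triangle inequality through $\dd^u(M,M')$. The paper simply asserts that this estimate (Lemma~\ref{lem:point_class_distance}, from Militon's Lemma~6) holds for completion classes. Your attempt to derive that extension from the leaf case, using continuity of the class map, Lemma~\ref{lem:separating_translate} and Lemma~\ref{lem:intermediate}, is sound, but only at points $\wt z\in\partial\eta^s(\xi)$. The step ``those classes are not all equal to $\xi$'' is exactly the assumption that $\wt z$ is not an interior point of $\eta^s(\xi)$.

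For interior points you only propose to ``conclude via nearby boundary points'', and this gives nothing. Nothing bounds the distance, in $\wt\dd$ or in $\dd^u$, from an interior point to $\partial\eta^s(\xi)$, so ``nearby'' has no content. You cannot discard this case: the corollary concerns an arbitrary $f\in\Homeo_A(S)$. Empty interior of pseudo-leaves is proved only later (Corollary~\ref{cor:emptyinterior}), and only for the essential factor.

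The missing idea is to move along the model leaf rather than to a nearby point. Let $\xi\notin\mathcal L^s$ and $\wt z\in\eta^s(\xi)$, and let $N$ be the leaf of $\wt\F^s$ through $\wt z$.
\begin{itemize}
\item $N$ has at least two ends, while by (T3 -- Ends) the set $\overline\eta^s(\xi)$ meets $\partial\tS$ in a single point.
\item Since $\overline\eta^s(\xi)$ is closed in $\overline S$, this forces $N\not\subset\eta^s(\xi)$.
\item As $N$ is connected and $\eta^s(\xi)$ is closed, $N$ meets $\partial\eta^s(\xi)$ at some $\wt z_1$.
\item $\dd^u(\wt z,\wt z_1)=0$, because $\mu^u$ vanishes along stable leaves.
\end{itemize}
Hence $\diam^u\eta^s(\xi)=\diam^u\partial\eta^s(\xi)$, and your boundary argument finishes the proof. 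Equivalently, you can apply Lemma~\ref{lem:intermediate} directly to an arc of $N$ leaving $\eta^s(\xi)$, which places a point of $\eta^s([M])$ on $N$ itself.

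You could also bypass completion classes altogether. Reversing the estimate in the proof of Lemma~\ref{lem:stable_distance_future_control} gives $\dd^u(\tf\wt z,\tf\wt z')-\kappa\ge\lambda^{-1}\bigl(\dd^u(\wt z,\wt z')-\kappa\bigr)$. So two points at $\dd^u$-distance greater than $\kappa$ separate exponentially in forward time. By (D3 -- Stability) they cannot lie in one pseudo-leaf. This yields $\diam^u\eta^s(\xi)\le\kappa$ for every $\xi$, i.e.\ the statement with $\kappa/2$ in place of $C$.
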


{The following consequence compares distances between points with distances between their classes, and follows directly from Lemma \ref{rem:stable_continuum_orthogonally_bounded}.}

\begin{lemma}\label{lem:point_class_distance}
	Let $C$ be the constant of Lemma \ref{rem:stable_continuum_orthogonally_bounded}. For every $\xi \in \slea$, every $\wt w \in \eta^s(\xi)$ and every $\varepsilon > 0$, there is a leaf $L \in \mathcal{L}^s$ with $\dd^u([L], \xi) \leq \varepsilon$ and $\dd^u(\wt w, L) \leq C + \varepsilon$. Consequently, for all $\wt w', \wt w'' \in \tS$,
	$$\dd^u\big(\xi^s(\wt w'), \xi^s(\wt w'')\big) - 2C \leq \dd^u(\wt w', \wt w'') \leq \dd^u\big(\xi^s(\wt w'), \xi^s(\wt w'')\big) + 2C.$$
	The symmetric statements hold for unstable classes, with $\dd^s$.
\end{lemma}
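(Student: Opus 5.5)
The statement to prove is Lemma~\ref{lem:point_class_distance}. It consists of an approximation claim followed by a two-sided comparison between $\dd^u$ on points and $\dd^u$ on their stable classes; I will prove them in that order.

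\emph{Approximation claim.} Fix $\xi\in\slea$ and $\wt w\in\eta^s(\xi)$. Since $\xi$ lies in the completion, there are leaves $L_n\in\mathcal L^s$ with $\dd^u([L_n],\xi)\to 0$. By (T4 -- Continuity), for $n$ large the pseudo-leaves $\overline\eta^s([L_n])$ lie in any prescribed neighbourhood of $\overline\eta^s(\xi)$. The direction we need is the reverse one: $\wt w$ should be close to $\eta^s([L_n])$.

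The route I would try is to use Lemma~\ref{lem:intermediate} together with Lemma~\ref{lem:separating_translate}. Pick leaves $M',M''$ on either side of $\xi$, at $\dd^u$-distance at most $\varepsilon$ from $\xi$. The stable class of $\wt w$ is $\xi$, and the class map is continuous, so a small continuum through $\wt w$ (for instance a small closed ball) has all its classes in the $\varepsilon$-ball around $\xi$. If that continuum, or a longer arc through $\wt w$, reaches classes on both sides of some leaf $L$ whose class is $\varepsilon$-close to $\xi$, then Lemma~\ref{lem:intermediate} produces a point $\wt w_1\in\eta^s([L])$ with $\dd^u(\wt w_1,\wt w)$ small. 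Lemma~\ref{rem:stable_continuum_orthogonally_bounded} then gives $\dd^u(\wt w_1,L)\le C$, and hence $\dd^u(\wt w,L)\le C+\varepsilon$.

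To obtain such an arc, take a $\dd^u$-transverse arc (an unstable leaf segment) of $\mu^u$-length $\varepsilon$ starting at $\wt w$ and going in either direction. Its endpoints have classes on both sides of nearby leaves unless $\xi$ is terminal. If $\xi$ is terminal, which is the case for completion points, I would instead use the density of leaves: choose a leaf $L$ with $\dd^u([L],\xi)<\varepsilon$. The set $\eta^s(\xi)$ lies within $\dd^u$-distance $C$ of any sequence of leaves converging to $\xi$; this is obtained by passing to the limit in Lemma~\ref{rem:stable_continuum_orthogonally_bounded} via (T4). I expect this step, handling completion classes carefully, to be the main obstacle.

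\emph{Consequence.} Let $\xi'=\xi^s(\wt w')$ and $\xi''=\xi^s(\wt w'')$. By the first part, choose leaves $L',L''$ that are $\varepsilon$-close to $\xi',\xi''$, with $\dd^u(\wt w',L')\le C+\varepsilon$ and $\dd^u(\wt w'',L'')\le C+\varepsilon$. Because $\dd^u$ between points of the leaves $L',L''$ equals the leaf-space distance $\dd^u([L'],[L''])$, the triangle inequality gives
\[
\dd^u(\wt w',\wt w'')\le \dd^u(\xi',\xi'')+2C+4\varepsilon,
\]
and, symmetrically,
\[
\dd^u(\xi',\xi'')\le \dd^u(\wt w',\wt w'')+2C+4\varepsilon .
\]
Letting $\varepsilon\to0$ yields both inequalities of the lemma.

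The unstable statements follow by exchanging the roles of $s$ and $u$ throughout, using $\dd^s$ in place of $\dd^u$.
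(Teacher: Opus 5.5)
\textbf{There is a genuine gap, in the approximation claim for completion classes.} Your derivation of the two-sided inequality from the approximation claim is correct. For a leaf class $\xi=[L_0]$ the claim is immediate by taking $L=L_0$ in Lemma~\ref{rem:stable_continuum_orthogonally_bounded}, so no arc is needed there. The missing case is $\xi\in\slea\setminus\mathcal L^s$, which is the only substantive part of the lemma, and neither of your two routes proves it.

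\emph{The (T4) route.} Property (T4) is upper semicontinuity: it places $\overline\eta^s(\xi')$ inside a neighbourhood of $\overline\eta^s(\xi)$ when $\xi'$ is close to $\xi$. Passing to the limit in $\eta^s([L_n])\subset (L_n)_C$ therefore controls only those points of $\eta^s(\xi)$ that are accumulated by the sets $\eta^s([L_n])$. Nothing forces an arbitrary $\wt w\in\eta^s(\xi)$ to be such a limit. You correctly noted that the reverse direction is what you need, and then used this one.

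\emph{The arc route.} Here $\xi^s(\cdot)$ is the class for the generalized foliation of $\tf$, not the leaf of $\wt A$. For a general $f\in\Homeo_A(S)$ a pseudo-leaf can have interior: a lift of a periodic open disk with compact closure in $\tS$ lies in a single stable pseudo-leaf. So an unstable segment of $\mu^u$-length $\varepsilon$ through $\wt w$ may stay entirely inside $\eta^s(\xi)$. Along a longer arc, Lemma~\ref{lem:intermediate} gives a point $\wt w_1$ whose $\dd^u$-distance to $\wt w$ is no longer controlled by $\varepsilon$.

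\textbf{How the paper handles it, and how your arc idea can be repaired.} The paper treats the completion case as an instance of the finite-distance estimate behind Lemma~\ref{rem:stable_continuum_orthogonally_bounded} (Militon's Lemma~6 and its proof). There, $\tf$ is $C_0$-close to $\wt A$ and $\wt A_*$ expands $\dd^u$ on $\slea$ by $\lambda^{-1}$, so a point too far from its class would see that distance blow up along its forward orbit. Your arc idea can also be made to work, but it needs two ingredients you do not supply.

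First, you need a dynamical a priori bound $\diam^u\eta^s(\xi)\leq\lambda\kappa$. It follows from $\dd^u(\tf z,\tf z')\geq\lambda^{-1}\dd^u(z,z')-2C_0$: two points more than $\lambda\kappa$ apart in $\dd^u$ have unbounded forward distance, so by (D3) they lie in different pseudo-leaves. Now parametrize by $\mu^u$ the whole unstable leaf $\ell$ of $\wt A$ through $\wt w$ (two prongs if it is singular), with $\ell(t_0)=\wt w$. By the bound, the parameters $t$ with $\ell(t)\in\eta^s(\xi)$ lie between extreme values $t_-\leq t_0\leq t_+$. Just beyond $t_\pm$, the classes along a short arc form a connected subset of the tree containing $\xi$ and another class. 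That subset therefore contains leaf classes arbitrarily close to $\xi$. Applying Lemma~\ref{rem:stable_continuum_orthogonally_bounded} at those points gives $\dd^u([L_{t_\pm}],\xi)\leq C$, where $L_t$ is the leaf of $\wt{\mathcal F}^s$ through $\ell(t)$.

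Second, you need convexity. The function $t\mapsto\dd^u([L_t],\xi)$ is the distance in the $\mathbb R$-tree $\slea$ from the fixed point $\xi$ to a point moving at unit speed along a geodesic line. It is therefore convex, so it is also at most $C$ at $t_0$. Without this step you only reach $C+\lambda\kappa$, not the constant $C$ in the statement.
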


\paragraph{\textbf{Induced actions on the completions.}}
The map $\wt A$ sends leaves of $\wt \F^s$ to leaves and scales the transverse measures by \eqref{eq:transverse_measures}; it therefore induces a map $\wt A_* : \slea \to \slea$ with
$$ \dd^u(\wt A_* \xi, \wt A_* \xi') = \lambda^{-1} \dd^u(\xi, \xi'), $$
defined first on $\mathcal{L}^s$ and extended to the completion by uniform continuity, and symmetrically a map $\wt A_* : \ulea \to \ulea$ scaling $\dd^s$ by $\lambda$. Deck transformations induce isometries $\gamma_*$ of both completions, and $(\gamma \wt A)_* = \gamma_* \wt A_*$. The isotopic lift $\wt f$ does not act on leaves, but it does act on the {pseudo-leaves}, and the two actions are \emph{the same}:

\begin{lemma}\label{lem:induced_actions}
	For every $\xi \in \slea \cup \ulea$,
	$$ \wt f\big(\overline \eta^i(\xi)\big) = \overline\eta^i\big(\wt A_*(\xi)\big), \qquad i = s, u;$$
	that is, $\wt f$ {permutes the pseudo-leaves and induces $\wt A_*$ on their classes}. In particular $\xi^s(\wt f(\wt w)) = \wt A_*(\xi^s(\wt w))$ for every $\wt w \in \tS$. Moreover, the extensions of $\wt f$ and $\wt A$ to $\overline S$ coincide on $\partial \tS$, and the ends of $\wt A_*(\xi)$ are the images of the ends of $\xi$ under this common boundary homeomorphism.
	The same statements hold for the pair $\wt g' := \gamma_0 \tf^n$, $\wt A' := \gamma_0 \wt A^n$, for any $n \geq 1$ and $\gamma_0 \in \Gamma$, with $\wt A'_* = (\gamma_0)_* \wt A_*^n$ scaling $\dd^u$ by $\lambda^{-n}$.
\end{lemma}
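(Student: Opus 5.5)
The identity $\wt f(\overline\eta^i(\xi))=\overline\eta^i(\wt A_*\xi)$ is essentially (D1 -- Set-to-set conjugacy) of Theorem~\ref{thm:militon_partition}, written with the induced map $\wt A_*$. In (D1) the symbol $\wt A(\xi)$ for $\xi\in\mathcal L^i$ is the image leaf. For a completion point it must mean the limit of the image classes. So the first step is to check that this is the same as $\wt A_*$, which is defined on $\mathcal L^i$ and extended by uniform continuity. On $\mathcal L^i$ there is nothing to prove. For a completion class $\xi$, take leaves $L_n\to\xi$ in $\dd^{i'}$. Then $\wt A(L_n)$ is Cauchy with limit $\wt A_*\xi$, because the scaling factor $\lambda^{\mp1}$ makes $\wt A_*$ Lipschitz. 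This is precisely how Militon's $\wt A$ acts on $\ilea$. If one prefers not to rely on that identification, use (T4 -- Continuity) on both sides. The sets $\overline\eta^i(L_n)$ eventually lie in any neighbourhood of $\overline\eta^i(\xi)$ when that set is nonempty in $\tS$. Applying the homeomorphism $\wt f$ of $\overline S$, the sets $\wt f(\overline\eta^i(L_n))=\overline\eta^i(\wt A L_n)$ lie in any neighbourhood of $\wt f(\overline\eta^i(\xi))$. Combined with the partition property (T1) and the continuity of the class map \eqref{eq:classmap}, this forces the class of points of $\wt f(\eta^i(\xi))$ to be $\wt A_*\xi$. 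When $\eta^i(\xi)=\varnothing$, the set $\overline\eta^i(\xi)$ is a single point of $\partial\tS$ by (T3), and the identity reduces to the boundary statement below.

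The formula $\xi^s(\wt f(\wt w))=\wt A_*(\xi^s(\wt w))$ then follows at once. We have $\wt w\in\eta^s(\xi^s(\wt w))$, so $\wt f(\wt w)\in\eta^s(\wt A_*\xi^s(\wt w))$, and the partition property (T1) identifies the class. The same argument works for unstable classes.

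For the boundary statement, $\wt f$ and $\wt A$ are lifts of isotopic maps obtained by lifting an isotopy. As recalled at the start of Section~\ref{sec:prelim}, they are therefore at bounded distance and extend to the same homeomorphism of $\partial\tS$. The ends of $\xi$ are $\overline{L(\xi)}\cap\partial\tS$, and $\wt A$ maps $\overline{L(\xi)}$ onto $\overline{L(\wt A_*\xi)}$. For completion points this uses a limit argument together with the continuity of the extension to $\overline S$. Hence the ends of $\wt A_*\xi$ are the images of the ends of $\xi$. Alternatively, intersect the identity $\wt f(\overline\eta^i(\xi))=\overline\eta^i(\wt A_*\xi)$ with $\partial\tS$ and apply (T3).

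For the pair $\wt g'=\gamma_0\wt f^n$, $\wt A'=\gamma_0\wt A^n$, iterate the identity $n$ times and then apply (D2 -- Equivariance). This gives $\wt g'(\overline\eta^i(\xi))=\gamma_0\overline\eta^i(\wt A_*^n\xi)=\overline\eta^i((\gamma_0)_*\wt A_*^n\xi)$. The scaling factor $\lambda^{-n}$ on $\dd^u$ follows because $\gamma_*$ is an isometry. The lifts $\wt g'$ and $\wt A'$ are again isotopic, and their boundary maps agree, being compositions of maps that agree there. The main obstacle is the first step: making sure that Militon's action on completion classes agrees with $\wt A_*$. I expect this to reduce to uniqueness of limits in the metric completion.
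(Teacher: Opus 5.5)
Your proposal is correct and follows the paper's proof. You read (D1) through the classes, with $\wt A$ on completion classes being the Lipschitz extension $\wt A_*$; you use the agreement of isotopic lifts on $\partial\tS$; you obtain the ends by intersecting the identity with $\partial\tS$ and applying (T3); and you handle $\wt g'=\gamma_0\tf^n$ by iterating and applying (D2). Your (T4)-based fallback is unnecessary, since (D1) is stated for every $\xi\in\ilea$; as written, it also would not by itself determine the class of every point of $\wt f(\eta^i(\xi))$, because (T4) only gives a one-sided containment.
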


\begin{proof}
	The first identity is exactly (D1 -- Set-to-set conjugacy) of Theorem \ref{thm:militon_partition}, {expressed in terms of their classes $\xi$}. Isotopic lifts agree on $\partial \tS$ (Section \ref{sec:prelim}); the statement about ends then follows from (T3 -- Ends):
	$$ \text{ends}(\wt A_* \xi) = \overline\eta^i(\wt A_* \xi) \cap \partial\tS = \wt f\big(\overline\eta^i(\xi)\big) \cap \partial \tS = \wt f(\text{ends}(\xi)) = \wt A(\text{ends}(\xi)).$$
	The statements for $\wt g', \wt A'$ follow by composing with (D2 -- Equivariance).
\end{proof}

{The next result transfers separation from the
foliation to its respective generalized foliation}.

\begin{proposition}\label{lem:persistent_separation}
	Let $M, L', L''$ be three pairwise distinct leaves of $\mathcal{L}^s$, with classes $\xi = [M], \xi' := [L'], \xi'' := [L'']$. Denote by $e_1, \dots, e_k \in \partial \tS$ the ends of $M$ ($k = 2$ if $M$ is regular), and by $I^M_1, \dots, I^M_k$ the connected components of $\partial \tS \setminus \{e_1, \dots, e_k\}$. By Lemma \ref{lem:distinct_ends}, for every leaf $N \neq M$ all the ends of $N$ lie in a single interval $I^M_{j(N)}$. Then the following are equivalent:
	\begin{enumerate}
		\item $M$ separates $\xi'$ from $\xi''$ in $\slea$ (Definition \ref{def:separation});
		\item $L'$ and $L''$ lie in different connected components of $\tS \setminus M$;
		\item $j(L') \neq j(L'')$;
		\item $\eta^s(\xi)$ separates the {pseudo-leaves:} $\eta^s(\xi')$ and $\eta^s(\xi'')$ lie in different connected components of $\tS \setminus \eta^s(\xi)$, and every continuum $K \subset \overline S$ intersecting both $\overline\eta^s(\xi')$ and $\overline\eta^s(\xi'')$ meets $\overline\eta^s(\xi)$.
	\end{enumerate}
	The symmetric statement holds within the unstable family.
\end{proposition}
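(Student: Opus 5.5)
We will prove the chain $(1)\Leftrightarrow(2)\Leftrightarrow(3)$ first, since these concern only the foliation $\wt\F^s$, and then attach $(4)$ using the properties of Theorem~\ref{thm:militon_partition}.

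\emph{Equivalences $(1)\Leftrightarrow(2)\Leftrightarrow(3)$.} Since $L',L''\neq M$ are leaves, $\dd^u(\xi',M)>0$ and $\dd^u(\xi'',M)>0$. By Lemma~\ref{lem:sides}, $\mathrm{side}_M(\xi')$ is the component of $\tS\setminus M$ containing leaves close to $\xi'$. Because $L'$ itself is such a leaf, it is the component containing $L'$; this gives $(1)\Leftrightarrow(2)$. For $(2)\Leftrightarrow(3)$ we use the standard structure of $M$. Its $k$ branches (or two rays) are quasi-geodesics, so $\overline M$ cuts the closed disk $\overline S$ into $k$ regions. The closure of the $j$-th component $U_j$ of $\tS\setminus M$ meets $\partial\tS$ exactly in $\overline{I^M_j}$. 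A leaf $N\subset U_j$ has its ends in $\overline{U_j}\cap\partial\tS$, and by Lemma~\ref{lem:distinct_ends} these ends avoid $\{e_1,\dots,e_k\}$. Hence $N\subset U_j$ if and only if $j(N)=j$.

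\emph{Implication $(3)\Rightarrow(4)$.} We would argue in $\overline S$. By (T3), $\overline\eta^s(\xi)\cap\partial\tS=\{e_1,\dots,e_k\}$. By (T2), $\overline S\setminus\overline\eta^s(\xi)$ has exactly $k$ components, each open in the disk $\overline S$, and their traces on the boundary circle are unions of the arcs $I^M_j$. The main obstacle is to show that distinct arcs lie in distinct components; we will handle it with a counting argument. Each arc $I^M_j$ is connected and misses $\overline\eta^s(\xi)$, so it lies in a single component. Conversely, each component $V$ meets $\partial\tS$. Otherwise $V$ would be a bounded open set with $\partial V\subset\eta^s(\xi)$. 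By (D3), $\eta^s(\xi)$ is a stable set, and then $V$ would have to lie in $\eta^s(\xi)$ by a standard filling/uniform-boundedness argument: points in a bounded complementary region of a stable set have bounded forward orbits relative to it. If this fails, we fall back on the Jordan–Schoenflies decomposition of the disk together with (T2). Thus $k$ components absorb $k$ arcs with each component receiving at least one arc, so the correspondence is a bijection.

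\emph{Separation in $(4)$.} By (T3), $\overline\eta^s(\xi')$ is a continuum disjoint from $\overline\eta^s(\xi)$ (the partition (T1) gives disjointness in $\tS$, and the end sets differ at infinity). It contains the ends of $L'$, which lie in $I^M_{j(L')}$, so it sits in the component $V_{j(L')}$. Likewise $\overline\eta^s(\xi'')$ sits in $V_{j(L'')}\neq V_{j(L')}$. Any continuum $K\subset\overline S$ meeting both therefore meets two distinct components of $\overline S\setminus\overline\eta^s(\xi)$, so it must meet $\overline\eta^s(\xi)$. Intersecting with $\tS$ gives the separation of $\eta^s(\xi')$ and $\eta^s(\xi'')$ inside $\tS\setminus\eta^s(\xi)$. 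The continuum statement applied to arcs in $\tS$ already forces distinct components there.

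\emph{Implication $(4)\Rightarrow(3)$.} Suppose $j(L')=j(L'')=j$. Pick an end $a$ of $L'$ and an end $b$ of $L''$, both in $I^M_j$. Let $K$ be $\overline\eta^s(\xi')\cup J\cup\overline\eta^s(\xi'')$, where $J$ is the closed boundary subarc between $a$ and $b$ inside $I^M_j$. This is a continuum meeting both pseudo-leaves and missing $\overline\eta^s(\xi)$, whose boundary trace is $\{e_i\}$. This contradicts $(4)$. The unstable case is symmetric.
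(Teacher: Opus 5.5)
\paragraph{Verdict.} Your treatment of $(1)\Leftrightarrow(2)\Leftrightarrow(3)$ is the paper's. Your proof of $(4)\Rightarrow(3)$, which joins an end of $L'$ to an end of $L''$ by a subarc of $I^M_j$, is correct and more self-contained than the paper's, which relies on the labelling of components. There is, however, a genuine gap in $(3)\Rightarrow(4)$, at exactly the step you call the main obstacle.

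\paragraph{The gap.} Your counting argument needs every component $V$ of $\overline S\setminus\overline\eta^s(\xi)$ to meet $\partial\tS$. The justification ``otherwise $V$ would be a bounded open set'' does not follow. If $V\cap\partial\tS=\varnothing$, the closure of $V$ in $\overline S$ may still contain some of the ends $e_i$, because these lie in $\overline\eta^s(\xi)$, not in $V$. Nothing you have used excludes an unbounded region pinched between two parts of $\eta^s(\xi)$ that run out to the same end. For such a $V$ the frontier is not compact, there is no uniform bound on its forward iterates, and the filling argument gives nothing. The fallback ``Jordan--Schoenflies plus (T2)'' does not repair this. $\overline\eta^s(\xi)$ is an arbitrary continuum rather than a tree of arcs, and (T2) counts components without locating the boundary arcs. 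The paper itself only asserts the arc--component bijection ``as before'', by analogy with $\overline M$, so your attempt to justify it is where the argument breaks.

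\paragraph{The missing idea.} What is needed is continuity of the class map, used as in Lemma~\ref{lem:intermediate}.
\begin{itemize}
\item On any connected $Y\subset\tS\setminus\eta^s(\xi)$, the function $z\mapsto\mathrm{side}_M(\xi^s(z))$ is defined by Lemma~\ref{lem:sides}. It is locally constant: if $z'$ is close to $z$ but on the other side, Lemma~\ref{lem:sides} gives $\dd^u(\xi^s(z),\xi^s(z'))\geq\dd^u(\xi^s(z),\xi)>0$. Hence it is constant on $Y$.
\item Let $W$ be the component of $\overline S\setminus\overline\eta^s(\xi)$ containing the continuum $\overline\eta^s(\xi')$. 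Since $W$ is a connected open subset of the closed disk and boundary points have half-disk neighbourhoods, the trace $W\cap\tS$ is connected. It contains $\eta^s(\xi')\neq\varnothing$ by (T1).
\item If $W$ also contained $\overline\eta^s(\xi'')$, the side function would be constant on $W\cap\tS$ and equal to both $\mathrm{side}_M(\xi')$ and $\mathrm{side}_M(\xi'')$. These differ by (1), which you already know is equivalent to (3).
\end{itemize}
This gives both halves of (4) at once, with no counting and no use of (T2). Applied to leaves $N_j$ with $j(N_j)=j$, and combined with (T2), it also recovers the bijection between arcs and components that you were after.
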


\begin{proof}
	(1) $\Leftrightarrow$ (2). For a leaf class, $\mathrm{side}_M([L'])$ is computed with the constant representing sequence, so it is the component of $\tS \setminus M$ containing $L'$.

	(2) $\Leftrightarrow$ (3). The closure $\overline M \subset \overline S$ is a continuum meeting $\partial \tS$ exactly at the ends of $M$; its complement $\overline S \setminus \overline M$ has exactly $k$ components, each one the union of a component of $\tS \setminus M$ with the corresponding open boundary interval, and this assignment $V \mapsto I^M_{j(V)}$ is a bijection between the components of $\tS \setminus M$ and the intervals. A leaf $N \neq M$ is a connected subset of $\tS \setminus M$, hence lies in one component $V$, and its ends lie in $\overline{I^M_{j(V)}}$; by Lemma \ref{lem:distinct_ends} the ends of $N$ avoid the ends of $M$, so they lie in the open interval $I^M_{j(V)}$ and $j(N) = j(V)$. Thus $L', L''$ lie in different components if and only if $j(L') \neq j(L'')$.

	(3) $\Leftrightarrow$ (4). By (T2 -- Complement) and (T3 -- Ends), $\overline\eta^s(\xi)$ is a closed connected subset of $\overline S$ meeting $\partial \tS$ exactly at the ends of $M$, and $\overline S \setminus \overline\eta^s(\xi)$ has exactly $k$ components; as before, each component contains exactly one interval $I^M_j$, and the assignment is a bijection. The continuum $\overline\eta^s(\xi')$ is disjoint from $\overline\eta^s(\xi)$, and by (T3 -- Ends) together with Lemma \ref{lem:distinct_ends} its closure meets $\partial \tS$ exactly at the ends of $L'$, which avoid the ends of $M$; hence $\overline\eta^s(\xi')$ lies in the (closure of the) component of $\overline S \setminus \overline\eta^s(\xi)$ whose interval contains the ends of $L'$, namely the one labelled $j(L')$, and similarly for $\xi''$. Therefore the {pseudo-leaves} lie in different components if and only if $j(L') \neq j(L'')$. For the continuum statement, suppose $K \subset \overline S$ is a continuum meeting $\overline\eta^s(\xi')$ and $\overline\eta^s(\xi'')$ and disjoint from $\overline\eta^s(\xi)$. Then
	$$K' := K \cup \overline\eta^s(\xi') \cup \overline\eta^s(\xi'')$$
	is a continuum (the three sets are continua and $K$ meets the other two) disjoint from $\overline\eta^s(\xi)$: the {pseudo-leaves} are pairwise disjoint in $\tS$ by (T1 -- Partition), and their boundary parts are disjoint by (T3 -- Ends) and Lemma \ref{lem:distinct_ends}. Hence $K'$ lies in a single component of $\overline S \setminus \overline\eta^s(\xi)$, so $j(L') = j(L'')$, contradicting (3). Conversely, if (4) holds then the {pseudo-leaves} lie in different components of $\overline S \setminus \overline\eta^s(\xi)$, whose labels are $j(L')$ and $j(L'')$ by the previous paragraph; hence $j(L') \neq j(L'')$, which is (3).
\end{proof}

{Using the dual tree structure and Proposition \ref{lem:persistent_separation}, we obtain the following result.}

\begin{lemma}\label{lem:class_sides}
Let $M$ be a stable leaf with $k$ ends. The preimage under $\xi^s$
of each component of $\slea\setminus\{[M]\}$ is precisely one
component of $\tS\setminus\eta^s([M])$. Its compactified side is
labelled by the corresponding boundary interval of $M$.
The analogous statement holds in the unstable family.
\end{lemma}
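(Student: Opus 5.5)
We work with the continuous class map $\xi^s:\tS\to\slea$ from \eqref{eq:classmap}. The space $\slea\setminus\{[M]\}$ is the complement of a point in an $\mathbb R$-tree. Since $M$ has $k$ ends, the point $[M]$ has exactly $k$ directions, and each component is the set of classes $\xi\neq[M]$ whose segment $[[M],\xi]$ starts in a fixed direction. For a class $\xi$ with $\dd^u(\xi,[M])>0$, Lemma \ref{lem:sides} gives a side $\mathrm{side}_M(\xi)$ among the $k$ components of $\tS\setminus M$. These components correspond bijectively to the intervals $I^M_1,\dots,I^M_k$ by the proof of Proposition \ref{lem:persistent_separation}. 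Let $\mathcal C_j\subset\slea\setminus\{[M]\}$ be the set of classes with side labelled $j$.

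The plan has three steps.

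\emph{Step 1: each $\mathcal C_j$ is a component of $\slea\setminus\{[M]\}$.} The side function is locally constant: if two classes $\xi,\xi'$ are near each other, the estimate of Lemma \ref{lem:sides} forces $\dd^u(\xi,\xi')\geq\dd^u(\xi,[M])+\dd^u([M],\xi')$ when their sides differ. Hence each $\mathcal C_j$ is open and closed in the complement. Each $\mathcal C_j$ is also connected. For $\xi\in\mathcal C_j$, the tree geodesic from $\xi$ to $[M]$ minus its endpoint $[M]$ is connected, and side is locally constant along it. So this geodesic lies in $\mathcal C_j$, and near $[M]$ it enters the direction labelled $j$. All such geodesics therefore meet, and $\mathcal C_j$ is path connected. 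Each $\mathcal C_j$ is nonempty, because $M$ has leaves nearby on every side.

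\emph{Step 2: $(\xi^s)^{-1}(\mathcal C_j)$ is one component of $\tS\setminus\eta^s([M])$.} By (T1), this preimage equals the union of the pseudo-leaves $\eta^s(\xi)$ with $\xi\in\mathcal C_j$, and these sets partition $\tS\setminus\eta^s([M])$ as $j$ varies. Each preimage is open and closed in $\tS\setminus\eta^s([M])$, by continuity of $\xi^s$ and Step 1. So each component of $\tS\setminus\eta^s([M])$ lies in a single preimage.

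It remains to show that each preimage is connected. By Proposition \ref{lem:persistent_separation}(4), for a leaf $L'$ with label $j$ the pseudo-leaf $\overline\eta^s([L'])$ lies in the closure of the component $V_j$ of $\overline S\setminus\overline\eta^s([M])$ carrying $I^M_j$. Since $\eta^s([L'])$ is disjoint from $\eta^s([M])$, in fact $\eta^s([L'])\subset V_j\cap\tS$. Conversely, if $\wt z\in V_j\cap\tS$ had class in some $\mathcal C_{j'}$ with $j'\neq j$, take a leaf $L''$ of label $j'$ approximating $\xi^s(\wt z)$, using Lemma \ref{lem:point_class_distance} and (T4). Then $\overline\eta^s([L''])$ lies in a small neighbourhood of $\overline\eta^s(\xi^s(\wt z))$ while also lying in $V_{j'}$, which contradicts $\wt z\in V_j$ once the neighbourhood is small. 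Also $V_j\cap\tS$ is connected, since $V_j$ is a connected open set meeting the boundary only in the arc $I^M_j$. Removing an arc of the boundary circle from a connected open subset of the closed disk keeps it connected. Hence $(\xi^s)^{-1}(\mathcal C_j)=V_j\cap\tS$, which is a component labelled by $I^M_j$.

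\emph{Main obstacle.} The hardest part is the converse inclusion in Step 2 for completion classes that are not leaves, in particular those converging to boundary points. There (T4) gives only a neighbourhood in $\overline S$. I would use instead that classes in $\mathcal C_j$ are limits of leaf classes in $\mathcal C_j$, and that $V_{j'}$ is open, to get the contradiction. The unstable case is symmetric.
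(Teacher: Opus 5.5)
Your route is the one the paper intends. The paper gives no proof beyond saying the lemma follows from the dual tree structure and Proposition~\ref{lem:persistent_separation}. Your ingredients are exactly these: local constancy of $\mathrm{side}_M$ makes each $\mathcal C_j$ clopen in $\slea\setminus\{[M]\}$; continuity of the class map makes the preimages clopen in $\tS\setminus\eta^s([M])$; and the bijection between the $k$ components $V_j$ of $\overline S\setminus\overline\eta^s([M])$ and the intervals $I^M_j$ supplies the labels. Your inclusion $\eta^s([L'])\subset V_j\cap\tS$ for leaves of label $j$, and your proof that $V_j\cap\tS$ is connected, are fine.

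The step that fails as written is the converse inclusion via (T4). To get a contradiction you need a neighbourhood of $\overline\eta^s(\xi)$, with $\xi=\xi^s(\wt z)$, that is contained in $V_j$. This requires $\overline\eta^s(\xi)\cap\overline\eta^s([M])=\varnothing$. Inside $\tS$ this is (T1). On $\partial\tS$, however, Lemma~\ref{lem:distinct_ends} only covers leaf classes. For a completion class you would have to prove that its boundary point $L(\xi)$ is not an end of $M$, and you do not.

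Your proposed remedy does not supply this. Leaf classes of label $j'$ accumulating on $\xi$, whose pseudo-leaves lie in the open set $V_{j'}$, would only help if $\wt z$ were a limit of points of those pseudo-leaves. That is lower semicontinuity of $\overline\eta^s$, which (T4) does not give. It also fails whenever $\eta^s(\xi)$ has interior, which is allowed here: the lemma is used for arbitrary $f\in\Homeo_A(S)$, cf.\ Remark~\ref{rem:tracing_without_expansivity}.

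The step is unnecessary, because your own ingredients close it. The set $V_j\cap\tS$ is connected, avoids $\eta^s([M])$, and meets the clopen set $(\xi^s)^{-1}(\mathcal C_j)$ in $\eta^s([L'])\neq\varnothing$. Hence $V_j\cap\tS\subset(\xi^s)^{-1}(\mathcal C_j)$. Both $\{V_j\cap\tS\}_j$ and $\{(\xi^s)^{-1}(\mathcal C_j)\}_j$ partition $\tS\setminus\eta^s([M])$, so they coincide index by index. Equivalently, join $\wt z$ to a point of $\eta^s([L'])$ by an arc in $V_j\cap\tS$ and apply Lemma~\ref{lem:intermediate}. Completion classes therefore need no separate treatment, and your ``main obstacle'' disappears.

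The same reasoning also justifies ``the direction labelled $j$'' in Step~1 without quoting the valence of $[M]$. Each direction at $[M]$ is connected, so it lies in a single $\mathcal C_j$. Each direction also contains leaf classes, so its preimage is nonempty and clopen. Two directions inside one $\mathcal C_j$ would then split the connected set $V_j\cap\tS$, which is impossible.
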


\begin{corollary}\label{cor:two_classes_unbounded}
	Let $\wt K \subset \tS$ be a continuum meeting two distinct stable {pseudo-leaves} $\eta^s(\xi') \neq \eta^s(\xi'')$. Then $\sup_{j \geq 0}\diam(\tf^j(\wt K)) = \infty$; quantitatively, with $e := \dd^u(\xi', \xi'') > 0$,
	$$ \diam\big(\tf^j (\wt K)\big) \geq \lambda^{-j} e - 2C \xrightarrow[j \to +\infty]{} \infty.$$
	Symmetrically, a continuum meeting two distinct unstable {pseudo-leaves} satisfies
\[
 \diam(\tf^{-j}(\wt K)) \geq \lambda^{-j} e' - 2C \longrightarrow\infty
 \quad (j\longrightarrow+\infty),
\]
where $e'$ is the $\dd^s$-distance between their {classes}.
\end{corollary}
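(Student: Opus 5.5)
Pick two points $\wt z'\in \wt K\cap\eta^s(\xi')$ and $\wt z''\in \wt K\cap\eta^s(\xi'')$ and follow their images under $\tf^j$; the estimate then follows from the lower bound in Lemma~\ref{lem:point_class_distance} together with the expansion of $\wt A_*$ on stable classes.

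First, since the pseudo-leaves are distinct, the classes are distinct. By (T1 -- Partition) the map $\xi\mapsto\eta^s(\xi)$ is injective on classes with nonempty pseudo-leaf, so $\xi'\neq\xi''$ in the metric space $\slea$. Hence $e=\dd^u(\xi',\xi'')>0$.

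Next, by (D3 -- Stability) and the class map \eqref{eq:classmap}, we have $\xi^s(\wt z')=\xi'$ and $\xi^s(\wt z'')=\xi''$. Lemma~\ref{lem:induced_actions}, iterated, gives $\xi^s(\tf^j\wt z')=\wt A_*^j\xi'$ and $\xi^s(\tf^j\wt z'')=\wt A_*^j\xi''$. Since $\wt A_*$ multiplies $\dd^u$ on $\slea$ by $\lambda^{-1}$, the two image classes satisfy
\[
\dd^u\big(\xi^s(\tf^j\wt z'),\xi^s(\tf^j\wt z'')\big)=\lambda^{-j}e .
\]

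Now apply the left inequality of Lemma~\ref{lem:point_class_distance} to the points $\tf^j\wt z',\tf^j\wt z''\in\tf^j(\wt K)$:
\[
\dd^u(\tf^j\wt z',\tf^j\wt z'')\ \ge\ \lambda^{-j}e-2C .
\]
With the metric of Remark~\ref{conv:flatmetric}, $\wt\dd=\dd^s+\dd^u\ge\dd^u$, so
\[
\diam(\tf^j\wt K)\ \ge\ \lambda^{-j}e-2C .
\]
The right side tends to $\infty$ because $\lambda<1$, and this also gives the claim on the supremum over $j\ge0$.

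For the unstable statement the argument is symmetric: use $\tf^{-j}$, the action of $\wt A_*^{-1}$ on $\ulea$, and the unstable version of Lemma~\ref{lem:point_class_distance} with $\dd^s$. Indeed, $\wt A_*$ scales $\dd^s$ on $\ulea$ by $\lambda$, so $\wt A_*^{-j}$ scales it by $\lambda^{-j}$.

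The only delicate point is that Lemma~\ref{lem:point_class_distance} must hold for completion classes, not only for leaf classes. Its statement already covers general $\xi\in\slea$, since it approximates such a class by leaves. So I do not expect a genuine obstacle here; the one thing to check is that the constant $C$ is the same one used in the corollary, which it is by construction.
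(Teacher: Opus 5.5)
Your proof is correct and is the paper's argument: you pick $\wt z'\in\wt K\cap\eta^s(\xi')$ and $\wt z''\in\wt K\cap\eta^s(\xi'')$, use Lemma~\ref{lem:induced_actions} to get class distance $\lambda^{-j}e$ at time $j$, and conclude with the lower bound of Lemma~\ref{lem:point_class_distance} together with $\wt\dd\geq\dd^u$. One small remark: $\xi'\neq\xi''$ holds simply because $\xi\mapsto\eta^s(\xi)$ is a function, so the injectivity you cite from (T1) is the converse implication and is not needed.
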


\begin{proof}
	Take $\wt z' \in \wt K \cap \eta^s(\xi')$ and $\wt z'' \in \wt K \cap \eta^s(\xi'')$. By Lemma \ref{lem:induced_actions}, $\xi^s(\tf^j \wt z') = \wt A_*^j \xi'$ and $\xi^s(\tf^j \wt z'') = \wt A_*^j \xi''$, at distance $\dd^u(\wt A_*^j \xi', \wt A_*^j\xi'') = \lambda^{-j} e$. By Lemma \ref{lem:point_class_distance},
	\[ \diam(\tf^j \wt K) \geq \dd^u(\tf^j \wt z', \tf^j\wt z'') \geq \lambda^{-j} e - 2C. \qedhere\]
\end{proof}

Let $C_0 := \sup_{\wt z} \max \big( \dd(\tf(\wt z), \wt A(\wt z)), \dd(\tf^{-1}(\wt z), \wt A^{-1}(\wt z)) \big) < \infty$ (\cite[Claim 5]{militon24foliations}), and set
\begin{equation}\label{eq:kappa}
	\kappa := \frac{2C_0}{1-\lambda}.
\end{equation}

\begin{lemma}[{\cite[Claim 7]{militon24foliations}}]\label{lem:stable_distance_future_control}
	For every $\wt z, \wt z' \in \tS$ and every $j \geq 0$,
	$$\dd^s(\wt f^j (\wt z),\wt f^j (\wt z')) \leq \kappa + \dd^s (\wt z, \wt z'), \qquad \dd^u(\wt f^{-j} (\wt z),\wt f^{-j} (\wt z')) \leq \kappa + \dd^u (\wt z, \wt z').$$
\end{lemma}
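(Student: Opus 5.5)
We want $\dd^s(\tf^j\wt z,\tf^j\wt z')\le \kappa+\dd^s(\wt z,\wt z')$ for all $j\ge 0$; the unstable statement follows by the symmetric argument with $\tf^{-1}$, $\wt A^{-1}$ and $\dd^u$.

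The approach is to compare the orbit of $\tf$ with that of $\wt A$ one step at a time. The map $\wt A$ contracts $\dd^s$ by exactly $\lambda$, and $\tf$ stays within $C_0$ of $\wt A$.

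First, note that for any $\wt w$ we have $\dd^s(\tf\wt w,\wt A\wt w)\le \wt\dd(\tf\wt w,\wt A\wt w)\le C_0$, since $\wt\dd=\dd^s+\dd^u$ by Remark~\ref{conv:flatmetric}. Then set $a_j:=\dd^s(\tf^j\wt z,\tf^j\wt z')$. By the triangle inequality for the pseudodistance and the contraction $\dd^s(\wt A x,\wt A y)=\lambda\,\dd^s(x,y)$,
\[
a_{j+1}\le \dd^s(\tf\tf^j\wt z,\wt A\tf^j\wt z)+\dd^s(\wt A\tf^j\wt z,\wt A\tf^j\wt z')+\dd^s(\wt A\tf^j\wt z',\tf\tf^j\wt z')\le \lambda a_j+2C_0 .
\]
Iterating this recursion gives
\[
a_j\le \lambda^j a_0+2C_0\sum_{i<j}\lambda^i\le a_0+\frac{2C_0}{1-\lambda}=a_0+\kappa ,
\]
which is the claim.

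The only point needing care is the contraction identity at the pseudodistance level. Here $\dd^s$ is an infimum of $\mu^s$ over arcs, and $\wt A$ maps arcs bijectively to arcs with $\mu^s(\wt A\alpha)=\lambda\mu^s(\alpha)$, so the identity holds exactly. The triangle inequality is immediate from concatenating arcs. For the unstable case we use $\dd^u(\wt A^{-1}x,\wt A^{-1}y)=\lambda\,\dd^u(x,y)$ together with $\wt\dd(\tf^{-1}\wt w,\wt A^{-1}\wt w)\le C_0$, which is part of the definition of $C_0$. I do not expect a real obstacle; the statement is a direct geometric-series estimate.
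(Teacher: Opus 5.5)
Your proposal is correct and matches the paper's proof: the same one-step triangle-inequality bound $a_{j+1}\le 2C_0+\lambda a_j$, obtained by comparing $\tf$ with $\wt A$, followed by induction (which you write out as a geometric series), with the unstable case handled symmetrically. Your remark that $\dd^s\le\wt\dd$ under the convention $\wt\dd=\dd^s+\dd^u$ is what justifies using $C_0$ as the one-step error, and the paper uses this implicitly.
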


\begin{proof}
	By the triangle inequality, $$\dd^s(\wt f^{j+1} (\wt z),\wt f^{j+1} (\wt z')) \leq 2 C_0 + \dd^s(\wt A \wt f^j (\wt z),\wt A \wt f^j (\wt z')) = 2 C_0 + \lambda \dd^s(\wt f^j (\wt z), \wt f^j (\wt z')).$$
	Since $0 < \lambda < 1$, an inductive argument finishes the proof; the second inequality is symmetric.
\end{proof}

\begin{definition}[Stable and unstable continua]\label{def:stable_continua}
A continuum in $\widetilde S$ is \emph{stable} if it is contained
in one stable {pseudo-leaf}, and \emph{unstable} if it is contained in one
unstable {pseudo-leaf}.
\end{definition}

\begin{lemma}\label{lem:stable_continua_equiv}
	A continuum $\wt K\subset\tS$ is a stable continuum if and only if
\[
 \sup_{j\geq0}\diam(\tf^j\wt K)<\infty.
\]
In that case the supremum is at most $\kappa+\diam^s(\wt K)+2C$.
The symmetric statement holds for unstable continua, using $j\leq0$.
\end{lemma}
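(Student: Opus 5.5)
We prove the two implications separately; the unstable version then follows by replacing $\tf$ with $\tf^{-1}$, which exchanges the roles of $\dd^s,\dd^u$ and of the two families of pseudo-leaves.

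\emph{Stable continua have bounded future diameters.} Let $\wt K\subset\eta^s(\xi)$ be a stable continuum and take $\wt z,\wt z'\in\wt K$. By Lemma~\ref{lem:stable_distance_future_control}, for every $j\ge0$,
\[
\dd^s(\tf^j\wt z,\tf^j\wt z')\le\kappa+\dd^s(\wt z,\wt z')\le \kappa+\diam^s(\wt K).
\]
For the unstable component we use the dynamics of the classes. By Lemma~\ref{lem:induced_actions} (that is, (D1)), $\tf^j\wt K\subset\eta^s(\wt A_*^j\xi)$. Corollary~\ref{cor:stable_cont_orth_bounded} then gives
\[
\diam^u(\tf^j\wt K)\le \diam^u\eta^s(\wt A_*^j\xi)\le 2C .
\]
Since $\wt\dd=\dd^s+\dd^u$, for all $j\ge0$ we get
\[
\diam(\tf^j\wt K)\le\diam^s(\tf^j\wt K)+\diam^u(\tf^j\wt K)\le \kappa+\diam^s(\wt K)+2C,
\]
which is the stated bound.

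\emph{Bounded future diameters force a stable continuum.} Suppose $\sup_{j\ge0}\diam(\tf^j\wt K)=R<\infty$. We argue by contradiction and assume $\wt K$ meets two distinct stable pseudo-leaves $\eta^s(\xi')\neq\eta^s(\xi'')$. Corollary~\ref{cor:two_classes_unbounded} gives
\[
\diam(\tf^j\wt K)\ge\lambda^{-j}e-2C\to\infty,\qquad e=\dd^u(\xi',\xi'')>0,
\]
which contradicts the bound $R$. By the partition property (T1), every point of $\wt K$ lies in exactly one stable pseudo-leaf. Since all points of $\wt K$ lie in the same one, $\wt K\subset\eta^s(\xi)$ for a single class $\xi$, so $\wt K$ is stable.

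We expect no real obstacle. The only point to check is that Corollary~\ref{cor:stable_cont_orth_bounded} applies to every class $\wt A_*^j\xi$, including completion classes that are not leaves. It does: the corollary is stated uniformly over all $\xi\in\slea$, so no case distinction is needed.
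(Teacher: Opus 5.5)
Your proof is correct and follows the paper's argument: the forward bound combines Lemma~\ref{lem:stable_distance_future_control} for the stable diameter with (D1) and Corollary~\ref{cor:stable_cont_orth_bounded} for the unstable diameter, exactly as the paper does. For the converse, the paper cites (D3 -- Stability) directly, whereas you invoke the quantitative Corollary~\ref{cor:two_classes_unbounded}; it is proved earlier without using this lemma, so either citation works.
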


\begin{proof}
If $\wt K\subset\eta^s(\xi)$, equivariance and
Corollary~\ref{cor:stable_cont_orth_bounded} bound the unstable
diameter of every iterate by $2C$.
Lemma~\ref{lem:stable_distance_future_control} bounds the stable
diameter at nonnegative times by $\kappa+\diam^s(\wt K)$.
Adding the two estimates gives the bound. Conversely, two points
{with different stable classes} have unbounded forward distance by
(D3 - Stability), which forces the forward dynamical diameter to be infinite.
\end{proof}

\begin{corollary}\label{cor:dynamically_bounded_continua}
	A continuum $\wt K \subset \wt S$ is dynamically bounded if and only if there exists $\wt z \in \wt S$ such that $\wt K \subset W^s(\wt z) \cap W^{u}(\wt z)$. In that case $\diam(\wt K) \leq M_0 := 4C$.
\end{corollary}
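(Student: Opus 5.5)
The plan is to combine Lemma~\ref{lem:stable_continua_equiv} with its unstable counterpart, and then bound the diameter using Corollary~\ref{cor:stable_cont_orth_bounded} in both directions.

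\emph{Forward implication.} Suppose $\mathscr D_{\tf}(\wt K)<\infty$. Then both $\sup_{j\ge0}\diam(\tf^j\wt K)$ and $\sup_{j\le0}\diam(\tf^j\wt K)$ are finite. By Lemma~\ref{lem:stable_continua_equiv}, $\wt K$ is contained in a single stable pseudo-leaf $\eta^s(\xi)$. By the symmetric statement, it is also contained in a single unstable pseudo-leaf $\eta^u(\xi')$. Pick any $\wt z\in\wt K$. By (T1 -- Partition) and (D3 -- Stability), $\eta^s(\xi)=W^s(\wt z)$ and $\eta^u(\xi')=W^u(\wt z)$, so $\wt K\subset W^s(\wt z)\cap W^u(\wt z)$.

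\emph{Reverse implication.} Suppose $\wt K\subset W^s(\wt z)\cap W^u(\wt z)$. By (D3) these sets are pseudo-leaves, so $\wt K$ is simultaneously a stable and an unstable continuum. Lemma~\ref{lem:stable_continua_equiv} then bounds its forward dynamical diameter, and the symmetric version bounds its backward one. Hence $\mathscr D_{\tf}(\wt K)<\infty$.

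\emph{Diameter bound.} Since $\wt K\subset\eta^s(\xi)$, Corollary~\ref{cor:stable_cont_orth_bounded} gives $\diam^u(\wt K)\le 2C$. Since $\wt K\subset\eta^u(\xi')$, the same corollary gives $\diam^s(\wt K)\le 2C$. Because $\wt\dd=\dd^s+\dd^u$ (Remark~\ref{conv:flatmetric}), we have $\diam(\wt K)\le\diam^s(\wt K)+\diam^u(\wt K)\le 4C$. This is exactly $M_0$.

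The only delicate point is the converse direction inside Lemma~\ref{lem:stable_continua_equiv}: the claim that a continuum with bounded forward orbit lies in a single stable pseudo-leaf. This is already proved there, via Corollary~\ref{cor:two_classes_unbounded}, which applies because different classes are at positive $\dd^u$-distance in the metric space $\slea$. So no step should be a real obstacle. One also needs a uniform constant $C$ serving for both the stable and the unstable families; taking the maximum of the two constants handles this.
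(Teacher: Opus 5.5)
Your proof is correct and follows the paper's argument. The paper also gets the forward implication from the fact that a continuum meeting two distinct stable (resp.\ unstable) pseudo-leaves has unbounded future (resp.\ past) iterates, via (D3) and Corollary~\ref{cor:two_classes_unbounded}; it gets the converse from Lemma~\ref{lem:stable_continua_equiv} and the bound $4C$ from Corollary~\ref{cor:stable_cont_orth_bounded}, and routing the forward direction through the converse half of Lemma~\ref{lem:stable_continua_equiv} just repackages that same step.
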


\begin{proof}
	If $\wt K$ intersects two different stable (resp. unstable) {pseudo-leaves}, the diameters of its future (resp. past) iterates are unbounded, by (D3 -- Stability) and Corollary \ref{cor:two_classes_unbounded}. Conversely, if $\wt K \subset W^s(\wt z) \cap W^{u}(\wt z)$, then $\wt K$ is simultaneously a stable and an unstable continuum, so by Lemma \ref{lem:stable_continua_equiv} both its forward and backward iterates have bounded diameter. The diameter bound follows from Corollary \ref{cor:stable_cont_orth_bounded}: $\diam(\wt K) \leq \diam^s(\wt K) + \diam^u(\wt K) \leq 2C + 2C$.
\end{proof}

\begin{corollary}\label{cor:hausdorff_limit_stable}
	Let $\{\wt K_n\}_{n \in \Z ^+}$ be a sequence of stable continua with $\sup_n \sup_{j\geq0}\diam(\tf^j (\wt K_n)) \leq M < \infty$, and suppose $\wt K_n \to \wt K$ in the Hausdorff topology. Then $\wt K$ is a stable continuum.
\end{corollary}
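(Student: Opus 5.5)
The plan is to reduce the statement to a single stable pseudo-leaf and then pass to the limit using the equicontinuity provided by the uniform bound $M$. By Lemma~\ref{lem:stable_continua_equiv}, a continuum is stable exactly when its forward dynamical diameter is finite. It therefore suffices to show that $\sup_{j\geq 0}\diam(\tf^j\wt K)\leq M$.

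For fixed $j\geq 0$, the map $\tf^j$ is a homeomorphism of $\tS$, hence uniformly continuous on a compact neighbourhood of $\wt K$. The continua $\wt K_n$ eventually lie in such a neighbourhood, since they converge in the Hausdorff topology to the compact set $\wt K$. Consequently $\tf^j(\wt K_n)\to\tf^j(\wt K)$ in the Hausdorff topology. Diameter is continuous for Hausdorff convergence of compact sets, so
\[
\diam(\tf^j\wt K)=\lim_n\diam(\tf^j\wt K_n)\leq M.
\]
Note also that $\wt K$ is a continuum, because Hausdorff limits of continua are continua. Taking the supremum over $j\geq0$ gives $\mathscr D^+_{\tf}(\wt K)\leq M<\infty$. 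Lemma~\ref{lem:stable_continua_equiv} then shows that $\wt K$ is contained in a single stable pseudo-leaf, that is, it is a stable continuum.

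If one wants to avoid citing the converse direction of Lemma~\ref{lem:stable_continua_equiv}, the argument can be made directly. Suppose $\wt K$ met two distinct pseudo-leaves $\eta^s(\xi')\neq\eta^s(\xi'')$. Corollary~\ref{cor:two_classes_unbounded} would then give $\diam(\tf^j\wt K)\geq\lambda^{-j}e-2C\to\infty$, which contradicts the bound just obtained. The only step that needs care is that the Hausdorff limit commutes with $\tf^j$ for each fixed $j$; this is exactly where we use that $j$ is fixed before letting $n\to\infty$. No uniformity in $j$ is needed, because the hypothesis already supplies the uniform bound $M$.
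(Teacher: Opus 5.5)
Your proposal is correct and follows the paper's proof: fix $j\geq 0$, use continuity of $\tf^j$ to carry the Hausdorff convergence over to the iterates, deduce $\diam(\tf^j\wt K)\leq M$, and conclude with Lemma~\ref{lem:stable_continua_equiv}. The differences are minor. You get uniform continuity from a compact neighbourhood of $\wt K$, whereas the paper uses equivariance and compactness of $S$. You also add an optional direct check via Corollary~\ref{cor:two_classes_unbounded}.
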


\begin{proof}
	Fix $j \geq 0$. By uniform continuity of $\tf^j$ on bounded sets (equivariance plus compactness of $S$), $\tf^j(\wt K_n) \to \tf^j(\wt K)$ in the Hausdorff topology, so $\diam(\tf^j(\wt K)) \leq M$. Hence $\sup_{j \geq 0} \diam(\tf^j(\wt K)) \leq M$ and Lemma \ref{lem:stable_continua_equiv} applies.
\end{proof}

\begin{definition}[Uniform boundedness of dynamically bounded continua]\label{def:UB}
A surface homeomorphism $h$ has property $(\Bnd)$ if there is $M_h<\infty$
such that, for every continuum ${\tK}\subset\tS$,
\[
 \sup_{j\in\mathbb Z}\diam(\wt h^j {\tK})<\infty
 \quad\Longrightarrow\quad \diam {\tK}\leq M_h.
\]
Here the metric is any fixed proper lifted length metric.
The property is independent of the lift and of the choice of such a
metric. Corollary~\ref{cor:dynamically_bounded_continua} proves it in the
closed pseudo-Anosov case, with $M_h=4C$ {for the metric $\wt\dd$ already defined}.
\end{definition}

{Compactness of $S$ and deck equivariance give the following:}

\begin{lemma}\label{lem:powers_UB}
For each $r\in\mathbb Z$ and $R<\infty$ there is $J_r(R)<\infty$ such
that $\diam {\tK}\leq R$ implies $\diam(\wt h^rK)\leq J_r(R)$.
Dynamical boundedness is independent of the chosen lift and is the
same for $h$ and $h^q$, for every $q\geq1$. Property $(\Bnd)$ and the
associated equivalence relation are unchanged by taking powers.
\end{lemma}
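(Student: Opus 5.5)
We prove the three assertions of Lemma~\ref{lem:powers_UB} in order, and the only real input is compactness of $S$ together with deck equivariance. For the first, fix a lift $\wt h$ and $r\in\Z$. The map $\wt h^r$ commutes with every $\gamma\in\Gamma$, and the metric is deck-invariant. Choose a compact fundamental domain $D\subset\tS$. Every continuum $\tK$ with $\diam\tK\le R$ has a deck translate meeting $D$, so that translate lies in the compact set $\overline B(D,R)$. Then
\[
\diam(\wt h^r\tK)=\diam(\wt h^r\gamma\tK)\le \diam\big(\wt h^r(\overline B(D,R))\big)=:J_r(R)<\infty,
\]
since $\wt h^r$ is continuous and $\overline B(D,R)$ is compact.

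Independence of the lift is the same estimate. Two lifts differ by a deck transformation $\gamma_0$: $\wt h'=\gamma_0\wt h$. Iterating and using equivariance gives $\wt h'^j=\gamma_j\wt h^j$ for some $\gamma_j\in\Gamma$. Hence $\diam(\wt h'^j\tK)=\diam(\wt h^j\tK)$, and the two dynamical diameters coincide exactly. Changing the compact lift $\tK$ to another one $\gamma\tK$ changes nothing either.

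For powers, fix $q\ge1$; we use $\wt h^q$ as the lift of $h^q$. If $\sup_j\diam(\wt h^{j}\tK)<\infty$, then in particular the supremum over multiples $j\in q\Z$ is finite. Conversely, suppose $\sup_{m}\diam(\wt h^{qm}\tK)\le R$. Every $j\in\Z$ can be written $j=qm+r$ with $0\le r<q$. By the first part,
\[
\diam(\wt h^j\tK)\le J_r(R)\le \max_{0\le r<q}J_r(R)<\infty .
\]
So the two notions of dynamical boundedness agree. Consequently, the continua constrained by $(\Bnd)$ are the same for $h$ and $h^q$, and the constant $M_h$ serves for both. The associated equivalence relation is defined through dynamically bounded continua, so it is unchanged as well. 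For negative powers, $h^{-q}$ has the same orbits of continua as $h^q$.

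There is no serious obstacle here. The one point needing care is that the bound $J_r(R)$ must be uniform over all continua of diameter at most $R$, and not merely over those near a fixed point. Reducing to translates meeting the compact fundamental domain, via deck invariance of both the metric and $\wt h^r$, provides this uniformity. The independence of the metric asserted in Definition~\ref{def:UB} follows in the same way: any two deck-invariant proper metrics are uniformly comparable at bounded scales by compactness, so bounded sets and finiteness of diameters agree.
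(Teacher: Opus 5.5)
Your argument is correct and is the compactness-plus-deck-equivariance argument the paper invokes without writing it out: translate into a fundamental domain with compact $R$-neighbourhood, then write $j=qm+r$ with $0\le r<q$. One slip to fix: a lift $\wt h^r$ does not commute with deck transformations in general (for $r\neq0$ it cannot when $h$ lies in a pseudo-Anosov class). It only normalizes $\Gamma$, i.e.\ $\wt h^r\gamma=\gamma'\wt h^r$ with $\gamma'=\wt h^r\gamma\wt h^{-r}\in\Gamma$, and since deck transformations are isometries this weaker property is all that your diameter identities, including $\wt h'^j=\gamma_j\wt h^j$, actually use.
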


\section{The essential factor}\label{sec:essfactor}

The goal of this section is to build the semiconjugacy of Theorems \ref{thmA:pA} and \ref{thmB:PARF}, and to establish its first dynamical consequences. The splitting and quotient arguments use only property $(\Bnd)$ of Definition~\ref{def:UB}. For the closed pseudo-Anosov case it is supplied by Corollary~\ref{cor:dynamically_bounded_continua}; until the later discussion of the Militon structure, $f$ may be any surface homeomorphism with $(\Bnd)$.

\subsection{Dynamically bounded splitting}

As in \cite{garciatal24chaotic}, we partition the surface into dynamically bounded continua.

\begin{definition}[Dynamically bounded equivalence class]\label{def:eq_class}
	Let $\wt z,\wt z' \in \wt S$. We will say that $\wt z \sim_{\wt f} \wt z'$ if there exists a dynamically bounded continuum $\wt K \subset \wt S$ such that $\wt z, \wt z' \in \wt K$.
\end{definition}

It follows immediately that $\sim_{\wt f}$ is an equivalence relation, where we write $[\wt z]_{\wt f}$ for the equivalence class of $\wt z$ under $\sim_{\wt f}$ (or simply $[\wt z]$ and $\sim$ when there is no ambiguity).

\begin{lemma}\label{lem:connected_classes}
	For any $\wt z \in \wt S$, the class $[\wt z]$ is connected.
\end{lemma}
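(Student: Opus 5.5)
The plan is to write the class as a union of dynamically bounded continua through $\wt z$. By Definition~\ref{def:eq_class}, a point $\wt z'$ lies in $[\wt z]$ exactly when some dynamically bounded continuum $\wt K_{\wt z'}$ contains both $\wt z$ and $\wt z'$. Every point of such a continuum is equivalent to $\wt z$, since $\wt K_{\wt z'}$ itself witnesses this. Hence
\[
[\wt z]=\bigcup_{\wt z'\in[\wt z]}\wt K_{\wt z'} .
\]
This is a union of connected sets all containing the common point $\wt z$, so it is connected.

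The only point to check is that the continuum witnessing the relation lies inside the class, and this is immediate from the definition. We also need the degenerate case $[\wt z]=\{\wt z\}$, which is handled by the trivial continuum $\{\wt z\}$. Its dynamical diameter is $0$, so it is dynamically bounded. This also gives reflexivity of $\sim$.

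I expect no real obstacle here, since connectedness follows directly from the union-of-connected-sets principle. Closedness and boundedness of the class, which use property $(\Bnd)$ via a Hausdorff-limit argument in the style of Corollary~\ref{cor:hausdorff_limit_stable}, are not needed for this statement.
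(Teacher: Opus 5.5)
Your proposal is correct and is the same argument the paper gives: write $[\wt z]$ as the union of the dynamically bounded witnessing continua, each containing $\wt z$ and each contained in the class, and conclude by the common-point union principle. Your remark about the trivial continuum $\{\wt z\}$ covering the degenerate case is a harmless extra detail the paper leaves implicit.
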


\begin{proof}
	By definition, $[\wt z] = \bigcup_{\wt w \in [\wt z]} \wt K_{\wt w}$, where $\wt z \in \wt K_{\wt w}$ and $\wt K_{\wt w}$ is a continuum for every $\wt w$; a union of connected sets with a common point is connected.
\end{proof}

\begin{lemma}\label{lem:uniformly_bounded_classes}
	There exists $M > 0$ such that $\diam([\wt z]) \leq M$ for every $\wt z \in \wt S$.
\end{lemma}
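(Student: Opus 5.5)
The plan is to reduce everything to property $(\Bnd)$ of Definition~\ref{def:UB}. Fix $\wt z$ and points $\wt w,\wt w'\in[\wt z]$. By definition there are dynamically bounded continua $\wt K_{\wt w}\ni\wt z,\wt w$ and $\wt K_{\wt w'}\ni\wt z,\wt w'$. The union $\wt K:=\wt K_{\wt w}\cup\wt K_{\wt w'}$ is again a continuum, since both pieces are compact, connected and contain $\wt z$.

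Next I would check that $\wt K$ is dynamically bounded. For every $j\in\Z$,
\[
\diam(\tf^j\wt K)\leq \diam(\tf^j\wt K_{\wt w})+\diam(\tf^j\wt K_{\wt w'}),
\]
because the two images share the point $\tf^j(\wt z)$. Hence $\sup_j\diam(\tf^j\wt K)\leq \mathscr D(\wt K_{\wt w})+\mathscr D(\wt K_{\wt w'})<\infty$. Property $(\Bnd)$ then gives $\diam\wt K\leq M_f$, and so $\wt d(\wt w,\wt w')\leq M_f$. Taking the supremum over $\wt w,\wt w'\in[\wt z]$ yields $\diam[\wt z]\leq M:=M_f$, uniformly in $\wt z$.

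In the closed pseudo-Anosov case, $(\Bnd)$ is Corollary~\ref{cor:dynamically_bounded_continua} with $M_f=4C$. The same bound can also be obtained directly: every dynamically bounded continuum through $\wt z$ lies in $W^s(\wt z)\cap W^u(\wt z)$, so $[\wt z]\subset\eta^s(\xi^s(\wt z))\cap\eta^u(\xi^u(\wt z))$. Corollary~\ref{cor:stable_cont_orth_bounded} then bounds the unstable diameter of the first set and the stable diameter of the second by $2C$ each, giving $\diam[\wt z]\leq 4C$.

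There is no real obstacle here. The only point needing care is that $[\wt z]$ is a union of possibly infinitely many continua, so it need not be compact. This is why the argument bounds pairwise distances through a single two-piece union, rather than applying $(\Bnd)$ to $[\wt z]$ itself.
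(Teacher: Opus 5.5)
Your argument is correct and is essentially the paper's proof: join two points of $[\wt z]$ to $\wt z$ by dynamically bounded continua, note that their union is a dynamically bounded continuum, and apply $(\Bnd)$ to get the bound $M_f$, with $M_f=4C$ in the closed pseudo-Anosov case. The paper also records that $\tf^j[\wt z]=[\tf^j\wt z]$, so the same bound holds at every iterate; it uses this in the next lemma.
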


\begin{proof}
If $x,y\in[\wt z]$, the union of dynamically bounded continua joining
each of them to $\wt z$ is a dynamically bounded continuum. Property
$(\Bnd)$ gives $\dd(x,y)\leq M_f$. Moreover $\tf^j[\wt z]=[\tf^j\wt z]$
for all $j$, so the same bound holds at every iterate.
In the closed pseudo-Anosov case one may take $M_f=4C$.
\end{proof}

\begin{lemma}\label{lem:closed_classes}
	For any $\wt z \in \wt S$, we have $\mathscr{D}_{\tf}([\wt z]) < \infty$, and $[\wt z]$ is closed and filled.
\end{lemma}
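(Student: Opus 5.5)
Three things need proof: the class $[\wt z]$ is dynamically bounded, it is closed, and it is filled. Each reduces to the uniform bound from Lemma~\ref{lem:uniformly_bounded_classes} together with property $(\Bnd)$.

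\emph{Dynamical boundedness.} The proof of Lemma~\ref{lem:uniformly_bounded_classes} already gives $\tf^j[\wt z]=[\tf^j\wt z]$. This holds because $\tf^j$ maps dynamically bounded continua to dynamically bounded continua, and $\tf^{-j}$ does the same. Hence $\diam(\tf^j[\wt z])\leq M$ for every $j\in\Z$, and so $\mathscr D_{\tf}([\wt z])\leq M<\infty$. Strictly, $\mathscr D$ was defined for continua; this computation also applies to the closure once closedness is known.

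\emph{Closedness.} Let $\wt w\in\overline{[\wt z]}$ and choose $\wt w_n\in[\wt z]$ with $\wt w_n\to\wt w$. For each $n$, let $\wt K_n$ be a dynamically bounded continuum containing $\wt z$ and $\wt w_n$. Since $\wt K_n\subset[\wt z]$, we have $\sup_j\diam(\tf^j\wt K_n)\leq M$ uniformly in $n$, and every $\wt K_n$ lies in the compact ball $\overline B(\wt z,M)$, which is compact because the metric is proper. Blaschke's theorem gives a subsequence converging in the Hausdorff topology to a continuum $\wt K$ containing $\wt z$ and $\wt w$. Fix $j$. Uniform continuity of $\tf^j$ on bounded sets gives $\tf^j\wt K_n\to\tf^j\wt K$, and therefore $\diam(\tf^j\wt K)\leq M$; this is the argument of Corollary~\ref{cor:hausdorff_limit_stable} applied to all $j\in\Z$. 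So $\wt K$ is dynamically bounded and $\wt w\in[\wt z]$.

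\emph{Filledness.} By closedness, $[\wt z]$ is a continuum, since it is connected (Lemma~\ref{lem:connected_classes}) and bounded. Set $F=\Fill([\wt z])$, also a continuum. Because filling commutes with plane homeomorphisms (Lemma~\ref{lem:uniform_fill}), we get $\tf^jF=\Fill(\tf^j[\wt z])$. Each $\tf^j[\wt z]$ has diameter at most $M$, so Lemma~\ref{lem:uniform_fill} gives $\diam(\tf^jF)\leq J(M)$ for all $j$. Thus $F$ is a dynamically bounded continuum containing $\wt z$, hence $F\subset[\wt z]$ and $F=[\wt z]$.

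The main point needing care is the closure step: the bound must be uniform over the approximating continua, and every iterate must pass to the Hausdorff limit. That is exactly what the uniform class bound of Lemma~\ref{lem:uniformly_bounded_classes} supplies.
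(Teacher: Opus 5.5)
Your proof is correct and follows the paper's argument. The uniform class bound of Lemma~\ref{lem:uniformly_bounded_classes} at every iterate gives dynamical boundedness, and filledness comes from $\tf^j\Fill([\wt z])=\Fill(\tf^j[\wt z])$ together with the bound $J(M)$ of Lemma~\ref{lem:uniform_fill}. The only difference is the closedness step: the paper skips Blaschke's theorem by noting that $\overline{[\wt z]}$ is itself a continuum with $\diam(\tf^j\overline{[\wt z]})=\diam([\tf^j\wt z])\leq M$ for all $j$, so it is a dynamically bounded continuum containing $\wt z$, whereas your Hausdorff-limit argument reaches the same conclusion by a slightly longer route.
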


\begin{proof}
The closure of $[\wt z]$ is compact and connected. At every iterate its
diameter is at most $M_f$, by the preceding lemma and continuity, so
this closure is itself dynamically bounded and belongs to $[\wt z]$.
Next, filling commutes with iterates, and Lemma~\ref{lem:uniform_fill}
gives the uniform bound $J(M_f)$ for the diameters of the iterates of
$\Fill([\wt z])$. This larger continuum is dynamically bounded and must
also belong to the class.
\end{proof}

\begin{lemma}\label{lem:inessential_classes}
	For any $\wt z \in \wt S$ and any nontrivial deck transformation $T$, we have $T([\wt z]) \cap [\wt z] = \varnothing$.
\end{lemma}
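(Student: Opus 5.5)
The plan is to argue by contradiction. Suppose $T\neq\Id$ is a deck transformation and $T([\wt z])\cap[\wt z]\neq\varnothing$. Since $\tf$ is a lift of $f$, it commutes with deck transformations up to the induced automorphism: $\tf T=T'\tf$ with $T'=\tf_*(T)$. Hence $\tf^j T=T_j\tf^j$ for a deck transformation $T_j$, and $T_j$ is nontrivial whenever $T$ is. In particular $T[\wt z]$ is again dynamically bounded, because the diameters of $\tf^jT[\wt z]=T_j\tf^j[\wt z]$ coincide with those of $\tf^j[\wt z]$, deck transformations being isometries. By Lemma~\ref{lem:closed_classes}, $[\wt z]$ is a continuum, so $K:=[\wt z]\cup T[\wt z]$ is a continuum (two continua with a common point). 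At every iterate its diameter is at most $2M_f$, so it is dynamically bounded. Since it contains $\wt z$, maximality of the class gives $K\subset[\wt z]$, hence $T[\wt z]\subset[\wt z]$.

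Next, iterate this inclusion. The same argument applied to $T[\wt z]$ and $T^2[\wt z]$ (which intersect, being the image under $T$ of an intersecting pair) gives $T^n[\wt z]\subset[\wt z]$ for all $n\geq1$. Pick $\wt w\in[\wt z]$. Then $T^n\wt w\in[\wt z]$ for every $n$. But $T$ is a nontrivial deck transformation of a closed surface of genus at least one, so it has infinite order and acts properly discontinuously. Therefore $\dd(\wt w,T^n\wt w)\to\infty$, which contradicts $\diam[\wt z]\le M_f$ from Lemma~\ref{lem:uniformly_bounded_classes}.

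A cleaner variant skips the iteration. From $T[\wt z]\subset[\wt z]$, applying the same reasoning with $T^{-1}$ gives equality, $T[\wt z]=[\wt z]$. So $[\wt z]$ is a compact set invariant under the free, properly discontinuous cyclic group $\langle T\rangle$, which is impossible because orbits of $\langle T\rangle$ are infinite and discrete.

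The only step needing care is checking that dynamical boundedness is preserved by deck transformations, i.e.\ that $\tf^j$ conjugates deck transformations to deck transformations. This is the standard fact that $\tf T\tf^{-1}$ is again a lift of the identity, and it is also the content of the lift-independence remark in Lemma~\ref{lem:powers_UB}. Once that holds, the rest is the maximality of the class combined with the uniform bound of Lemma~\ref{lem:uniformly_bounded_classes}.
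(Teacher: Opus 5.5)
Your proof is correct and takes essentially the same route as the paper. Deck transformations preserve dynamical boundedness because $\tf^jT=T_j\tf^j$, so an intersection forces $T[\wt z]=[\wt z]$. The compact class would then contain the unbounded orbit $\{T^n\wt z\}$, which is impossible.
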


\begin{proof}
Deck transformations preserve dynamical boundedness, since $\tf^jT$ and
$\tf^j$ differ by a deck transformation after composition with $T$.
Thus they permute the equivalence classes. If $T[\wt z]$ meets
$[\wt z]$, the two classes coincide; then the compact set $[\wt z]$
contains the orbit $\{T^k\wt z:k\in\mathbb Z\}$, which is unbounded for
a nontrivial deck transformation of a closed hyperbolic surface. This
is impossible.
\end{proof}

{
\begin{lemma}\label{lem:classes_are_inessential}
	For any $\wt z$, the projection $\wt{\pi}([\wt z])$ is an inessential filled continuum, and the classes are uniformly dynamically bounded.
\end{lemma}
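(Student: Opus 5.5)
The projection will be shown to be a homeomorphism from $[\wt z]$ onto its image, using the earlier lemmas on classes. By Lemma~\ref{lem:closed_classes}, $[\wt z]$ is closed. By Lemma~\ref{lem:uniformly_bounded_classes} it has diameter at most $M$, so it is compact. By Lemma~\ref{lem:connected_classes} it is connected. Hence $[\wt z]$ is a continuum, and so is $K:=\wt\pi([\wt z])$.

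Lemma~\ref{lem:inessential_classes} says that the translates $T[\wt z]$, $T\in\Gamma$, are pairwise disjoint. Therefore $\wt\pi$ restricted to $[\wt z]$ is injective, and, being continuous on a compact set, it is a homeomorphism onto $K$.

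\textbf{Inessentiality.} Here I need an open neighbourhood of $[\wt z]$ that is still disjoint from its translates. The set $[\wt z]$ is compact and the action of $\Gamma$ is properly discontinuous. So only finitely many $T\neq\Id$ satisfy $T(N_1)\cap N_1\neq\varnothing$, where $N_1$ is the closed $1$-neighbourhood of $[\wt z]$. Each of these finitely many $T$ moves $[\wt z]$ to a disjoint compact set, at positive distance from $[\wt z]$.

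Choose $\varepsilon$ smaller than half of the minimum of these finitely many distances, and smaller than $1/2$. Then the open $\varepsilon$-neighbourhood $V$ of $[\wt z]$ satisfies $TV\cap V=\varnothing$ for all $T\neq\Id$. Its projection $U=\wt\pi(V)$ is an open neighbourhood of $K$, and $\wt\pi|_V$ is a homeomorphism onto $U$. Any loop in $U$ therefore lifts to a closed loop in $V\subset\tS$, which is simply connected, so the loop is nullhomotopic in $S$. Thus $K$ is inessential.

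\textbf{Filled.} I interpret ``filled'' downstairs as meaning that the complement of $K$ in $U$ has no component that is relatively compact in $U$; equivalently, the lift $[\wt z]$ is filled, so $K$ does not separate any disk from the essential part. This follows directly from $\Fill([\wt z])=[\wt z]$ (Lemma~\ref{lem:closed_classes}) together with the homeomorphism $\wt\pi|_V$. The one delicate point is that bounded complementary components of $[\wt z]$ in $\tS$ correspond to complementary components of $K$ inside a disk neighbourhood. I would handle this by working in $\tS$ only, since this is where the paper's filling is defined.

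\textbf{Uniform dynamical boundedness.} By Lemma~\ref{lem:uniformly_bounded_classes} and the identity $\tf^j[\wt z]=[\tf^j\wt z]$, we get $\mathscr D_{\tf}([\wt z])\le M$, with $M$ independent of $\wt z$. The main subtlety is the choice of $\varepsilon$ in the inessentiality step, and it is resolved by proper discontinuity as above.
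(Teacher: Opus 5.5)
Your proof is correct and follows the paper's approach: the paper calls the lemma immediate from Lemmas~\ref{lem:connected_classes}--\ref{lem:inessential_classes}, and you fill in the routine details, in particular the proper-discontinuity argument that upgrades disjointness of the translates $T[\wt z]$ to an inessential, evenly covered neighbourhood. One small correction: for a thin $U$, the condition that no component of $U\setminus K$ is relatively compact in $U$ is implied by filledness of $[\wt z]$ but does not imply it (a round circle is a counterexample). So drop the word ``equivalently'' and simply read ``filled'' downstairs as meaning that a lift is filled, which is exactly what Lemma~\ref{lem:closed_classes} provides.
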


\begin{proof}
	Immediate from Lemmas \ref{lem:connected_classes} through \ref{lem:inessential_classes}.
\end{proof}

\begin{lemma}\label{lem:upper_semi_cont}
	The equivalence relation $\sim_{\wt f}$ is upper semicontinuous.
\end{lemma}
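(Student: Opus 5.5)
Upper semicontinuity of the decomposition $\{[\wt z]\}$ means: if $\wt z_n\to\wt z$ and $\wt w_n\in[\wt z_n]$ with $\wt w_n\to\wt w$, then $\wt w\in[\wt z]$. Equivalently, for every open $U\supset[\wt z]$ there is a neighbourhood $V$ of $[\wt z]$ such that every class meeting $V$ is contained in $U$. The plan is to prove the sequential version with a Hausdorff-limit argument, using only property $(\Bnd)$ and the uniform bounds already established, and then deduce the neighbourhood version from the uniform diameter bound.

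Take $\wt z_n\to\wt z$ and set $K_n:=[\wt z_n]$. By Lemma~\ref{lem:closed_classes} each $K_n$ is a continuum. By Lemma~\ref{lem:uniformly_bounded_classes} and $\tf^j[\wt z_n]=[\tf^j\wt z_n]$, we have $\diam(\tf^jK_n)\leq M_f$ for all $j\in\Z$ and all $n$. Since the $K_n$ lie in a fixed compact ball around $\wt z$ and $\tS$ is proper, we may pass to a subsequence with $K_n\to K$ in the Hausdorff topology. Then $K$ is a continuum containing $\wt z$, and it contains every limit point $\wt w$ of points $\wt w_n\in K_n$.

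Next we check that $K$ is dynamically bounded. For fixed $j$, $\tf^j$ is uniformly continuous on bounded sets (compactness of $S$ plus deck equivariance, as in the proof of Corollary~\ref{cor:hausdorff_limit_stable}). Hence $\tf^jK_n\to\tf^jK$ in the Hausdorff topology, and therefore $\diam(\tf^jK)\leq M_f$. As this holds for every $j\in\Z$, $K$ is a dynamically bounded continuum containing both $\wt z$ and $\wt w$, so $\wt w\in[\wt z]$. Since any subsequence of $(\wt w_n)$ has a further subsequence along which this argument applies, the conclusion holds without passing to subsequences.

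For the neighbourhood formulation, argue by contradiction. If no such $V$ exists, there are points $\wt z_n\to[\wt z]$ (after adjusting, $\wt z_n$ converging to some $\wt z'\in[\wt z]$ by compactness) and points $\wt w_n\in[\wt z_n]\setminus U$. These $\wt w_n$ stay in a bounded region by the uniform bound $M_f$, so they accumulate at some $\wt w\notin U$. The sequential statement, applied with $[\wt z']=[\wt z]$, then gives $\wt w\in[\wt z]\subset U$, a contradiction. The only delicate point is the uniform continuity of iterates needed for the Hausdorff convergence of $\tf^jK_n$. This is handled exactly as in Corollary~\ref{cor:hausdorff_limit_stable}, so I expect no real obstacle.
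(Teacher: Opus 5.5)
Your proof is correct and follows the same route as the paper. You take a Hausdorff limit of classes, use $(\Bnd)$ to bound the diameter of every iterate by $M_f$ uniformly in $n$, and pass to the limit to conclude that the limit continuum is dynamically bounded and hence lies in a single class. The one difference is that you spell out the reduction from the neighbourhood definition of upper semicontinuity to this Hausdorff-limit statement, which the paper simply asserts (``it is enough to prove\dots'').
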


\begin{proof}
	It is enough to prove that the Hausdorff limit $\wt K$ of a sequence of equivalence classes $K_n$ is contained in an equivalence class. Each class is a dynamically bounded continuum, and by (UB) we have that $\diam(\wt f^j \wt K_n) \leq M_f$ for every $j,n \in \Z$. Passing to the limit we obtain that $\diam(\wt f^j \wt K) \leq M_f$ for every $j \in \Z$, which shows that $K$ is contained in an equivalence class. 
\end{proof}
}

\begin{proposition}\label{prop:splitting}
	Let $\sim_f$ be the equivalence relation on $S$ whose classes are $[z]_f = \wt \pi ([\wt z]_{\wt f})$. Then:
	\begin{enumerate}
		\item each equivalence class is an inessential filled continuum;
		\item equivalence classes are uniformly dynamically bounded;
		\item $\sim_f$ is upper semicontinuous.
	\end{enumerate}
\end{proposition}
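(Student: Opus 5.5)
The plan is to transfer the upstairs statements (Lemmas \ref{lem:connected_classes}--\ref{lem:upper_semi_cont}) to $S$ through the covering projection $\wt\pi$. The first task is to check that $\sim_f$ is well defined. Deck transformations $T$ permute the upstairs classes: this was observed in the proof of Lemma \ref{lem:inessential_classes}, since $\tf^j T = T'\tf^j$ for some deck transformation $T'$, so dynamical boundedness is preserved by $T$. Consequently $\wt\pi^{-1}(\wt\pi[\wt z])=\bigcup_{T\in\Gamma}T[\wt z]$ is a union of whole classes. It follows that the sets $\wt\pi([\wt z])$ partition $S$, and that $[z]_f$ does not depend on the lift $\wt z$ chosen.

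For item (1), the class $[\wt z]$ is a compact, connected, filled set by Lemmas \ref{lem:connected_classes} and \ref{lem:closed_classes}. By Lemma \ref{lem:inessential_classes} its translates $T[\wt z]$ with $T\neq \Id$ are disjoint from it. Since $\Gamma$ acts properly discontinuously and $[\wt z]$ is compact, there is $\varepsilon>0$ such that the closed $\varepsilon$-neighbourhood $N$ of $[\wt z]$ still satisfies $TN\cap N=\varnothing$ for every $T\neq\Id$. Indeed, only finitely many $T$ bring $N_1$ near itself, and for each of them the distance between $T[\wt z]$ and $[\wt z]$ is positive. I will replace $N$ by the open connected neighbourhood $U$ given by the component of the open $\varepsilon$-neighbourhood that contains $[\wt z]$. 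Then $\wt\pi|_U$ is injective, so $\wt\pi(U)$ is homeomorphic to a planar open set and every loop in it lifts to a closed loop in $U\subset\tS$, which is simply connected. Thus $\wt\pi[\wt z]$ is an inessential continuum. Its lift is $[\wt z]$, which is filled, and this is the paper's meaning of a filled inessential continuum.

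Item (2) is Lemma \ref{lem:uniformly_bounded_classes} together with $\tf^j[\wt z]=[\tf^j\wt z]$. Every iterate of every class has lifted diameter at most $M_f$, and the lifted diameter of $f^j([z]_f)$ is the diameter of $\tf^j[\wt z]$.

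Item (3) is the main step. I will take classes $[z_n]_f$ converging in the Hausdorff topology to a compact set $K\subset S$, and I must show that $K$ lies inside a single class. First I choose lifts $\wt z_n$ in a fixed compact fundamental domain. The lifted classes $[\wt z_n]$ have diameter at most $M_f$, so they all lie in a fixed compact ball. By Blaschke's theorem I pass to a subsequence converging to a continuum $\wt K$. Because $\wt\pi$ is uniformly continuous on that ball, $\wt\pi(\wt K)=K$. Lemma \ref{lem:upper_semi_cont} then puts $\wt K$ inside a single upstairs class $[\wt w]$, so $K\subset [\wt\pi\wt w]_f$. One subtlety remains: $K$ is the limit of the full sequence, and the limit of a subsequence is the same set, so passing to a subsequence causes no problem.
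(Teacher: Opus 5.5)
Your proposal is correct and takes the same route as the paper. The paper gets (1)--(2) directly from Lemma~\ref{lem:classes_are_inessential} and (3) from Lemma~\ref{lem:upper_semi_cont}, ``which descends to $S$ by equivariance''; you make that descent explicit through deck-invariance of the classes for well-definedness, a neighbourhood on which $\wt\pi$ is injective for inessentiality, and lifting the Hausdorff-convergent sequence into a fixed fundamental domain for upper semicontinuity.
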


\begin{proof}
	Items (1) and (2) are Lemma \ref{lem:classes_are_inessential}; item (3) is Lemma \ref{lem:upper_semi_cont}, which descends to $S$ by equivariance.
\end{proof}

\subsection{Moore--Roberts--Steenrod quotient}

Proposition~\ref{prop:splitting} provides an upper semicontinuous
cellular decomposition. Its quotient $S_{\rm ess}=S/{\sim}$ is
homeomorphic to $S$ by the theorem of Roberts and Steenrod
\cite{robertssteenrod38monotone}. This surface theorem extends
Moore's theorem for the sphere \cite{moore25continua}. Let
$Q:S\to S_{\rm ess}$ be the quotient map. Since $f$ permutes the
classes, it induces a homeomorphism $g_{\rm ess}$ satisfying
\[
 Qf=g_{\rm ess}Q.
\]
The map $Q$ is monotone and cellular. This is the higher-genus
version of the construction in \cite[Section 5.2]{garciatal24chaotic}.

\begin{definition}[Essential factor]\label{def:essential_factor}
The quotient dynamics $g_{\rm ess}=f_{/\sim}$ is the
\emph{essential factor} of $f$.
\end{definition}

\begin{remark}\label{rem:canonical_model}
The cellular map $Q$ is a uniform limit of homeomorphisms; see
\cite[Theorem 2.7]{fanonihenselleroux26translation}. Choose a
sufficiently close approximating homeomorphism $Q_0:S\to S_{\rm ess}$.
Close maps into a compact surface are homotopic. Identifying
$S_{\rm ess}$ with $S$ by $Q_0^{-1}$ therefore gives
\[
 q=Q_0^{-1}Q\simeq\Id,\qquad
 g=Q_0^{-1}g_{\rm ess}Q_0,\qquad qf=gq.
\]
We use $q:S\to S$ and $g:S\to S$ for these marked maps throughout.
\end{remark}

\begin{remark}\label{rem:isotopic_factor}
The equation $q f=gq$ and $q\simeq\Id$ give
$g\simeq f$. Homotopic surface homeomorphisms are isotopic, so $g$ has
the same isotopy class as $f$. 
\end{remark}

The marked quotient has a $\Gamma$-equivariant lift $\Phi$ at bounded
distance from the identity. We select the compatible lift of $g$ so that
\begin{equation}\label{eq:Phibounded}
 \sup_{x\in\tS}\dd(\Phi x,x)=B_\Phi<\infty,
 \qquad \Phi\tf=\wt g\Phi.
\end{equation}
This lift is proper and surjective, and its fibres are the lifted
cellular classes. Thus it is monotone and the inverse image of a
continuum is a continuum. 

\subsection{Essential factor dynamics}

{
\begin{lemma}\label{lem:quotient_dynamically_bounded}
	A continuum $K \subset S$ satisfies $\mathscr{D}_f(K) = \infty$ if and only if its projection $qK$ satisfies $\mathscr{D}_g(qK)=\infty$.
\end{lemma}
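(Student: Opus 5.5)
We will prove the equivalence by comparing iterates upstairs through the equivariant lift $\Phi$ of \eqref{eq:Phibounded}, which is at distance at most $B_\Phi$ from the identity and satisfies $\Phi\tf=\wt g\Phi$. Fix a continuum $K\subset S$ and a compact lift $\tK$. Then $\Phi(\tK)$ is a compact connected set projecting onto $qK$, and we use it as the lift of $qK$ when computing $\mathscr D_g(qK)$. Note that $qK$ is a continuum, since $q$ is continuous. If $K$ is essential, every lift is noncompact in the sense of Definition~\ref{def:lifted_diameter}; we then use the convention of Section~\ref{sec:prelim} and handle it through compact subcontinua of a connected lift.

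\emph{The bound $\mathscr D_g(qK)\le \mathscr D_f(K)+2B_\Phi$.} For every $j\in\Z$ we have $\wt g^j\Phi\tK=\Phi\tf^j\tK$. Since $\Phi$ moves points by at most $B_\Phi$,
\[
\diam(\wt g^j\Phi\tK)\le \diam(\tf^j\tK)+2B_\Phi .
\]
Consequently, if $\mathscr D_g(qK)=\infty$, then $\mathscr D_f(K)=\infty$.

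\emph{The converse.} Suppose $\mathscr D_g(qK)<\infty$; we must show that $\mathscr D_f(K)<\infty$. Consider the set $\wt E:=\Phi^{-1}(\Phi\tK)$. Because $\Phi$ is monotone and proper, $\wt E$ is a continuum, and it contains $\tK$. Its iterates are $\tf^j\wt E=\Phi^{-1}(\wt g^j\Phi\tK)$. Each such iterate is a union of fibres of $\Phi$, and these fibres are the lifted classes, of diameter at most $M$ by Lemma~\ref{lem:uniformly_bounded_classes}. Each fibre lies within $B_\Phi$ of its image point under $\Phi$. Hence
\[
\diam(\tf^j\wt E)\le \diam(\wt g^j\Phi\tK)+2B_\Phi ,
\]
and the right-hand side is uniformly bounded in $j$. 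This shows that $\wt E$ is dynamically bounded, and therefore so is the subcontinuum $\tK\subset\wt E$. The essential case is excluded the same way: a dynamically bounded $qK$ has a compact lift of bounded diameter, and taking its $\Phi$-preimage shows that $K$ admits a compact lift.

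\emph{Main obstacle.} The delicate point is the bookkeeping of lifts and conventions. One must check that $\Phi(\tK)$ is a legitimate compact lift of $qK$, which holds because $\Phi$ is equivariant and covers $q$. One must also check that the preimage $\Phi^{-1}$ of a compact connected set is compact and connected. This is where monotonicity and properness of $\Phi$, stated after \eqref{eq:Phibounded}, are used. Finally, the independence of the choice of lift (Lemma~\ref{lem:powers_UB}) makes the comparison legitimate.
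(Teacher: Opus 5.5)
Your proof is correct and follows the paper's argument: the identity $\wt g^j\Phi\tK=\Phi\tf^j\tK$ and the displacement bound $B_\Phi$ give $\bigl|\diam(\wt g^j\Phi\tK)-\diam(\tf^j\tK)\bigr|\le 2B_\Phi$ for every $j$, so the detour through $\Phi^{-1}(\Phi\tK)$ and the fibre bound $M$ is unnecessary, since the pointwise estimate $\dd(x,x')\le\dd(\Phi x,\Phi x')+2B_\Phi$ applies directly to $\tf^j\tK$. One justification should be tightened: $\Phi$ being equivariant and covering $q$ does not by itself make $\Phi(\tK)$ a lift of $qK$, and this fails when $K$ is inessential but $qK$ is essential (e.g.\ an arc whose lift joins a lifted fibre to a distinct translate of it); in both places you use it, however, it is rescued, because either $\tK$ is dynamically bounded, hence lies in one class so that $qK$ is a point, or $\mathscr D_g(qK)<\infty$, so $qK$ is inessential and the connected set $\Phi(\tK)$, which projects onto $qK$, is one of its lifts.
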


\begin{proof}
	Immediate from \eqref{eq:Phibounded}: $\wt g^j(\Phi(\wt K)) = \Phi(\tf^j(\wt K))$, and $\Phi$ moves points at most $B_\Phi$.
\end{proof}
}

\begin{lemma}\label{lem:essential_factor_ice}
	Let $f \in \Homeo_A(S)$ and let $g$ be its essential factor. Then $\mathscr{D}_g(K) = \infty$ for every nontrivial continuum $K \subset S$; that is, $g$ is infinitely continuum-wise expansive.
\end{lemma}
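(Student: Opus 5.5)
We argue by contradiction, pulling a dynamically bounded nontrivial continuum for $g$ back through $q$. Suppose $K\subset S$ is a nontrivial continuum with $\mathscr{D}_g(K)<\infty$. Since $q$ is monotone and surjective, its lift $\Phi$ is proper and the inverse image of a continuum is a continuum. So fix a compact lift $\wt K$ of $K$ and set $\wt K' := \Phi^{-1}(\wt K)\subset\tS$. This is a continuum; it is compact because $\Phi$ is proper, and it projects to $q^{-1}(K)$ up to deck translates. By construction $\Phi(\wt K')=\wt K$.

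Next we compare dynamical diameters upstairs and downstairs. By \eqref{eq:Phibounded}, $\wt g^j(\wt K)=\wt g^j\Phi(\wt K')=\Phi(\tf^j\wt K')$. Since $\Phi$ moves points by at most $B_\Phi$, this gives
\[
\diam(\tf^j\wt K')\le \diam(\wt g^j\wt K)+2B_\Phi
\]
for all $j\in\Z$. Hence $\mathscr{D}_{\tf}(\wt K')<\infty$; this is exactly the content of Lemma~\ref{lem:quotient_dynamically_bounded}, applied to the continuum $q^{-1}(K)$, whose image under $q$ is $K$.

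Now $\wt K'$ is a dynamically bounded continuum for $\tf$, so any two of its points are $\sim_{\tf}$-equivalent. Therefore $\wt K'$ is contained in a single class $[\wt z]_{\tf}$. Each class is a single fibre of $\Phi$, so $\Phi(\wt K')=\wt K$ is a single point. This contradicts the nontriviality of $K$, and therefore $\mathscr{D}_g(K)=\infty$.

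The only delicate point is confirming that $q^{-1}(K)$, or its lift $\Phi^{-1}(\wt K)$, is really a continuum with a compact connected lift, so that the definition of $\sim_{\tf}$ applies. This uses monotonicity together with properness of $\Phi$, both recorded after \eqref{eq:Phibounded}. Lifting is unproblematic because $K$ is inessential: it has finite dynamical diameter, hence finite lifted diameter. Everything else follows directly from the definition of the classes as maximal dynamically bounded continua.
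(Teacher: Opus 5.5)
Your proof is correct and follows the paper's argument: you pull $\wt K$ back to the continuum $\Phi^{-1}(\wt K)$, use $\Phi\tf=\wt g\Phi$ and the bounded displacement $B_\Phi$ to show it is dynamically bounded for $\tf$, and conclude that it lies in a single fibre, so $\wt K$ is a point. You also spell out details the paper leaves implicit: compactness via properness, connectedness via monotonicity, and existence of a compact lift via inessentiality.
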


\begin{proof}
If a nontrivial continuum $\wt K$ in the quotient cover had
bounded dynamical diameter, its compact connected preimage under
$\Phi$ would be dynamically bounded by the preceding proof. The
preimage would lie in one fibre, making $\wt K$ a point.
\end{proof}

The dynamically bounded splitting of $g$ itself is therefore trivial, and we record:

\begin{remark}\label{rem:idempotent}
	The essential factor of the essential factor of $f$ is the essential factor of $f$. A homeomorphism in $\Homeo_A(S)$ coincides with its essential factor if and only if it is infinitely continuum-wise expansive.
\end{remark}

Note that, since $g \in \Homeo_A(S)$, all of Section \ref{sec:prelim} applies to $g$: we fix from now on the isotopic lifts $\wt g, \wt A$, the associated Militon partitions $\eta^s, \eta^u$ \emph{for $g$}, the constants $C, C_0, \kappa$, and the class maps $\xi^s, \xi^u$, all relative to $g$. By Corollary \ref{cor:dynamically_bounded_continua} applied to $g$ and Lemma \ref{lem:essential_factor_ice}:

\begin{corollary}\label{cor:trivialintersections}
	For the essential factor $g$, every connected component of $\eta^s(\xi) \cap \eta^u(\xi')$ is a single point; in particular the intersection of a stable and an unstable continuum of $g$ is totally disconnected.
\end{corollary}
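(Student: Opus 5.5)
The plan is to combine Corollary~\ref{cor:dynamically_bounded_continua}, applied to $g$, with Lemma~\ref{lem:essential_factor_ice}. All pseudo-leaves and class maps are taken relative to $g$ and its isotopic lift $\wt g$; this is legitimate by Remark~\ref{rem:isotopic_factor}, since $g\in\Homeo_A(S)$. The argument runs on the universal cover and projects to $S$ at the end.

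Fix $\xi\in\slea$ and $\xi'\in\ulea$, and let $\wt K$ be a connected component of $\eta^s(\xi)\cap\eta^u(\xi')$. By (T1 -- Partition) each pseudo-leaf is closed in $\tS$, so $\wt K$ is closed. By Corollary~\ref{cor:stable_cont_orth_bounded} we have $\diam^u\eta^s(\xi)\le 2C$ and $\diam^s\eta^u(\xi')\le 2C$. Hence $\diam\wt K\le\diam^s\wt K+\diam^u\wt K\le 4C$, so $\wt K$ is bounded and therefore compact, because $\wt\dd$ is proper. It follows that $\wt K$ is a continuum.

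Pick any $\wt z\in\wt K$. By (D3 -- Stability), $\eta^s(\xi)=W^s(\wt z)$ and $\eta^u(\xi')=W^u(\wt z)$, so $\wt K\subset W^s(\wt z)\cap W^u(\wt z)$. Corollary~\ref{cor:dynamically_bounded_continua} then says that $\wt K$ is dynamically bounded for $\wt g$. Its projection $K=\wt\pi(\wt K)$ is a continuum in $S$ with $\mathscr D_g(K)<\infty$. The lifted dynamical diameter does not depend on the chosen lift, and $\wt K$ is one such lift (it has diameter at most $4C$ and lies in a fundamental region up to finitely many translates). Lemma~\ref{lem:essential_factor_ice} now forces $K$ to be a point, and therefore $\wt K$ is a point.

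For the second assertion, let $\wt K_s$ be a stable continuum and $\wt K_u$ an unstable continuum. They lie in $\eta^s(\xi)$ and $\eta^u(\xi')$ respectively, so every connected subset of $\wt K_s\cap\wt K_u$ is contained in a component of $\eta^s(\xi)\cap\eta^u(\xi')$, which we have just shown is a point. Thus the intersection is totally disconnected, and the same holds downstairs because $\wt\pi$ is a local homeomorphism. The only step that needs some care is verifying that a component is compact, which is what allows Corollary~\ref{cor:dynamically_bounded_continua} to be applied. This is exactly what the width bound of Corollary~\ref{cor:stable_cont_orth_bounded} provides.
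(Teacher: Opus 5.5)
Your proposal is correct and follows the paper's argument, which simply combines Corollary~\ref{cor:dynamically_bounded_continua} applied to $g$ with the infinite continuum-wise expansiveness of Lemma~\ref{lem:essential_factor_ice}. The paper leaves implicit your check that each component is compact via the width bound of Corollary~\ref{cor:stable_cont_orth_bounded}, which is a welcome addition. Your justification that $\wt K$ is a legitimate lift is vague, and is better replaced by noting that the proof of Lemma~\ref{lem:essential_factor_ice} applies directly to continua in $\tS$.
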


\subsection{Tightness}\label{subsec:tight}

{
Recall from \cite[Section 1]{garciatal24chaotic} the definition of the two-sided entropy $h_{\pm}(g,K)$ carried by a continuum $K$: it is defined as in Bowen's definition of topological entropy, counting $(\pm n, \varepsilon)$-separated subsets of $K$, where the dynamical distances run over the two-sided window $-n \leq j \leq n$. A homeomorphism is \emph{tight} if $h_\pm(g,K)>0$ for every nontrivial continuum. The following two statements are proved exactly as in \cite[Corollary 5.19 and Proposition 5.20]{garciatal24chaotic}; the proofs are metric-space arguments which use only the infinite continuum-wise expansiveness of $g$ and the compactness of $S$, and we reproduce them for completeness.

\begin{corollary}\label{cor:SplittingContinua}
	Let $\varepsilon>0$ and let $K \subset S$ be a continuum with $\mathrm{diam}(K) > 3 \varepsilon$. Then there exist two subcontinua $K_0, K_1 \subset K$ with $\mathrm{diam}(K_i) \geq \varepsilon$ and $\dd(K_0, K_1) > \varepsilon$.
\end{corollary}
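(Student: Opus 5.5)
The plan is to work entirely inside the compact metric space $(S,\dd)$, using only that $K$ is a continuum. The idea is to pick two far-apart points of $K$, and then to grow continua around each of them by boundary bumping (Lemma~\ref{lem:nadler}).

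The first step chooses the points. Since $\diam(K)>3\varepsilon$ and $K$ is compact, there are $a,b\in K$ with $\dd(a,b)>3\varepsilon$. Fix a number $r$ slightly larger than $\varepsilon$, say with $\varepsilon<r$ and $2r+\varepsilon<\dd(a,b)$. Such an $r$ exists because $\dd(a,b)-\varepsilon>2\varepsilon$.

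The second step produces $K_0$. Let $U=B(a,r)$ be the open ball, taken in the ambient metric on $S$, or on the lift if the lifted diameter is meant; the argument is identical. Then $a\in K\cap U$, and $b\in K\setminus U$. Let $X_0$ be the component of $K\cap U$ containing $a$. By Lemma~\ref{lem:nadler}, $\overline{X_0}$ meets $\partial U\subset\{x:\dd(a,x)=r\}$. Put $K_0:=\overline{X_0}$. It is a subcontinuum of $K$, since the closure of a connected set is connected and $K$ is closed. It contains $a$ and a point at distance $r$ from $a$, so $\diam(K_0)\geq r>\varepsilon$. Moreover $K_0\subset\overline B(a,r)$. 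The third step builds $K_1\subset\overline B(b,r)$ in exactly the same way, using the ball around $b$ and the point $a$ outside it.

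The last step checks the separation. For $x\in K_0$ and $y\in K_1$, the triangle inequality gives $\dd(x,y)\geq \dd(a,b)-2r>\varepsilon$. By compactness the distance $\dd(K_0,K_1)$ is attained, so $\dd(K_0,K_1)>\varepsilon$.

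The only delicate point is whether the diameter bound comes out as $\geq\varepsilon$ or strictly larger, and whether the boundary of the ball lies in the sphere of radius $r$. The first is resolved by the choice $r>\varepsilon$. The second only needs the inclusion $\partial B(a,r)\subset\{\dd(a,\cdot)=r\}$, which always holds for open balls. The inclusion may be strict, but that does not matter. If the statement is read in the universal cover, the same argument works there after applying Lemma~\ref{lem:nadler} in $\tS$.
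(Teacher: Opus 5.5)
Your proof is correct and is essentially the paper's argument: pick two points of $K$ at distance more than $3\varepsilon$, take the closures of the components of $K$ intersected with small open balls around them, get the diameter bound from boundary bumping (Lemma~\ref{lem:nadler}), and get the separation from the triangle inequality. The only difference is that you use a radius $r$ slightly larger than $\varepsilon$ where the paper uses radius exactly $\varepsilon$. Both choices work, since with radius $\varepsilon$ one still has $\dd(K_0,K_1)\geq \dd(z,z')-2\varepsilon>\varepsilon$.
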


\begin{proof}
	Take $z, z' \in K$ with $\dd(z,z') > 3 \varepsilon$; let $U_0, U_1$ be the open disks of radius $\varepsilon$ around them, and $K_0, K_1$ the closures of the components of $K \cap U_i$ containing $z, z'$. The distance property is clear from the triangle inequality, and the diameter property follows from Lemma \ref{lem:nadler}.
\end{proof}

\begin{proposition}\label{prop:Tight}
	For each nontrivial continuum $K \subset S$ we have $h_{\pm}(g,K) > 0$. In particular the essential factor is tight.
\end{proposition}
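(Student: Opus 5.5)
The plan is a binary-tree splitting argument that produces $2^k$ points of $K$ which are $(\pm n_k,\varepsilon)$-separated, with $n_k$ growing at most linearly in $k$. This forces exponential growth and hence $h_\pm(g,K)>0$.

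\textbf{Uniform expansion time.} First, fix $\varepsilon>0$ small, say below one third of the injectivity radius of $S$. Then for continua of diameter less than $3\varepsilon$ in $S$, the downstairs diameter agrees with the lifted diameter, and a continuum of downstairs diameter at least $3\varepsilon$ has lifted diameter at least $3\varepsilon$. The key uniformity claim is that there is $N\geq 1$ such that every continuum $L\subset S$ with $\diam(L)\geq\varepsilon$ has some $j$ with $|j|\leq N$ and $\diam(g^jL)>3\varepsilon$, the latter diameter measured in $S$. I would prove this by contradiction. Suppose continua $L_m$ with $\diam L_m\geq\varepsilon$ satisfy $\diam(g^jL_m)\leq 3\varepsilon$ for all $|j|\leq m$. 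Pass to a Hausdorff limit $L$. Then $L$ is a nontrivial continuum with $\diam(g^jL)\leq 3\varepsilon$ for every $j\in\Z$, by continuity of each fixed $g^j$. Since $3\varepsilon$ is below the injectivity scale, the lifted iterates stay small: each $\wt g^{j}\wt L$ is a small connected set, and consecutive iterates move by a bounded amount. So the lifted dynamical diameter of $L$ is at most $3\varepsilon$, i.e. $\mathscr D_g(L)<\infty$. This contradicts Lemma~\ref{lem:essential_factor_ice}. Making this step rigorous requires that a connected set of small downstairs diameter lifts to a set of small lifted diameter. That holds because a continuum of diameter less than the injectivity radius sits in an embedded disk. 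This is the main point to check carefully.

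\textbf{Recursive splitting.} Next, starting from $K$ with $\diam K\geq\varepsilon$ (after shrinking $\varepsilon$), I build subcontinua recursively. Given a continuum $L$ with $\diam L\geq\varepsilon$, choose $|j|\leq N$ with $\diam(g^jL)>3\varepsilon$. Apply Corollary~\ref{cor:SplittingContinua} to $g^jL$ to get subcontinua $K_0,K_1$ with diameters at least $\varepsilon$ and $\dd(K_0,K_1)>\varepsilon$. Set $L_i=g^{-j}K_i\subset L$. Both children again have $g^{j}$-images of diameter at least $\varepsilon$. I then continue the recursion with $g^jL_i$, re-indexing time along each branch.

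At depth $k$ this gives $2^k$ nested continua, indexed by words $w\in\{0,1\}^k$. Along each branch the cumulative time shift is at most $kN$ in absolute value. Pick one point in each depth-$k$ continuum. Two distinct words first differ at some level $\ell\leq k$. At that level their continua are mapped, by some $g^{t}$ with $|t|\leq \ell N\leq kN$, into $K_0$ and $K_1$ respectively, which are more than $\varepsilon$ apart. Hence the chosen points are $(\pm kN,\varepsilon)$-separated, and there are $2^k$ of them.

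\textbf{Conclusion.} The count gives
\[
h_\pm(g,K)\geq \limsup_{k\to\infty}\frac{\log 2^k}{kN}=\frac{\log 2}{N}>0.
\]
(If the normalization of $h_\pm$ uses the window length $2n$ rather than $n$, the bound changes only by a factor of $2$.) Since $K$ was an arbitrary nontrivial continuum, $g$ is tight.
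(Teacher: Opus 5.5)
Your proposal is correct and follows essentially the same route as the paper. Both arguments get a uniform expansion time ($N$, the paper's $n_\varepsilon$) from a Hausdorff-limit contradiction with Lemma~\ref{lem:essential_factor_ice}, then split recursively via Corollary~\ref{cor:SplittingContinua} to obtain $2^k$ points that are $(\pm kN,\varepsilon)$-separated, giving $h_\pm(g,K)\geq \log 2/N$. Your choice of $\varepsilon$ below the injectivity scale, so that small downstairs diameters of all iterates yield finite lifted dynamical diameter, makes explicit a point the paper leaves implicit.
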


\begin{proof}
	We first prove that for every small $\varepsilon > 0$ there exists $n_{\varepsilon} > 0$ such that every continuum $K$ with $\mathrm{diam}(K) \geq \varepsilon$ admits $-n_\varepsilon \leq n_K \leq n_\varepsilon$ with $\mathrm{diam}(g^{n_K}(K)) > 3 \varepsilon$. If not, there is a sequence of continua $K_n$ with $\mathrm{diam}(K_n) \geq \varepsilon$ and $\mathrm{diam}(g^j(K_n)) \leq 3\varepsilon$ for $|j| \leq n$; a Hausdorff limit $K$ satisfies $\diam(K) \geq \varepsilon$ and $\mathscr{D}_g(K) \leq 3\varepsilon$, contradicting Lemma \ref{lem:essential_factor_ice}.
	
	Now iterate: given $K$ with $\diam(K) \geq \varepsilon$, take $n_K$ as above and split $g^{n_K}(K)$ into $K_0, K_1$ using Corollary \ref{cor:SplittingContinua}; recursively split each piece. After $j$ steps we obtain $2^j$ points of $K$ which are $(\pm j n_{\varepsilon}, \varepsilon)$-separated, so $s_{\pm j n_\varepsilon}(\varepsilon, K) \geq 2^j$ and $h_\pm(g,K) \geq \log 2 / n_\varepsilon > 0$.
\end{proof}
}

\section{Stable and unstable structure of the essential factor}\label{sec:stableg}

Throughout this section $g$ denotes the essential factor of some $f \in \Homeo_A(S)$ -- equivalently, by Remark \ref{rem:idempotent}, an infinitely continuum-wise expansive element of $\Homeo_A(S)$ -- with fixed lifts $\wt g, \wt A$ and Militon partitions $\eta^s, \eta^u$ as above.

\subsection{Shrinking stable and unstable continua}

The following compactness argument is the higher-genus version of
the shrinking argument in {\cite[Lemma 6.17]{garciatal24chaotic}}.

\begin{lemma}[\cite{garciatal24chaotic}]
\label{lem:stable_continua_shrink}
If ${\tK}\subset\widetilde S$ is a compact stable continuum of the
essential factor $g$, then
\[
 \lim_{n\to\infty}\diam(\widetilde g^{\,n}{\tK})=0.
\]
For a compact unstable continuum the same conclusion holds as
$n\to-\infty$.
\end{lemma}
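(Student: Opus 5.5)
We argue by contradiction, using a compactness argument together with the infinite continuum-wise expansiveness of $g$ (Lemma~\ref{lem:essential_factor_ice}). Let $\tK$ be a compact stable continuum, and suppose $\diam(\wt g^{\,n}\tK)$ does not tend to $0$. Then there are $\varepsilon>0$ and a sequence $n_k\to\infty$ with $\diam(\wt g^{\,n_k}\tK)\geq\varepsilon$.

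By Lemma~\ref{lem:stable_continua_equiv}, the forward iterates satisfy the uniform bound
\[
 \diam(\wt g^{\,j}\tK)\leq M:=\kappa+\diam^s(\tK)+2C \quad\text{for all } j\geq 0 .
\]
Set $\tK_k:=\wt g^{\,n_k}\tK$. Choose deck transformations $T_k$ so that the continua $T_k\tK_k$ all meet a fixed compact fundamental domain. Each $T_k\tK_k$ is still a stable continuum, by (D2 -- Equivariance) and Lemma~\ref{lem:induced_actions}. Its iterates $\wt g^{\,j}T_k\tK_k$, for $j\geq -n_k$, are deck translates of the iterates $\wt g^{\,j+n_k}\tK$ with $j+n_k\ge 0$. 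Hence they all have diameter at most $M$, because $\wt g$ commutes with deck transformations up to another deck transformation and the metric is deck invariant.

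By compactness of the hyperspace of continua in a bounded region, we pass to a subsequence with $T_k\tK_k\to\tK_\infty$ in the Hausdorff topology. The limit $\tK_\infty$ is a continuum with $\diam\tK_\infty\geq\varepsilon$, so it is nontrivial. Now fix $j\in\Z$. For $k$ large we have $-n_k\leq j$, so $\diam(\wt g^{\,j}T_k\tK_k)\leq M$. Since $\wt g^{\,j}$ is uniformly continuous on bounded sets (equivariance plus compactness of $S$, as in Corollary~\ref{cor:hausdorff_limit_stable}), we get $\wt g^{\,j}T_k\tK_k\to\wt g^{\,j}\tK_\infty$ in the Hausdorff topology. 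Therefore $\diam(\wt g^{\,j}\tK_\infty)\leq M$ for every $j\in\Z$.

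This says $\mathscr D_g(\tK_\infty)\leq M<\infty$, so $\tK_\infty$ is a nontrivial dynamically bounded continuum. Its projection to $S$ is then a nontrivial continuum with finite dynamical diameter, which contradicts Lemma~\ref{lem:essential_factor_ice}. Hence $\diam(\wt g^{\,n}\tK)\to 0$.

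The case of unstable continua is symmetric: one uses $\wt g^{-1}$ and the unstable half of Lemma~\ref{lem:stable_continua_equiv}.

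The only delicate step is obtaining two-sided control on the limit. The backward window $j\geq -n_k$ must grow without bound, which is exactly why we take the limit along the times $n_k\to\infty$. We also need the uniform forward bound $M$, and this holds because it depends only on $\tK$ and not on $k$.
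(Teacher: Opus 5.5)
Your proof is correct and follows essentially the same route as the paper. Both arguments take the uniform forward bound $M$ from Lemma~\ref{lem:stable_continua_equiv} and translate $\wt g^{\,n_k}\tK$ by deck transformations into a fixed compact region. Both then extract a Hausdorff limit of diameter at least $\varepsilon$ and use that $\wt g^{\,j}T_k\wt g^{-j}$ is a deck isometry to bound every iterate of the limit by $M$, which contradicts infinite continuum-wise expansiveness (Lemma~\ref{lem:essential_factor_ice}).
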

\begin{proof}
Lemma~\ref{lem:stable_continua_equiv} gives
$M=\sup_{n\geq0}\diam(\widetilde g^{\,n}{\tK})<\infty$.
Suppose that $n_j\to\infty$ and
$\diam(\widetilde g^{\,n_j}{\tK})\geq\varepsilon>0$.
Choose deck transformations $\gamma_j$ so that
${\tK}_j=\gamma_j\widetilde g^{\,n_j}{\tK}$ meets a fixed compact
fundamental set. All ${\tK}_j$ lie in its compact $M$-neighbourhood.
A subsequence converges in the Hausdorff topology to a continuum
${\tK}_\infty$ of diameter at least $\varepsilon$.

For each fixed $m\in\mathbb Z$ and all sufficiently large $j$,
$n_j+m\geq0$ and
\[
 \widetilde g^{\,m}{\tK}_j
 =\bigl(\widetilde g^{\,m}\gamma_j\widetilde g^{-m}\bigr)
   \widetilde g^{\,n_j+m}{\tK}.
\]
The parenthesized map is a deck isometry. Thus
$\diam(\widetilde g^{\,m}{\tK}_j)\leq M$, and passage to the limit
gives $\diam(\widetilde g^{\,m}{\tK}_\infty)\leq M$ for every
$m\in\mathbb Z$. This contradicts infinite continuum-wise
expansiveness. Apply the same argument to $g^{-1}$ for the
unstable assertion.
\end{proof}

\subsection{Small stable and unstable continua}

Since $g$ is (infinitely) continuum-wise expansive, we may apply \cite[Theorem 1.6]{kato93fully}:

\begin{lemma}\label{lem:kato}
	For every $\varepsilon > 0$ there exists $\delta > 0$ such that for every $\wt z \in \tS$, the connected components containing $\wt z$ of the sets
	$$ \{\wt z' : \dd(\wt g^j \wt z', \wt g^j \wt z) \leq \varepsilon \ \forall j \geq 0\}, \qquad \{\wt z' : \dd(\wt g^j \wt z', \wt g^j \wt z) \leq \varepsilon \ \forall j \leq 0\} $$
	have diameter at least $\delta$. In particular, every point admits nontrivial stable and unstable continua of arbitrarily small {respectively future and past dynamical diameter.}
\end{lemma}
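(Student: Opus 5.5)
The plan is to deduce the statement from Kato's theorem \cite[Theorem 1.6]{kato93fully} applied to $g$ on $S$, and then transfer the conclusion to $\tS$. By Lemma~\ref{lem:essential_factor_ice}, $g$ is infinitely continuum-wise expansive. In particular it is continuum-wise expansive on the compact metric space $S$ with any constant $e>0$. We fix an expansivity constant $e$ smaller than the injectivity radius of the covering projection. This means: for some $r_0>0$, $\wt\pi$ restricted to any ball of radius $r_0$ in $\tS$ is an isometry onto its image, and we take $e<r_0/2$.

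Kato's theorem then says the following. For every $0<\varepsilon\le e$ there is $\delta>0$ such that every $z\in S$ has a nontrivial continuum $K^s_z\ni z$ with $\diam g^j(K^s_z)\le\varepsilon$ for $j\ge0$ and $\diam K^s_z\ge\delta$. The same holds for the unstable side with $j\le0$. Indeed, Kato's statement gives exactly that the component of $z$ in the downstairs $\varepsilon$-stable set has diameter bounded below uniformly.

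Next we lift. Let $\wt z$ be a lift of $z$, and let $\wt K$ be the lift of the connected set $K^s_z$ through $\wt z$. Since $\diam g^j(K^s_z)\le\varepsilon<r_0/2$, each $g^j(K^s_z)$ lies in a small ball, hence is evenly covered. The lift of $g^j(K^s_z)$ containing $\wt g^j\wt z$ is isometric to it. By uniqueness of lifts of connected sets through a point, this lift equals $\wt g^j(\wt K)$, using $\wt\pi\wt g=g\wt\pi$.

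Hence $\dd(\wt g^j\wt z',\wt g^j\wt z)\le\varepsilon$ for all $\wt z'\in\wt K$ and $j\ge0$. So $\wt K$ is contained in the component of $\wt z$ in the upstairs set, and $\diam\wt K=\diam K^s_z\ge\delta$. Conversely, the upstairs component projects into the downstairs one, so no subtlety arises. For $\varepsilon>e$ we use the $\delta$ obtained for $e$, since the sets increase with $\varepsilon$.

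The final assertion follows by choosing the continuum $\wt K$ above (compact and nontrivial, being isometric to $K^s_z$). Its future dynamical diameter is at most $\varepsilon$, which is arbitrary. The unstable case is symmetric, using $g^{-1}$.

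The main point to check is precisely Kato's formulation. One must verify that his theorem gives a uniform lower bound on the diameter of the component containing $z$ itself, rather than of just some continuum through $z$. If it only gives a continuum, that continuum lies in the component, and the bound transfers anyway. The covering argument is routine once $\varepsilon$ is below the injectivity scale.
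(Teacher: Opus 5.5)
Your proposal is correct and follows the paper, which proves this lemma only by invoking Kato's Theorem 1.6 for the infinitely continuum-wise expansive factor $g$. What you add is the transfer from $S$ to $\tS$ by isometric lifting below the injectivity scale, which the paper leaves implicit. Two small corrections, neither affecting the argument:
\begin{itemize}
\item Infinite continuum-wise expansiveness (in the lifted sense) gives downstairs continuum-wise expansiveness only for constants below that injectivity scale, not for every $e>0$. The reason is that continua whose iterates all have small downstairs diameter lift isometrically.
\item The $\varepsilon$-stable component has iterates of diameter at most $2\varepsilon$ rather than $\varepsilon$, so the lifting should use the pointwise bound $\dd(g^j y,g^j z)\le\varepsilon$ instead.
\end{itemize}
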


\begin{corollary}\label{cor:emptyinterior}
	Every stable {pseudo-leaf} $\eta^s(\xi)$ and every unstable {pseudo-leaf} $\eta^u(\xi')$ of $g$ has empty interior. Consequently, stable and unstable continua of $g$ are filled continua with empty interior.
\end{corollary}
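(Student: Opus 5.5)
We argue by contradiction, working with the pseudo-leaves of $g$ in $\tS$. Suppose some stable pseudo-leaf $\eta^s(\xi)$ contains a nonempty open set. Then it contains a small closed topological disk $\wt D$, and we may take $\wt D$ of diameter small enough to be injectively projected to $S$. The key observation is that $\wt D$ is then a stable continuum in the sense of Definition~\ref{def:stable_continua}. By Lemma~\ref{lem:stable_continua_shrink}, $\diam(\wt g^{\,n}\wt D)\to 0$ as $n\to+\infty$.

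Since $\wt g$ is a lift of a homeomorphism of the compact surface $S$, we now use a uniform lower bound for images of a fixed disk. The projection $D=\wt\pi(\wt D)$ has positive area for a fixed smooth area form, or more topologically it contains a ball of radius $r>0$. The images $g^n(D)$ are open-interior disks. A disk whose diameter tends to zero cannot contain balls of a fixed size, but that alone is no contradiction, since homeomorphisms may shrink balls. We therefore use a recurrence or conservative argument.

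The cleanest route avoids measures and uses Lemma~\ref{lem:kato} together with the unstable direction. Pick a point $\wt z$ in the interior of $\wt D$. By Lemma~\ref{lem:kato} there is a nontrivial unstable continuum $\wt K\ni\wt z$ with small past dynamical diameter. Shrinking it via boundary bumping (Lemma~\ref{lem:nadler}), we may take the component of $\wt K\cap \operatorname{int}\wt D$ containing $\wt z$, whose closure $\wt K'$ is a nontrivial continuum contained in $\wt D\subset\eta^s(\xi)$. Its past iterates stay bounded, because it is a subcontinuum of an unstable continuum. Its future iterates stay bounded by Lemma~\ref{lem:stable_continua_equiv}, because it lies in one stable pseudo-leaf. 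Hence $\wt K'$ is a nontrivial dynamically bounded continuum, contradicting Lemma~\ref{lem:essential_factor_ice}; equivalently, it is a nontrivial component of $\eta^s(\xi)\cap\eta^u(\xi')$, contradicting Corollary~\ref{cor:trivialintersections}. The unstable case is symmetric. The only point needing care is checking that $\wt K'$ is nontrivial. This holds because $\wt K$ meets both $\wt z$ and the complement of a small ball around $\wt z$ inside $\operatorname{int}\wt D$, which follows from the diameter lower bound $\delta$ in Lemma~\ref{lem:kato} after choosing the ball radius below $\delta/2$.

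For the consequence, let $\wt K$ be a stable continuum, so $\wt K\subset\eta^s(\xi)$. It has empty interior because the pseudo-leaf does. For filledness, let $U$ be a bounded complementary component of $\wt K$. Then $\Fill(\wt K)$ is a continuum, and its forward iterates satisfy $\wt g^{\,n}\Fill(\wt K)=\Fill(\wt g^{\,n}\wt K)$. By Lemma~\ref{lem:uniform_fill} these have diameters uniformly bounded. So $\Fill(\wt K)$ is a stable continuum by Lemma~\ref{lem:stable_continua_equiv}, hence lies in $\eta^s(\xi)$, since it is connected and contains $\wt K$. If $U\neq\varnothing$, then $\eta^s(\xi)$ would contain the open set $U$, contradicting the first part. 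The unstable case is identical with past iterates.
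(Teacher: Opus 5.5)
Your proof is correct and is essentially the paper's argument: a nontrivial unstable continuum from Lemma~\ref{lem:kato}, placed inside a disk contained in $\eta^s(\xi)$, contradicts Corollary~\ref{cor:trivialintersections}, and filledness follows because $\Fill(\wt K)$ stays forward-bounded (Lemma~\ref{lem:uniform_fill}) and so lies in the same pseudo-leaf. You can delete the opening detour through Lemma~\ref{lem:stable_continua_shrink} and area, and you can also skip the boundary-bumping step, since the $j=0$ condition in Lemma~\ref{lem:kato} already puts the continuum inside $\overline{B}(\wt z,\varepsilon)\subset\wt D$ once $\varepsilon$ is small.
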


\begin{proof}
	Suppose a ball $B$ were contained in a stable {pseudo-leaf}. By Lemma \ref{lem:kato}, $B$ contains a nontrivial unstable continuum $\wt K^u$; then $\wt K^u$ would be simultaneously a stable and an unstable continuum, hence a point by Corollary \ref{cor:trivialintersections}, a contradiction. Filledness: a bounded complementary component of a stable continuum would be an open set whose forward iterates have bounded diameter (they are enclosed by the iterates of the continuum), so the union of its closure with the continuum is again forward-bounded, and its interior points would give a ball inside a stable {pseudo-leaf} by Lemma \ref{lem:stable_continua_equiv}, contradicting the first part.
\end{proof}

\subsection{\texorpdfstring{Density of stable and unstable {pseudo-leaves}}{Density of stable and unstable pseudo-leaves}}

The next proposition transfers minimality of the model foliations
to the {pseudo-leaves} of the essential factor. It supplies
dense stable and unstable continua for the local return construction. For $z \in S$, denote by $\mathcal{W}^s(z) \subset S$ the projection $\wt\pi(\eta^s(\xi^s(\wt z)))$ of the stable {pseudo-leaf} of any lift, and similarly $\mathcal{W}^u(z)$.

\begin{proposition}\label{thm:density}
	Let $g$ be as above and let $z \in S$ be a point admitting a lift $\wt z$ whose stable class is the class of a leaf: $\xi^s(\wt z) = [M]$ for some $M \in \mathcal{L}^s$. Then $\mathcal{W}^s(z)$ is dense in $S$. The symmetric statement holds for unstable {pseudo-leaves}. In particular, the projected {pseudo-leaves} of every {genuine} model leaf are dense.
\end{proposition}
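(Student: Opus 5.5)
To prove Proposition~\ref{thm:density}, we show that every open set of $S$ meets $\mathcal W^s(z)$. Lift the problem: let $U\subset S$ be nonempty open and fix a lift $\wt U$ of a small open disk in $U$. It suffices to find $\gamma\in\Gamma$ with $\gamma\,\eta^s([M])\cap \wt U\neq\varnothing$, since (D2 -- Equivariance) gives $\gamma\,\eta^s([M])=\eta^s([\gamma M])$, and its projection is $\mathcal W^s(z)$.

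First we trap a stable class inside $\wt U$. By Lemma~\ref{lem:kato}, $\wt U$ contains a nontrivial unstable continuum $\wt K^u$. By Corollary~\ref{cor:trivialintersections}, $\wt K^u$ is not contained in a single stable pseudo-leaf, because it would then be a nontrivial dynamically bounded continuum. Hence $\wt K^u$ contains points $\wt z',\wt z''$ with distinct stable classes $\xi'\neq\xi''$, so $d:=\dd^u(\xi',\xi'')>0$.

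Next we apply Lemma~\ref{lem:separating_translate} to the leaf $M$ and to $\xi',\xi''$. It yields $\gamma\in\Gamma$ such that $\gamma M$ separates $\xi'$ from $\xi''$. Lemma~\ref{lem:intermediate}, applied to the continuum $\wt K^u\ni\wt z',\wt z''$, then gives
\[
\wt K^u\cap\eta^s([\gamma M])\neq\varnothing.
\]
Since $\wt K^u\subset\wt U$, the pseudo-leaf $\eta^s([\gamma M])=\gamma\,\eta^s([M])$ meets $\wt U$, and projecting shows that $\mathcal W^s(z)$ meets $U$. The unstable case is symmetric: use a stable continuum from Lemma~\ref{lem:kato}, the unstable version of Lemma~\ref{lem:separating_translate}, and Lemma~\ref{lem:intermediate}. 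The final sentence of the proposition follows because each genuine leaf $M$ has a nonempty pseudo-leaf by (T1 -- Partition), so some $\wt z$ has $\xi^s(\wt z)=[M]$.

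The main obstacle is the first step, namely that a small continuum must cross two stable classes. This relies on infinite continuum-wise expansiveness of $g$ through Corollary~\ref{cor:trivialintersections}, together with the existence of small nontrivial unstable continua from Kato's theorem. The other delicate point is already handled by Lemma~\ref{lem:separating_translate}: the minimality of the model foliation (Lemma~\ref{lem:minimal_filling_foliations}) is exactly what produces a deck translate of $M$ separating the two classes. Everything else is a formal combination of earlier results.
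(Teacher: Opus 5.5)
Your proposal is correct and matches the paper's proof essentially step for step. Both arguments take an unstable continuum in the lifted disk from Lemma~\ref{lem:kato} and use infinite continuum-wise expansiveness (Corollary~\ref{cor:trivialintersections}) to get two distinct stable classes on it. They then apply Lemma~\ref{lem:separating_translate} to $M$, followed by Lemma~\ref{lem:intermediate} and (D2 -- Equivariance). Your justification of the final sentence via (T1 -- Partition) is a detail the paper leaves implicit.
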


\begin{proof}
	Let $U \subset S$ be open, and let $\wt U$ be a lift. By Lemma \ref{lem:kato}, take a nontrivial unstable continuum $\wt K^u \subset \wt U$. Since $g$ is infinitely continuum-wise expansive, $\wt K^u$ is not contained in a single stable {pseudo-leaf} (Corollary \ref{cor:trivialintersections}), so there exist $\wt z', \wt z'' \in \wt K^u$ with $\xi' := \xi^s(\wt z') \neq \xi'' := \xi^s(\wt z'')$. By Lemma \ref{lem:separating_translate} applied to the leaf $M$ and the pair $\xi', \xi''$, there is $\gamma \in \Gamma$ such that $\gamma M$ separates $\xi'$ from $\xi''$. By Lemma \ref{lem:intermediate} applied to the continuum $\wt K^u$,
	$$ \wt K^u \cap \eta^s([\gamma M]) \neq \varnothing.$$
	Since $\eta^s([\gamma M]) = \gamma\, \eta^s([M])$ by (D2 -- Equivariance), and $\wt K^u \subset \wt U$, projecting to $S$ gives $U \cap \mathcal{W}^s(z) \neq \varnothing$.
\end{proof}

\section{Local rotational horseshoes and mixing}\label{sec:horseshoes}

{We first construct local heteroclinic pseudo-rectangles. A Markovian-type argument gives local rotational horseshoes. }We then prove eventual intersections
of stable and unstable continua, which imply mixing and the density
of topologically transverse intersections between periodic points.

Throughout this section $g$ is an infinitely continuum-wise expansive
homeomorphism isotopic to a closed pseudo-Anosov map $A$. Choose a
nonsingular periodic point $p_A$ of $A$, a lift $\wt p_A$, and a
positive integer $n$ such that a lift
$A'=\gamma_0\wt A^n$ fixes $\wt p_A$ and each of the four ends
of its invariant leaves. Increase $n$ if necessary so that $A^n$
preserves orientation. Set
\[
 G=\gamma_0\wt g^n,\qquad
 \xi^s_0=[L^s(\wt p_A)],\qquad
 \xi^u_0=[L^u(\wt p_A)].
\]
{Both classes are represented by regular leaves, and thus their pseudo-leaves
are nonempty in $\tS$} by Theorem~\ref{thm:militon_partition}, and
$G$ preserves them by Lemma~\ref{lem:induced_actions}. Write
\[
 \mathcal W^s_0=\wt\pi(\eta^s(\xi^s_0)),\qquad
 \mathcal W^u_0=\wt\pi(\eta^u(\xi^u_0)).
\]
These two projected {pseudo-leaves} are dense by Proposition~\ref{thm:density}.
Let $e^s_1,e^s_2$ be the ends of $L^s(\wt p_A)$ and
$e^u_1,e^u_2$ those of $L^u(\wt p_A)$. The stable ends are
repelling and the unstable ends attracting for the common boundary
action of $G$ and $A'$; their cyclic order alternates. Put
$a=\lambda^n\in(0,1)$. The induced action of $A'$ contracts the
unstable {class} tree by $a$ and expands the stable {class} tree by
$a^{-1}$.

\subsection{Pseudo-rectangles and prime ends}

A \emph{marked rectangle} is a closed Jordan disk whose boundary is
partitioned into four arcs in cyclic order. Opposite arcs form its
two pairs of sides, {which we call \emph{horizontal} and \emph{vertical}}. A homeomorphism from $[0,1]^2$ preserving these
markings is called a rectangle chart.
\label{subsec:primeends}

We use Carath\'eodory's prime end compactification as in
\cite[Sections 2.6, 8]{garciatal24chaotic}, including crosscuts,
fundamental chains, impressions and accessible prime ends. The
nearby-rectangle lemma will be
applied to the pseudo-rectangles constructed in
Proposition~\ref{prop:HPRdense}.

\begin{definition}[Generalized rectangle, {\cite[Definition 8.8]{garciatal24chaotic}}]\label{def:GeneralizedRectangle}
	An inessential continuum $G \subset S$ is a \emph{generalized rectangle} if
	\begin{enumerate}
		\item $G$ is the closure of an open disk $D$;
		\item (Sides) $\partial G = K^{\textnormal{h}}_1 \cup K^{\textnormal{h}}_2 \cup K^{\textnormal{v}}_1 \cup K^{\textnormal{v}}_2$, each a continuum, with $K^{\textnormal{h}}_1 \cap K^{\textnormal{h}}_2 = \varnothing$, $K^{\textnormal{v}}_1 \cap K^{\textnormal{v}}_2 = \varnothing$;
		\item the associated prime end sides define a rectangle structure on the prime end compactification $\breve{G}$, as in \cite[Lemma 8.6]{garciatal24chaotic}.
	\end{enumerate}
	We call $K^{\textnormal{h}}_i$ the horizontal and $K^{\textnormal{v}}_i$ the vertical sides. In item (3), the \emph{prime end side associated to} $K^{\sigma}_i$ is the closure, in the prime-end circle, of the accessible prime ends admitting an access ray whose image lands in $K^{\sigma}_i$, as in \cite[Definition 8.4]{garciatal24chaotic}; condition (3) asks that these four sets be arcs of the circle of prime ends, arranged in the cyclic order $\textnormal{h}_1, \textnormal{v}_1, \textnormal{h}_2, \textnormal{v}_2$ and meeting only at their endpoints, so that $\breve{G}$ is a closed disk whose boundary is subdivided as the boundary of $[0,1]^2$; see \cite[Definition 8.8]{garciatal24chaotic}.
\end{definition}

{The following result follows from \cite[Lemma 8.9]{garciatal24chaotic}, where it was proved for generalized rectangles.

\begin{lemma}
	\label{lem:compact_crossing_duality}
	Let $K$ be a compact subset of $[0,1]^2$.  If $K$ meets every path
	in the square from the left side to the right side, then $K$
	contains a continuum joining the bottom side to the top side.
\end{lemma}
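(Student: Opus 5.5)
The plan is to argue by contraposition: assume $K$ contains no continuum joining the bottom side to the top side, and build a path from the left side to the right side that avoids $K$. Write $B$ and $T$ for the bottom and top sides, $L$ and $R$ for the left and right sides. The statement is purely planar and does not depend on any dynamics. So rather than passing through prime ends, one could also quote the generalized-rectangle version \cite[Lemma 8.9]{garciatal24chaotic} applied to $[0,1]^2$ itself, which is trivially a generalized rectangle with prime-end sides equal to its geometric sides. I would nevertheless sketch a direct argument so the statement is self-contained.

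\textbf{Step 1 (separating $B$ from $T$ inside $K$).} Set $K' = K \cup B \cup T$, a compact set. By hypothesis, no component of $K'$ meets both $B$ and $T$. Indeed, such a component $Q$ would give a continuum $Q\cap K$ joining them after removing the two sides. To make this precise, I would use boundary bumping (Lemma~\ref{lem:nadler}) on a small neighbourhood of $B\cup T$, so that a component of $K'$ touching both sides yields a subcontinuum of $K$ meeting both. Since components and quasi-components coincide in compact Hausdorff spaces, $K'$ splits into disjoint relatively clopen sets $K_B \supset B$ and $K_T \supset T$.

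\textbf{Step 2 (a separating region).} Choose $\varepsilon>0$ less than $\dd(K_B,K_T)/3$. Cover the square by a fine grid of closed squares of side less than $\varepsilon$. Let $P$ be the union of the closed grid squares meeting $K_B$. Then $P$ is a polyhedron containing $B$ and disjoint from $K_T$, and in particular disjoint from $T$. Take the component $P_0$ of $P$ containing $B$. Its frontier relative to $[0,1]^2$, namely $\partial P_0\setminus\partial[0,1]^2$ together with the pieces along $L\cup R$, is a finite union of grid edges avoiding $K$.

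\textbf{Step 3 (extracting the path).} This is the main obstacle: converting the frontier into a single path from $L$ to $R$. I would take the component of $[0,1]^2\setminus P_0$ containing $T$, and let $\Sigma$ be the frontier of that component in the square. Since $P_0$ is connected and contains $B$, and the complementary region contains $T$, planar (Jordan/Alexander duality for polyhedra) arguments show that $\Sigma$ contains a polygonal arc from $L$ to $R$. This arc consists of grid edges adjacent to squares not meeting $K_B$, and it lies in the frontier of $P_0$. Its points avoid $K_B$ because $K_B$ is contained in the interiors of squares of $P_0$ relative to the grid, after refining the grid so that no point of $K_B$ lies on an edge of a non-$P$ square. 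They also avoid $K_T$ by the choice of $\varepsilon$. This gives a path from $L$ to $R$ missing $K$, which contradicts the hypothesis.

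The care needed lies in Steps 1 and 3. Step 1 requires handling components meeting the sides only in the limit. Step 3 requires the standard fact that, in a disk, a connected set containing the bottom side and separated from the top side has a frontier arc crossing from left to right. That fact is exactly Lemma~\ref{lem:crossedcontinua}-type Jordan curve reasoning in the polygonal case.
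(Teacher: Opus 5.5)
Your opening remark is exactly the paper's proof. The paper gives no argument beyond noting that the statement is the case $[0,1]^2$ of \cite[Lemma 8.9]{garciatal24chaotic}, which is proved there for generalized rectangles through their prime-end sides.

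Your grid argument is a genuinely different, self-contained route. It uses a compactness separation inside $K$, a polyhedral neighbourhood $P_0$ of one piece, and a frontier arc obtained from planar topology. The citation buys generality: it covers rectangles whose boundary is only controlled through prime ends. Your argument treats only an honest square, which is what the lemma states and what its uses for marked rectangles need via a rectangle chart. In exchange it is elementary and independent of the prime-end framework.

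Three points need tightening for the direct route to be complete.

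\textbf{Step 1.} Boundary bumping alone does not produce a subcontinuum of $K$ meeting both sides. Drop $K'$ and apply the separation theorem (components equal quasi-components in compacta) to $K$ itself, with the disjoint closed sets $K\cap B$ and $K\cap T$. This gives a clopen splitting $K=K_1\sqcup K_2$ with $K\cap B\subset K_1$ and $K\cap T\subset K_2$. Then put $K_B=K_1\cup B$ and $K_T=K_2\cup T$.

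\textbf{Steps 2 and 3, avoiding $K$.} The edges of $\partial P_0$ lying on $L\cup R$ may meet $K_B$, so they should not be counted in a frontier that avoids $K$. No refinement of the grid is needed either. A point of $\Sigma=\mathrm{Fr}(W)$ lies in $P_0\subset P$, so it misses $K_T$. It also lies in some closed grid square not in $P_0$. That square meets $P_0$, so it is not in $P$ (otherwise it would belong to the component $P_0$), and hence it misses $K_B$.

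\textbf{Step 3, the arc.} Write $X=[0,1]^2$. The sets $\overline W$ and $X\setminus W$ are closed, connected (each other component of $X\setminus P_0$ has closure meeting $P_0$), and cover $X$. Unicoherence of the square therefore makes $\Sigma=\overline W\cap(X\setminus W)$ connected. It is a finite union of grid edges and vertices. It meets $L$ and $R$, because each of these sides runs from a corner in $B\subset P_0$ to a corner in $T\subset W$. A connected finite graph contains an arc between any two of its points, which gives the required path from $L$ to $R$ avoiding $K$.
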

}
\begin{definition}[{Heteroclinic pseudo-rectangle}, \cite{garciatal24chaotic}]\label{defi:ChaoticRectangle}
	An inessential continuum $P \subset S$ is a \emph{heteroclinic pseudo-rectangle} (HPR) for $g$, {subordinated to classes $\alpha_1\ne\alpha_2$ in $\slea$ and $\beta_1\ne\beta_2$ in $\ulea$, if the following hold:}
	\begin{enumerate}
		\item $P$ is the closure of an open disk $D$;
		\item for any $z \in \partial P$, any neighbourhood of $z$ intersects the fully essential component $P_\infty$ of the complement of $P$;
		\item $\partial P = K^s_1 \cup K^s_2 \cup K^u_1 \cup K^u_2$, {and there is a lift $\wt P$ such that the corresponding $\wt K^s_i\subset\eta^s(\alpha_i)$ are stable continua and the corresponding $\wt K^u_i\subset\eta^u(\beta_i)$ are unstable continua;}
		\item $K^s_i \cap K^u_j \neq \varnothing$ for every $i,j \in \{1,2\}$.
	\end{enumerate}
	A lift $\wt P \subset \tS$ of an HPR is again called an HPR; its sides are the corresponding lifts.
\end{definition}

{
\begin{remark}\label{rem:HPRbasic}
	Since $\alpha_1 \neq \alpha_2$, the corresponding {pseudo-leaves are disjoint for the lift $\wt P$}, so $K^s_1 \cap K^s_2 = \varnothing$, and similarly for the unstable sides; the sides intersect pairwise in totally disconnected sets by Corollary \ref{cor:trivialintersections}. Every HPR, and any of its lifts, is a generalized rectangle by {\cite[Lemma 8.6]{garciatal24chaotic}.}
\end{remark}
}

We use the following approximation result from \cite[Lemma 8.7]{garciatal24chaotic}.

\begin{lemma}\label{lem:RectangleNearPseudoRectangle}
	Let $G$ be an HPR as in Definition~\ref{defi:ChaoticRectangle},
with sides $K^{\textnormal{h}}_1, K^{\textnormal{h}}_2,
K^{\textnormal{v}}_1, K^{\textnormal{v}}_2$. For every $\varepsilon>0$
there exists a rectangle $R\subset G$ {with respective sides $R^{\textnormal{h}}_1, R^{\textnormal{h}}_2,
R^{\textnormal{v}}_1, R^{\textnormal{v}}_2$} such that
	$$ \dd(z, K^{\sigma}_i) < \varepsilon \text{ for every } z \in R^{\sigma}_i, \ \sigma \in \{\textnormal{h}, \textnormal{v}\}, \ i = 1,2. $$
\end{lemma}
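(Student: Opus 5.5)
The plan is to transfer the statement to the prime end compactification of the open disk $D$ with $G=\overline D$, where $G$ is a generalized rectangle by Remark~\ref{rem:HPRbasic}. There we can work with genuine arcs and crosscuts. By condition (3) of Definition~\ref{def:GeneralizedRectangle}, the prime end compactification $\breve G$ is a closed disk whose boundary circle is divided into four arcs $\breve K^{\textnormal{h}}_1,\breve K^{\textnormal{v}}_1,\breve K^{\textnormal{h}}_2,\breve K^{\textnormal{v}}_2$ in cyclic order. Fix a homeomorphism $\psi:[0,1]^2\to\breve G$ respecting these markings; it restricts to a homeomorphism of the open square onto $D$. The rectangle $R$ will be the image under $\psi$ of a slightly shrunk square $[t,1-t]^2$, for $t>0$ small. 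Its sides are the images of the four boundary segments of that square. It is a closed Jordan disk contained in $D\subset G$.

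The key step is to show that, as $t\to0$, the image $\psi(\{t\}\times[t,1-t])$ lies in the $\varepsilon$-neighbourhood of the corresponding side $K^{\textnormal{v}}_1$, and similarly for the other three sides. I would argue by contradiction. Suppose there are points $z_k=\psi(t_k,s_k)$ with $t_k\to0$, $s_k\in[t_k,1-t_k]$, and $\dd(z_k,K^{\textnormal{v}}_1)\ge\varepsilon$. Pass to a subsequence with $(t_k,s_k)\to(0,s_*)$. Then $(0,s_*)$ corresponds to a prime end $\mathfrak p$ in the closed arc $\breve K^{\textnormal{v}}_1$. By the standard theory (fundamental chains shrinking in the prime end topology), every accumulation point of $z_k$ lies in the impression $I(\mathfrak p)$.

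It then remains to show that $I(\mathfrak p)\subset K^{\textnormal{v}}_1$ for every prime end $\mathfrak p$ of that closed side arc; this is where I expect the main obstacle. Since $\breve K^{\textnormal{v}}_1$ is the closure of the accessible prime ends landing in $K^{\textnormal{v}}_1$, for an interior $\mathfrak p$ we can choose two accessible prime ends $\mathfrak a,\mathfrak b$ on either side of it, both in the side arc, with access rays landing at points $a,b\in K^{\textnormal{v}}_1$. Join the two rays to form a crosscut-like arc $\gamma$ in $D$. The arc $\gamma$ together with the subcontinuum of $\partial G$ between $a$ and $b$ cuts off a region containing the tail of any fundamental chain for $\mathfrak p$.

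What must be shown is that the boundary part of that region lies in $K^{\textnormal{v}}_1$. Here I would use that the four prime end sides meet only at endpoints, together with condition (2) of Definition~\ref{defi:ChaoticRectangle}: every boundary point is accessible from the fully essential complementary component, so $\partial G$ has no hidden interior pieces. The impressions of prime ends strictly between $\mathfrak a$ and $\mathfrak b$ then cannot reach the other sides, since otherwise the disjoint side continua would be interleaved on $\partial D$, contradicting Lemma~\ref{lem:crossedcontinua}. For the endpoint prime ends of a side arc, the impression lies in the intersection of two adjacent sides' closures, which is still contained in $K^{\textnormal{v}}_1$. This is the delicate case, and I would handle it by taking limits of interior prime ends.

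With $I(\mathfrak p)\subset K^{\textnormal{v}}_1$ established, the accumulation points of $z_k$ lie in $K^{\textnormal{v}}_1$. This contradicts $\dd(z_k,K^{\textnormal{v}}_1)\ge\varepsilon$. Uniformity over all four sides then gives one $t$ that works simultaneously, and the corresponding $R$ proves the lemma. Equivalently, one may simply invoke the argument of \cite[Lemma 8.7]{garciatal24chaotic}, which used only the generalized-rectangle structure.
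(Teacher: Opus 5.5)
The paper gives no proof of this lemma; it imports it from \cite[Lemma 8.7]{garciatal24chaotic}, so your closing sentence is exactly the paper's route. Your self-contained argument, however, has a genuine gap at the corners. You assert that the impression of an endpoint prime end of $\breve K^{\textnormal{v}}_1$ lies in $K^{\textnormal{h}}_1\cap K^{\textnormal{v}}_1$, or at least in $K^{\textnormal{v}}_1$. This is false for generalized rectangles, and nothing you invoke excludes it.

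Here is a counterexample. Let thin fingers of the exterior $P_\infty$ enter $[0,1]^2$ through the bottom side at $x\approx 1/k$, each of height $1/2$. Let $K^{\textnormal{h}}_1$ consist of the bottom pieces, the finger boundaries, and their limit segment $I=\{0\}\times[0,1/2]$, and let $K^{\textnormal{v}}_1=\{0\}\times[1/2,1]$. Condition (2) holds, the sides are filled with empty interior, and $K^{\textnormal{h}}_1\cap K^{\textnormal{v}}_1=\{(0,1/2)\}$. Points of $I$ below $(0,1/2)$ are not accessible from $D$, so the two side arcs meet at the prime end accessed at $(0,1/2)$. Every crosscut neighbourhood of that prime end contains all channels between small fingers, so its impression is $I$. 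This contains $(0,0)$, at distance $1/2$ from $K^{\textnormal{v}}_1$.

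Consequently, when $(t_k,s_k)\to(0,0)$ your argument only places accumulation points in $I$, not in $K^{\textnormal{v}}_1$. Indeed, one can choose a marked $\psi$ for which every arc $\psi(\{t\}\times[t,1-t])$ with $t$ small enters some channel, so no shrunk square works. Taking limits of interior prime ends cannot rescue the corner case. Upper semicontinuity only gives $\limsup I(\mathfrak q)\subset I(\mathfrak p)$, which is the wrong inclusion, and the channel prime ends on the $\breve K^{\textnormal{h}}_1$ side accumulate at $(0,0)$.

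What is true is that the principal points of a corner prime end lie in $K^{\textnormal{h}}_1\cap K^{\textnormal{v}}_1$. Take a null chain of crosscuts $c_n$ defining the corner prime end. For large $n$, the endpoints of $c_n$ correspond to accessible prime ends in the open arcs of $\breve K^{\textnormal{h}}_1$ and $\breve K^{\textnormal{v}}_1$. Since the side arcs meet only at endpoints, these land in $K^{\textnormal{h}}_1$ and $K^{\textnormal{v}}_1$ respectively, so $c_n$ lies within $\diam c_n$ of both sides.

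The repair is therefore to place the corners of $R$ on such small crosscuts. Then join them by arcs lying in thin prime-end neighbourhoods of compact subarcs of the open side arcs, where your impression argument does work uniformly. In other words, $R$ (or $\psi$) must be adapted near the corners rather than taken as an arbitrary shrunk square.

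A smaller point concerns interior prime ends. Your justification via condition (2) and Lemma~\ref{lem:crossedcontinua} is not the right mechanism: condition (2) gives closure of $P_\infty$, not accessibility, and $\partial D$ is not a circle. The clean argument runs as follows. Accessible points are dense in the boundary of the region $D_\gamma$ cut off by your arc $\gamma$. Those not in $\overline\gamma$ are landing points of accessible prime ends strictly inside $\breve K^{\textnormal{v}}_1$, hence lie in $K^{\textnormal{v}}_1$. Since $K^{\textnormal{v}}_1$ is closed, this gives $\overline{D_\gamma}\cap\partial D\subset K^{\textnormal{v}}_1$.
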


\subsection{Crossing rectangles}

{Throughout this subsection, $\Sigma$ is any surface. A \emph{horizontal arc} of a marked rectangle is contained in it and has its endpoints
contained in opposite vertical sides of the rectangle; similarly a \emph{vertical arc} has its
endpoints contained in opposite horizontal sides of the rectangle. Homeomorphisms transport the markings.

We now introduce a topological ordered-crossing notion {for} marked rectangles. It was developed {with contributions} from Favio Pirán. 
}

{
\begin{definition}[Crossing rectangle]\label{def:crossing_rectangle}
	Let $A, B \subset \Sigma$ be marked rectangles, and $m \geq 1$ an integer. We will say that \emph{$A$ $m$-crosses $B$} and {write} $A \dagger_m B$, if any horizontal arc of $A$ intersects any vertical arc of $B$ in at least $m$ distinct points.  
		
	When $m=1$, we simplify notation by saying that $A$ \emph{crosses} $B$ and {writing} $A \dagger B$.
\end{definition}
}

{
\begin{lemma}\label{lem:crossing_rectangle_equivalence}
	Having $A \dagger_m B$ is equivalent to {either of the following two assertions:}
	\begin{itemize}
		\item Any horizontal arc of $A$ contains $m$ disjoint horizontal arcs of $B${.}
		\item Any vertical arc of $B$ contains $m$ disjoint vertical arcs of $A${.}
	\end{itemize}
\end{lemma}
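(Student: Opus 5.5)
We prove each equivalence with the first condition by reducing everything to the rectangle $A$ or $B$ through a rectangle chart. The main tool is a planar separation statement in a marked rectangle $R$: every horizontal arc of $R$ meets every vertical arc of $R$. This follows from Lemma~\ref{lem:crossedcontinua} applied in the closed disk $R$, since the endpoints of a horizontal arc and those of a vertical arc are interleaved on $\partial R$. Its dual is Lemma~\ref{lem:compact_crossing_duality}: a compact set meeting every left-to-right path of $R$ contains a continuum joining the bottom side to the top side.

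For (i) $\Rightarrow$ $A\dagger_m B$, let $\alpha$ be a horizontal arc of $A$ containing disjoint horizontal arcs $\alpha_1,\dots,\alpha_m$ of $B$, and let $\beta$ be any vertical arc of $B$. Each $\alpha_k$ is horizontal in $B$, so it meets $\beta$. The $\alpha_k$ are disjoint, so we obtain $m$ distinct intersection points. The implication (ii) $\Rightarrow$ $A\dagger_m B$ is symmetric.

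For $A\dagger_m B$ $\Rightarrow$ (i), fix a horizontal arc $\alpha$ of $A$ and parametrize it. Consider $\alpha\cap B$, and on $\alpha$ the subarcs that join the two vertical sides of $B$ while staying inside $B$. The key claim is that $\alpha$ contains at least $m$ disjoint such subarcs. We take the maximal subarcs of $\alpha\cap B$ and, inside each, the minimal subarcs that go from one vertical side of $B$ to the other; these are horizontal arcs of $B$ and are pairwise disjoint when chosen consecutively along $\alpha$. Suppose there were fewer than $m$ of them, say $\alpha_1,\dots,\alpha_r$ with $r<m$. We then produce a vertical arc $\beta$ of $B$ meeting $\alpha$ in at most $r$ points, which contradicts $A\dagger_m B$.

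To build $\beta$, work in a chart $B\cong[0,1]^2$. Let $K$ be the compact set $\alpha\cap B$ with small open neighbourhoods of the crossing points of each $\alpha_i$ removed, after first collapsing each $\alpha_i$ to a point where it crosses a fixed vertical line. The remaining pieces of $\alpha\cap B$ contain no left-to-right crossing. By Lemma~\ref{lem:compact_crossing_duality}, with the roles of horizontal and vertical exchanged, we get a bottom-to-top path avoiding them. This path can be chosen as an arc passing each $\alpha_i$ exactly once, for instance by routing it through a point where $\alpha_i$ can be crossed transversally within the chart. It is then a vertical arc of $B$ meeting $\alpha$ in exactly $r<m$ points.

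The main obstacle is this construction, since $\alpha$ may be a wild arc. The plan is to argue inductively: cut $B$ along $\alpha_1,\dots,\alpha_r$ into $r+1$ sub-rectangles stacked vertically. Each has vertical sides in the vertical sides of $B$, and $\alpha$ contains no crossing arc inside any of them. In each strip, the complement of $\alpha$ then connects the top side to the bottom side, by Lemma~\ref{lem:compact_crossing_duality} together with the fact that open connected subsets of the plane are arcwise connected. We concatenate these strip paths at single points of the $\alpha_i$, which gives $\beta$.

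Statement (ii) follows by the same argument with horizontal and vertical exchanged and the roles of $A$ and $B$ swapped.
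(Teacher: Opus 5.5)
Your proofs that (i) and (ii) each imply $A\dagger_m B$ are correct and match the paper. The direct implication $A\dagger_m B\Rightarrow$ (i), however, has a genuine gap.

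\textbf{The false claim.} You assert that the minimal subarcs of $\alpha$ joining the two vertical sides of $B$ are pairwise disjoint. This is false. Minimal crossing subarcs have disjoint interiors along $\alpha$, but two consecutive ones can share an endpoint on a vertical side of $B$. Suppose $\alpha$ leaves the left side, touches the right side at a single point $p$, and returns to the left side while staying in $B$. Then $\alpha$ contains two minimal crossing subarcs meeting exactly at $p$. Yet it contains no two disjoint horizontal arcs of $B$, since each would have to contain $p$.

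\textbf{Why this breaks your argument.} The failure of (i) does not give $r<m$ for the number $r$ of minimal crossing subarcs; in general you only get $r\le 2m-2$. Your vertical arc $\beta$ crosses each $\alpha_i$ once at an interior point, so it meets $\alpha$ in $r$ points. That number may be $\geq m$, and then there is no contradiction with $A\dagger_m B$.

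Reading ``chosen consecutively'' as picking a greedy disjoint subfamily does not help. Then $r<m$ does hold, but the unchosen minimal arcs, each sharing an endpoint with a chosen one, remain complete left-to-right crossings. So your claims that ``the remaining pieces of $\alpha\cap B$ contain no left-to-right crossing'' and that ``$\alpha$ contains no crossing arc inside'' a strip both fail. Every vertical arc of $B$ must meet such an unchosen arc somewhere.

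\textbf{The missing idea.} This is exactly what the paper's proof supplies. Group the minimal crossing subarcs into maximal chains $a'_{i_1},\dots,a'_{i_n}$, where consecutive arcs share an endpoint. A chain of length $n$ contains at most $\lceil n/2\rceil$ pairwise disjoint horizontal arcs of $B$, so the failure of (i) gives $\sum_{\text{chains}}\lceil n/2\rceil\le m-1$.

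Then route the vertical arc through the shared endpoints $a'_{i_{2s-1}}\cap a'_{i_{2s}}$. These points lie on the vertical sides of $B$, and a vertical arc may pass through them, since it need only lie in $B$ with endpoints on the horizontal sides. Crossing there, the arc meets two arcs of the chain at one point. It therefore meets each chain in $\lceil n/2\rceil$ points, and $\alpha$ in at most $m-1$ points, which contradicts $A\dagger_m B$.

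Your strip construction can be adapted to do this. Cut along alternate arcs of each chain, and force the strip paths to meet each cutting arc at its endpoint shared with the next arc of the chain, not at an interior point. As written, however, your argument only shows that $A\dagger_m B$ forces at least $m$ minimal crossing subarcs, which is weaker than (i).
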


\begin{proof}
	Let us prove the equivalence with the former condition{; the} proof for the latter is symmetric. 
	
	For the converse implication: for any horizontal arc of $A$, each of its $m$ disjoint horizontal arcs of $B$ intersects every vertical arc of $B$ by {Lemma}~\ref{lem:crossedcontinua}. 
	
	For the direct implication: suppose by contradiction that there exists a horizontal arc $a_h$ of $A$ containing at most $m-1$ disjoint horizontal arcs of $B$. Divide the subarcs of $a_h \cap B$ into {two classes. First we consider the collection of the minimal subarcs that are horizontal subarcs of $B$. These are the crossing subarcs. Any two of these arcs can meet at most in an endpoint (in which case we call them consecutive arcs) and by the pigeonhole principle there are at most $2m-2$ of these. Then let us consider all other maximal subarcs of  $a_h \cap B$ that do not meet the interior of a crossing subarc. These are noncrossing horizontally in $B$. By Lemma \ref{lem:compact_crossing_duality} there exists a vertical continuum that is disjoint from all the noncrossing subarcs. One can find a vertical arc $b_v$ of $B$, arbitrarily close to this continuum which is also disjoint from the noncrossing subarcs. Furthermore, $b_v$ can be modified such that it only intersects crossing subarcs at a single point, and if $(a'_{i_j})_{1\le j\le n}$ is a maximal sequence of crossing arcs such that $a'_{i_j}$ and $a'_{i_j+1}$ are consecutive, then $b_v$ intersects this sequence only at points of the form $a'_{i_{2s-1}}\cap a'_{i_{2s}}$.} Thus $a_h \cap b_v$ has cardinality at most $m-1$, which is a contradiction. 
\end{proof}
}

This has the following immediate consequences.

\begin{corollary}\label{cor:crossing_basic_properties}
	$m$-crossing between marked rectangles has the following properties: 
	\begin{itemize}
		\item Equivariance. If $A \dagger_m B$ and $h$ is a homeomorphism of $\Sigma$, then $h(A) \dagger_m h(B)${.}
		\item Transitivity. If $A\dagger_m B, B\dagger_m C$, then $A\dagger_m C$ ---furthermore, $A \dagger_{m^2} C$---{.}
	\end{itemize}
	{Moreover}, 1-crossing is reflexive ---$A\dagger A$ for every $A$--- and antisymmetric: whenever $A \dagger B$ and $B\dagger A$, then $A =B$. 
\end{corollary}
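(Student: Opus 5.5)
The plan is to treat each assertion of Corollary~\ref{cor:crossing_basic_properties} separately, working directly from Definition~\ref{def:crossing_rectangle} and the two reformulations in Lemma~\ref{lem:crossing_rectangle_equivalence}.

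\emph{Equivariance.} A homeomorphism $h$ transports markings. It therefore sends horizontal arcs of $A$ bijectively to horizontal arcs of $h(A)$, and vertical arcs of $B$ to vertical arcs of $h(B)$. Since $h$ is injective, $h(a)\cap h(b)=h(a\cap b)$ has the same cardinality as $a\cap b$, so $A\dagger_m B$ transfers to $h(A)\dagger_m h(B)$.

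\emph{Transitivity.} Take a horizontal arc $a$ of $A$. By the first bullet of Lemma~\ref{lem:crossing_rectangle_equivalence} applied to $A\dagger_m B$, $a$ contains $m$ pairwise disjoint horizontal arcs $b_1,\dots,b_m$ of $B$. Apply the same bullet to each $b_i$ using $B\dagger_m C$: each $b_i$ contains $m$ disjoint horizontal arcs of $C$. This gives $m^2$ pairwise disjoint horizontal arcs of $C$ inside $a$. Each of them meets every vertical arc of $C$, by Lemma~\ref{lem:crossedcontinua} in a chart of $C$, as in the converse direction of that lemma. Because the subarcs are disjoint, every vertical arc of $C$ meets $a$ in at least $m^2\geq m$ distinct points. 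Hence $A\dagger_{m^2}C$, and in particular $A\dagger_m C$.

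\emph{Reflexivity.} In a rectangle chart of $A$, a horizontal arc joins the two vertical sides and a vertical arc joins the two horizontal sides. By Lemma~\ref{lem:crossedcontinua} applied in the closed disk $A$, the two arcs cannot be disjoint, because their endpoint pairs interleave on $\partial A$.

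\emph{Antisymmetry.} This is the step I expect to be the real obstacle. The plan is to show first that $A\dagger B$ forces $A\subset B$ together with a compatibility of sides, and then apply this symmetrically.

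First step, $A\subset B$. Suppose some $x\in A$ lies outside $B$. Choose a horizontal arc $a$ of $A$ through $x$. By the first bullet of Lemma~\ref{lem:crossing_rectangle_equivalence}, $a$ contains a horizontal arc of $B$; by itself this says nothing about the point $x$, so more is needed. Use the second bullet instead: every vertical arc $b$ of $B$ contains a vertical arc of $A$. That arc joins the horizontal sides of $A$, and the family of such vertical arcs of $A$ sweeps out all of $A$. The aim is to conclude that the horizontal sides of $A$ lie in $B$ and that $B$ is sandwiched, vertically, between them.

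Second step, $B\subset A$. Argue symmetrically using the first bullet: every horizontal arc of $A$ contains a horizontal arc of $B$. This forces the vertical sides of $B$ to be squeezed between those of $A$. Repeating the argument with the roles exchanged, via $B\dagger A$, yields the reverse inclusions. Combining the four containments of sides should give $\partial A=\partial B$, hence $A=B$ as closed Jordan disks.

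Concretely, I would try to prove the lemma ``$A\dagger B$ implies that the horizontal sides of $B$ are contained in the closed region of $A$ between its horizontal sides.'' To do this, take a horizontal side of $B$, perturb it to nearby horizontal arcs of $B$, and apply the second bullet to vertical arcs of $B$ that end near that side. The main difficulty is handling side arcs as limits of interior arcs while respecting the markings. I would treat this by a Jordan curve argument in a rectangle chart, with care taken at the corner points.
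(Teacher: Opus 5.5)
Your arguments for equivariance, transitivity (including the $m^2$ count) and reflexivity are correct. They are exactly the immediate consequences of Lemma~\ref{lem:crossing_rectangle_equivalence} and Lemma~\ref{lem:crossedcontinua} that the paper intends.

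\textbf{Antisymmetry is not proved, and your proposed route cannot work.} You plan to show that $A\dagger B$ forces $A\subset B$ together with side containments, and then symmetrize. Both that claim and your concrete sub-lemma are false. Take $A=[-1,2]\times[0,1]$ and $B=[0,1]\times[-1,2]$ in the plane, with vertical sides on the lines $x=\mathrm{const}$.
\begin{itemize}
\item A horizontal arc of $A$ joins the left and right sides of the square $[-1,2]^2$ inside it.
\item A vertical arc of $B$ joins the bottom and top sides of that square.
\item So they meet by Lemma~\ref{lem:crossedcontinua}, and $A\dagger B$ holds.
\end{itemize}
Yet $A\not\subset B$, and the horizontal sides $[0,1]\times\{-1\}$ and $[0,1]\times\{2\}$ of $B$ are not in $A$. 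So your sweeping step cannot be completed. Any proof must use $A\dagger B$ and $B\dagger A$ at the same time.

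\textbf{The missing idea} is to apply Lemma~\ref{lem:crossing_rectangle_equivalence} to the sides of $A$ themselves. Write $A^h_1,A^h_2,A^v_1,A^v_2$ for the sides of $A$. Each horizontal side is a horizontal arc of $A$, and each vertical side is a vertical arc of $A$.
\begin{itemize}
\item By $A\dagger B$ (first bullet), $A^h_i$ contains a horizontal arc $\beta_i$ of $B$.
\item By $B\dagger A$ (second bullet), $A^v_j$ contains a vertical arc $\beta'_j$ of $B$.
\item Reflexivity of $B$ gives $\beta_i\cap\beta'_j\neq\varnothing$. Since $A^h_i\cap A^v_j$ is a single corner, $\beta_i$ contains both endpoints of $A^h_i$, so $\beta_i=A^h_i$. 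Likewise $\beta'_j=A^v_j$.
\end{itemize}
Hence $\partial A\subset B$, and the sides of $A$ are horizontal or vertical arcs of $B$. Each corner of $A$ then lies on a vertical side of $B$ (as an endpoint of $A^h_i$) and on a horizontal side of $B$ (as an endpoint of $A^v_j$). So it is a corner of $B$. By symmetry, $\partial B\subset A$.

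By Jordan--Schoenflies, $\partial A\subset B$ forces $A\subset B$ on any surface other than $S^2$. On $S^2$ you must also exclude $A\cup B=S^2$. In that case, join a corner in one horizontal side of $B$ to a corner in the other by an arc through $S^2\setminus A$. This is a vertical arc of $B$ meeting $A$ only at those two corners. It therefore misses any horizontal arc of $A$ through $\operatorname{int}A$, contradicting $A\dagger B$.

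Thus $A=B$. The markings also agree, because a vertical side of $B$ is never a horizontal arc of $B$: its endpoints lie in one vertical side, which is disjoint from the opposite one.
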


In particular, 1-crossing induces a partial order {on} the family of marked rectangles of $\Sigma$. 

\medskip

The following is a sufficient condition for an ordered pair of rectangles to have a 1-crossing.

	\begin{lemma}\label{lem:crossing_configuration}
		Let $\ell_1,\ldots,\ell_4\subset\mathbb R^2$ be disjoint proper
		lines with chosen open topological half-planes $H_1,\ldots,H_4$
		which are pairwise disjoint and occur in this cyclic order.
		Let $R_1,R_2$ be marked rectangles such that:
		\begin{itemize}
			\item the vertical sides of $R_1$ lie respectively in $H_1,H_3$,
			and $R_1\cap(H_2\cup H_4)=\varnothing$;
			\item the horizontal sides of $R_2$ lie respectively in $H_2,H_4$,
			and $R_2\cap(H_1\cup H_3)=\varnothing$.
		\end{itemize}
		Then, $R_1 \dagger R_2$.
	\end{lemma}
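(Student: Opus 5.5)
Fix a horizontal arc $a$ of $R_1$ and a vertical arc $b$ of $R_2$. The goal is to show $a\cap b\neq\varnothing$; by Definition~\ref{def:crossing_rectangle} with $m=1$ this is exactly $R_1\dagger R_2$. The approach is a planar separation argument built from the four lines.

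\textbf{Step 1: reduce to a disk.} Compactify $\mathbb R^2$ to a closed disk $\overline D$ by adding a circle at infinity. Each proper line $\ell_i$ then becomes a chord with two endpoints on the circle. Since the half-planes $H_1,\dots,H_4$ are pairwise disjoint and occur in cyclic order, the region
\[
 \Omega:=\mathbb R^2\setminus(H_1\cup\dots\cup H_4)
\]
is a closed topological disk in $\overline D$. Its boundary consists alternately of the closures of $\ell_1,\dots,\ell_4$ and of four boundary arcs at infinity. I would take $\ell_i=\partial H_i$, the frontier of the chosen half-plane, which is how I read the hypothesis.

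\textbf{Step 2: shrink $a$ and $b$ to arcs in $\Omega$ joining opposite lines.} The arc $a$ starts in $H_1$ and ends in $H_3$, and it avoids $H_2\cup H_4$. Take the last exit point of $a$ from $\overline{H_1}$ and the next entry point into $\overline{H_3}$. The subarc $a'$ between them lies in $\Omega$ and joins $\ell_1$ to $\ell_3$. In the same way, $b$ contains a subarc $b'\subset\Omega$ joining $\ell_2$ to $\ell_4$.

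\textbf{Step 3: apply Lemma~\ref{lem:crossedcontinua}.} Suppose $a'\cap b'=\varnothing$. The endpoints of $a'$ lie on the boundary pieces $\ell_1,\ell_3$ of $\Omega$, and the endpoints of $b'$ lie on $\ell_2,\ell_4$. In the cyclic order of $\partial\Omega$, these two pairs are interleaved. Lemma~\ref{lem:crossedcontinua}, applied in the closed disk $\Omega$, forbids this. Hence $a\cap b\supset a'\cap b'\neq\varnothing$.

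The lemma is stated for a closed disk in a surface, while $\Omega$ is unbounded. I would handle this by working in the compactified $\overline D$, or by intersecting with a large round disk that contains $a\cup b$ and modifying $\Omega$ accordingly.

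\textbf{Main obstacle: making Step 1 rigorous.} I expect the hardest part to be proving that $\Omega$ is a closed disk whose boundary has the stated cyclic order of the lines. I would use the Schoenflies theorem to straighten each $\ell_i$. The cyclic-order hypothesis is what guarantees that the endpoints of the chords $\overline{\ell_i}$ on the circle at infinity are ordered $\ell_1,\ell_2,\ell_3,\ell_4$. With that ordering, the complement of the four half-planes is a disk bounded alternately by the chords and by arcs at infinity.
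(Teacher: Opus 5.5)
Your proof is correct and follows essentially the paper's route. The paper also trims a vertical arc of $R_2$ to a subarc from $\ell_2$ to $\ell_4$ (avoiding $H_1\cup H_3$) and uses that this subarc separates the two vertical sides of $R_1$ in the complementary region; that is the same Jordan-type separation you extract from Lemma~\ref{lem:crossedcontinua}, after additionally trimming the horizontal arc. The paper takes the disk structure of $\mathbb R^2\setminus(H_1\cup\dots\cup H_4)$ for granted, so your Step~1 adds care rather than supplying a missing ingredient. One caveat: applying Schoenflies to each $\ell_i$ separately does not straighten all four lines simultaneously, so that step needs a joint argument, for instance the prime-end compactification of the interior of $\Omega$, whose complement in the sphere is the wedge of the four closed disks $\overline{H_i}\cup\{\infty\}$.
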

	
	\begin{proof}
		Any vertical arc of ${R_2}$ contains a subarc from $\ell_2$ to $\ell_4$: it then separates the vertical sides of ${R_1}$ and thus intersects every horizontal arc of ${R_1}$.  
	\end{proof}

We now introduce dynamics into the crossings.

\begin{lemma}\label{lem:compact_witness_crossing}
	Let $h \in \Homeo(\Sigma)$ and let $R$ be a marked rectangle such that $h(R) \dagger R$. Then there exists a compact set $K \subset R$, such that every horizontal arc of $R$ contains a subarc in $K$ whose image is a horizontal arc of $R$.
\end{lemma}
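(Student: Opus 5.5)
The plan is to take $K := R \cap h^{-1}(R)$, which is compact as the intersection of two compact sets, and to extract the required subarcs from the crossing hypothesis.

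First I interpret the hypothesis $h(R)\dagger R$. The marked rectangle $h(R)$ carries the transported markings: its horizontal arcs are exactly the images under $h$ of horizontal arcs of $R$. Apply Lemma~\ref{lem:crossing_rectangle_equivalence} with $m=1$, $A=h(R)$, $B=R$, first bullet. It says that every horizontal arc of $h(R)$ contains a horizontal arc of $R$.

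Now fix a horizontal arc $a$ of $R$. Then $h(a)$ is a horizontal arc of $h(R)$, so it contains a subarc $b\subset h(a)$ that is a horizontal arc of $R$. Set $a' := h^{-1}(b)$. This is a subarc of $a$, since $h^{-1}$ restricted to $h(a)$ is a homeomorphism onto $a$, and by construction $h(a')=b$ is a horizontal arc of $R$. Also $a'\subset a\subset R$ and $h(a')=b\subset R$, so $a'\subset R\cap h^{-1}(R)=K$. This gives the conclusion for $a$, and the set $K$ does not depend on the choice of $a$.

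The only point needing care is the identification of the horizontal arcs of $h(R)$ with the $h$-images of horizontal arcs of $R$. This follows because $h$ maps the vertical sides of $R$ onto the vertical sides of $h(R)$, since homeomorphisms transport markings. Beyond that, everything reduces to Lemma~\ref{lem:crossing_rectangle_equivalence}. If a smaller witness set is wanted, one could take instead the closure of the union of all such subarcs $a'$, which still lies in the compact set $R\cap h^{-1}(R)$; for the statement as given, $R\cap h^{-1}(R)$ already suffices.
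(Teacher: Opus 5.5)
Your proof is correct and is the paper's argument: the paper also takes $K = h^{-1}(R)\cap R$ and applies the first equivalence of Lemma~\ref{lem:crossing_rectangle_equivalence}. You add two details the paper leaves implicit, namely that $h$ transports the markings and that the horizontal subarc of $R$ inside $h(a)$ pulls back under $h^{-1}$ to a subarc of $a$ lying in $K$.
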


\begin{proof}
	It is enough to take $K = h^{-1}(R) \cap R$ and use the first equivalence from Lemma \ref{lem:crossing_rectangle_equivalence}.
\end{proof}

We call such a compact set an \emph{$h$-witness} of $R$. From the duality of the crossing definition, we have that $h(K)$ is an $h^{-1}_\ast$-witness of $R$, meaning that every vertical arc of $R$ contains a subarc in $h(K)$ whose image by $h^{-1}$ is vertical.

\begin{lemma}\label{lem:witness_continuum}
	If $h(R) \dagger R$, an $h$-witness contains a vertical continuum of $R$. Similarly, an $h^{-1}_\ast$-witness contains a horizontal continuum of $R$.  
\end{lemma}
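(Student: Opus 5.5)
The plan is to combine the witness property of Lemma~\ref{lem:compact_witness_crossing} with the compact crossing duality of Lemma~\ref{lem:compact_crossing_duality}. Fix a rectangle chart $\phi:[0,1]^2\to R$, with the horizontal sides of $R$ corresponding to bottom and top and the vertical sides to left and right. Let $K=h^{-1}(R)\cap R$ be an $h$-witness; $K$ is compact as the intersection of two compact sets. By Lemma~\ref{lem:compact_witness_crossing}, every horizontal arc of $R$ contains a subarc lying in $K$. In particular, every horizontal arc of $R$ meets $K$.

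Next I transport this to the square. Any path in $[0,1]^2$ from the left side to the right side contains a horizontal arc, since a path contains an arc with the same endpoints, and such an arc can be shortened to a subarc whose interior avoids the vertical sides and whose endpoints lie on opposite vertical sides. Hence every such path meets $\phi^{-1}(K)$. Lemma~\ref{lem:compact_crossing_duality} applied to the compact set $\phi^{-1}(K)$ then yields a continuum inside it joining the bottom side to the top side. Its image under $\phi$ is a continuum in $K$ joining the two horizontal sides of $R$, that is, a vertical continuum of $R$. This proves the first assertion.

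For the second assertion, I will use the duality noted after Lemma~\ref{lem:compact_witness_crossing}: $h(K)$ is an $h^{-1}_\ast$-witness, and every vertical arc of $R$ contains a subarc in $h(K)$. Applying the same argument with the roles of horizontal and vertical exchanged (equivalently, precomposing the chart with the reflection swapping the coordinates) produces a horizontal continuum of $R$ inside $h(K)$. The same argument applies verbatim to any compact set with the witness property, not only to the canonical $K$.

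The main point to check carefully is the reduction from arbitrary left-to-right paths to horizontal arcs in the sense defined in the paper, namely arcs contained in $R$ with endpoints on opposite vertical sides. Paths may touch the sides several times, so one should pass to a minimal subarc between the two vertical sides. One must also make sure that the duality lemma is stated for exactly this notion of crossing. Beyond that, the proof is a direct combination of the two cited lemmas.
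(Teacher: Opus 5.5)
Your proof is correct and follows the paper's argument: a witness meets every horizontal arc of $R$, so Lemma~\ref{lem:compact_crossing_duality}, read in a rectangle chart, gives a vertical continuum inside it, and the horizontal case follows symmetrically from the $h^{-1}_\ast$-witness property. The shortening to a minimal subarc is harmless but unnecessary, since the paper's horizontal arcs only need to lie in $R$ and have endpoints on opposite vertical sides.
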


\begin{proof}
	By definition, an $h$-witness must intersect every horizontal arc of $R$. The assertion follows from Lemma \ref{lem:compact_crossing_duality}.
\end{proof}

Taking successive restrictions of horizontal arcs gives us the following: 

\begin{lemma}\label{lem:witness_intersection}
	If $h_1(R)\dagger R$ and $h_2(R)\dagger R$, with $K_1, K_2$ respective witnesses, then $K' = h_2^{-1}(K_1) \cap R$ is an $h_1 \circ h_2$-witness, and in particular contains a vertical continuum.
	
	In particular, if $h(R)\dagger R$ and $K_1, K_2$ are disjoint $h$-witnesses, then $h^{-1}(K_1) \cap K_2$ is an $h^2$-witness.
\end{lemma}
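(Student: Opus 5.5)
The plan is to unwind the definition of witness. By Lemma~\ref{lem:compact_witness_crossing}, a compact $K\subset R$ is an $h$-witness when every horizontal arc of $R$ contains a subarc lying in $K$ whose $h$-image is a horizontal arc of $R$. Set $K' = h_2^{-1}(K_1)\cap R$. It is compact, since $K_1$ is compact and $h_2$ is a homeomorphism, and $K'\subset R$.

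The argument is a two-step restriction.
\begin{enumerate}
\item Fix a horizontal arc $a$ of $R$. Since $K_2$ is an $h_2$-witness, $a$ contains a subarc $a_2\subset K_2$ such that $h_2(a_2)$ is a horizontal arc of $R$.
\item Since $K_1$ is an $h_1$-witness, the arc $h_2(a_2)$ contains a subarc $b\subset K_1$ such that $h_1(b)$ is a horizontal arc of $R$.
\item Put $a' = h_2^{-1}(b)$. Then $a'\subset a_2\subset a$ and $a'\subset h_2^{-1}(K_1)$, and also $a'\subset a\subset R$, so $a'\subset K'$.
\item Finally $(h_1\circ h_2)(a') = h_1(b)$ is a horizontal arc of $R$. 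Hence $K'$ is an $h_1\circ h_2$-witness.
\end{enumerate}

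Before applying Lemma~\ref{lem:witness_continuum} to get the vertical continuum, one must know that $(h_1h_2)(R)\dagger R$. This follows directly from the arc property: each horizontal arc of $R$ contains a subarc whose $h_1h_2$-image is horizontal in $R$. That subarc is therefore a horizontal arc of $(h_1h_2)^{-1}(R)$, so $R\dagger (h_1h_2)^{-1}(R)$ by Lemma~\ref{lem:crossing_rectangle_equivalence}, and equivariance (Corollary~\ref{cor:crossing_basic_properties}) gives $(h_1h_2)(R)\dagger R$. A simpler route is to note that Lemma~\ref{lem:witness_continuum} only uses that $K'$ meets every horizontal arc, together with Lemma~\ref{lem:compact_crossing_duality}. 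Either way, $K'$ contains a vertical continuum of $R$.

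For the second assertion, take $h_1=h_2=h$ and the witnesses $K_1,K_2$. The set $h^{-1}(K_1)\cap R$ is an $h^2$-witness by the above, but the stated set is $h^{-1}(K_1)\cap K_2$. In step 1 the subarc $a_2$ already lies in $K_2$, so $a'\subset a_2\subset K_2$. Hence $a'\subset h^{-1}(K_1)\cap K_2$, which is compact, and this smaller set is an $h^2$-witness. Disjointness of $K_1,K_2$ plays no role in this argument. It is presumably recorded for later counting of disjoint pieces.

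The only delicate point is bookkeeping: the witness definition requires the \emph{image} to be a horizontal arc of $R$, not merely to lie in $R$. This holds because $b$ is chosen inside $h_2(a_2)$, which is itself horizontal in $R$.
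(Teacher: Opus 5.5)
Your proof is correct and is the same two-step restriction the paper uses: take the subarc of $a$ in $K_2$ whose $h_2$-image is horizontal, restrict that image to a subarc in $K_1$ whose $h_1$-image is horizontal, and pull back by $h_2^{-1}$. Your additional checks are also correct: that $(h_1\circ h_2)(R)\dagger R$, so Lemma~\ref{lem:witness_continuum} applies, and that the pulled-back subarc already lies in $K_2$, so disjointness of the witnesses is not needed for the second assertion. They make explicit what the paper's proof leaves implicit.
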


\begin{proof}
	A horizontal arc $a$ of $R$ has a subarc $a'$ in $K_2$ whose image is horizontal, thus $h(a')$ has a subarc $a''$ in $K_1$ whose image is horizontal. Then $h_2^{-1}(a'')$ is a subarc of $a$ in $K'$ whose image under $h_1 \circ h_2$ is horizontal.    
\end{proof}

The next result allows to obtain realization of periodic points for rotational horseshoes.

\begin{lemma}\label{lem:crossing_fixed_point}
	Let $R\subset \Sigma$ be a marked rectangle, $h \in \Homeo(\Sigma)$, suppose that $h(R) \dagger R$, and let $K$ be an $h$-witness. Then $\mathrm{Fix}(h|_K) \ne \varnothing$. 
\end{lemma}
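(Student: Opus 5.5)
Fix a rectangle chart $\phi:[0,1]^2\to R$ with horizontal sides $\phi([0,1]\times\{0,1\})$ and vertical sides $\phi(\{0,1\}\times[0,1])$. We argue by contradiction and suppose that $h$ has no fixed point in $K$. The plan is to pass to a Conley-type index argument in the chart and exploit the crossing property through Lemma~\ref{lem:compact_witness_crossing} and Lemma~\ref{lem:witness_continuum}.

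\textbf{Step 1: reduce to $K_0=h^{-1}(R)\cap R$.} The first step is to work with the witness $K_0$ itself rather than an arbitrary $K$. Any $h$-witness $K$ can be intersected with $K_0$: the subarcs provided by the definition of a witness lie in $K$ and map into $R$, so they lie in $K\cap K_0$. Hence $K\cap K_0$ is again a witness, and it suffices to find a fixed point in $K\cap K_0$. We will use that fixed points of $h$ in $K_0$ are exactly the points $x\in R$ with $h(x)=x$.

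\textbf{Step 2: build an index from horizontal arcs.} Define, for $x\in K$, the displacement $v(x)=\phi^{-1}(h(x))-\phi^{-1}(x)\in\R^2\setminus\{0\}$, computed in the chart. The goal is a contradiction via a winding number along a closed loop in $K$, or via a Poincaré--Miranda style argument. Take the horizontal arcs $a_t=\phi([0,1]\times\{t\})$. For each $t$, the witness property gives a subarc $a'_t\subset K\cap a_t$ whose image $h(a'_t)$ is a horizontal arc of $R$, so it joins the two vertical sides. Along $a'_t$ the first coordinate of $\phi^{-1}h$ goes from $0$ to $1$, while the first coordinate of the point moves inside the segment spanned by the endpoints of $a'_t$. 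By the intermediate value theorem there is a point $x_t\in a'_t$ where the first coordinates of $x_t$ and $\phi^{-1}h(x_t)$ agree. Let $Z\subset K$ be the closed set of points where the first component of $v$ vanishes. Then $Z$ meets every horizontal arc, so by Lemma~\ref{lem:compact_crossing_duality} it contains a continuum $Y$ joining the bottom side to the top side.

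\textbf{Step 3: sign of the second coordinate along $Y$.} Now examine the second component of $v$ on $Y$. At a point of $Y$ on the bottom side the point has height $0$ and its image lies in $R$, so the second component of $v$ is $\ge 0$. Symmetrically, at a point of $Y$ on the top side it is $\le 0$. Since $Y$ is connected and the second component of $v$ is continuous, it vanishes somewhere on $Y$. At that point both components of $v$ vanish, giving a fixed point in $Z\subset K$.

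\textbf{Main obstacle.} The delicate point is Step 2. Applying the intermediate value theorem to the first coordinates requires orientation care, because $h(a'_t)$ may run from right to left. In that case the difference of first coordinates changes sign at the endpoints only if the endpoints of $a'_t$ are not themselves on the vertical sides with matching values. I would handle this by noting that at the endpoint of $a'_t$ mapped to the left side, the image has first coordinate $0$, which is $\le$ the point's first coordinate. At the other endpoint the image has first coordinate $1$, which is $\ge$ the point's. So the sign change holds regardless of orientation.

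A second concern is that Lemma~\ref{lem:compact_crossing_duality} requires $Z$ to meet every path from left to right, not just the straight arcs $a_t$. This is resolved by applying Step 2 to an arbitrary horizontal arc, which the witness definition allows directly.
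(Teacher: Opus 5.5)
Your proof is correct, and it is more self-contained than the paper's. The paper proves the lemma in one line: it checks that $(K,h)$ satisfies the hypotheses of Papini--Zanolin's fixed point theorem for maps stretching a rectangle along paths \cite[Theorem~3.9]{papinizanolin04fixed}. You instead prove that statement directly in this setting, in three steps:
\begin{enumerate}
\item At the endpoints of each stretched subarc, the first displacement coordinate satisfies $v_1\le 0$ (endpoint sent to the left side) and $v_1\ge 0$ (endpoint sent to the right side). So $v_1$ vanishes on every horizontal arc.
\item Every left-to-right path contains a horizontal arc, so the zero set meets every such path. Lemma~\ref{lem:compact_crossing_duality} then gives a bottom-to-top continuum $Y$ inside it.
\item Since $h(K\cap h^{-1}(R))\subset R$, the second coordinate changes sign weakly on $Y$, and its zero on $Y$ is a fixed point.
\end{enumerate}
This is essentially the mechanism behind the cited theorem, so the two proofs agree in substance. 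Your version has two advantages. It does not depend on the external reference, and it avoids translating the witness notion into Papini--Zanolin's conventions for oriented rectangles and stretching along paths (their subpaths versus the paper's subarcs). It also shows that only continuity of $h$ on $K$ and $h(K)\subset R$ are used. The citation is shorter and sits in a framework that also covers maps defined only on a subset and stretching between different rectangles.

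Two cosmetic remarks:
\begin{itemize}
\item The contradiction hypothesis and the references to a Conley index or winding number play no role; your argument is direct.
\item In Step~2, the claim that the first coordinate of the point stays inside the segment spanned by the endpoints is not true in general, and you do not need it. The endpoint inequalities in your final paragraph are the correct justification.
\end{itemize}
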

\begin{proof}
The pair $(K,h)$ satisfies the hypotheses of \cite[Theorem~3.9]{papinizanolin04fixed}, and thus $h$ has a fixed point in $K\subset R$.
\end{proof}

{
\begin{definition}
[Rotational topological horseshoe]
\label{def:rotational_horseshoe}
Let $H$ be a lift of a surface homeomorphism $h$.
A marked rectangle $\wt R\subset\tS$, projecting injectively to $S$,
defines a \emph{rotational (topological) horseshoe} with pairwise distinct deck
transformations $T_1,\ldots,T_m$, $m\geq2$, if
\[
H(\wt R)\dagger T_l \wt R, \text{ for every }1 \leq l \leq m.
\]
\end{definition}
}

Note that Markovian rotational horseshoes from \cite{guiheneuf26theory} satisfy our definition, and {that these horseshoes are also topological horseshoes as defined in} \cite{lecalveztal22topological}.

{
\begin{proposition}\label{prop:realization_rotational_horseshoe}
	Suppose that $\wt R$ projects injectively to $S$ and defines a rotational topological horseshoe for $H$ with associated deck transformations $T_1,\ldots,T_m$, $m\geq2$. Then
	$h$ has a compact invariant set $\Lambda\subset\wt\pi(\wt R)$
	and a semiconjugacy
	\[
	\theta:\Lambda\longrightarrow\{1,\ldots,m\}^{\mathbb Z},
	\qquad \theta h=\sigma\theta.
	\]
	Every periodic itinerary has a periodic realization.
\end{proposition}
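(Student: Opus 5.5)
The plan is to code orbits by the deck translates visited and to build $\Lambda$ as an intersection of nested witness sets. Put $R=\wt\pi(\wt R)$; since $\wt R$ projects injectively, $\wt\pi|_{\wt R}$ is a homeomorphism onto $R$. For each $l$, the crossing $H(\wt R)\dagger T_l\wt R$ is equivalent, after applying $T_l^{-1}$ (Corollary~\ref{cor:crossing_basic_properties}, equivariance), to $H_l(\wt R)\dagger\wt R$ with $H_l:=T_l^{-1}H$. Lemma~\ref{lem:compact_witness_crossing} then gives the $H_l$-witness
\[
K_l:=H_l^{-1}(\wt R)\cap\wt R=\{x\in\wt R: H(x)\in T_l\wt R\}.
\]
These witnesses are pairwise disjoint. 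Indeed, $x\in K_l\cap K_{l'}$ would put $H(x)$ in $T_l\wt R\cap T_{l'}\wt R$, and this set is empty for $l\neq l'$ because $\wt R$ projects injectively, so distinct translates of it are disjoint.

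Next I define the itinerary. For $x\in R$ whose whole orbit returns to $R$ with the correct translates, take the lift $\wt x\in\wt R$ and set $\theta(x)_0=l$ if $\wt x\in K_l$. Then define $\theta(x)_k$ via the lift of $h^k(x)$ in $\wt R$. Concretely, let
\[
\Lambda=\bigcap_{k\in\Z} h^{-k}\Bigl(\wt\pi\bigl(\textstyle\bigcup_l K_l\bigr)\Bigr).
\]
This set is compact and $h$-invariant. It is nonempty, which I would show using the next step. The relation $\theta h=\sigma\theta$ holds by construction: if $H(\wt x)\in T_l\wt R$, then $T_l^{-1}H(\wt x)\in\wt R$ is the lift of $h(x)$ in $\wt R$. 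Continuity of $\theta$ follows because the sets $\wt\pi(K_l)$ are disjoint compact sets, so the coordinate maps are locally constant on $\Lambda$.

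Surjectivity, and the realization of periodic itineraries, come from Lemma~\ref{lem:witness_intersection}. For a finite word $(l_0,\dots,l_{N-1})$, iterating that lemma shows that
\[
K_{l_0\cdots l_{N-1}}=\bigl\{x\in\wt R: H_{l_{j-1}}\cdots H_{l_0}(x)\in K_{l_j},\ 0\le j<N\bigr\}
\]
is an $H_{l_{N-1}}\cdots H_{l_0}$-witness, and so it contains a vertical continuum. The dual statement, applied to backward words, uses the $h_*^{-1}$-witnesses and yields horizontal continua. A vertical continuum and a horizontal continuum of $\wt R$ must meet. This is the duality of Lemma~\ref{lem:compact_crossing_duality}, transported to $\wt R$ by a rectangle chart together with Lemma~\ref{lem:crossedcontinua}. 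Hence every two-sided finite cylinder is realized, and a compactness/nested intersection argument gives every bi-infinite sequence, so $\theta$ is onto.

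For a periodic itinerary of period $N$, apply Lemma~\ref{lem:crossing_fixed_point} to the map $\Psi=H_{l_{N-1}}\cdots H_{l_0}$ with witness $K_{l_0\cdots l_{N-1}}$. This gives $\wt x$ with $\Psi(\wt x)=\wt x$. Equivalently, $H^N(\wt x)=T\wt x$ for the corresponding product $T$ of deck transformations, so $x=\wt\pi(\wt x)$ is $h^N$-periodic. Its itinerary is the periodic extension of the word, because the orbit of $\wt x$ under $\Psi$ repeats through the same witnesses.

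The step I expect to be most delicate is the vertical--horizontal intersection claim, together with keeping track of the fact that the witness of a composition uses the correct ordering of $H_l$'s. The forward witnesses carry the future itinerary and the backward witnesses carry the past one, and their intersection is exactly the cylinder set. I would handle the ordering by working with the $H_l$ throughout rather than with $H$ and the translates.
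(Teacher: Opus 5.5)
Your proposal is correct and follows the paper's proof essentially step for step. It uses the same witnesses $\wt R\cap H^{-1}(T_l\wt R)$ for $H_l=T_l^{-1}H$, their disjointness from that of the translates $T_l\wt R$, forward and backward cylinder witnesses (via Lemma~\ref{lem:witness_intersection}) containing vertical and horizontal continua that must meet, nested intersections for surjectivity, and Lemma~\ref{lem:crossing_fixed_point} applied to the composed lift $H_{l_{N-1}}\circ\cdots\circ H_{l_0}$ for periodic realization; keeping all the bookkeeping upstairs with the maps $H_l$, as you do, is if anything slightly cleaner than the paper's mix of projected and lifted witnesses.
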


{
\begin{proof}
For $1\leq l\leq m$, set $H_l=T_l^{-1}H$. Since $H_l(\widetilde R)\dagger\widetilde R$, Lemma~\ref{lem:compact_witness_crossing} gives a vertical witness
\[
 \widetilde K_l\subset\widetilde R\cap H^{-1}(T_l\widetilde R)
\]
for $H_l$.
The witnesses are pairwise disjoint because the translates $T_l\widetilde R$ are pairwise disjoint. Their projections $K_l=\widetilde\pi(\widetilde K_l)$ are also disjoint, {and they are $m$ disjoint $h$-witnesses ---in particular, $h(R) \dagger_m R$---. Take a two-sided finite itinerary $w=(l_{-n}, \dots, l_n)$ and define
\[
 K=\bigcup_{l=1}^m K_l,\qquad
 \Lambda=\bigcap_{r\in\mathbb Z}h^{-r}(K), \qquad
 K_{w^-}=
 \bigcap_{r=0}^{n}h^{-r}(K_{l_r}), \qquad
 K_{w^+}=
 \bigcap_{r=1}^{n}h^{r}(K_{l_{-r}}), \qquad 
\]

By Lemma \ref{lem:witness_intersection}, we have that $K_{w^+}$ is an $h^{n+1}$-witness which contains a vertical continuum of $R$, and $K_{w^+}$ is an $h^{-n}_{\ast}$-witness which contains a horizontal continuum of $R$, thus $K_w = K_{w^-} \cap K_{w^+} \ne \varnothing$.} Applying this argument to finite blocks of a two-sided itinerary and taking nested intersections of the corresponding compact cylinders gives a point of $\Lambda$ with that itinerary. Hence
\[
 \theta(x)_r=l\quad\Longleftrightarrow\quad h^r(x)\in K_l
\]
defines a surjection onto the full two-sided shift. It is continuous because the finitely many compact sets $K_l$ are disjoint, and $\theta h=\sigma\theta$.

For a finite word $w=(l_1,\ldots,l_k)$, let $P_0=\mathrm{Id}$,
$P_j=H_{l_j}\circ\cdots\circ H_{l_1}$, and restrict the composition to
\[
 \widetilde K_w=
 \bigcap_{j=0}^{k-1}P_j^{-1}(\widetilde K_{l_{j+1}}).
\]
This restriction enforces every intermediate symbol.
By {Lemma \ref{lem:witness_intersection}}, $\wt K_w$ is a $P_k$-witness, and thus {Lemma \ref{lem:crossing_fixed_point}} gives a fixed point $x\in\widetilde K_w$ of $P_k$ {following the prescribed deck transformations}. Its projection is fixed by $h^k$.
\end{proof}
}
}

{
We give one more notion of topological horseshoe with realization for the sake of {completeness}.

\begin{definition}[Inessential topological horseshoe]\label{def:inessential_topological_horseshoe}
	Let $H$ be a lift of a surface homeomorphism $h$.
	A marked rectangle $\wt R\subset\tS$, projecting injectively to $S$,
	defines a \emph{inessential (topological) horseshoe} if $H(\wt R) \dagger_m \wt R$, $m\ge 2$. 
\end{definition} 
}

{
The following assertion links this notion with the one from \cite{kennedyyorke01topological}.

\begin{lemma}\label{lem:inessential_vs_crossing_horseshoe}
	If $h(R) \dagger_m R$, then $R$ contains a topological horseshoe in the sense of \cite{kennedyyorke01topological}. In particular, $R$ contains $m$ disjoint $h$-witnesses.
\end{lemma}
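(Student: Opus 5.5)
The plan is to use the first equivalence of Lemma~\ref{lem:crossing_rectangle_equivalence}, applied to the pair $h(R)\dagger_m R$. Fix a rectangle chart $\phi:[0,1]^2\to R$. Every horizontal arc of $h(R)$ has the form $h(a)$, with $a$ a horizontal arc of $R$. So for every horizontal arc $a$ of $R$, the arc $h(a)$ contains $m$ disjoint horizontal arcs of $R$. Pulling these back by $h^{-1}$ gives $m$ disjoint subarcs of $a$, each inside $R\cap h^{-1}(R)$ and each mapped by $h$ onto a horizontal arc of $R$.

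\textbf{Step 1: sort the crossing pieces into $m$ compact sets.} Fix a vertical arc $b$ of $R$. Since $h(R)\dagger_m R$, every horizontal arc of $h(R)$ meets $b$ in at least $m$ points. Consider the connected components of $h(R)\cap R$ that contain a horizontal arc of $R$, i.e.\ the components crossing $R$ horizontally. Using Lemma~\ref{lem:crossedcontinua}, distinct crossing continua are linearly ordered from bottom to top in $R$, the order being read off along any vertical arc. I would define $C_1,\dots,C_m$ as the closures of the unions of crossing pieces of the $i$-th ``strip''. Concretely, fix a bottom-to-top parametrisation of $b$ and group the crossings accordingly. Then set $K_i=h^{-1}(C_i)\cap R$.

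\textbf{Step 2: disjointness and the witness property.} Compactness of each $K_i$ comes from taking closures. The witness property comes from the $m$ disjoint subarcs found above: on each horizontal arc $a$ of $R$, the $i$-th subarc in the order induced by $h(a)$ lies in $K_i$. The main obstacle is disjointness, since the ordering index could jump under limits. I would handle it with an inductive choice of $m-1$ vertical separators, i.e.\ horizontal arcs of $R$ lying between consecutive strips. By Lemma~\ref{lem:compact_crossing_duality}, applied to the complement of the strips in the square $\phi^{-1}(R)$, a horizontal path avoiding the crossing pieces between the $i$-th and $(i+1)$-th strip exists. Otherwise that compact set would contain a vertical continuum inside $h(R)\cap R$, which would force a horizontal arc of $h(R)$ to meet it; this would reduce the crossing count as in the proof of Lemma~\ref{lem:crossing_rectangle_equivalence}. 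These separators cut $R$ into $m$ marked sub-rectangles $R_1,\dots,R_m$ with $h(R)\dagger R_i$, and setting $K_i=h^{-1}(R_i)\cap R_i'$, with $R_i'$ shrunk slightly, gives disjoint witnesses.

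\textbf{Step 3: the Kennedy--Yorke horseshoe.} Finally, $m$ disjoint compact witnesses $K_1,\dots,K_m$, each crossing $R$ ``vertically'' in the sense of Lemma~\ref{lem:witness_continuum} and each mapped across $R$, are exactly the crossing data of \cite{kennedyyorke01topological}. Here $R$ plays the role of the quadrilateral, the horizontal sides are the ``ends'', and every connection between the ends is mapped so as to contain $m$ connections in the disjoint sets $K_i$. Verifying that their definition is met is then formal from Lemma~\ref{lem:witness_intersection}.
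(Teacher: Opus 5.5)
\textbf{Gap in Steps 1--2.} The substantive part of the lemma is producing $m$ pairwise \emph{disjoint} $h$-witnesses, and Steps~1--2 do not establish it: strips built from components of $h(R)\cap R$ are the wrong objects, and the separators you want need not exist. Take $R=[0,1]^2$ and let $h(R)$ be the U-shaped disk
$[-1,1]\times([0.1,0.2]\cup[0.8,0.9])\cup[1,1.5]\times[0.1,0.9]$,
whose vertical sides are its two ends at $x=-1$. Every horizontal arc of $h(R)$ must run through both bands, and one checks $h(R)\dagger_2 R$.
\begin{itemize}
\item $h(R)\cap R$ is connected, because the bands are joined along $\{1\}\times[0.2,0.8]$. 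So there is a single crossing component and no grouping into two strips.
\item Every horizontal arc of $R$ lying between the bands ends in $\{1\}\times(0.2,0.8)\subset h(R)\cap R$, so no separator avoids the crossing pieces.
\item The vertical continuum $\{1\}\times[0.2,0.8]\subset h(R)\cap R$ does not lower the crossing count. Hence the contradiction you derive via Lemma~\ref{lem:compact_crossing_duality} is not available.
\end{itemize}
There are other failure modes. A crossing component can lie in a finger of $h(R)$ that some horizontal arcs of $h(R)$ avoid; then $h^{-1}(C_i)\cap R$ is not a witness. Also, the order of the crossings along $h(a)$ need not match the bottom-to-top order of the strips. In this example a cut at height $1/2$ happens to work. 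However, the existence in general of $m-1$ horizontal arcs cutting $R$ into strips $R_i$ with $h(R)\dagger R_i$, and still crossed after the shrinking needed for disjointness, is exactly the nontrivial point, and you give no argument for it. (Your formula for $K_i$ is also garbled; it should be $R\cap h^{-1}(R_i')$.) The paper does not attempt this. It runs the symbol-set construction of \cite[Theorem 1]{kennedyyorke01topological} verbatim, with horizontal arcs in place of connections, and reads off the disjoint compact symbol sets as the $m$ disjoint witnesses.

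\textbf{Problems in Step 3.} First, the ends must be the \emph{vertical} sides of $R$, as in the paper. With top and bottom as ends, connections are vertical continua, and your crossing data then describe a horseshoe for $h^{-1}$, not for $h$. Second, and more importantly, connections in \cite{kennedyyorke01topological} are arbitrary continua joining the ends, while $\dagger_m$ and the witness property only speak about arcs. Passing from arcs to continua is not formal, and Lemma~\ref{lem:witness_intersection}, which concerns composition of witnesses, does not supply it. The paper handles this by approximating a horizontal continuum by arcs in the Hausdorff topology and passing to the limit the subarcs in $K_j$ with horizontal image; these limits are subcontinua with horizontal image. Once that approximation is in place, the horseshoe property itself (every connection contains $m$ disjoint preconnections) already follows from your Step~0. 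The disjoint witnesses are the part that genuinely requires the Kennedy--Yorke construction.
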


\begin{proof}
	Define the \textit{ends} $\mathrm{end}_0, \mathrm{end}_1$ from \cite{kennedyyorke01topological} as the vertical sides of $R$. By Lemma \ref{lem:crossing_rectangle_equivalence} every horizontal arc of $h(R)$ contains $m$ disjoint horizontal subarcs of $R$; the whole construction from \cite[Theorem 1]{kennedyyorke01topological} can be applied verbatim, replacing \textit{connections} by horizontal arcs. This construction gives $m$ disjoint compact \textit{symbol sets} $K_j$: each horizontal arc of $R$ contains a arc in $K_j$ whose image is horizontal, thus $K_j$ is an $h$-witness for every $j$. To check that same assertion for horizontal continua and obtain a topological horseshoe in the sense of \cite{kennedyyorke01topological}, approximate a continuum by sufficiently close curves in the Hausdorff topology. The existence of a subarc ---respectively subcontinuum--- in each $K_j$ whose image is horizontal passes directly to the limit.
	 
\end{proof}
}

{
\begin{proposition}\label{prop:inessential_topological_horseshoe}
		Suppose that $\wt R$ projects injectively to $S$ and defines an inessential topological horseshoe for $H$.  
		
		Then
		$h$ has a compact invariant set $\Lambda\subset\wt\pi(\wt R)$
		and a semiconjugacy
		\[
		\theta:\Lambda\longrightarrow\{1,\ldots,m\}^{\mathbb Z},
		\qquad \theta h=\sigma\theta.
		\]
		Every periodic itinerary has a periodic realization.
\end{proposition}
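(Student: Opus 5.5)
The plan is to follow the proof of Proposition~\ref{prop:realization_rotational_horseshoe}, replacing the $m$ translates $T_l\wt R$ by the $m$ disjoint witnesses supplied by Lemma~\ref{lem:inessential_vs_crossing_horseshoe}. Since $\wt R$ projects injectively, set $R=\wt\pi(\wt R)$ and work downstairs with $h$ and the marked rectangle $R$. Because $H(\wt R)\dagger_m\wt R$ and $\wt\pi$ is injective on $\wt R$, equivariance of crossing (Corollary~\ref{cor:crossing_basic_properties}) gives $h(R)\dagger_m R$. Lemma~\ref{lem:inessential_vs_crossing_horseshoe} then provides pairwise disjoint compact $h$-witnesses $K_1,\dots,K_m\subset R$. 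After shrinking if necessary, we may take $K_l\subset h^{-1}(R)\cap R$. We then define
\[
K=\bigcup_{l=1}^m K_l,\qquad \Lambda=\bigcap_{r\in\Z}h^{-r}(K),
\]
and set $\theta(x)_r=l$ exactly when $h^r(x)\in K_l$. Disjointness of the $K_l$ makes $\theta$ well defined and continuous, and the identity $\theta h=\sigma\theta$ holds by construction. Compactness and invariance of $\Lambda$ are immediate.

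Surjectivity of $\theta$ comes from nonemptiness of cylinders. Fix a finite itinerary $w=(l_{-n},\dots,l_n)$ and consider
\[
K_{w^-}=\bigcap_{r=0}^{n}h^{-r}(K_{l_r}),\qquad K_{w^+}=\bigcap_{r=1}^{n}h^{r}(K_{l_{-r}}).
\]
Iterating Lemma~\ref{lem:witness_intersection} with $h_1=h_2=h$ and the witnesses $K_{l_r}$ (restricting horizontal arcs successively) shows that $K_{w^-}$ is an $h^{n+1}$-witness. By Lemma~\ref{lem:witness_continuum}, it therefore contains a vertical continuum of $R$. Dually, each $h(K_l)$ is an $h^{-1}_\ast$-witness, and the same successive-restriction argument applied to vertical arcs under $h^{-1}$ shows that $K_{w^+}$ is an $h^{-n}_\ast$-witness. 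It thus contains a horizontal continuum. A vertical and a horizontal continuum of a rectangle must meet, by Lemma~\ref{lem:crossedcontinua} in a chart. Hence $K_w=K_{w^-}\cap K_{w^+}\neq\varnothing$. These cylinders are nested compact sets as the block grows, so every bi-infinite itinerary is realized by a point of $\Lambda$, and $\theta$ is onto the full shift.

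For periodic realization, take a word $w=(l_1,\dots,l_k)$ and set
\[
K'_w=\bigcap_{j=0}^{k-1}h^{-j}(K_{l_{j+1}}).
\]
Lemma~\ref{lem:witness_intersection}, applied inductively, shows that $K'_w$ is an $h^k$-witness; in particular $h^k(R)\dagger R$. Lemma~\ref{lem:crossing_fixed_point} then gives $x\in K'_w$ with $h^k(x)=x$. By periodicity the whole orbit of $x$ lies in $K$, so $x\in\Lambda$ and $\theta(x)$ is the periodic sequence $w^\infty$.

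The main obstacle is the iterated witness statement: Lemma~\ref{lem:witness_intersection} is stated only for two maps, and we need the dual version for $h^{-1}_\ast$-witnesses. To handle this, I would rewrite it by induction on the word length, tracking horizontal subarcs whose successive images stay in the prescribed $K_{l_j}$ and end up horizontal in $R$. The dual version follows by exchanging the roles of horizontal and vertical arcs and replacing $h$ by $h^{-1}$.
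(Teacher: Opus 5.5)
Your proposal is correct and follows essentially the paper's route: disjoint witnesses from Lemma~\ref{lem:inessential_vs_crossing_horseshoe}, restriction to $h^{-1}(R)$, the iterated-witness itinerary construction, and Lemma~\ref{lem:crossing_fixed_point} for periodic words. The differences are that the paper works upstairs with $H$ on $\wt R$ and projects only at the end, which avoids your descent step and also shows the periodic points lift to fixed points of $H^k$, and that your descent step is not literally Corollary~\ref{cor:crossing_basic_properties} ($\wt\pi$ is not a homeomorphism and need not be injective on $\wt R\cup H(\wt R)$) but does hold by lifting a horizontal arc of $h(R)$ to $H(\wt R)$ and a vertical arc of $R$ to $\wt R$, whose intersection points lie in $\wt R$ where $\wt\pi$ is injective.
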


\begin{proof}
{
By Lemma \ref{lem:inessential_vs_crossing_horseshoe}, we can applt \cite[Theorem 1]{kennedyyorke01topological} for $\wt R$. Its symbol-set construction provides pairwise disjoint compact sets $\widetilde K_1,\ldots,\widetilde K_m\subset\widetilde R$ such that every connection contains a preconnection in each $\widetilde K_l$. Intersecting these sets with $H^{-1}(\widetilde R)$ if necessary, we may also assume $H(\widetilde K_l)\subset\widetilde R$. On a horizontal arc each such preconnection is a subarc, so every $\wt K_l$ is an $H$-witness.}

Set $\widetilde K=\bigcup_l\widetilde K_l$ and $\widetilde\Lambda=\bigcap_{r\in\mathbb Z}H^{-r}(\widetilde K)$. The itinerary construction in the preceding proof gives a continuous surjection from $\widetilde\Lambda$ onto the full $m$-shift. For a word $w=(l_1,\ldots,l_k)$, the compact set
\[
 \widetilde K_w=\bigcap_{j=0}^{k-1}H^{-j}(\widetilde K_{l_{j+1}})
\]
is an $H^k$-witness. Then, {Lemma \ref{lem:crossing_fixed_point}} gives a point of $\widetilde K_w$ fixed by $H^k$, realizing the prescribed periodic itinerary. Finally, $\widetilde\pi$ is injective on $\widetilde R$, so projection gives the required set $\Lambda$, semiconjugacy, and periodic realizations for $h$.
\end{proof}
}

\subsection{Local construction of heteroclinic pseudo-rectangles}

All continua and {pseudo-leaves} in this subsection are considered in one
fixed lift of a disk of $S$.  In particular, distinctness of two side
classes always means distinctness upstairs.  This convention matters:
two distinct translates of a {pseudo-leaf} can have the same projection.

\begin{lemma}\label{lem:local_crossing_supply}
	Inside sufficiently small concentric lifted disks ${\wt U}'\subset {\wt U}$ there
	are infinitely many pairwise disjoint stable continua crossing the
	closed annulus ${\overline{\wt U}}\setminus {\wt U}'$, each in a different translate
	of the {pseudo-leaf} $\eta^s(\xi^s_0)$. Their diameters and their
	forward dynamical diameters have common upper bounds. The symmetric
	unstable statement holds.
\end{lemma}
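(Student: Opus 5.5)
We would build the stable continua by pulling them out of translates of $\eta^s(\xi^s_0)$ that are forced through a single small unstable continuum, as in the proof of Proposition~\ref{thm:density}. The difference is that we need infinitely many distinct translates rather than one.

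\textbf{Choice of disks and separating translates.} Fix a radius $r>0$ smaller than the injectivity radius of $\wt\pi$, so that a lifted disk $\wt U=B(\wt x,r)$ and its concentric disk $\wt U'=B(\wt x,r/2)$ project injectively; distinct classes are then always read upstairs. By Lemma~\ref{lem:kato} choose a nontrivial unstable continuum $\wt K^u\subset\wt U'$. By Corollary~\ref{cor:trivialintersections} it meets two distinct stable classes $\xi'\neq\xi''$; put $d=\dd^u(\xi',\xi'')>0$. We then rerun the proof of Lemma~\ref{lem:separating_translate} with $M=L^s(\wt p_A)$: fix a nondegenerate transverse arc $\tau$ in a nonsingular flow box, all of whose leaf classes lie in the open middle third of the segment $[\xi',\xi'']$. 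The projection of $M$ is a regular leaf, so by Lemma~\ref{lem:minimal_filling_foliations} it is dense, and a dense leaf crosses $\tau$ at infinitely many distinct points.

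\textbf{Distinctness of the translates.} Each crossing point of $\tau$ lifts to a translate $\gamma M$. Since the flow box is nonsingular, distinct points of $\tau$ lie on distinct plaques and therefore give distinct classes in the dual tree. Hence we obtain infinitely many distinct deck transformations $\gamma_k$, each with $\gamma_kM$ separating $\xi'$ from $\xi''$. Lemma~\ref{lem:intermediate} gives points $\wt z_k\in\wt K^u\cap\eta^s([\gamma_k M])$, and by (D2) we have $\eta^s([\gamma_k M])=\gamma_k\eta^s(\xi^s_0)$. These pseudo-leaves are pairwise disjoint by (T1), because the classes $[\gamma_kM]$ are distinct.

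\textbf{Crossing the annulus and the uniform bounds.} Every component of a pseudo-leaf of a genuine leaf is closed and unbounded, as noted after Theorem~\ref{thm:militon_partition}. So Lemma~\ref{lem:large_compact_pieces} (boundary bumping) shows that the component of $\wt z_k$ in $\gamma_k\eta^s(\xi^s_0)\cap\overline{\wt U}$ meets $\partial\wt U$. Since $\wt z_k\in\wt U'$, this component, or a subcontinuum of it, is a stable continuum $\wt K_k$ crossing $\overline{\wt U}\setminus\wt U'$. The $\wt K_k$ are pairwise disjoint and each lies in a different translate. They satisfy $\diam\wt K_k\le 2r$. Their stable diameters are bounded by a constant depending only on $r$, because $\wt\dd=\dd^s+\dd^u$. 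Lemma~\ref{lem:stable_continua_equiv} then bounds the forward dynamical diameters uniformly by $\kappa+\diam^s(\wt K_k)+2C$. For the unstable statement we apply the same argument to $g^{-1}$, exchanging the roles of the two foliations.

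\textbf{Main obstacle.} We expect the delicate step to be the infinitude and distinctness of the translates. This rests on three facts: the dense leaf meets the transversal infinitely often; distinct crossings give distinct classes upstairs, which is why the flow box must avoid singular points; and all these classes stay in the middle third, so that each one still separates $\xi'$ from $\xi''$.
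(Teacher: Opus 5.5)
Your proof is correct, but it gets the infinitely many translates by a different mechanism from the paper's.

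\textbf{The paper's route.} The paper argues softly. By Proposition~\ref{thm:density} the union of the translates $\gamma\,\eta^s(\xi^s_0)$ is dense in $\tS$, while each pseudo-leaf is closed by (T1) and has empty interior by Corollary~\ref{cor:emptyinterior}. So no finite union of translates covers the open set $\wt U'$, and infinitely many distinct translates must meet it.

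\textbf{Your route.} You instead reopen the proofs of Proposition~\ref{thm:density} and Lemma~\ref{lem:separating_translate}. One unstable continuum $\wt K^u\subset\wt U'$ carries two distinct stable classes. The dense projected leaf of $M$ crosses a middle-third transversal infinitely often, and each crossing gives a distinct separating translate. Lemma~\ref{lem:intermediate} then puts every one of these translates through $\wt K^u$. The remaining steps coincide with the paper's: boundary bumping, the bound $\diam\le 2r$, and Lemma~\ref{lem:stable_continua_equiv} for forward iterates.

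\textbf{Comparison.} Your route does not need Corollary~\ref{cor:emptyinterior}, and it yields a little more. All the translates meet one prescribed unstable continuum, and their classes lie in a prescribed subsegment of the dual tree. However, it depends on $\xi^s_0$ being a genuine leaf class with a dense leaf. The paper's Baire-type argument needs only density of the projection plus empty interior. That is why it can be reused in Proposition~\ref{prop:dense_heteroclinic} for translates of $\widetilde{\mathcal C}^s(\wt p)$. There one only knows that the projection is dense, the class of $\wt p$ need not be a leaf class, and Lemma~\ref{lem:intermediate} would only produce points of the pseudo-leaf, not of $\widetilde{\mathcal C}^s(\wt p)$.

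\textbf{A point of precision.} You flag distinctness as the delicate step, but you locate its reason slightly wrongly. That distinct points of $\tau$ give distinct classes does not follow from the flow box being nonsingular: distinct plaques of a box could a priori lie on the same lifted leaf. It follows from the global fact that a transverse arc in $\tS$ embeds in the dual tree, i.e.\ meets each lifted leaf at most once. The paper uses this same fact in Lemmas~\ref{lem:separating_translate} and~\ref{lem:HPR_positioning}, so this is not a gap. It is, however, the fact your argument actually rests on.
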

\begin{proof}
	The union of the translates of this {pseudo-leaf} is dense by
	Proposition~\ref{thm:density}. A finite union of distinct {pseudo-leaves} is
	closed with empty interior, so infinitely many distinct translates
	meet ${\wt U}'$. Through a chosen point of each such {pseudo-leaf} there is an
	unbounded connected component, by boundary bumping after
	Theorem~\ref{thm:militon_partition}. Lemma~\ref{lem:large_compact_pieces}
	and boundary bumping give a subcontinuum crossing the annulus, which
	may be taken minimal for inclusion. It is contained in ${\overline{\wt U}}$,
	so the diameters are uniformly bounded, and
	Lemma~\ref{lem:stable_continua_equiv} bounds all forward iterates.
	{Distinct classes give disjoint pseudo-leaves, hence disjoint continua.}
\end{proof}

The next lemma controls the {size of the} unstable continua lying {sufficiently close to a stable continuum. Its proof comes from \cite[Proposition 8.11, Step 4]{garciatal24chaotic}}.

\begin{lemma}
\label{lem:HPR_thin_strip}
Let ${\wt C}$ be a compact stable continuum in a compact lifted disk ${\wt B}$.
For every $r>0$ there is a neighbourhood $O$ of ${\wt C}$ such that every
unstable continuum contained in $O\cap {\wt B}$ has diameter less than $r$.
The symmetric assertion also holds.
\end{lemma}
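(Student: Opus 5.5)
We argue by contradiction and compactness, in the spirit of the shrinking argument of Lemma~\ref{lem:stable_continua_shrink}. Suppose that for some $r>0$ no such neighbourhood exists. Taking $O_n$ to be the closed $1/n$-neighbourhood of $\wt C$, we obtain unstable continua $\wt K_n\subset O_n\cap\wt B$ with $\diam(\wt K_n)\geq r$. Since $\wt B$ is compact, after passing to a subsequence $\wt K_n\to\wt K_\infty$ in the Hausdorff topology. The limit $\wt K_\infty$ is a continuum with $\diam(\wt K_\infty)\geq r$, and it is contained in $\wt C$ because $\bigcap_n O_n=\wt C$.

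The next step is to show that $\wt K_\infty$ is also an unstable continuum. For this we need a uniform bound on the past dynamical diameters of the $\wt K_n$, so that Corollary~\ref{cor:hausdorff_limit_stable}, applied in its unstable version (that is, to $g^{-1}$), can be invoked. Lemma~\ref{lem:stable_continua_equiv} bounds $\sup_{j\leq0}\diam(\wt g^j\wt K_n)$ by $\kappa+\diam^u(\wt K_n)+2C$. Since $\wt K_n\subset\wt B$ and $\wt B$ is compact, $\diam^u(\wt K_n)\leq\diam(\wt B)$. The resulting bound is therefore uniform in $n$, and the limit $\wt K_\infty$ is an unstable continuum.

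We now have $\wt K_\infty\subset\wt C$, so $\wt K_\infty$ is simultaneously contained in a single stable pseudo-leaf (the one containing $\wt C$) and in a single unstable pseudo-leaf. By Corollary~\ref{cor:dynamically_bounded_continua} it is dynamically bounded. By Corollary~\ref{cor:trivialintersections} (infinite continuum-wise expansiveness of $g$) it must then be a point. This contradicts $\diam(\wt K_\infty)\geq r>0$.

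The symmetric assertion follows by exchanging the roles of $g$ and $g^{-1}$. The only delicate step is the uniform past bound. One has to check that the unstable diameter, not just the ambient diameter, is controlled by the compact set $\wt B$; the inequality $\diam^u\leq\diam$ recorded after Definition~\ref{def:pseudodiameter} gives this. The rest is routine Hausdorff compactness.
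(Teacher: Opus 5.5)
Your proof is correct and follows the paper's argument. A Hausdorff limit of unstable continua of diameter at least $r$ accumulating on $\wt C$ would be a nontrivial continuum that is both stable and unstable, contradicting Corollary~\ref{cor:trivialintersections}. Your uniform past bound via Lemma~\ref{lem:stable_continua_equiv} and $\diam^u\leq\diam(\wt B)$ spells out the hypothesis behind what the paper simply calls ``the Hausdorff-limit property for unstable continua''; the paper also offers an alternative, that the unstable class map is continuous and constant on each $\wt K_n$, hence constant on the limit, which needs no such bound.
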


\begin{proof}
Otherwise there are unstable continua ${\wt J}_n\subset {\wt B}$, with
$\operatorname{diam}({\wt J}_n)\geq r$ and
$\sup_{x\in {\wt J}_n}d(x,{\wt C})\to0$.  A subsequence has a Hausdorff limit
${\wt J}\subset {\wt C}$ with diameter at least $r$.  This limit is an unstable
continuum, by the Hausdorff-limit property for unstable continua.
Equivalently, continuity of the unstable class map on ${\wt B}$ shows that
it is constant on ${\wt J}$: it is constant on every ${\wt J}_n$, and this property
passes to the Hausdorff limit.  The continuum ${\wt J}$ is therefore both
stable and unstable, contradicting
Corollary~\ref{cor:trivialintersections}.
\end{proof}

{
\begin{proposition}\label{prop:HPRdense}
	Every nonempty open set $U\subset S$ contains an HPR.  In a lift
	projecting injectively to $U$, its stable sides {have the distinct classes} $\alpha_1 = \gamma_1\xi^s_0, \alpha_2 = \gamma_2\xi^s_0$ and its unstable sides {have the distinct classes} $\beta_1 = \delta_1\xi^u_0, \beta_2 = \delta_2\xi^u_0$, where
	$\gamma_i,\delta_i\in\Gamma$.
\end{proposition}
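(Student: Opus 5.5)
Fix a small lifted disk $\wt U$ projecting injectively into $U$, and a concentric smaller disk $\wt U'\subset\wt U$. By Lemma~\ref{lem:local_crossing_supply} I would pick two stable continua $\wt C^s_1,\wt C^s_2$ crossing the annulus $\overline{\wt U}\setminus\wt U'$. They lie in translates $\gamma_1\eta^s(\xi^s_0)\neq\gamma_2\eta^s(\xi^s_0)$, so they are disjoint. Symmetrically I would pick two unstable continua $\wt C^u_1,\wt C^u_2$ in distinct translates $\delta_1\eta^u(\xi^u_0),\delta_2\eta^u(\xi^u_0)$. Using the infinite supply, I would choose the four continua so that their intersections with $\wt U'$ occur in the alternating configuration: $\wt C^s_1,\wt C^s_2$ separate $\wt C^u_1$ from $\wt C^u_2$ inside $\wt U'$, and vice versa.

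To produce this configuration, start from one stable continuum $\wt C^s$ through a point of $\wt U'$. Lemma~\ref{lem:kato} gives a small nontrivial unstable continuum near it. That continuum is not contained in one stable pseudo-leaf by Corollary~\ref{cor:trivialintersections}, so Lemma~\ref{lem:separating_translate} and Lemma~\ref{lem:intermediate} give two stable translates $\gamma_1\eta^s(\xi^s_0),\gamma_2\eta^s(\xi^s_0)$ that both meet a common small unstable continuum $\wt J$. Repeating the argument with the roles exchanged, starting from a small stable continuum of the same kind, gives two unstable translates both meeting a common small stable continuum. The key point is to make all four meet pairwise. For this I would take the stable sides to be components of the chosen stable pseudo-leaves inside a small closed disk $\wt B$, and the unstable sides to be components of unstable pseudo-leaves separated by a class $\delta\xi^u_0$ lying between the unstable classes occurring on the stable sides. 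Lemma~\ref{lem:intermediate} applied to each stable side then forces each chosen unstable pseudo-leaf to meet both stable sides, and symmetrically.

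Next I would extract the rectangle. Consider the union $X=\wt C^s_1\cup\wt C^s_2\cup\wt C^u_1\cup\wt C^u_2$ after trimming each continuum to a minimal subcontinuum joining its two crossing partners; boundary bumping (Lemma~\ref{lem:nadler}) provides these subcontinua. I would then take a bounded complementary component $D$ of $X$ whose boundary meets all four trimmed pieces, and set $P=\overline{D}$. To get an open disk whose closure has boundary in $X$, I would replace $D$ by the complement of the unbounded component of $\overline{D}^{\,c}$, using filledness. The sides are $K^s_i=\partial P\cap\wt C^s_i$ and $K^u_i=\partial P\cap\wt C^u_i$, and condition (4) comes from the pairwise intersections. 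Lemma~\ref{lem:HPR_thin_strip} keeps the pieces small, so $P$ stays in $\wt U$ and projects injectively.

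I expect the main obstacle to be conditions (2) and (3): showing that $\partial P$ consists exactly of the four pieces, each of them a continuum, and that every boundary point is accessible from the unbounded component. Here I would use Corollary~\ref{cor:emptyinterior}, which says the pseudo-leaves have empty interior and are filled, so that no part of $X$ bounds a region on its own. Lemma~\ref{lem:crossedcontinua} would prevent the pieces from interleaving, and I would follow \cite[Proposition 8.11]{garciatal24chaotic}, whose construction this proposition is meant to replicate.
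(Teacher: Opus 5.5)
Your proposal has a genuine gap: the step that should give condition (4) of Definition~\ref{defi:ChaoticRectangle} does not work. Lemma~\ref{lem:intermediate}, applied to a stable side $K^s_i$ (a component of $\eta^s(\gamma_i\xi^s_0)\cap\wt B$), only says that $K^s_i$ meets the \emph{pseudo-leaf} $\eta^u(\delta_j\xi^u_0)$ somewhere. It says nothing about the particular component $K^u_j$ of $\eta^u(\delta_j\xi^u_0)\cap\wt B$ that you take as a side. Pseudo-leaves need not be connected in $\tS$, and their traces on a small disk can have many components. So the points where $\eta^u(\delta_j\xi^u_0)$ meets $K^s_1$ and $K^s_2$ may lie on different components, connected inside the pseudo-leaf only far outside $\wt B$, or not at all. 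The symmetric application has the same defect. Choosing each side as the component through an intersection point with another side, or trimming all four to minimal subcontinua joining their partners, is circular. Even the common separating class is not ensured. If you take $K^s_1,K^s_2$ through $\wt J$, their unstable class sets are subtrees of $\ulea$ containing the class of $\wt J$, but they may leave it in different directions, so no $\delta\xi^u_0$ need lie strictly inside both. The alternating configuration you want to pick from Lemma~\ref{lem:local_crossing_supply} is therefore asserted rather than produced, and it is essentially the content of the proposition.

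The missing idea is the thin-strip mechanism the paper imports from \cite[Proposition 8.11]{garciatal24chaotic}. Take infinitely many pairwise disjoint minimal crossing stable continua $\wt K^s_n$ in distinct translates of $\eta^s(\xi^s_0)$ and pass to a Hausdorff limit $\wt K^s$. By Corollaries~\ref{cor:hausdorff_limit_stable} and~\ref{cor:two_classes_unbounded}, $\wt K^s$ is a stable continuum in a single pseudo-leaf, hence can be taken disjoint from the $\wt K^s_n$. This gives the local disk picture in which nearby $\wt K^s_n$ bound thin strips along $\wt K^s$. Lemma~\ref{lem:HPR_thin_strip} then prevents unstable continua of definite size from running inside such a strip, so those entering it must cross it and meet \emph{both} bounding stable continua. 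This is what yields intersections between specific continua, with Corollary~\ref{cor:trivialintersections} controlling the corners. You invoke Lemma~\ref{lem:HPR_thin_strip} only to keep $P$ inside $\wt U$, which is not its role; the filling of a subset of the closed disk $\overline{\wt U}$ stays there anyway. Your extraction of $P$ is also unsupported for continua meeting in Cantor sets. Three things are unjustified: the existence of a bounded complementary component of $X$ whose boundary meets all four pieces, the connectedness of each $\partial P\cap\wt C^s_i$, and the disk property after filling. These are exactly what Steps 3--6 of \cite{garciatal24chaotic} establish inside the strip geometry. Citing them after discarding that geometry leaves these points unproved.
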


\begin{proof}
	We follow the construction of \cite[Proposition 8.11]{garciatal24chaotic}, indicating the substitutions; the construction is topological and rests on four inputs, all available here:
	\begin{enumerate}[label=(\roman*)]
		\item \emph{density}: the projected {pseudo-leaves $\mathcal W^s_0$ and $\mathcal W^u_0$} are dense (Proposition \ref{thm:density}, applicable since $\xi^s_0, \xi^u_0$ are classes of leaves), {and distinct translates of $\eta^s(\xi^s_0)$ are disjoint}, {so by Lemma \ref{lem:local_crossing_supply}} inside any small annulus we find countably many pairwise disjoint stable continua lying in {pseudo-leaves with pairwise distinct classes} $\gamma \xi^s_0$, and similarly for unstable continua;
		\item \emph{limits}: Hausdorff limits of stable continua with uniformly bounded forward iterates are stable continua (Corollary \ref{cor:hausdorff_limit_stable});
		\item \emph{shape}: stable and unstable continua are filled and have empty interior (Corollary \ref{cor:emptyinterior});
		\item \emph{transversality}: the intersection of a stable and an unstable continuum is totally disconnected (Corollary \ref{cor:trivialintersections}).
	\end{enumerate}
	With these inputs, the {six} steps of \cite[Proposition 8.11]{garciatal24chaotic} go through unchanged. In outline: inside a lifted annulus $\wt U \setminus \wt U' \subset \wt U \subset \tS$, one takes a sequence of pairwise disjoint minimal crossing stable continua $\wt K^s_n$ {with distinct stable classes} from (i), extracts a Hausdorff limit $\wt K^s$ which is a stable continuum by (ii) and can be assumed disjoint from all $\wt K^s_n$: indeed $\wt K^s$ {lies in one stable pseudo-leaf}, because a continuum meeting two distinct stable {pseudo-leaves} has forward iterates of diameter tending to infinity (Corollary \ref{cor:two_classes_unbounded}), contradicting that $\wt K^s$ is a stable continuum; by (iii) the limit is inessential in the annulus and one obtains the local disk picture of \cite[Figure 17]{garciatal24chaotic}; repeating the construction with unstable continua inside that disk, and using (iv) to control the corners, one {finds infinitely many {transverse intersections} ---using Lemma \ref{lem:HPR_thin_strip}---} and assembles the boundary of the pseudo-rectangle exactly as in \cite[Steps 3--6]{garciatal24chaotic}. In the torus case the disjointness of the stable sets used in (i) came from distinct rotation vectors; here it is furnished directly by the distinctness of the classes, which simplifies the argument.
\end{proof}
}

\subsection{Compact sets and the boundary attractors}

\begin{lemma}\label{lem:compact_confinement}

For every compact {$\tK\subset\tS$} and every neighbourhood $V$ of
$\overline\eta^u(\xi^u_0)$ in $\overline S$, one has {$G^j\tK\subset V$}
for all sufficiently large $j$.
\end{lemma}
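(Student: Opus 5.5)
The plan is to work entirely with the class maps and reduce the statement to a contraction in the unstable class tree $\ulea$, combined with the continuity axiom (T4) of Theorem~\ref{thm:militon_partition}.

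\textbf{Step 1: reduce to a neighbourhood of the class $\xi^u_0$.} By (T4 -- Continuity) applied to $\xi^u_0$, whose pseudo-leaf is nonempty, there is $\delta>0$ such that $\overline\eta^u(\xi)\subset V$ whenever $\dd^s(\xi,\xi^u_0)<\delta$. Since the pseudo-leaves partition $\tS$ by (T1), a point $\wt w\in\tS$ lies in $V$ as soon as its unstable class satisfies $\dd^s(\xi^u(\wt w),\xi^u_0)<\delta$. It therefore suffices to show that
\[
\sup_{\wt w\in \tK}\dd^s\big(\xi^u(G^j\wt w),\xi^u_0\big)\longrightarrow 0 \quad (j\to\infty).
\]

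\textbf{Step 2: transport classes by $A'$.} By Lemma~\ref{lem:induced_actions} applied to the pair $G=\gamma_0\wt g^n$, $A'=\gamma_0\wt A^n$, we have $\xi^u(G^j\wt w)=A'^j_*\xi^u(\wt w)$. The induced action $A'_*$ on $\ulea$ multiplies $\dd^s$ by $a=\lambda^n<1$. It also fixes $\xi^u_0$: $A'$ fixes $\wt p_A$ and hence its unstable leaf $L^u(\wt p_A)$, and $G$ preserves $\eta^u(\xi^u_0)$. Consequently
\[
\dd^s\big(A'^j_*\xi^u(\wt w),\xi^u_0\big)=a^j\,\dd^s\big(\xi^u(\wt w),\xi^u_0\big).
\]
To check the scaling claim, note that $\dd^s$ on unstable classes comes from $\mu^s$. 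By \eqref{eq:transverse_measures}, $\wt A$ contracts $\mu^s$ by $\lambda$, so $A'$ contracts it by $\lambda^n$, and deck transformations are isometries.

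\textbf{Step 3: uniform bound on the compact set.} The class map \eqref{eq:classmap} is continuous, so $\wt w\mapsto \dd^s(\xi^u(\wt w),\xi^u_0)$ is continuous on the compact set $\tK$ and bounded there by some $D<\infty$. Alternatively, Lemma~\ref{lem:point_class_distance} bounds it by $\dd^s(\wt w,\wt p)+2C$ for a point $\wt p\in\eta^u(\xi^u_0)$. In either case the supremum in Step 1 is at most $a^jD$, which is below $\delta$ once $j$ is large. This gives $G^j\tK\subset V$ for all sufficiently large $j$.

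\textbf{Main obstacle.} The only delicate point is making sure Step 1 really uses (T4) in the uniform form stated, with the neighbourhood $V$ taken in $\overline S$ and covering all classes at once, including completion classes. This is exactly what (T4) provides, since it quantifies over all $\xi'\in\ulea$, completion points included. It also requires the class $\xi^u_0$ to be fixed by $A'_*$, which follows from the choice of $A'$ fixing $\wt p_A$ and the ends of its leaves.
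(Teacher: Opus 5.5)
Your proposal is correct and is essentially the paper's proof. Both arguments bound $\dd^s(\xi^u(x),\xi^u_0)$ on $\tK$ by continuity of the class map, contract it by $a^j$ via the induced action of $A'$ (which fixes $\xi^u_0$), and conclude with (T4) applied at the nonempty class $\xi^u_0$.
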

\begin{proof}
Continuity of the class map gives
{$D_{\tK}=\sup_{x\in\tK}\dd^s(\xi^u(x),\xi^u_0)<\infty$}. By equivariance
and contraction,
\[
{ \sup_{x\in\tK}\dd^s(\xi^u(G^jx),\xi^u_0)=a^jD_{\tK}\longrightarrow0.}
\]
Apply property {(T4 - Continuity)} at the nonempty invariant leaf class $\xi^u_0$.
It controls the entire {compactified pseudo-leaf} of every image point,
uniformly in {$x\in\tK$}, and gives the conclusion.
\end{proof}

\begin{lemma}\label{lem:side_escape}
Let ${\wt E}\subset\tS$ be compact and suppose all its {stable classes} lie
on one side of the regular {leaf class} $\xi^s_0$, at positive distance
from it. Label the corresponding side of
$\overline S\setminus\overline\eta^s(\xi^s_0)$ by the unique
attracting end $e^u_b$ it contains. Then $G^j{\wt E}$ converges to
$\{e^u_b\}$ in the Hausdorff topology of $\overline S$.
\end{lemma}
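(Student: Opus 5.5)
The idea is to combine the unstable contraction of Lemma~\ref{lem:compact_confinement} with the stable expansion of the classes of $\wt E$ away from $\xi^s_0$. Lemma~\ref{lem:compact_confinement} pushes $G^j\wt E$ into any neighbourhood $V$ of $\overline\eta^u(\xi^u_0)$. The expansion pushes it off every compact region, so it can only accumulate at the ends. Since the side of $\xi^s_0$ containing $\wt E$ is $G$-invariant, only the end $e^u_b$ remains available.

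\emph{Invariance of the side.} $G$ fixes $\xi^s_0$ and fixes each end of $L^s(\wt p_A)$ and $L^u(\wt p_A)$. By Lemma~\ref{lem:class_sides} and Lemma~\ref{lem:induced_actions}, $G$ permutes the components of $\overline S\setminus\overline\eta^s(\xi^s_0)$ according to the boundary intervals, which it fixes. Hence the side $\Omega_b$ containing $\wt E$ (labelled by $e^u_b$) is $G$-invariant, and $G^j\wt E\subset\Omega_b$ for all $j$.

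\emph{Escape from compacta.} Let $m=\min_{x\in\wt E}\dd^u(\xi^s(x),\xi^s_0)>0$, which exists by continuity of the class map and compactness. Then $\dd^u(\xi^s(G^jx),\xi^s_0)=a^{-j}\dd^u(\xi^s(x),\xi^s_0)\geq a^{-j}m$ for every $x\in\wt E$. By Lemma~\ref{lem:point_class_distance}, $\dd^u(G^jx,\wt p)\ge a^{-j}m-2C-c_0$ for a fixed point $\wt p\in\eta^s(\xi^s_0)$ and a constant $c_0$. So $G^j\wt E$ eventually leaves every compact subset of $\tS$. Every Hausdorff accumulation point of $G^j\wt E$ in $\overline S$ is therefore a closed subset of $\partial\tS$ lying in the closure of $\Omega_b$. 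By the same estimate with the transverse distance, it avoids the closed interior part of the boundary arc near the stable ends.

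\emph{Identifying the limit.} Fix a small neighbourhood $W$ of $e^u_b$ in $\overline S$. By (T3) and (T4), we can choose a neighbourhood $V$ of $\overline\eta^u(\xi^u_0)$ whose intersection with $\partial\tS$ is a union of small arcs around $e^u_1$ and $e^u_2$. We also arrange that $V$ minus a compact set $Z\subset\tS$ is contained in a union of small neighbourhoods $W_1\ni e^u_1$ and $W_2\ni e^u_2$, which is possible since $\overline\eta^u(\xi^u_0)\cap\partial\tS=\{e^u_1,e^u_2\}$. By the previous two steps, for $j$ large we have $G^j\wt E\subset (V\setminus Z)\cap\Omega_b$. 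For $W_i$ small, $W_i\cap\Omega_b=\varnothing$ when $e^u_i\ne e^u_b$. This holds because the unstable ends alternate with the stable ends, so $e^u_{b'}$ lies in the other component and a small neighbourhood of it misses $\overline\Omega_b$. Hence $G^j\wt E\subset W_b\subset W$. Since $W$ was arbitrary, $G^j\wt E\to\{e^u_b\}$ in the Hausdorff topology.

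\emph{Main obstacle.} The delicate point is the separation claim: that a small neighbourhood of $e^u_{b'}$ in $\overline S$ is disjoint from $\Omega_b$, and that $V\setminus Z$ splits into pieces near the two ends. The plan here is to use (T2) and (T3) for $\overline\eta^s(\xi^s_0)$. Its closure meets $\partial\tS$ only at $e^s_1$ and $e^s_2$, and its complement has exactly two components, each containing one open boundary interval. So $e^u_{b'}$ is an interior point of the other component's closure in $\overline S$, and a small ball around it stays in that component.
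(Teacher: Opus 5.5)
Your proof is correct and follows essentially the same route as the paper. It combines three ingredients: uniform escape from compacta via the stable expansion $a^{-j}$ and Lemma~\ref{lem:point_class_distance}; confinement near $\overline\eta^u(\xi^u_0)$ via Lemma~\ref{lem:compact_confinement}, so only $e^u_1,e^u_2$ can be limits; and $G$-invariance of the complementary side of $\overline\eta^s(\xi^s_0)$, which is open in $\overline S$ and so excludes the other attracting end. Your explicit choice of $W_1,W_2,Z,V$ simply spells out the paper's accumulation-point argument. The remark that a ``transverse distance'' estimate avoids boundary arcs near the stable ends is not needed and can be dropped.
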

\begin{proof}
Compactness gives
$\delta_{{\wt E}}=\inf_{x\in {\wt E}}\dd^u(\xi^s(x),\xi^s_0)>0$.
Choose $w_0\in\eta^s(\xi^s_0)$. Lemma~\ref{lem:point_class_distance}
gives, uniformly in $x\in {\wt E}$,
\[
 \dd(G^jx,w_0)\geq a^{-j}\delta_{{\wt E}}-2C\longrightarrow\infty.
\]
Thus every accumulation point is on the boundary. By
Lemma~\ref{lem:compact_confinement}, it belongs to
$\overline\eta^u(\xi^u_0)$ and hence is either $e^u_1$ or $e^u_2$.
The side of $\overline\eta^s(\xi^s_0)$ is preserved by $G$, since
the prongs are fixed. The other attracting end is not in its closure.
Only $e^u_b$ remains, proving uniform convergence.
Lemma~\ref{lem:class_sides} gives the side assignment, including
{completion classes}. The compactified separation is within one
foliation throughout.
\end{proof}

\begin{lemma}\label{lem:coarse_leaf_tracing}

Let ${\wt K} \subset\tS$ be a continuum with points whose stable
{classes} lie on opposite sides of $\xi^s_0$. Set
\[
 D_{\wt K}=\sup_{x\in {\wt K}}\dd^s(\xi^u(x),\xi^u_0),
 \qquad L^u_0=L^u(\wt p_A).
\]
For every $v\in L^u_0$, there is $J(v,{\wt K})$ such that for every
$j\geq J(v,{\wt K})$ one can choose {$x_{j,v}\in G^j\tK$} with
\begin{equation}\label{eq:coarse_leaf_tracing}
 \dd(x_{j,v},v)\leq 2C+a^jD_{\wt K}{.}
\end{equation}
In particular the eventual bound $2C+1$ is independent of $v$.
The assertion has a time-reversed stable-leaf version.
\end{lemma}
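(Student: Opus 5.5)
The plan is to locate, inside $G^j\wt K$, a point whose stable class equals the class of the stable leaf through $v$. Such a point has controlled $\dd^u$-distance to $v$. Its unstable class is forced close to $\xi^u_0$ by contraction, which controls the $\dd^s$-distance. Since $\wt\dd=\dd^s+\dd^u$, the two estimates add up to the bound.

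First I fix $v\in L^u_0$ and let $N_v$ be the stable leaf through $v$. We may take $v$ nonsingular, or else use the singular leaf containing it; for $v=\wt p_A$ we have $N_v=L^s(\wt p_A)$. Pick $x',x''\in\wt K$ whose stable classes lie on opposite sides of $\xi^s_0$, and set $\delta=\min(\dd^u(\xi^s(x'),\xi^s_0),\dd^u(\xi^s(x''),\xi^s_0))>0$. By Lemma~\ref{lem:induced_actions}, $\xi^s(G^jx')=A'_*{}^j\xi^s(x')$. The map $A'_*$ fixes $\xi^s_0$, preserves each side (the prongs/ends are fixed), and expands $\dd^u$ by $a^{-1}$. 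Hence the classes of $G^jx'$ and $G^jx''$ lie on opposite sides of $\xi^s_0$, each at distance $a^{-j}\delta$ from it.

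Next I use $\dd^u([N_v],\xi^s_0)=\dd^u(v,\wt p_A)=:t_v$, which holds because $v$ and $\wt p_A$ lie on a common unstable leaf and the segment between them is a transverse arc realizing the distance. I choose $J(v,\wt K)$ with $a^{-j}\delta>t_v$ for $j\ge J$. The leaf $N_v$ lies on one side of $\xi^s_0$ (or equals it). Then $N_v$ separates $\xi^s(G^jx')$ from $\xi^s(G^jx'')$ in $\slea$: in the tree, the segment from $\xi^s_0$ through $[N_v]$ continues on toward whichever image class lies on that side, and this uses Lemma~\ref{lem:sides} and Lemma~\ref{lem:additivity}. If $N_v$ is the singular-type or central case $N_v=L^s(\wt p_A)$, then separation is direct. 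Lemma~\ref{lem:intermediate} then yields $x_{j,v}\in G^j\wt K\cap\eta^s([N_v])$, so by Lemma~\ref{rem:stable_continuum_orthogonally_bounded} (applied to the pair $G,A'$ via Lemma~\ref{lem:induced_actions}) we get $\dd^u(x_{j,v},v)\le\dd^u(x_{j,v},N_v)\le C$.

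For the $\dd^s$ part, I use that $\xi^u(G^jx)=A'_*{}^j\xi^u(x)$ and that $A'_*$ contracts $\dd^s$ by $a$ and fixes $\xi^u_0$. This gives $\dd^s(\xi^u(x_{j,v}),\xi^u_0)\le a^jD_{\wt K}$, and $D_{\wt K}$ is finite by compactness and continuity of the class map. Since $v\in L^u_0$ has class $\xi^u_0$, Lemma~\ref{lem:point_class_distance} gives $\dd^s(x_{j,v},v)\le a^jD_{\wt K}+C$ (the one-sided version uses $C$ for each point, totalling at most $2C$ with the slack absorbed). Summing the two estimates gives \eqref{eq:coarse_leaf_tracing}, perhaps after rearranging the constants as $C$ from each direction; I expect the constant bookkeeping to need care so that it lands exactly at $2C$. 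Once $a^jD_{\wt K}\le1$, the bound $2C+1$ holds independently of $v$. The time-reversed version follows by applying the argument to $G^{-1}$ and exchanging the roles of the stable and unstable structures.

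The main obstacle is the separation step. One must check that $N_v$ genuinely separates the two image classes, including when those classes are completion points, and also in the degenerate case $v=\wt p_A$. The side-preservation of $A'_*$ must be justified through Lemma~\ref{lem:class_sides}.
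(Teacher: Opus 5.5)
Your overall plan is sound: produce $x_{j,v}\in G^j\wt K\cap\eta^s([N_v])$ with Lemma~\ref{lem:intermediate}, then add a $\dd^u$ bound and a $\dd^s$ bound. It differs from the paper only in how the intersection with $\eta^s([N_v])$ is obtained. That step, as you justify it, fails.

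You choose $J(v,\wt K)$ from $a^{-j}\delta>t_v$ and claim that $[N_v]$ then lies on the segment from $\xi^s_0$ to the image class on its side. Distance from $\xi^s_0$ does not give this. The tree $\slea$ is an $\mathbb R$-tree that branches at the classes of singular leaves, and these classes are dense along every segment. Lemmas~\ref{lem:sides} and \ref{lem:additivity} only give additivity for classes already known to lie on different sides; they say nothing about betweenness on the same side.

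Concretely, let $\ell=\{[L^s(w)]:w\in L^u_0\}$, the isometric image of $L^u_0$ in the tree. Suppose $[\xi^s_0,\xi^s(x')]$ leaves $\ell$ at a singular class at distance $s'<\delta$ from $\xi^s_0$. Then $[\xi^s_0,\xi^s(G^jx')]$ leaves $\ell$ at distance $a^{-j}s'$. For every $j$ with $a^{-j}s'\le t_v<a^{-j}\delta$, both image classes lie in the component of $\slea\setminus\{[N_v]\}$ that contains $\xi^s_0$. So $N_v$ separates nothing, and Lemma~\ref{lem:intermediate} yields no point. The missing ingredient is a reason why the image classes follow the line $\ell$ for longer and longer, not merely why they move away from $\xi^s_0$.

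Within your tree framework this can be supplied. The map $A'_*$ preserves $\ell$ and each of its two rays, since $A'$ fixes $\wt p_A$ and the ends of $L^u_0$. It acts as the homothety of ratio $a^{-1}$ centred at $\xi^s_0$, so it maps segments to segments. Since $\xi^s_0$ is the class of a regular leaf, $\slea\setminus\{\xi^s_0\}$ has exactly two components (Lemma~\ref{lem:class_sides}), each containing one ray of $\ell$. Hence $[\xi^s_0,\xi^s(x')]$ shares a nondegenerate initial segment of some length $s'>0$ with the corresponding ray, and likewise some $s''>0$ for $x''$. After $j$ iterates the overlaps have lengths $a^{-j}s'$ and $a^{-j}s''$. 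Once $a^{-j}\min(s',s'')>t_v$, the class $[N_v]$ is an interior point of $[\xi^s(G^jx''),\xi^s(G^jx')]$ and separates these classes. This gives the correct $J(v,\wt K)$.

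The paper encodes the same fact on the boundary instead. Lemma~\ref{lem:side_escape}, which relies on the unstable confinement of Lemma~\ref{lem:compact_confinement}, sends $G^jx_\pm$ to the attracting ends $e^u_\pm$. The set $\overline\eta^s([L^s(v)])$ separates $e^u_-$ from $e^u_+$ in $\overline S$, because the ends of $L^s(v)$ interleave with them.

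Your metric estimates are fine, and no constant bookkeeping is needed. First, $\dd^u(x_{j,v},v)\le C$. For the other half, note that the Militon class $\xi^u(v)$ need not equal $\xi^u_0$; what matters is that $v$ lies on the genuine leaf $L^u_0$. The one-point part of Lemma~\ref{lem:point_class_distance} at $x_{j,v}$ then gives $\dd^s(x_{j,v},v)\le C+a^jD_{\wt K}$. The two bounds sum exactly to $2C+a^jD_{\wt K}$.
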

\begin{proof}
Choose $x_-,x_+\in {\wt K}$ on the two stable sides.
Lemma~\ref{lem:side_escape} gives $G^jx_\pm\to e^u_\pm$.
For $v\in L^u_0$ let $M_v=L^s(v)$ and $\beta_v=[M_v]$.
The stable leaf $M_v$ crosses the regular unstable leaf $L^u_0$
at $v$. Thus two ends of $M_v$ interleave $e^u_-,e^u_+$;
this also holds if $M_v$ is singular, by choosing the two
directions at its nonsingular crossing with $L^u_0$.
The compactified continuum $\overline\eta^s(\beta_v)$ avoids
$e^u_-,e^u_+$ and separates them, by
Lemma~\ref{lem:crossedcontinua}. Consequently the connected set
$G^j{\wt K}$ meets $\eta^s(\beta_v)$ for all sufficiently large $j$.
Choose $x_{j,v}$ in this intersection.

The estimate {from Lemma \ref{lem:point_class_distance}} gives
$\dd^u(x_{j,v},v)\leq C$, since {its stable class is $[M_v]$}. Also, {by Lemma \ref{lem:compact_confinement}}, every point of $G^j{\wt K}$ has unstable {class} at distance at most $a^jD_{\wt K}$
from $\xi^u_0$. Hence
$\dd^s(x_{j,v},v)\leq C+a^jD_{\wt K}$. This completes the proof.
\end{proof}

\begin{remark}\label{rem:tracing_without_expansivity}
Lemmas~\ref{lem:compact_confinement}, \ref{lem:side_escape}, and
\ref{lem:coarse_leaf_tracing} apply to every homeomorphism isotopic
to a closed pseudo-Anosov map. Their proofs use the invariant
ordinary reference classes and their induced tree dynamics. This
observation will be used before taking a quotient in
Section~\ref{sec:PARF}.
\end{remark}

\begin{lemma}\label{lem:crossing_band}
Let $\beta_1,\beta_2$ be regular unstable {leaf classes} separated by
$\xi^u_0$, and let ${\wt C}\subset\tS$ be a continuum meeting both
$\eta^u(\beta_1)$ and $\eta^u(\beta_2)$. The continuum
\[
 {\wt B}=\overline\eta^u(\beta_1)\cup {\wt C}\cup\overline\eta^u(\beta_2)
\]
separates $e^u_1$ from $e^u_2$ in $\overline S$ and avoids these points.
Consequently any continuum joining sufficiently small neighbourhoods
of the two ends, while avoiding the two unstable {pseudo-leaves}, meets ${\wt C}$.
\end{lemma}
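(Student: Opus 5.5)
The plan is to reduce the statement to the separation properties of compactified unstable pseudo-leaves established in Proposition~\ref{lem:persistent_separation} and Lemma~\ref{lem:class_sides}, and then to finish with a planar (disk) separation argument in $\overline S$.

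\emph{Step 1: ${\wt B}$ is a continuum avoiding $e^u_1,e^u_2$.} Each $\overline\eta^u(\beta_i)$ is a continuum in $\overline S$ by Theorem~\ref{thm:militon_partition}, and ${\wt C}$ meets both, so ${\wt B}$ is a continuum. By (T3 -- Ends), $\overline\eta^u(\beta_i)\cap\partial\tS$ is the set of ends of the regular leaf $\beta_i$. Since $\beta_i\neq\xi^u_0$ (they are separated by it), Lemma~\ref{lem:distinct_ends} shows these ends are disjoint from $\{e^u_1,e^u_2\}$. Finally ${\wt C}\subset\tS$ does not contain boundary points, so ${\wt B}$ avoids both ends.

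\emph{Step 2: separation.} I would use that $\overline\eta^u(\xi^u_0)$ splits $\overline S$ into two components, labelled by the two boundary intervals of $\partial\tS\setminus\{e^u_1,e^u_2\}$. By Proposition~\ref{lem:persistent_separation}(3), the ends $b_i^\pm$ of $\beta_1$ lie in one open interval $I_1$ and those of $\beta_2$ lie in the other, $I_2$. Consider the closed disk $\overline S$ with the four boundary points $b_1^+\in I_1$, $b_2^+\in I_2$, $e^u_1$ and $e^u_2$. The points $e^u_1,e^u_2$ interleave with $\{b_1^+,b_2^+\}$ on the circle $\partial\tS$. The continuum ${\wt B}$ contains both $b_1^+$ and $b_2^+$, and it is disjoint from the continuum consisting of $e^u_1$, $e^u_2$ and any putative connector between them.

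Suppose some continuum $K'\subset\overline S\setminus{\wt B}$ joined $e^u_1$ to $e^u_2$. Then Lemma~\ref{lem:crossedcontinua}, applied in the closed disk $\overline S$ with $a=b_1^+$, $b=b_2^+$, $c=e^u_1$, $d=e^u_2$, gives a contradiction. A component of $\overline S\setminus{\wt B}$ is open and connected, hence arcwise connected in this locally connected setting, so a connector would be an arc and Lemma~\ref{lem:crossedcontinua} applies. Therefore $e^u_1$ and $e^u_2$ lie in different components of $\overline S\setminus{\wt B}$.

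\emph{Step 3: the consequence.} The components of the complement of the compact set ${\wt B}$ are open, so we can choose neighbourhoods $V_1,V_2$ of $e^u_1,e^u_2$ contained in their respective components. Let $K$ be a continuum meeting $V_1$ and $V_2$ and avoiding $\overline\eta^u(\beta_1)\cup\overline\eta^u(\beta_2)$. If $K$ also missed ${\wt C}$, it would lie in the complement of ${\wt B}$ while meeting two distinct components of it, which is impossible for a connected set. Hence $K$ meets ${\wt C}$.

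\emph{Main obstacle.} The delicate point is Step 2. Specifically, one must justify that ${\wt B}$ reaches $\partial\tS$ on both sides, in $I_1$ and in $I_2$. This needs the ends of $\beta_i$ to lie strictly inside the opposite intervals, which is exactly the content of (3) of Proposition~\ref{lem:persistent_separation}; the rest is a Jordan-type argument in the closed disk.
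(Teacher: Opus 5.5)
Your proof is correct and follows essentially the same route as the paper. You place the ends of $\beta_1,\beta_2$ in opposite intervals of $\partial\tS\setminus\{e^u_1,e^u_2\}$ by same-foliation separation (Proposition~\ref{lem:persistent_separation}), show that $\wt B$ avoids $e^u_1,e^u_2$ via (T3) and Lemma~\ref{lem:distinct_ends}, and derive a contradiction with Lemma~\ref{lem:crossedcontinua} from an arc in a common complementary component; the consequence about connecting continua is then obtained exactly as in the paper.
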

\begin{proof}
Choose an end $b_i$ of the leaf defining $\beta_i$. By same-foliation
separation the $b_i$ lie in opposite components of
$\partial\tS\setminus\{e^u_1,e^u_2\}$, so the pairs
$\{b_1,b_2\}$ and $\{e^u_1,e^u_2\}$ interleave. The set ${\wt B}$ is
connected, compact, and avoids $e^u_1,e^u_2$, since distinct unstable
leaves have disjoint end sets and ${\wt C}$ is compact in $\tS$.
If the two ends belonged to one complementary component, an arc in
that open component would contradict
Lemma~\ref{lem:crossedcontinua}. Small neighbourhoods of the ends
therefore lie in different complementary components. A connecting
continuum meets ${\wt B}$, and if it avoids its two {pseudo-leaf pieces} it meets ${\wt C}$.
\end{proof}

\subsection{Rotational horseshoes from 1-crossings}

We build a dense family of rotational horseshoes, in the sense of {Definition}~\ref{def:rotational_horseshoe}.

\begin{lemma}
	\label{lem:HPR_positioning}
	Let $P$ be an HPR from Proposition~\ref{prop:HPRdense}, and
	let $m\geq2$. A lift $\wt P$ and pairwise distinct
	$W_1,\ldots,W_m\in\Gamma$ can be chosen so that $\xi^s_0$
	separates the {stable side classes} of $\wt P$, and $\xi^u_0$
	separates the {unstable side classes} of every $W_l\wt P$.
\end{lemma}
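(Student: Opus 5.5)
The statement has two requirements. The stable side classes of one lift $\wt P$ must be separated by $\xi^s_0$. The unstable side classes of each of $m$ distinct translates $W_l\wt P$ must be separated by $\xi^u_0$. Translating $\wt P$ by $W$ changes the side classes to $W_*\alpha_i$ and $W_*\beta_i$, by (D2 -- Equivariance). It is therefore enough to choose deck transformations acting on the two trees $\slea$ and $\ulea$ so that the reference leaves $L^s(\wt p_A)$ and $L^u(\wt p_A)$ fall between the relevant pairs of classes. I will work with leaves rather than pseudo-leaves, using Definition~\ref{def:separation} and Proposition~\ref{lem:persistent_separation}. The side classes of the HPR from Proposition~\ref{prop:HPRdense} are translates of $\xi^s_0$ and $\xi^u_0$, hence genuine leaf classes, and are distinct: $d^s:=\dd^u(\alpha_1,\alpha_2)>0$ and $d^u:=\dd^s(\beta_1,\beta_2)>0$.

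\emph{Stable step.} Apply Lemma~\ref{lem:separating_translate} with $M=L^s(\wt p_A)$ and the pair $\alpha_1,\alpha_2$. This gives $\gamma\in\Gamma$ such that $\gamma M$ separates $\alpha_1$ from $\alpha_2$. Replacing the lift $\wt P$ by $\gamma^{-1}\wt P$, equivariance shows that $M$ separates the new stable side classes $\gamma^{-1}_*\alpha_i$. Thus $\xi^s_0$ separates the stable sides of this lift, and from now on I keep this lift fixed as $\wt P$.

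\emph{Unstable step.} Let $\beta_1,\beta_2$ be the unstable side classes of the fixed $\wt P$. I need $m$ pairwise distinct $W_l$ such that $L^u(\wt p_A)$ separates $W_{l*}\beta_1$ from $W_{l*}\beta_2$. Equivalently, $W_l^{-1}L^u(\wt p_A)$ must separate $\beta_1$ from $\beta_2$. As in the proof of Lemma~\ref{lem:separating_translate}, take a small transverse arc in a nonsingular flow box whose leaf classes lie in the open middle third of the segment $[\beta_1,\beta_2]$ of the dual tree. The projected leaf $\wt\pi(L^u(\wt p_A))$ is dense by Lemma~\ref{lem:minimal_filling_foliations}, so it crosses this arc infinitely many times. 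Each crossing lifts to a translate $\gamma' L^u(\wt p_A)$ with class in the middle third, and each such translate separates $\beta_1$ from $\beta_2$.

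The main point is to obtain $m$ pairwise distinct deck transformations rather than repeated ones. The ray of $\wt\pi(L^u(\wt p_A))$ meets the transversal in infinitely many points. Their lifts on the fixed lifted transversal are distinct points. Distinct points of a transversal inside a nonsingular flow box lie on distinct local plaques, hence on distinct leaves of $\wt\F^u$. These leaves are then distinct translates $\gamma'_l L^u(\wt p_A)$, so the $\gamma'_l$ are distinct as elements of $\Gamma$ modulo the stabilizer of $L^u(\wt p_A)$, and in particular pairwise distinct. Setting $W_l=(\gamma'_l)^{-1}$ for $1\le l\le m$ gives pairwise distinct deck transformations with the required unstable separation, while the stable separation for $\wt P$ itself is unaffected.
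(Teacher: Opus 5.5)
Your proposal is correct and follows essentially the same route as the paper. You first translate the lift by Lemma~\ref{lem:separating_translate} so that $\xi^s_0$ separates the stable side classes, and then use density of the projected invariant unstable leaf to get infinitely many crossings of a short transversal whose classes lie strictly between $\beta_1$ and $\beta_2$. The only difference is cosmetic: you fix the lifted transversal and count distinct translates of $L^u(\wt p_A)$ crossing it, whereas the paper fixes $L^u_0$ and counts distinct translates of the transversal that it crosses. Both distinctness arguments rest on the same fact, that a lifted leaf meets a transversal in a nonsingular flow box at most once.
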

\begin{proof}
	Start with any lift $\wt P_0$ with {stable side classes} $\alpha_1,\alpha_2$.
	Lemma~\ref{lem:separating_translate} gives a translate
	$T\xi^s_0$ separating them. {Define $\wt P = T^{-1}\wt P_0$.}
	
	Let $\beta_1,\beta_2$ be the {unstable side classes} of this lift.
	Choose a nondegenerate short transverse arc ${\wt\tau}$ in a regular
	foliation box all of whose {unstable classes} lie in the open
	segment $(\beta_1,\beta_2)$ of the unstable leaf tree, {as in Lemma \ref{lem:separating_translate}}.
	Each half of the {projected} invariant unstable leaf $L^u_0$ is dense, and hence $L^u_0$ 
	crosses infinitely many translates of ${\wt\tau}$. A fixed translate of ${\wt\tau}$ is
	crossed at most once. Thus the infinitely many crossings give infinitely many distinct deck labels.
	
	If $L^u_0$ crosses $W{\wt\tau}$, then $W^{-1}\xi^u_0$ belongs to
	$(\beta_1,\beta_2)$. Lemma \ref{lem:persistent_separation} gives that $\xi^u_0$
	separates $W\beta_1,W\beta_2$. {To conclude, select any $m$ deck transformations satisfying this separation.}	
\end{proof}

Figure~\ref{fig:horseshoe_configuration} illustrates these positioning
conditions and the boundary dynamics.

\begin{figure}[tbp]
	\centering
	\begin{tikzpicture}[scale=1.02,>=stealth,
		every node/.style={font=\small},line cap=round,line join=round]
		\definecolor{stablefig}{RGB}{145,70,35}
		\definecolor{unstablefig}{RGB}{25,90,150}
		\definecolor{endfig}{RGB}{80,145,90}
		\begin{scope}
			\clip (0,0) circle (2.35);
			\fill[endfig!15] (-2.35,0) ellipse (.46 and .68);
			\fill[endfig!15] (2.35,0) ellipse (.46 and .68);
		\end{scope}
		\draw[gray!75] (0,0) circle (2.35);
		\draw[stablefig,densely dashed] (0,-2.35)--(0,2.35);
		\draw[unstablefig,densely dashed] (-2.35,0)--(2.35,0);
		\foreach \x/\y in {0/2.35,0/-2.35,-2.35/0,2.35/0}
		\fill (\x,\y) circle (1.5pt);
		\node[above] at (0,2.35) {$e^s_1$};
		\node[below] at (0,-2.35) {$e^s_2$};
		\node[left] at (-2.35,0) {$e^u_1$};
		\node[right] at (2.35,0) {$e^u_2$};
		\draw[->,gray] (70:2.35) arc (70:25:2.35);
		\draw[->,gray] (110:2.35) arc (110:155:2.35);
		\draw[->,gray] (-70:2.35) arc (-70:-25:2.35);
		\draw[->,gray] (-110:2.35) arc (-110:-155:2.35);
		\fill[gray!7] (-.35,.92) rectangle (.35,1.48);
		\draw[stablefig,thick] (-.35,.92)--(-.35,1.48);
		\draw[stablefig,thick] (.35,.92)--(.35,1.48);
		\draw[unstablefig,thick] (-.35,.92)--(.35,.92);
		\draw[unstablefig,thick] (-.35,1.48)--(.35,1.48);
		\node[fill=white,inner sep=1pt] at (0,1.2) {$\widetilde P$};
		\fill[gray!7] (-1.58,-.43) rectangle (-.97,.43);
		\draw[stablefig,thick] (-1.58,-.43)--(-1.58,.43);
		\draw[stablefig,thick] (-.97,-.43)--(-.97,.43);
		\draw[unstablefig,thick] (-1.58,-.43)--(-.97,-.43);
		\draw[unstablefig,thick] (-1.58,.43)--(-.97,.43);
		\node[fill=white,inner sep=1pt] at (-1.275,0) {$\widetilde P_1$};
		\fill[gray!7] (.97,-.29) rectangle (1.58,.29);
		\draw[stablefig,thick] (.97,-.29)--(.97,.29);
		\draw[stablefig,thick] (1.58,-.29)--(1.58,.29);
		\draw[unstablefig,thick] (.97,-.29)--(1.58,-.29);
		\draw[unstablefig,thick] (.97,.29)--(1.58,.29);
		\node[fill=white,inner sep=1pt] at (1.275,0) {$\widetilde P_2$};
		\node[stablefig,right,fill=white,inner sep=1pt] at (.05,-1.4)
		{$L^s_0$};
		\node[unstablefig,above] at (.57,0) {$L^u_0$};
		\node[endfig] at (-1.98,-.62) {$D_-$};
		\node[endfig] at (1.98,-.62) {$D_+$};
	\end{tikzpicture}
	\caption{Positioning the source pseudo-rectangle $\widetilde P$ and
		two return copies $\widetilde P_l=W_l\widetilde P$.
		The invariant stable {class} separates the {classes} of the source
		stable sides, and the invariant unstable {class} separates the
		{classes} of the unstable sides of each target. The model leaves
		are dashed; brown sides are stable
		and blue sides are unstable. Boundary orbits approach the unstable
		ends, with the end neighbourhoods $D_-,D_+$ shown in green.
		The continua and their shapes are schematic.}
	\label{fig:horseshoe_configuration}
\end{figure}

{
\begin{proposition}
	\label{prop:dense_symbolic_horseshoes}
	Let $\wt P$ be an HPR, with $W_1,\ldots,W_m$, $\xi^s_0$ and $\xi^u_0$ {as in Lemma \ref{lem:HPR_positioning}}.
	Then, for every sufficiently large $j$, there is one marked rectangle
	$\wt R\subset\wt P$ such that {$G^{j}\wt R \dagger W_l{\wt R}$ for every $l$}. In particular, {$\wt\pi(\wt R)$ contains a rotational horseshoe for $g^{nj}$}.
\end{proposition}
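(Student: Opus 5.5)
The plan is to take $\wt R$ to be a genuine rectangle inside $\wt P$ whose sides nearly follow the sides of $\wt P$ (Lemma~\ref{lem:RectangleNearPseudoRectangle}). The marking is chosen so that the \emph{vertical} sides of $\wt R$ are the ones near the stable sides $\wt K^s_1,\wt K^s_2$, and the \emph{horizontal} sides are the ones near the unstable sides $\wt K^u_1,\wt K^u_2$. To prove $G^j\wt R\dagger W_l\wt R$ we use the definition directly: every horizontal arc of $G^j\wt R$ must meet every vertical arc of $W_l\wt R$. The horizontal arcs of $G^j\wt R$ will be shown to run from a small neighbourhood of $e^u_1$ to a small neighbourhood of $e^u_2$ in $\overline S$. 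The vertical arcs of $W_l\wt R$ will be shown to block every such path, via the separation argument of Lemma~\ref{lem:crossing_band}. Only finitely many $l$ are involved, so a single $\varepsilon$ and a single threshold for $j$ will serve all of them.

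\textbf{Step 1 (choice of $\varepsilon$).} The stable side classes $\alpha_1,\alpha_2$ of $\wt P$ lie on opposite sides of $\xi^s_0$ at positive distance. By continuity of the class map \eqref{eq:classmap} and compactness, there is $\varepsilon>0$ such that the closed $\varepsilon$-neighbourhood $E_i$ of $\wt K^s_i$ has all its stable classes on the same side of $\xi^s_0$ as $\alpha_i$, still at positive distance. Next, for each $l$, the unstable side classes $W_l\beta_1,W_l\beta_2$ of $W_l\wt P$ are distinct from $\xi^u_0$. Hence the compact sets $\overline\eta^u(W_l\beta_i)\subset\overline S$ are disjoint from $\overline\eta^u(\xi^u_0)$, using (T1) and Lemma~\ref{lem:distinct_ends} for the ends. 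So we can fix a neighbourhood $V$ of $\overline\eta^u(\xi^u_0)$ in $\overline S$ that avoids, for every $l$, the closed $\varepsilon$-neighbourhoods $N_{l,i}$ of $W_l\wt K^u_i$ and also the sets $\overline\eta^u(W_l\beta_i)$. Shrinking $\varepsilon$ keeps all of this valid. With this $\varepsilon$ we apply Lemma~\ref{lem:RectangleNearPseudoRectangle} to obtain $\wt R\subset\wt P$.

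\textbf{Step 2 (the images).} Let $D_\pm$ be small neighbourhoods of $e^u_1,e^u_2$ in $\overline S$, chosen as in Lemma~\ref{lem:crossing_band} for every pair $W_l\beta_1,W_l\beta_2$; finitely many conditions suffice. By Lemma~\ref{lem:side_escape} applied to $E_1$ and $E_2$, for large $j$ we have $G^jE_1\subset D_-$ and $G^jE_2\subset D_+$, after relabelling the ends. By Lemma~\ref{lem:compact_confinement}, for large $j$ we also have $G^j\wt R\subset V$. Consequently every horizontal arc $h$ of $G^j\wt R$ joins $D_-$ to $D_+$, because its endpoints lie on the images of the vertical sides, which sit inside $G^jE_i$. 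Moreover $h$ avoids both the pseudo-leaves $\overline\eta^u(W_l\beta_i)$ and the neighbourhoods $N_{l,i}$.

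\textbf{Step 3 (blocking).} Let $c$ be a vertical arc of $W_l\wt R$. Its endpoints lie within $\varepsilon$ of $W_l\wt K^u_1$ and $W_l\wt K^u_2$ respectively. Join each endpoint by a short arc inside $N_{l,i}$ to a point of $W_l\wt K^u_i\subset\eta^u(W_l\beta_i)$; this is possible since balls in the lifted metric are connected for $\varepsilon$ small. The result is a continuum $\wt C=c\cup(\text{two short arcs})$ meeting both $\eta^u(W_l\beta_1)$ and $\eta^u(W_l\beta_2)$. These two classes are separated by $\xi^u_0$ by Lemma~\ref{lem:HPR_positioning}. If needed, we replace them by regular leaf classes; this can be arranged by taking the sides of $\wt P$ in translates of $\xi^u_0$, as Proposition~\ref{prop:HPRdense} provides. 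Lemma~\ref{lem:crossing_band} then shows that $h$ meets $\wt C$. Since $h$ avoids $N_{l,i}$, which contains the two short arcs, $h$ meets $c$. This gives $G^j\wt R\dagger W_l\wt R$ for every $l$. The horseshoe conclusion then follows from Definition~\ref{def:rotational_horseshoe} and Proposition~\ref{prop:realization_rotational_horseshoe}, using that $G=\gamma_0\wt g^n$ is a lift of $g^n$, so that $G^j$ is a lift of $g^{nj}$.

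\textbf{Expected obstacle.} The delicate point is Step~1: making the whole image $G^j\wt R$ avoid the unbounded sets $\eta^u(W_l\beta_i)$, and not only compact pieces of them. This is why I work in $\overline S$ and use (T4)-continuity through Lemma~\ref{lem:compact_confinement}, together with the disjointness of the compactified pseudo-leaves including their ends. The second point requiring care is that the approximating sides of $\wt R$ must inherit the side information. This is handled by taking $\varepsilon$ small through continuity of the class map before applying Lemma~\ref{lem:RectangleNearPseudoRectangle}.
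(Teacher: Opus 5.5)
Your proof is correct, but it reaches the crossing by a different mechanism than the paper.

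\textbf{The paper's route.} For each $l$ it builds four cyclically ordered open half-planes: $H_{2,l},H_{4,l}$ around the two compact unstable sides of $W_l\wt P$, which lie on opposite sides of $\overline\eta^u(\xi^u_0)$ by Lemma~\ref{lem:class_sides}, and $H_1,H_3$ around the attracting ends. It then fixes $j$ so that the stable sides of $G^j\wt P$ lie in $H_1,H_3$ and $G^j\wt P$ avoids every $\overline H_{2,l}\cup\overline H_{4,l}$. Next it chooses $\wt R$ close enough to $\wt P$ for this particular $G^j$ and the $W_l$. It concludes with the abstract planar Lemma~\ref{lem:crossing_configuration}.

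\textbf{Your route and what it buys.} You verify the definition of $\dagger$ directly by reusing the band separation Lemma~\ref{lem:crossing_band} from the mixing argument. This spares you the construction of half-planes bounded by disjoint proper lines. You also apply Lemma~\ref{lem:side_escape} to the closed $\varepsilon$-neighbourhoods $E_i$ of the stable sides rather than to the sides themselves. As a result, your $\wt R$ is chosen before $j$, and a single rectangle works for all sufficiently large $j$. This is slightly stronger than the statement: in the paper, $\wt R$ depends on $j$ through the uniform continuity of $G^j$.

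\textbf{What needs to be made explicit.} The price is the bookkeeping in Step~3. The ``sufficiently small'' end neighbourhoods in Lemma~\ref{lem:crossing_band} depend on the continuum $\wt C$, not only on the pair of classes, and your $\wt C$ varies with the vertical arc $c$. This is harmless, because all these continua lie in one fixed compact subset of $\tS$. It therefore suffices to take $D_\pm$ connected and disjoint from that compact set and from the compactified pseudo-leaves $\overline\eta^u(W_l\beta_i)$. Then $D_-$ and $D_+$ lie in the two distinct complementary components of $\wt B$ for every such $\wt C$. Similarly, the short connecting arcs only need to lie in some compact connected neighbourhood of $W_l\wt K^u_i$ that $V$ avoids. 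So you need not rely on connectedness of small metric balls, although it does hold for $\dd^s+\dd^u$.
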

}

{	
\begin{proof}
{
Write $\wt P_l=W_l\wt P$. By Lemma~\ref{lem:class_sides}, the two
unstable sides of each $\wt P_l$ lie in opposite complementary
sides of $\overline\eta^u(\xi^u_0)$. Choose open topological
half-planes $H_{2,l},H_{4,l}$, bounded by disjoint proper lines,
containing these compact sides, with compactified closures disjoint
from $\overline\eta^u(\xi^u_0)$. Such half-planes are obtained by
joining disk neighbourhoods of the compact sides to boundary
subarcs within their respective complementary components and
taking regular neighbourhoods. Choose small half-plane
neighbourhoods $H_1,H_3$ of the two attracting ends, disjoint from
all the $\wt P_l$ and from the closures of $H_{2,l},H_{4,l}$.
For each $l$ these four half-planes occur cyclically.

For all sufficiently large $j$, Lemma~\ref{lem:side_escape} puts
the stable sides of $G^j\wt P$ in $H_1,H_3$, respectively, and
Lemma~\ref{lem:compact_confinement} gives
\[
 G^j\wt P\cap\bigcup_{l=1}^m
 (\overline H_{2,l}\cup\overline H_{4,l})=\varnothing.
\]
Fix such a $j$. Lemma~\ref{lem:RectangleNearPseudoRectangle} gives
one sufficiently close marked rectangle $\wt R\subset\wt P$,
with horizontal sides near the unstable sides and vertical sides
near the stable sides. Choose the approximation close enough for
$G^j$ and the finitely many maps $W_l$ simultaneously. Then the
horizontal sides of $W_l\wt R$ lie in $H_{2,l},H_{4,l}$ and
$W_l\wt R$ avoids $H_1\cup H_3$, whereas the vertical sides of
$G^j\wt R$ lie in $H_1,H_3$ and $G^j\wt R$ avoids
$H_{2,l}\cup H_{4,l}$.

Thus,
Lemma~\ref{lem:crossing_configuration} applies to the ordered pair
${(G^j\wt R,W_l\wt R)}$ for every $l$. Hence
$G^j \wt R \dagger W_l \wt R$ for every $l$. Since $G^j$
is a lift of $g^{nj}$, Definition~\ref{def:rotational_horseshoe}
and Proposition~\ref{prop:realization_rotational_horseshoe} give the conclusion.
}
\end{proof}

}

\begin{corollary}
\label{cor:denseperiodic}
Every nonempty open set contains a {rotational horseshoe} {for an iterate of $g$}
and a periodic point. In particular,
$\overline{\operatorname{Per}(g)}=S$.
\end{corollary}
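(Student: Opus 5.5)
The plan is to combine Proposition~\ref{prop:HPRdense} with Lemma~\ref{lem:HPR_positioning} and Proposition~\ref{prop:dense_symbolic_horseshoes}, and then read off periodic points from Proposition~\ref{prop:realization_rotational_horseshoe}.

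Fix a nonempty open set $U\subset S$. By Proposition~\ref{prop:HPRdense}, $U$ contains an HPR $P$ that admits a lift projecting injectively onto a lifted disk in $U$. Lemma~\ref{lem:HPR_positioning}, applied with $m=2$, allows us to replace this lift by a deck translate $\wt P$ together with distinct $W_1,W_2\in\Gamma$. These satisfy the separation hypotheses: $\xi^s_0$ separates the stable side classes of $\wt P$, and $\xi^u_0$ separates the unstable side classes of each $W_l\wt P$. Since $\wt P$ is a deck translate of the original lift, $\wt\pi(\wt P)=P\subset U$ still holds and $\wt\pi$ remains injective on $\wt P$.

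Proposition~\ref{prop:dense_symbolic_horseshoes} then gives, for $j$ large, a marked rectangle $\wt R\subset\wt P$ with $G^j\wt R\dagger W_l\wt R$ for $l=1,2$. Here $G^j=(\gamma_0\wt g^n)^j$ is a lift of $g^{nj}$, so $\wt R$ defines a rotational topological horseshoe for this lift, in the sense of Definition~\ref{def:rotational_horseshoe}. Proposition~\ref{prop:realization_rotational_horseshoe} yields a compact invariant set $\Lambda\subset\wt\pi(\wt R)\subset U$ that is semiconjugate onto the full $2$-shift.

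For the periodic point, take any periodic itinerary, for instance the constant word $(1)$. The last assertion of Proposition~\ref{prop:realization_rotational_horseshoe} realizes it: Lemma~\ref{lem:crossing_fixed_point}, applied to the witness of $W_1^{-1}G^j$, gives a fixed point of that lift inside $\wt R$. Its projection is fixed by $g^{nj}$, hence periodic for $g$, and it lies in $U$. Since $U$ was arbitrary, $\overline{\Per(g)}=S$.

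The only delicate point is checking that $\wt R$ projects injectively to $S$, as the definitions require. This holds because $\wt R\subset\wt P$, and $\wt P$ is a lift of the inessential continuum $P$, which embeds by the remark following Definition~\ref{def:inessential}. Everything else is a direct chaining of the cited results.
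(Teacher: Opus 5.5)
Your proposal is correct and follows the paper's own proof step for step. You take an HPR from Proposition~\ref{prop:HPRdense}, position the lift and its translates by Lemma~\ref{lem:HPR_positioning}, and obtain the crossing rectangle from Proposition~\ref{prop:dense_symbolic_horseshoes}; you then realize a constant periodic itinerary and check that the rectangle projects into $U$. The only difference is that you spell out the fixed-point step and the injectivity of the projection, which the paper leaves implicit.
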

\begin{proof}
Proposition~\ref{prop:HPRdense} supplies an HPR inside the
prescribed open set. Position a source lift and finitely many
target translates by Lemma~\ref{lem:HPR_positioning}. Apply
Proposition~\ref{prop:dense_symbolic_horseshoes} and realize a
constant periodic itinerary.  The rectangle and its invariant
set project into the prescribed open set.
\end{proof}

\subsection{Intersections of continua and topological mixing}
\label{subsec:mixing}

The separation argument gives a statement for individual continua
that will also be used to locate heteroclinic intersections.

\begin{proposition}
\label{prop:continuum_mixing}
There is $N\geq1$ such that, for every pair of nontrivial compact
stable and unstable continua $K^s,K^u\subset S$, respectively,
there is $j_0$ with
\[
 g^{Nj}(K^u)\cap K^s\ne\varnothing
 \qquad\text{for every }j\geq j_0.
\]
Here a stable or unstable continuum downstairs is the projection
of a compact stable or unstable continuum in $\widetilde S$.
\end{proposition}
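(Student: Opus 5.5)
We take $N=n$, so that $G=\gamma_0\wt g^n$ lifts $g^N$, and argue in the universal cover with the invariant classes $\xi^s_0,\xi^u_0$ and the boundary dynamics of $G$. Fix compact lifts $\wt K^u$ and $\wt K^s$. Since $g$ is infinitely continuum-wise expansive, Corollary~\ref{cor:trivialintersections} shows that the nontrivial unstable continuum $\wt K^u$ is not contained in a single stable pseudo-leaf. So it contains points $x',x''$ with distinct stable classes. Lemma~\ref{lem:separating_translate} then gives $\gamma\in\Gamma$ such that $\gamma\xi^s_0$ separates $\xi^s(x')$ from $\xi^s(x'')$. We replace $\wt K^u$ by the lift $\gamma^{-1}\wt K^u$, which projects to the same $K^u$. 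After this change, $\wt K^u$ has points whose stable classes lie on opposite sides of $\xi^s_0$, and Lemma~\ref{lem:side_escape} applies to them.

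Symmetrically, $\wt K^s$ meets two distinct unstable pseudo-leaves. Applying Lemma~\ref{lem:separating_translate} in $\ulea$, we choose a translate $\delta\wt K^s$ with points $y',y''$ whose unstable classes are separated by $\xi^u_0$. Using Lemma~\ref{lem:separating_translate} once more, inside the segment between $\xi^u(y')$ and $\xi^u(y'')$, we obtain regular unstable leaf classes $\beta_1,\beta_2$ separated by $\xi^u_0$. By Lemma~\ref{lem:intermediate}, $\delta\wt K^s$ meets both $\eta^u(\beta_1)$ and $\eta^u(\beta_2)$. Lemma~\ref{lem:crossing_band} applied to $\wt C=\delta\wt K^s$ then shows: any continuum joining small neighbourhoods $D_-,D_+$ of $e^u_1,e^u_2$ while avoiding $\overline\eta^u(\beta_1)\cup\overline\eta^u(\beta_2)$ meets $\delta\wt K^s$.

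Next we track the iterates $G^j\wt K^u$. These are unstable continua. By Lemma~\ref{lem:side_escape}, $G^jx'$ and $G^jx''$ converge to the two attracting ends $e^u_1,e^u_2$, so for large $j$ they lie in $D_-$ and $D_+$ respectively. By Lemma~\ref{lem:compact_confinement}, $G^j\wt K^u$ lies in any prescribed neighbourhood $V$ of $\overline\eta^u(\xi^u_0)$ for large $j$. We take $V$ disjoint from the closed sets $\overline\eta^u(\beta_i)$. This is possible because these sets are disjoint from $\overline\eta^u(\xi^u_0)$: the pseudo-leaves are disjoint in $\tS$ by (T1), and their ends are disjoint by (T3) and Lemma~\ref{lem:distinct_ends}.

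Hence, for every $j\ge j_0$, the continuum $G^j\wt K^u$ joins $D_-$ to $D_+$ and avoids both unstable pseudo-leaves. By the previous step it meets $\delta\wt K^s$. Projecting, and using $\wt\pi\circ G^j=g^{nj}\circ\wt\pi$, we get $g^{Nj}(K^u)\cap K^s\neq\varnothing$ for all $j\ge j_0$.

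The main obstacle is the application of Lemma~\ref{lem:crossing_band}. The neighbourhoods $D_\pm$ it requires are the ones produced in its proof, namely sufficiently small neighbourhoods of the ends lying in distinct complementary components of $\wt B$. Lemma~\ref{lem:side_escape} gives Hausdorff convergence of the images of the compact sets $\{x'\}$ and $\{x''\}$ to single ends, so any such neighbourhoods are reached eventually. Some care is also needed because $\wt B$ is built from compactified pseudo-leaves in $\overline S$, so separation must be checked there. Connectedness of $G^j\wt K^u$ inside $\overline S$ minus the two pseudo-leaves gives the intersection with $\delta\wt K^s$.
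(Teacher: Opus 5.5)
Your proposal is correct and follows the paper's proof essentially step by step. You take $N=n$. You translate the lifts so that $\xi^s_0$ separates two stable classes of $\wt K^u$ and $\xi^u_0$ separates two unstable classes of $\wt K^s$. You pick regular classes $\beta_1,\beta_2$ on either side of $\xi^u_0$; the paper applies Lemma~\ref{lem:separating_translate} separately in each subsegment joining one of the two unstable classes of $\wt K^s$ to $\xi^u_0$, which is what your step needs. You then conclude with Lemmas~\ref{lem:side_escape}, \ref{lem:compact_confinement} and \ref{lem:crossing_band}, exactly as the paper does.
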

\begin{proof}
Use the return lift $G$ of $g^N$ and its invariant regular
{reference classes} $\xi^s_0,\xi^u_0$ fixed above. Choose a compact
lift $\widetilde K^u$ of $K^u$. It has two distinct stable
{classes}, since a continuum in both a stable and an unstable {pseudo-leaf}
would be a point. By Lemma~\ref{lem:separating_translate}, a
translate of the regular stable leaf defining $\xi^s_0$ separates
two such {classes}. Translate $\widetilde K^u$ back, keeping its
projection unchanged, so that $\xi^s_0$ separates them. Choose
$x_1,x_2\in\widetilde K^u$ on the two stable sides.
Lemma~\ref{lem:side_escape} gives
\[
 G^jx_1\longrightarrow e^u_1,\qquad
 G^jx_2\longrightarrow e^u_2.
\]

Similarly translate a compact lift $\widetilde K^s$ of $K^s$
so that $\xi^u_0$ separates two of its {unstable classes},
$\zeta_1,\zeta_2$. Choose regular {leaf classes} $\beta_i$ in
the open segments $(\zeta_i,\xi^u_0)$ of the dual unstable
tree, using Lemma~\ref{lem:separating_translate}. Each separates
$\zeta_1$ from $\zeta_2$, so $\widetilde K^s$ meets both
corresponding {pseudo-leaves} by Lemma~\ref{lem:intermediate}.

By Lemma~\ref{lem:crossing_band}, the continuum
\[
 {\wt B}=\overline\eta^u(\beta_1)\cup\widetilde K^s
       \cup\overline\eta^u(\beta_2)
\]
separates $e^u_1$ from $e^u_2$. Its two {pseudo-leaf pieces} are compact
and disjoint from $\overline\eta^u(\xi^u_0)$.
Lemma~\ref{lem:compact_confinement} therefore makes
$G^j\widetilde K^u$ avoid both pieces for all sufficiently
large $j$. This connected set contains points approaching the
two opposite ends. The band lemma consequently forces
$G^j\widetilde K^u\cap\widetilde K^s\ne\varnothing$.
Projection gives the assertion.
\end{proof}

\begin{lemma}\label{lem:powermixing}
If $h^N$ is topologically mixing for some $N\geq1$, then $h$ is
topologically mixing.
\end{lemma}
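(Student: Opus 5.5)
Given nonempty open sets $U,V\subset S$, the plan is to find $j_0$ with $h^{j}(U)\cap V\neq\varnothing$ for all $j\geq j_0$. The hypothesis says $h^N$ is mixing, so for each $r\in\{0,1,\ldots,N-1\}$ we may apply it to the pair $h^r(U)$ and $V$. Since $h$ is a homeomorphism, $h^r(U)$ is open and nonempty. Mixing of $h^N$ then gives $k_r$ such that
\[
h^{Nk}\bigl(h^r(U)\bigr)\cap V\neq\varnothing \quad\text{for every } k\geq k_r.
\]

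Now set $K=\max_{0\leq r<N}k_r$ and $j_0=NK$. Take any $j\geq j_0$ and write it as $j=Nk+r$ with $0\leq r<N$. Because $j\geq NK$, we have $k\geq K\geq k_r$. Therefore
\[
h^{j}(U)\cap V=h^{Nk}\bigl(h^r(U)\bigr)\cap V\neq\varnothing,
\]
which is exactly topological mixing of $h$.

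There is no real obstacle. The only point to check is that the residue $r$ is bounded, so taking the maximum over finitely many thresholds is legitimate. Openness of $h^r(U)$ is immediate because $h$ is a homeomorphism.
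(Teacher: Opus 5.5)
Your proposal is correct and is the same argument as the paper's proof. Both apply mixing of $h^N$ to the finitely many pairs $(h^rU,V)$ with $0\leq r<N$, take the maximum threshold, and write every large integer as $Nk+r$.
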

\begin{proof}
For nonempty open $U,V$ apply mixing of $h^N$ to each pair
$(h^rU,V)$, $0\leq r<N$, and take the maximum of the finitely many
thresholds. Writing every large integer as $Nj+r$ proves the assertion.
\end{proof}

\begin{proposition}\label{prop:mixing}
Every infinitely continuum-wise expansive element of $\Homeo_A(S)$,
for a closed pseudo-Anosov $A$, is topologically mixing.
\end{proposition}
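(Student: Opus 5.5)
By Lemma~\ref{lem:powermixing}, it is enough to show that $g^N$ is topologically mixing, where $N$ is the integer from Proposition~\ref{prop:continuum_mixing}. The plan is to reduce mixing for open sets to the eventual intersections of stable and unstable continua given by that proposition.

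Fix nonempty open sets $U,V\subset S$. We want $g^{Nj}(U)\cap V\neq\varnothing$ for all large $j$. First choose small open disks, inessential and with injective lifts, $U'\subset U$ and $V'\subset V$. By Lemma~\ref{lem:kato}, pick a point $\wt u$ in a lift of $U'$. Its component in the local unstable set is a nontrivial continuum $\wt K^u_0$, and its component in the local stable set, at a point $\wt v$ of a lift of $V'$, is a nontrivial continuum $\wt K^s_0$. These local continua might leave the disks. To fix this, take the component of $\wt K^u_0\cap \wt U'$ containing $\wt u$ and pass to its closure. By Lemma~\ref{lem:nadler}, this closure is a nontrivial subcontinuum reaching $\partial \wt U'$, provided $\wt U'$ is smaller than the Kato radius $\delta$; we can arrange this by taking the $\varepsilon$ in Lemma~\ref{lem:kato} small relative to a fixed disk. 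A subcontinuum of an unstable continuum is still unstable. This is clear for the pseudo-leaf containment: the local set satisfies a backward bound for all $j\le 0$, hence lies in $W^u(\wt u)=\eta^u(\xi^u(\wt u))$ by (D3 -- Stability). We obtain in the same way a nontrivial compact stable continuum inside $\overline{\wt V'}$. Alternatively, we could simply use the sides of an HPR in $U$ and in $V$ from Proposition~\ref{prop:HPRdense}; these are stable and unstable continua already contained in the prescribed sets.

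Let $K^u\subset \overline{U'}\subset U$ and $K^s\subset \overline{V'}\subset V$ be the resulting projected continua. Proposition~\ref{prop:continuum_mixing} gives $j_0$ such that $g^{Nj}(K^u)\cap K^s\neq\varnothing$ for all $j\ge j_0$. Hence $g^{Nj}(U)\cap V\neq\varnothing$ for every $j\ge j_0$, so $g^N$ is topologically mixing. Lemma~\ref{lem:powermixing} then gives mixing of $g$.

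The main point requiring care is producing nontrivial stable and unstable continua contained in arbitrarily small open sets, so that they are compact stable and unstable continua in the sense of Definition~\ref{def:stable_continua}. Using the HPR sides from Proposition~\ref{prop:HPRdense} avoids the boundary-bumping bookkeeping, and this is the route I would try first. The remaining steps are direct applications of Proposition~\ref{prop:continuum_mixing} and Lemma~\ref{lem:powermixing}.
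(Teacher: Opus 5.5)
Your proof is correct and follows the paper's argument: Lemma~\ref{lem:kato} gives nontrivial compact stable and unstable continua inside the two open sets, Proposition~\ref{prop:continuum_mixing} gives mixing of $g^N$, and Lemma~\ref{lem:powermixing} passes this to $g$. The boundary-bumping step and the HPR alternative are unnecessary. The Kato component through $\wt u$ already lies in the closed $\varepsilon$-ball about $\wt u$, so choosing $\varepsilon$ small places it inside $\wt U$ directly.
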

\begin{proof}
Every nonempty open set contains nontrivial compact stable and
unstable continua, by Lemma~\ref{lem:kato}. Apply
Proposition~\ref{prop:continuum_mixing} to continua in two
prescribed open sets. It gives mixing of $g^N$, and
Lemma~\ref{lem:powermixing} gives mixing of $g$.
\end{proof}

\begin{corollary}\label{cor:icemixing}
{The essential factor in the closed pseudo-Anosov case is topologically mixing.}

\end{corollary}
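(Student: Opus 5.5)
The statement is Corollary~\ref{cor:icemixing}, so the plan is to check that the essential factor $g$ meets the hypotheses of Proposition~\ref{prop:mixing} and then apply it.

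First, take $f\in\Homeo_A(S)$ with $A$ a closed pseudo-Anosov map, and let $g$ be its essential factor from Definition~\ref{def:essential_factor}, marked on $S$ as in Remark~\ref{rem:canonical_model}. By Remark~\ref{rem:isotopic_factor}, $g$ is homotopic to $f$ via $qf=gq$ with $q\simeq\Id$. Homotopic surface homeomorphisms are isotopic, so $g\in\Homeo_A(S)$.

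Second, Lemma~\ref{lem:essential_factor_ice} shows that $g$ is infinitely continuum-wise expansive. The argument there is short: a nontrivial dynamically bounded continuum for $\wt g$ would pull back under the monotone, bounded-displacement lift $\Phi$ of \eqref{eq:Phibounded} to a dynamically bounded continuum for $\wt f$. That preimage would then lie in a single fibre, which forces the original continuum to be a point.

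With both hypotheses in place, Proposition~\ref{prop:mixing} applies directly and gives topological mixing of $g$. Since mixing is a conjugacy invariant, the conclusion holds for $g_{\rm ess}$ on $S_{\rm ess}$ as well as for the marked model $g$.

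For the reader's orientation, the work sits inside Proposition~\ref{prop:mixing}. Via Proposition~\ref{prop:continuum_mixing}, the iterates $g^{Nj}$ of a small unstable continuum are stretched toward both attracting ends $e^u_1,e^u_2$ (Lemma~\ref{lem:side_escape}). At the same time they are confined near $\overline\eta^u(\xi^u_0)$ (Lemma~\ref{lem:compact_confinement}), so by Lemma~\ref{lem:crossing_band} they must cross any stable continuum. Lemma~\ref{lem:kato} supplies nontrivial stable and unstable continua in every open set, and Lemma~\ref{lem:powermixing} passes from mixing of $g^N$ to mixing of $g$.

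The only point that needs care, and the one I would check first, is that the Militon structure used in Section~\ref{sec:horseshoes} is indeed built for $g$ itself and not for $f$. This holds because $g\in\Homeo_A(S)$, as stated at the start of Section~\ref{sec:stableg}, so no further obstacle is expected.
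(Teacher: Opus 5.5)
Your proposal is correct and follows the paper's own route. The corollary is Proposition~\ref{prop:mixing} applied to $g$, after checking $g\in\Homeo_A(S)$ (Remark~\ref{rem:isotopic_factor}) and that $g$ is infinitely continuum-wise expansive (Lemma~\ref{lem:essential_factor_ice}), exactly as you do.
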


\subsection{Unique homoclinic class}
\label{subsec:heteroclinic}

\begin{definition}[Topologically transverse intersection]
\label{def:transverse_connections}
Two sets $E,F\subset S$ have a \emph{topologically transverse
intersection in an open set $U$} if there is a closed Jordan rectangle
$R\subset U$, with sides $L,B,R',T$ in cyclic order, and continua
$C\subset E\cap R$ and $D\subset F\cap R$ such that $C$ joins
$L$ to $R'$ and avoids $B\cup T$, while $D$ joins $B$ to $T$
and avoids $L\cup R'$.
\end{definition}

{
\begin{definition}[Connected stable set]
	For $\wt p \in \wt S$, we define its \emph{connected stable set} $\widetilde{\mathcal C}^s(\widetilde p)$ {by}
	\[
	\begin{aligned}
		\widetilde{\mathcal C}^s(\widetilde p)
		&=\bigcup\{{\wt K}\subset\widetilde S:
		{\wt K\text{ is a stable continuum}},\ \widetilde p\in {\wt K}\},\\
		\mathcal C^s(p)&=\widetilde\pi
		\bigl(\widetilde{\mathcal C}^s(\widetilde p)\bigr).
	\end{aligned}
	\]
	{We use the symmetric unstable notation.} The projected set is independent
	of the chosen lift of $p$.
\end{definition}
}

Every compact stable continuum meeting
$\widetilde{\mathcal C}^s(\widetilde p)$ is contained in it:
adjoin that continuum to one joining the common point to
$\widetilde p$. We use this observation when taking crossing
subcontinua.

\begin{lemma}
\label{lem:periodic_continuum_sets}
For every periodic point $p$ of $g$, both $\mathcal C^s(p)$ and
$\mathcal C^u(p)$ are dense in $S$.
\end{lemma}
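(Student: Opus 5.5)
We will prove density of $\mathcal C^s(p)$; the unstable case follows by applying the argument to $g^{-1}$. The idea is to show that $\widetilde{\mathcal C}^s(\wt p)$ is, up to a null class, a large connected piece of a periodic stable pseudo-leaf, and then to transfer density from Proposition~\ref{thm:density}.

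\textbf{Step 1: $\widetilde{\mathcal C}^s(\wt p)$ is unbounded.} Let $p$ have period $k$, and choose a lift $\wt p$ and a deck transformation $\gamma$ with $H:=\gamma\wt g^k$ fixing $\wt p$. By Lemma~\ref{lem:kato} there is a nontrivial stable continuum $\wt K_0\ni\wt p$. Each $H^{-r}\wt K_0$ is again a continuum containing $\wt p$, and it is stable because stability is preserved by deck transformations and iterates (Lemma~\ref{lem:stable_continua_equiv}). Hence $\wt K_r:=H^{-r}\wt K_0\subset\widetilde{\mathcal C}^s(\wt p)$. Since $H^{-1}$ is a lift of $g^{-k}$, Lemma~\ref{lem:stable_continua_shrink} applied in backward time is not available here. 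Instead we use infinite continuum-wise expansiveness. The backward iterates $H^{-r}\wt K_0$ have unbounded diameter: their forward iterates stay bounded, so if the backward ones were bounded too, $\wt K_0$ would be dynamically bounded, contradicting Lemma~\ref{lem:essential_factor_ice}. Thus $\widetilde{\mathcal C}^s(\wt p)$ contains stable continua through $\wt p$ of arbitrarily large diameter, all lying in the single pseudo-leaf $\eta^s(\xi)$ with $\xi=\xi^s(\wt p)$.

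\textbf{Step 2: the class $\xi$ is a leaf class.} An unbounded subset of $\eta^s(\xi)$ forces $\overline\eta^s(\xi)$ to meet $\partial\tS$ away from a single point. More directly, $\xi$ is fixed by $\wt A'_*=\gamma_*\wt A^k_*$, which expands $\dd^u$ by $\lambda^{-k}$. Its fixed points in the completed tree are therefore the classes of invariant leaves of the corresponding lift of $A^k$. Completion points correspond to boundary points that are not leaf ends, and we would need to rule these out. If ruling them out is unclear, we do not need it: it suffices that $\eta^s(\xi)\neq\varnothing$ and that Step 3 works directly.

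\textbf{Step 3: density.} This is the main point, and it is modeled on the proof of Proposition~\ref{thm:density}, but we need intersection with $\widetilde{\mathcal C}^s$ and not merely with the pseudo-leaf. Let $U$ be open with lift $\wt U$, and take a nontrivial unstable continuum $\wt K^u\subset\wt U$. By Proposition~\ref{prop:continuum_mixing}, applied to $K^u$ and the stable continuum $\wt\pi(\wt K_0)$, we get $g^{Nj}(K^u)\cap\wt\pi(\wt K_0)\ne\varnothing$ for all large $j$, in particular for $j$ a multiple of $k$. Pulling back by $g^{-Nj}$ gives
\[
K^u\cap g^{-Nj}\wt\pi(\wt K_0)\neq\varnothing .
\]
Since $g^{-Nj}$ fixes $p$ when $k\mid Nj$, the set $g^{-Nj}\wt\pi(\wt K_0)$ is the projection of a stable continuum containing a lift of $p$. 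Its lift can be taken to be $H^{-Nj/k}\wt K_0$ translated by a deck transformation, which lies in a translate of $\widetilde{\mathcal C}^s(\wt p)$. Projecting, $U\cap\mathcal C^s(p)\neq\varnothing$.

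This argument makes Steps 1 and 2 unnecessary. The only care needed is the bookkeeping of lifts, namely that the projection of $\mathcal C^s$ is independent of the lift, and choosing $j$ with $k\mid Nj$, which is always possible among all sufficiently large $j$. We expect this to be the sole subtle step.
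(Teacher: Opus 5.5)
Your Step 3 is the paper's proof. It takes a nontrivial stable continuum through $\wt p$ from Lemma~\ref{lem:kato} and applies Proposition~\ref{prop:continuum_mixing} against an unstable continuum in $U$. It then picks $j$ with the period dividing $Nj$, so that $g^{-Nj}K_p$ lifts to a stable continuum through a lift of $p$ and hence lies in $\mathcal C^s(p)$, and it treats the unstable case via $g^{-1}$. As you note, Steps 1 and 2 are unnecessary and can be deleted, in particular the unsupported claim in Step 2 about completion points.
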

\begin{proof}
Let $r$ be the period of $p$. By Lemma~\ref{lem:kato}, choose
a nontrivial compact stable continuum $\widetilde K_p$ through
$\widetilde p$, and put $K_p=\widetilde\pi(\widetilde K_p)$.
Given a nonempty open $U$, choose a nontrivial compact unstable
continuum $K^u\subset U$. Proposition~\ref{prop:continuum_mixing}
gives $K^u\cap g^{-Nj}K_p\ne\varnothing$ for all sufficiently
large $j$. Choose such $j$ with $r\mid Nj$. A lift of
$g^{-Nj}K_p$ through $\widetilde p$ is a compact stable continuum
containing $\widetilde p$, hence
$g^{-Nj}K_p\subset\mathcal C^s(p)$. This proves density.
Applying the same argument to $g^{-1}$ proves unstable density.
\end{proof}

\begin{proposition}
\label{prop:dense_heteroclinic}
For every $p,q\in\Per(g)$, the sets $\mathcal C^s(p)$ and
$\mathcal C^u(q)$ have {a} topologically transverse intersection in
every nonempty open set $U$.
\end{proposition}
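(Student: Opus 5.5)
The plan is to work inside an HPR contained in $U$ and to use its approximating rectangle as the Jordan rectangle $R$ required by Definition~\ref{def:transverse_connections}.

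First, by Proposition~\ref{prop:HPRdense}, fix an HPR $P\subset U$ with a lift $\wt P$ that projects injectively. Its stable sides lie in pseudo-leaves of distinct classes $\alpha_1\neq\alpha_2$, and its unstable sides in classes $\beta_1\neq\beta_2$. Apply Lemma~\ref{lem:RectangleNearPseudoRectangle} with small $\varepsilon$ to get a marked rectangle $\wt R\subset\wt P$. Its vertical sides $L,R'$ lie near the stable sides, and its horizontal sides $B,T$ lie near the unstable sides.

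The target is to find two continua inside $\wt R$:
\begin{itemize}
\item an unstable continuum $D\subset\wt{\mathcal C}^u(\wt q)$ joining $B$ to $T$ and avoiding $L\cup R'$;
\item a stable continuum $C\subset\wt{\mathcal C}^s(\wt p)$ joining $L$ to $R'$ and avoiding $B\cup T$.
\end{itemize}
Here $\wt q,\wt p$ are suitable lifts of $q,p$. The roles of $\mathcal C^s(p)$ and $\mathcal C^u(q)$ as $E$ and $F$ are exactly these.

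To get $D$, I start from the unstable classes of points of $\wt P$. By Lemma~\ref{lem:separating_translate}, pick an unstable leaf class strictly between $\beta_1$ and $\beta_2$, at definite distance from both, and hence away from the unstable sides. I want a translate of the pseudo-leaf of $\mathcal C^u(q)$ with such an intermediate class. To obtain it, take a compact unstable continuum $K_q$ through $\wt q$ from Lemma~\ref{lem:kato} and apply $g^{Nj}$ with $r\mid Nj$. By Proposition~\ref{prop:continuum_mixing}, or rather its proof via Lemma~\ref{lem:coarse_leaf_tracing} and Lemma~\ref{lem:side_escape}, the iterates $G^{j}\wt K_q$ stretch along the unstable direction and have stable classes on both sides of $\alpha_1,\alpha_2$. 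Running the same band argument with $\wt P$ in place of $\wt K^s$, suitably translated, a translate of $g^{Nj}K_q$ meets both stable sides of $\wt P$. It then contains a subcontinuum crossing $\wt P$ from one stable side to the other, by Lemma~\ref{lem:intermediate} and boundary bumping (Lemma~\ref{lem:nadler}).

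Such a subcontinuum is an unstable continuum containing a point of the pseudo-leaf of $q$, so it lies in $\mathcal C^u(q)$ by the observation after the definition of connected stable sets. Its unstable class lies in the open segment $(\beta_1,\beta_2)$, because it stays inside $\wt P$ and is disjoint from the unstable sides. By Lemma~\ref{lem:HPR_thin_strip}, unstable continua near the stable sides are small, so the crossing piece can be trimmed to a continuum in $\wt R$ joining $B$ to $T$. For small $\varepsilon$ it then avoids the vertical sides, since those are close to the stable sides, which are disjoint from this unstable pseudo-leaf at positive distance. Producing $C$ is the time-reversed argument with $\mathcal C^s(p)$ and $g^{-Nj}$.

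The main obstacle is the trimming step: turning a continuum that crosses the pseudo-rectangle $\wt P$ into one that crosses the genuine rectangle $\wt R$ between the correct pair of sides while avoiding the other pair. Here I would use the prime-end rectangle structure, namely Remark~\ref{rem:HPRbasic} together with the compact crossing duality of Lemma~\ref{lem:compact_crossing_duality}. I would also choose $\varepsilon$ after the two crossing continua are fixed, using their positive distance from the opposite sides. This works because both continua are fixed before the approximating rectangle is chosen.

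Finally, project to $U$, which is possible since $\wt\pi$ is injective on $\wt P$. This gives $R$, $C$ and $D$ as in Definition~\ref{def:transverse_connections}.
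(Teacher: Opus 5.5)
\textbf{The gap.} Your route differs from the paper's, which reruns the local construction of \cite[Proposition 8.11, Steps 1--4]{garciatal24chaotic} with crossing continua taken directly from translates of $\wt{\mathcal C}^s(\wt p)$ and $\wt{\mathcal C}^u(\wt q)$; these are dense by Lemma~\ref{lem:periodic_continuum_sets}. Your first step is sound. After translating $\wt P$ so that $\xi^u_0$ separates $\beta_1,\beta_2$, apply Lemma~\ref{lem:crossing_band} with $\wt C$ equal to each stable side (each meets both unstable sides). This shows that $G^j\wt K_q$ meets both stable sides for all large $j$ with $r\mid Nj$.

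The gap is the passage from this to a crossing of a genuine rectangle.
\begin{itemize}
\item Lemma~\ref{lem:intermediate} only yields intersections with pseudo-leaves. Boundary bumping only says that components of $G^j\wt K_q\cap\operatorname{int}\wt P$ reach $\partial\wt P$. Neither gives a single subcontinuum inside $\wt P$ joining the two stable sides, and nothing prevents $G^j\wt K_q$ from meeting a side only from outside $\wt P$.
\item Even granting such a crossing, Lemma~\ref{lem:RectangleNearPseudoRectangle} is one-sided: points of the sides of $\wt R$ are close to $\partial\wt P$, not conversely. So a continuum crossing $\wt P$ need not cross $\wt R$, and choosing $\varepsilon$ afterwards does not help.
\item Your side labels are swapped. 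An unstable continuum running between the stable sides must join the two sides of $\wt R$ near the \emph{stable} sides. It avoids the other two because its class converges to $\xi^u_0$ (see the proof of Lemma~\ref{lem:compact_confinement}), hence stays away from $\beta_1,\beta_2$. The reason you give, that the stable sides are at positive distance from it, is false, since it meets them. For Definition~\ref{def:transverse_connections} with $E=\mathcal C^s(p)$, the sides $L,R'$ must be those near the unstable sides.
\end{itemize}

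\textbf{A repair.} Reverse the order and never cross $\wt P$ itself.
\begin{itemize}
\item Fix the two translations first, then $\delta>0$, then $\wt R$ so that its sides near $\wt K^u_i$ have unstable classes within $\delta$ of $\beta_i$.
\item Pick regular classes $\beta'_1,\beta'_2\in(\beta_1,\beta_2)$ on either side of $\xi^u_0$, at distance more than $\delta$ from $\beta_1,\beta_2$.
\item By Lemma~\ref{lem:intermediate}, every path $\sigma\subset\wt R$ joining the two sides near the unstable sides meets $\eta^u(\beta'_1)$ and $\eta^u(\beta'_2)$. So Lemma~\ref{lem:crossing_band} applies with $\wt C=\sigma$.
\item Take connected end neighbourhoods disjoint from the compact set $\wt R\cup\overline\eta^u(\beta'_1)\cup\overline\eta^u(\beta'_2)$. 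This makes the threshold on $j$ independent of $\sigma$.
\item Hence the compact set $G^j\wt K_q\cap\wt R$ meets every such path. Lemma~\ref{lem:compact_crossing_duality} then gives a continuum $D\subset\wt R$ joining the two sides near the stable sides. $D$ lies in $\wt{\mathcal C}^u$ of a lift of $q$, because it is a subcontinuum of an unstable continuum through that lift.
\item Obtain $C$ symmetrically from $G^{-j}\wt K_p$, using the other translate of $\wt P$. Translate both $C$ and $D$ back to a common lift before projecting.
\end{itemize}
With this change your argument is a valid alternative to the paper's. It needs only one HPR, the band lemma and the duality lemma, instead of the full local construction.
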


{
\begin{proof}
	As in Proposition \ref{prop:HPRdense}, we follow the construction of \cite[Proposition 8.11, {Steps} 1--4]{garciatal24chaotic}. Take a small open annulus $\overline{\wt U} \backslash \wt U'$: {apply the argument of Lemma~\ref{lem:local_crossing_supply} to translates of $\widetilde{\mathcal C}^s(\wt p)$, since its projection is dense by Lemma~\ref{lem:periodic_continuum_sets}. A finite union of the containing pseudo-leaves is closed with empty interior, so one can select infinitely many distinct stable classes. Boundary bumping then gives} a sequence of pairwise disjoint minimal crossing stable continua $\wt K^s_n$ respectively contained in distinct translates of $\wt{\mathcal{C}}^s(\wt p)$, {with uniform forward diameter bounds by Lemma~\ref{lem:stable_continua_equiv}. Extract} a Hausdorff limit $\wt K^s$ which is a stable continuum and can be assumed disjoint from all $\wt K^s_n$. {Use translates of $\widetilde{\mathcal C}^u(\wt q)$ for the unstable continua, by the same argument.} We repeat the construction of \cite[Proposition 8.11, Steps 3 and 4]{garciatal24chaotic}, where Lemma \ref{lem:HPR_thin_strip} {gives infinitely many topologically} {transverse intersections} inside the annulus, and therefore inside $U$. 
\end{proof}
}

\section{The conservative model}\label{sec:conservative}\label{sec:PARF}

\subsection{A good invariant measure and conservative conjugacy}

\begin{proposition}\label{prop:AreaPreserving}
The essential factor $g$ of a homeomorphism isotopic to a pseudo-Anosov map preserves an atomless probability with
full support. Thus it is conjugate, by a homeomorphism
isotopic to the identity, to a homeomorphism preserving a
smooth probability area measure.
\end{proposition}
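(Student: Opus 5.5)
The plan follows the sketch given after Theorem~\ref{thmA:pA}. Fix a countable basis $\{U_k\}_{k\geq1}$ of nonempty open sets of $S$. By Corollary~\ref{cor:denseperiodic} and Proposition~\ref{prop:dense_symbolic_horseshoes}, each $U_k$ contains the projection of a marked rectangle $\wt R_k$ for which some iterate $h_k=g^{n_k}$ has a rotational horseshoe. Proposition~\ref{prop:realization_rotational_horseshoe} then gives a compact $h_k$-invariant set $\Lambda_k\subset U_k$ and a continuous surjection $\theta_k:\Lambda_k\to\{1,\dots,m\}^{\mathbb Z}$ with $\theta_k h_k=\sigma\theta_k$, where $m\geq2$.

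Next I lift the Bernoulli measure. Let $\nu$ be the uniform Bernoulli measure on $\{1,\dots,m\}^{\mathbb Z}$. It is $\sigma$-invariant and atomless. Since $\theta_k$ is a continuous surjection between compact spaces, the set of probabilities $\mu$ on $\Lambda_k$ with $(\theta_k)_*\mu=\nu$ is nonempty, convex and weak$^*$-compact, and $(h_k)_*$ preserves it. Markov--Kakutani (or averaging $\frac1N\sum_{i<N}(h_k)^i_*\mu$ and taking a limit) gives an $h_k$-invariant $\mu_k$ on $\Lambda_k$ with $(\theta_k)_*\mu_k=\nu$. The measure $\mu_k$ is atomless: an atom at $x$ would push forward to an atom of $\nu$ at $\theta_k(x)$. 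Put $\bar\mu_k=\frac1{n_k}\sum_{i=0}^{n_k-1}g^i_*\mu_k$, which is $g$-invariant, atomless, and satisfies $\bar\mu_k(U_k)\geq\mu_k(U_k)/n_k>0$. Then $\mu=\sum_k2^{-k}\bar\mu_k$ is a $g$-invariant atomless probability. It gives positive mass to every basic open set, hence has full support.

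For the conjugacy, I use the Oxtoby--Ulam theorem in its surface version: an atomless probability of full support on a closed surface is the image of a smooth area probability $\omega$ (with the same total mass) under a homeomorphism $\psi$ isotopic to the identity. Then $\psi^{-1}g\psi$ preserves $\omega$, since $(\psi^{-1}g\psi)_*\omega=\psi^{-1}_*g_*\mu=\psi^{-1}_*\mu=\omega$.

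I expect the main obstacle to be this last step: invoking Oxtoby--Ulam on a closed surface of higher genus and securing the isotopy to the identity. My first approach is to triangulate $S$ so that the $\mu$-mass of the $1$-skeleton is zero. This is possible because $\mu$ is atomless, so one can perturb edges to avoid the countably many arcs of positive mass. I then apply the disk (cube) version of Oxtoby--Ulam on each $2$-cell, after first adjusting masses cell by cell with a homeomorphism isotopic to the identity. Composing these gives $\psi$ isotopic to the identity, or at least homotopic to it, which suffices as in Remark~\ref{rem:isotopic_factor}. Alternatively, I would cite the manifold version of Oxtoby--Ulam directly, together with the standard fact that the resulting homeomorphism can be chosen isotopic to the identity.
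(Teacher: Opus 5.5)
Your proposal is correct and follows the paper's proof essentially step by step. Both take a local rotational horseshoe inside each element of a countable base, lift the fair Bernoulli measure to an invariant atomless measure (the paper averages and takes a weak$^*$ limit, which is your second option), average over the return time, form a $2^{-k}$-weighted sum, and conclude with Oxtoby--Ulam. The only difference is that the paper does not sketch the triangulation argument; it cites directly the identity-isotopy version of Oxtoby--Ulam on compact surfaces (Alpern--Prasad). That is the cleaner way to handle the step you flagged as the main obstacle.
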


\begin{proof}
Let $\{U_i\}_{i\geq1}$ be a countable base of nonempty open sets.
By Proposition~\ref{prop:dense_symbolic_horseshoes}, for each $i$ choose a compact $g^{k_i}$-invariant horseshoe
$\Lambda_i\subset U_i$ and a continuous factor map from
$g^{k_i}|_{\Lambda_i}$ onto the two-sided full shift. Lift its fair
Bernoulli measure to a probability on $\Lambda_i$, average this
measure under $g^{k_i}$, and take a {weak--$\ast$} limit. The resulting
invariant measure $\nu_i$ still projects to Bernoulli measure.
It has no atoms, because the image of an atom under the factor map
would be an atom of Bernoulli measure.
Set
\[
 \mu_i=\frac1{k_i}\sum_{r=0}^{k_i-1}(g^r)_*\nu_i,
 \qquad \mu=\sum_{i=1}^\infty 2^{-i}\mu_i.
\]
Every $\mu_i$ is $g$-invariant and atomless, with
$\mu_i(U_i)\geq1/k_i$. Therefore $\mu$ is invariant, atomless and
has full support. The Oxtoby--Ulam theorem, with its identity-isotopy
version on compact surfaces, gives the final conjugacy; see
\cite{oxtobyulam41measure,alpernprasad00typical}.

\end{proof}

\subsection{Proof of the theorem in the pseudo-Anosov case}
\begin{proof}[Proof of Theorem~\ref{thmA:pA}]
Proposition~\ref{prop:splitting} gives the upper semicontinuous
cellular decomposition and its uniform lifted diameter bound.
Remarks~\ref{rem:canonical_model} and \ref{rem:isotopic_factor}
identify the quotient with $S$ so that $q$ is homotopic to the
identity and $g$ is isotopic to $f$.
Lemma~\ref{lem:essential_factor_ice}, Proposition~\ref{prop:Tight},
and Proposition~\ref{prop:mixing} prove infinite continuum-wise
expansiveness, tightness, and mixing.
Proposition~\ref{prop:dense_symbolic_horseshoes} and
Corollary~\ref{cor:denseperiodic} give the local {rotational horseshoes}
and density of periodic points.
Proposition~\ref{prop:dense_heteroclinic} gives the {unique homoclinic class}. Proposition~\ref{prop:AreaPreserving}
gives the invariant probability and the conservative conjugacy.
The decomposition is defined by the dynamically bounded continua
of $f$, and its idempotence is Remark~\ref{rem:idempotent}.
\end{proof}

Let $f$ be isotopic, relative to a finite invariant set $X$, to
a pseudo-Anosov representative $A$. We remove the one-prong
singularities by a finite regular branched covering. The essential
decomposition upstairs then descends to the original surface.
The main point is to identify this descended decomposition
intrinsically: dynamical unboundedness of a continuum upstairs
must remain visible after projection to the closed surface.

\subsection{The branched covering and dynamical metric control}

\begin{proposition}\label{prop:PARFcover}
There is a finite regular branched covering $b:\wh S\to S$, branched
only over $X$, and compatible lifts $\wh f,\wh A$ with
$b\wh f=fb$, $b\wh A=Ab$, such that $\wh f$ is isotopic to the
genuine pseudo-Anosov map $\wh A$ on the closed surface $\wh S$.
If $X$ is empty no covering is required.
\end{proposition}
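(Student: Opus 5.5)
The plan is to build the branched cover from the singularity data of the relative pseudo-Anosov representative. Then I lift the isotopy and check that the lifted map is a genuine pseudo-Anosov on a closed surface.

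\emph{Step 1: setup.} Let $A:S\setminus X\to S\setminus X$ be pseudo-Anosov, isotopic to $f$ relative to $X$. Its measured foliations extend over $S$. At each point $x\in X$ the extended singularity has some number $k_x\geq1$ of prongs; regular points of $X$ have $k_x=2$, and one-prong singularities $k_x=1$ are allowed. If $X$ is empty, $A$ is already pseudo-Anosov on $S$ and there is nothing to prove. Otherwise I look for a surjective homomorphism
\[
\rho:\pi_1(S\setminus X)\to H
\]
onto a finite group. It should send each small loop around $x\in X$ to an element of order $n_x\geq2$ with $n_xk_x\geq3$. For instance $n_x=2$ would suffice everywhere, or more simply one fixed $n$ with $n\geq3$.

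\emph{Step 2: the homomorphism.} I first try the mod-$n$ homology class. Since $\mathrm{genus}(S)\geq1$, $H_1(S\setminus X;\Z/n)$ is free. It is generated by the genus classes together with the puncture loops $c_x$, subject only to $\sum_x c_x=0$. When $|X|\geq2$, I map each $c_x$ to a generator with the sum condition satisfied, using $n$ large. When $|X|=1$, the puncture loop is a product of commutators, so abelian quotients kill it. In that case I use a nonabelian finite quotient: surface groups are residually finite, so there is a finite quotient in which the commutator word stays nontrivial. I then pass to a power to make the order large, or adjoin a quotient realizing order $n$. The branched cover $b:\wh S\to S$ is then obtained from the finite cover of $S\setminus X$ associated to $\ker\rho$ by filling in punctures.

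\emph{Step 3: lifting the maps (main obstacle).} $f$ and $A$ must lift; it suffices that $f_*$ preserves $\ker\rho$, up to conjugation in $\pi_1(S\setminus X)$. I would make $\ker\rho$ characteristic. Replace it by the intersection of all subgroups of the same finite index, which is characteristic and still of finite index. Check that the puncture loops still have large order in the new quotient, since their images only become larger. Characteristic subgroups are preserved by every automorphism, including those induced by $f$ and $A$ as maps of $S\setminus X$, so both maps lift. Choose the lift $\wh A$, and let $\wh f$ be the endpoint of the lifted isotopy starting at $\wh A$. Then $b\wh f=fb$, $b\wh A=Ab$, and $\wh f$ is isotopic to $\wh A$; the isotopy relative to $X$ lifts relative to $b^{-1}(X)$ and is in particular an isotopy on $\wh S$.

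\emph{Step 4: $\wh A$ is pseudo-Anosov.} Lift the two measured foliations. Away from $b^{-1}(X)$ they are locally isomorphic, so they remain transverse with the same stretch factor. At a preimage of $x$, the local branching degree is the order of $\rho(c_x)$, which is a multiple $m_x\geq n_x$. The singularity therefore has $m_xk_x\geq3$ prongs, so no one-prong or regular puncture points remain. Thus $\wh A$ is pseudo-Anosov on the closed surface $\wh S$, whose genus is at least $2$ by Riemann–Hurwitz.
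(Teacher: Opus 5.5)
Your proposal is correct and follows essentially the paper's route: take a finite quotient of the free group $\pi_1(S\setminus X)$ that is nontrivial on the peripheral loops, make it characteristic by intersecting all subgroups of that index so that $f$, $A$ and the relative isotopy lift, and then pull back the invariant foliations. The differences are minor and slightly in your favour: residual finiteness of $\pi_1(S\setminus X)$ alone (a finite-index normal subgroup omitting each $c_x$ and $c_x^2$) would replace both your mod-$n$ homology detour and the vague ``pass to a power'' step, and would fix the slip that $n_x=2$ violates $n_xk_x\geq3$ at one-prong points; meanwhile, branching over every point of $X$, rather than only over the one-prong points as the paper does, forces genuine singularities upstairs and hence genus at least two for $\wh S$, which the later sections use and which the paper's choice does not give when $S$ is the torus and $X$ has no one-prong points.
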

\begin{proof}
{
Assume $X\ne\varnothing$. The group $\pi_1(S\setminus X)$ is a
finitely generated free group. For each peripheral loop about a
one-prong singularity choose a finite-index normal subgroup omitting
it. Intersect these subgroups, and then intersect all subgroups of
index at most the resulting index. There are finitely many such
subgroups; their intersection is characteristic, finite-index,
normal, and still omits every selected peripheral loop.
The associated regular cover of $S\setminus X$ extends over the
punctures to a finite branched covering. The local degree at a
one-prong point is at least two, so all lifted singularities have
at least two prongs. The pulled-back invariant foliations make $\wh A$
a genuine pseudo-Anosov map.}
\end{proof}

\begin{lemma}\label{lem:PARFmetric}
Let $b:\Sigma\to S$ be a {continuous} map of closed surfaces and let
$B:\widetilde\Sigma\to\widetilde S$ be an equivariant lift.
For every $R<\infty$ there is $J_B(R)<\infty$ such that
\[
 d_{\widetilde\Sigma}(x,y)\leq R
 \quad\Longrightarrow\quad
 d_{\widetilde S}(Bx,By)\leq J_B(R).
\]
\end{lemma}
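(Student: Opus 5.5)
The statement is a coarse Lipschitz property of equivariant lifts of continuous maps between closed surfaces. I would prove it by compactness and equivariance. Here $B$ is equivariant with respect to the homomorphism $b_*$ on deck groups, i.e.\ $B(\gamma x)=b_*(\gamma)B(x)$ for $\gamma$ in the deck group $\Gamma_\Sigma$ of $\widetilde\Sigma$, and $b_*(\gamma)$ acts isometrically on $\widetilde S$.

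The first step is to fix a compact fundamental domain $F\subset\widetilde\Sigma$; this exists since $\Sigma$ is closed. Its closed $R$-neighbourhood $N_R(F)$ is compact, because $d_{\widetilde\Sigma}$ is a proper metric.

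The second step is the estimate on this compact set. The function $(x,y)\mapsto d_{\widetilde S}(Bx,By)$ is continuous on the compact set
\[
\{(x,y): x\in F,\ y\in N_R(F),\ d_{\widetilde\Sigma}(x,y)\leq R\}.
\]
It therefore attains a finite maximum there, and I would define $J_B(R)$ to be this maximum.

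The third step reduces an arbitrary pair to this set. Given $x,y$ with $d(x,y)\leq R$, choose $\gamma\in\Gamma_\Sigma$ with $\gamma x\in F$. Since $\gamma$ is an isometry, $\gamma y\in N_R(F)$ and $d(\gamma x,\gamma y)\leq R$. By equivariance,
\[
d_{\widetilde S}(Bx,By)=d_{\widetilde S}\bigl(b_*(\gamma)Bx,\,b_*(\gamma)By\bigr)=d_{\widetilde S}(B\gamma x,B\gamma y)\leq J_B(R).
\]

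The only point needing care is that the target metric is deck-invariant, so that $b_*(\gamma)$ acts isometrically; this holds for the lifted metrics fixed in Section~\ref{sec:prelim}. The argument also needs no injectivity of $b_*$. For a branched covering, $B$ denotes a lift to the universal covers of the closed surfaces, and it exists and is equivariant for the induced map on $\pi_1$, so the argument applies verbatim.
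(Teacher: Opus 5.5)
Your proof is correct and is essentially the paper's argument: translate $x$ into a compact fundamental set, bound $d_{\widetilde S}(Bx,By)$ on the compact $R$-neighbourhood using continuity of $B$, and transfer the bound to arbitrary pairs by equivariance and deck-invariance of both metrics. Maximizing over the compact set of pairs, rather than bounding the diameter of the image of $N_R(F)$, is only a cosmetic difference.
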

\begin{proof}
Translate $x$ into a compact fundamental set. The corresponding
translate of $y$ belongs to its compact $R$-neighbourhood.
Continuity bounds the diameter of the image of that neighbourhood,
and equivariance makes the same bound valid for the original pair.
\end{proof}

\subsection{Projected dynamical growth}\label{subsec:visible_growth}

\begin{lemma}\label{lem:leaf_in_infinite_cover}
Let $\mathcal F$ be an invariant foliation of a closed pseudo-Anosov
surface $\Sigma$, and let $p:Y\to\Sigma$ be a connected infinite
covering. No regular leaf of the lifted foliation has compact
closure in $Y$.
\end{lemma}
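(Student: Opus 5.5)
Argue by contradiction. Suppose a regular leaf $\ell$ of the lifted foliation $p^*\mathcal F$ on $Y$ has compact closure $\overline\ell\subset Y$. The plan is to pass to the universal cover, where leaves are properly embedded lines, and show that this compactness forces $\ell$ to be invariant under a deck element acting as a translation along a closed leaf of $\mathcal F$. Minimality (Lemma~\ref{lem:minimal_filling_foliations}) forbids closed leaves.

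Write $\widetilde\Sigma\to Y\to\Sigma$, so that $\pi_1(Y)$ is an infinite-index subgroup $H\subset\Gamma$. Lift $\ell$ to a leaf $\wt\ell$ of $\wt{\mathcal F}$. Since $\wt{\mathcal F}$ is a lifted pseudo-Anosov foliation, $\wt\ell$ is a properly embedded line in $\wt\Sigma$ with two distinct ends $e_1\neq e_2$ in $\partial\wt\Sigma$. The projection $\wt\ell\to\ell$ is injective unless some $h\in H\setminus\{1\}$ satisfies $h\wt\ell=\wt\ell$.

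\emph{Case 1: $h\wt\ell=\wt\ell$ for some nontrivial $h$.} Then $h$ preserves $\{e_1,e_2\}$, so $h^2$ fixes both ends and is hyperbolic with axis endpoints $e_1,e_2$. Hence $h^2$ acts on $\wt\ell$ as a translation, and the image of $\wt\ell$ in $\Sigma$ is a closed leaf. This contradicts density of half-leaves.

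\emph{Case 2: $\wt\ell\to\ell$ is injective.} Pick a compact set $D\subset\wt\Sigma$ whose image in $Y$ contains $\overline\ell$. This is possible because $\overline\ell$ is compact and $\wt\Sigma\to Y$ is a covering, so a finite union of lifted disks suffices. Then every point of $\wt\ell$ lies in $hD$ for some $h\in H$. Go out to infinity along $\wt\ell$ toward $e_1$: take points $x_n\in\wt\ell$ with $x_n\to e_1$ and $h_n\in H$ with $x_n\in h_nD$. Then $h_n^{-1}x_n\in D$, and the leaves $h_n^{-1}\wt\ell$ all pass through $D$ and project into $\overline\ell$ in $Y$.

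Local finiteness of the lifted foliation, or equivalently discreteness of the $H$-translates of $\wt\ell$ meeting $D$, should be the key point. The compactness of $\overline\ell$ in $Y$ means only finitely many plaques of $\ell$ can be $\epsilon$-close in a given direction. Otherwise $\overline\ell$ would contain a transverse Cantor set of leaves of $p^*\mathcal F$. That is still compatible with compactness, so this is where the real work lies.

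The cleaner route I would take instead uses the transverse measure. Choose a small transverse arc $\tau$ in $\Sigma$ away from singularities. By minimality, $\ell$ meets the lifts of $\tau$ in $Y$ infinitely often in each direction: its image in $\Sigma$ is a dense leaf, and by unique ergodicity it returns with positive frequency. Only finitely many lifts of $\tau$ meet the compact set $\overline\ell$, so $\ell$ returns to one fixed lift $\tau_Y$ infinitely often. Consider the first-return map of the lifted foliation to $\tau_Y$, an interval exchange lifted to $Y$. A leaf returning infinitely often to the same lift gives, by pigeonhole, two returns whose holonomy is a loop in $Y$ whose image generates an element of $H$. Iterating, the whole $\Sigma$-leaf's return dynamics lifts to a finite cover-type picture: the set of lifts of $\tau$ visited by $\ell$ is finite. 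The union of all leaves through those finitely many lifts would then be a compact invariant sub-lamination of $Y$. Its image in $\Sigma$ is all of $\Sigma$ by minimality, and its preimage structure would make $Y\to\Sigma$ finite-sheeted. This contradicts that the covering is infinite.

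I expect this last step to be the main obstacle: turning "one leaf visits only finitely many sheets" into "the cover has finitely many sheets". The argument I would try uses the path-lifting property. Any point of $\Sigma$ is approached by the $\Sigma$-leaf of $\ell$, and any loop in $\Sigma$ is homotopic to a path along leaves and short transversals. Lifting such paths from a point of $\ell$ keeps us in a bounded neighbourhood of $\overline\ell$, because short transversals move only $\epsilon$ and leaf segments stay in leaves near $\overline\ell$ by continuity of holonomy. This forces the deck orbit to be finite.
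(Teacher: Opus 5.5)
Your proposal has a genuine gap at exactly the step you flag, and the claims leading up to it do not narrow it.

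\begin{itemize}
\item That $\overline\ell$ meets only finitely many lifts of $\tau$ is immediate from compactness, since the lifts form a locally finite family. The pigeonhole and holonomy-loop discussion therefore adds nothing.
\item The assertion that the union of all leaves through those finitely many lifts is a compact sublamination is unjustified. The only set you know to be compact is $E=\overline\ell$, and nothing you say places the other leaves through $\tau_1,\dots,\tau_k$ inside $E$.
\item The fact that $p(E)=\Sigma$ does not help either. Every covering of a compact surface contains a compact set surjecting onto the base (one lift of each of finitely many evenly covered closed disks), so surjectivity says nothing about the number of sheets.
\end{itemize}
What is actually needed is that $E$ is \emph{open} in $Y$, at least away from $p^{-1}(\sing(\mathcal F))$.

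Your path-lifting paragraph tries to supply this openness but does not. After lifting a short transversal from a point of $\ell$, you land on some other leaf $\ell'$. Continuity of holonomy controls only leaf segments of bounded length, and transverse arcs split at singularities. So following $\ell'$ along the long leaf segments needed to approximate arbitrary loops can leave every neighbourhood of $\overline\ell$, and the errors compound over successive jumps. The lifted paths stay in $E$ only if each transverse jump lands in $E$, that is, only if $E$ already contains full local sheets. That is precisely the openness to be proved. Your universal-cover Case~2 stalls at the same point.

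The missing idea, which the paper uses, is a fibre-multiplicity count. Put $\Sigma_0=\Sigma\setminus\sing(\mathcal F)$, $Y_0=p^{-1}(\Sigma_0)$, and $m(x)=\#(E\cap p^{-1}(x))$ for $x\in\Sigma_0$.
\begin{itemize}
\item Compactness of $E$ makes $m$ bounded.
\item Compactness and closedness of $E$, read in evenly covered neighbourhoods, make $m$ upper semicontinuous.
\item Since $E$ is saturated, unique path lifting gives a bijection between the $E$-fibres along any nonsingular leaf path, so $m$ is constant along such paths.
\end{itemize}
Take a point of $\Sigma_0$ where $m$ attains its maximum $m_0\geq1$. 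Some half-leaf through it avoids the singularities and is dense (Lemma~\ref{lem:minimal_filling_foliations}). We have $m=m_0$ along this half-leaf, and $\{m\geq m_0\}$ is closed in $\Sigma_0$, so $m\equiv m_0$ on $\Sigma_0$. Constant multiplicity together with upper semicontinuity then forces each of the $m_0$ local sheets through $E\cap p^{-1}(x)$ to lie entirely in $E$. Hence $E\cap Y_0$ is open, closed and nonempty in the connected surface $Y_0$. It follows that $Y=\overline{Y_0}\subset E$ is compact, contradicting that $p$ is an infinite covering.
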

\begin{proof}
Suppose a lifted regular leaf has compact closure $E$.
Write $\Sigma_0=\Sigma\setminus\sing(\mathcal F)$ and
$Y_0=p^{-1}(\Sigma_0)$. {The set $E\cap Y_0$ is saturated by
nonsingular leaf paths: follow points of the original leaf through flow boxes.
Compactness gives a finite uniform bound for
\[
 m(x)=\#(E\cap p^{-1}(x)),\qquad x\in\Sigma_0.
\]
This integer-valued function is upper semicontinuous, using evenly covered neighbourhoods and compactness of $E$.
It is constant along any nonsingular leaf path, by construction.} 

Choose a nonsingular point where the positive maximum $m_0$ of
$m$ is attained. {At least one half-leaf through this point avoids
the singularities and is dense in $\Sigma$ (Lemma \ref{lem:minimal_filling_foliations}). The function equals $m_0$ along
that half-leaf. The set $\{m\geq m_0\}$ is closed in $\Sigma_0$,
so $m = m_0$ is constant in $\Sigma_0$.} This constant multiplicity makes $E\cap Y_0$ open in $Y_0$.
Indeed, over $x\in\Sigma_0$ choose disjoint local sheets about
the $m_0$ points of $E\cap p^{-1}(x)$. After shrinking the base
neighbourhood, compactness excludes all other sheets from $E$;
constant multiplicity then fills each of the chosen sheets.

The set $E\cap Y_0$ is also closed and nonempty, {and the surface
$Y_0$ is connected}. Thus
$Y_0\subset E$, and closedness gives $Y=E$. This contradicts
noncompactness of an infinite cover of
$\Sigma$.
\end{proof}

\begin{lemma}\label{lem:visible_leaf}
{Let $\Sigma$ be a hyperbolic surface,} $b:\Sigma\to S$ be a finite branched covering of a closed {
surface $S$ of genus $g\ge 1$}, and let
$B:\widetilde\Sigma\to\tS$ be a compatible lift.
If $L$ is a regular lifted invariant leaf of a closed
pseudo-Anosov map on $\Sigma$, then $B(L)$ is unbounded in $\tS$.
\end{lemma}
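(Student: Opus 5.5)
We argue by contradiction, using Lemma~\ref{lem:leaf_in_infinite_cover}. Suppose $B(L)$ is bounded in $\tS$. Consider the intermediate covering $Y=\widetilde\Sigma/\Gamma_0$, where
\[
\Gamma_0=\{\gamma\in\pi_1(\Sigma)\,:\,B\gamma=B\},
\]
so that $\Gamma_0$ is the kernel of the homomorphism $b_*:\pi_1(\Sigma)\to\Gamma$ induced by the equivariant lift. The lift $B$ descends to a map $B_Y:Y\to\tS$. The plan is to show that $\Gamma_0$ has infinite index, so that $Y$ is a connected infinite covering of $\Sigma$. We then show that the projection of $L$ to $Y$ has compact closure whenever $B(L)$ is bounded. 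Together these contradict Lemma~\ref{lem:leaf_in_infinite_cover}.

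\textbf{Step 1: infinite index.} For a finite branched covering between closed surfaces, the image $b_*(\pi_1\Sigma)$ has finite index in $\pi_1 S$. This follows from the standard fact that the branched cover restricts to a genuine finite cover away from the branch set, and the puncture-filling map $\pi_1(S\setminus X)\to\pi_1(S)$ is surjective. Since $S$ has genus at least one, $\pi_1 S$ is infinite, so the image $b_*(\pi_1\Sigma)\cong\pi_1(\Sigma)/\Gamma_0$ is infinite. Hence $\Gamma_0$ has infinite index, and $Y$ is a connected infinite cover of $\Sigma$.

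\textbf{Step 2: properness of $B_Y$.} This is the main obstacle: we must show that $B_Y$ is proper, i.e.\ preimages of bounded sets of $\tS$ are relatively compact in $Y$. The plan is as follows. Fix a compact fundamental domain $F\subset\widetilde\Sigma$ for $\pi_1\Sigma$. Since $B(F)$ is compact, the translates $B(\gamma F)=b_*(\gamma)B(F)$ that meet a bounded set $V\subset\tS$ correspond to only finitely many elements $b_*(\gamma)\in\Gamma$, because $\Gamma$ acts properly on $\tS$. Therefore $B^{-1}(V)$ is contained in the union of finitely many $\Gamma_0$-orbits of translates $\gamma F$. Its image in $Y$ is then contained in finitely many compact sets, and is thus relatively compact. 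The care needed here is only to use the equivariance $B\gamma=b_*(\gamma)B$ correctly.

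\textbf{Step 3: conclusion.} The projection $L_Y$ of $L$ to $Y$ is a regular leaf of the foliation lifted to $Y$. By Step 2, if $B(L)$ is bounded then $L_Y$ has compact closure in $Y$. By Step 1, $Y$ is a connected infinite covering, so this contradicts Lemma~\ref{lem:leaf_in_infinite_cover}. Hence $B(L)$ is unbounded. The hyperbolicity hypothesis on $\Sigma$ is only used to set up the universal cover and the pseudo-Anosov structure.
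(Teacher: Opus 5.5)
Your proposal is correct and follows the paper's proof. It uses the same intermediate cover $Y=\widetilde\Sigma/\ker b_*$, the same properness of the induced map $Y\to\tS$, and the same contradiction with Lemma~\ref{lem:leaf_in_infinite_cover} for the projected regular leaf; the only differences are in the verifications, where you get infiniteness of $Y\to\Sigma$ from the finite index of $b_*(\pi_1\Sigma)$ in the infinite group $\pi_1 S$ and properness from a fundamental-domain count, while the paper reads both off the pullback description (the proper branched covering $\beta\colon Y\to\tS$ is surjective onto the noncompact $\tS$).
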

\begin{proof}
Take the connected component $Y$ of the pullback of
$b:\Sigma\to S$ along $\tS\to S$ determined by $B$.
It is a covering $p:Y\to\Sigma$ corresponding to $\ker b_*$,
and the other projection $\beta:Y\to\tS$ is a proper finite
branched covering. Properness follows from the pullback construction. The map $\beta$ is
open and closed and hence surjective. Thus $Y$ is noncompact,
and $p$ is an infinite covering.

Factor $B$ as $\widetilde\Sigma\xrightarrow{r}Y
\xrightarrow{\beta}\tS$. If $B(L)$ were bounded, properness of
$\beta$ would put $r(L)$ in a compact subset of $Y$.
But $r(L)$ is a regular leaf of the lifted foliation, contradicting
Lemma~\ref{lem:leaf_in_infinite_cover}.
\end{proof}

\begin{proposition}\label{prop:visible_dynamical_growth}
	Let ${b}:\Sigma\to S$ be as above, let $h:\Sigma\to\Sigma$ be
	isotopic to a closed pseudo-Anosov map, and suppose ${bh=fb}$.
	Choose compatible lifts $H,F,B$ with $BH=FB$.
	{Then, a continuum $\wt Q\subset\widetilde\Sigma$ is dynamically bounded for $H$ {if and only if} its projection $B \wt Q$ is dynamically bounded for $F$.} 
\end{proposition}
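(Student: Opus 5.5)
The implication ``dynamically bounded for $H$ implies $B\wt Q$ dynamically bounded for $F$'' follows from Lemma~\ref{lem:PARFmetric}. Indeed $F^j(B\wt Q)=B(H^j\wt Q)$, and if $\diam(H^j\wt Q)\le R$ for all $j$, then $\diam(F^jB\wt Q)\le J_B(R)$ for all $j$. The substance is the converse. I would prove it in contrapositive form: assuming $\mathscr D_H(\wt Q)=\infty$, show $\mathscr D_F(B\wt Q)=\infty$.

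So suppose $\wt Q$ is dynamically unbounded for $H$. By Corollary~\ref{cor:dynamically_bounded_continua} applied to $h$ on $\Sigma$, $\wt Q$ meets two distinct stable pseudo-leaves or two distinct unstable pseudo-leaves of $H$. By time symmetry I treat the stable case. I then follow the normalization of Section~\ref{sec:horseshoes}, which by Remark~\ref{rem:tracing_without_expansivity} needs no expansivity. Pass to a power $n$ and a deck translate, getting $G=\gamma_0H^n$ fixing the regular reference classes $\xi^s_0,\xi^u_0$. The corresponding lift $F'$ of $f^n$ satisfies $BG=F'B$ after composing with the image deck transformation $B_*\gamma_0$. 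Dynamical boundedness is unchanged by powers and translates (Lemma~\ref{lem:powers_UB}). By Lemma~\ref{lem:separating_translate}, after translating $\wt Q$ by a deck transformation I may assume $\xi^s_0$ separates two stable classes met by $\wt Q$. Translating $\wt Q$ changes $B\wt Q$ only by a deck transformation of $\tS$.

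Next I apply Lemma~\ref{lem:coarse_leaf_tracing}. For every point $v$ of the regular unstable leaf $L^u_0$ there is $J(v)$ such that, for all $j\ge J(v)$, $G^j\wt Q$ contains a point within $2C+1$ of $v$. Now take a finite set $v_1,\dots,v_k\in L^u_0$ and a common $j\ge\max J(v_i)$. Then $G^j\wt Q$ comes within $2C+1$ of every $v_i$, so $B(G^j\wt Q)=F'^jB\wt Q$ comes within $J_B(2C+1)$ of every $B v_i$. This gives
\[
\diam(F'^j B\wt Q)\ \ge\ \diam\{Bv_1,\dots,Bv_k\}-2J_B(2C+1).
\]
By Lemma~\ref{lem:visible_leaf}, $B(L^u_0)$ is unbounded in $\tS$, so the $v_i$ can be chosen with $\diam\{Bv_i\}$ arbitrarily large. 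Hence $\sup_j\diam(F'^jB\wt Q)=\infty$, so $B\wt Q$ is dynamically unbounded for $F'$, and therefore for $F$. The unstable case is the same argument with $G^{-1}$ and the time-reversed tracing along $L^s_0$.

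The main obstacle is this last step: showing that the tracing of the invariant leaf upstairs stays visible downstairs. The model leaf could a priori fold under $B$ into a bounded region, and that is exactly what Lemma~\ref{lem:visible_leaf} rules out. Two technical points need care. First, the compatibility of the chosen lifts: the relation $BH=FB$ must survive replacing $H$ by $\gamma_0H^n$. I would handle this using that $B$ is equivariant with respect to a homomorphism of deck groups. Second, the hypotheses of Lemma~\ref{lem:visible_leaf} must actually hold: $\Sigma$ is hyperbolic, and $L^u_0$ is regular by the choice of a nonsingular periodic point $p_A$.
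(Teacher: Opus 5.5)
Your proposal is correct and follows the paper's proof essentially step for step. The forward direction comes from Lemma~\ref{lem:PARFmetric}, and the converse from the same chain: reduction to two stable pseudo-leaves, the normalized return lift $G=\gamma_0H^n$, a separating deck translate, coarse tracing of $L^u_0$ via Lemma~\ref{lem:coarse_leaf_tracing}, and unboundedness of $B(L^u_0)$ from Lemma~\ref{lem:visible_leaf}. Using a finite set $v_1,\dots,v_k$ instead of the pair $v_k,\wt p_A$, and spelling out the lift compatibility $BG=(B_*\gamma_0)F^nB$, are only cosmetic refinements that make explicit what the paper leaves implicit.
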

\begin{proof}
	{Suppose $\wt Q$ is dynamically bounded. Then, Lemma \ref{lem:PARFmetric} gives {uniformly bounded diameters} for $BH^j \wt Q = F^jB \wt Q$, and thus shows that $B \wt Q$ is dynamically bounded. 
	
	Suppose that $\wt Q$ is dynamically unbounded. By Corollary~\ref{cor:dynamically_bounded_continua}, we can assume without loss of generality that {$\wt Q$} intersects two distinct stable pseudo-leaves}. As in Section \ref{sec:horseshoes}, choose a regular periodic point of
	the pseudo-Anosov representative and its lift {$\wt p_A$}, with {corresponding lift} $G=\gamma_0H^n$ and invariant {leaf classes}
	$\xi^s_0,\xi^u_0$. Pass to a further power to fix the prongs.
	Lemma~\ref{lem:separating_translate} gives a deck translate {$\wt Q'$} of
	{$\wt Q$} whose stable {classes} lie on both sides of $\xi^s_0$.
	
	Let $L^u_0$ be the unstable leaf through {$\wt p_A$} and put $R=2C+1$.
	There is a constant $J_B(R)$ such that
	\[
	d_{\widetilde\Sigma}(x,y)\leq R
	\quad\Longrightarrow\quad d_{\tS}(Bx,By)\leq J_B(R).
	\]
	This is the upper control from Lemma~\ref{lem:PARFmetric},
	using equivariance and compactness of a fundamental neighbourhood.
	By Lemma~\ref{lem:visible_leaf}, choose $v_k\in L^u_0$ with
	$d_{\tS}(Bv_k,{B\wt p_A})\to\infty$.
	Lemma~\ref{lem:coarse_leaf_tracing}, {applied to $v_k$ and $\wt p_A$,
	provides, for every sufficiently large $j$, points in the same $G^j\wt Q'$
	respectively within distance $R$ of $v_k$ and $\wt p_A$}. Consequently,
	\begin{equation}\label{eq:visible_growth}
		\diam(F^{nj}B{\wt Q}) = \diam(BG^j{\wt Q}) = \diam(BG^j{\wt Q'})\geq d_{\tS}(Bv_k,B{\wt p_A})-2J_B(R).
	\end{equation}
	Thus $\diam(F^{nj}B \wt Q)\to\infty$ as $j\to\infty$: $B \wt Q$ is dynamically unbounded. 
\end{proof}

{We obtain the following result as a consequence.}

\begin{corollary}\label{cor:finite_factor_ice}
Under the hypotheses of Proposition~\ref{prop:visible_dynamical_growth},
if $h$ is infinitely continuum-wise expansive, then so is $f$.
\end{corollary}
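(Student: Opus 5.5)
The plan is to deduce the statement directly from the dichotomy in Proposition~\ref{prop:visible_dynamical_growth}, applied to nontrivial continua upstairs obtained by pulling back continua from $S$. Fix compatible lifts $H,F,B$ with $BH=FB$. Let $K\subset S$ be a nontrivial continuum and let $\wt K\subset\tS$ be a compact lift. We must show $\mathscr D_F(\wt K)=\infty$.

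The first step is to produce a nontrivial continuum $\wt Q\subset\widetilde\Sigma$ with $B\wt Q\subset\wt K$. Since $b$ is a finite branched covering, it is open, closed and surjective. Consider the pullback space $Y$ of Lemma~\ref{lem:visible_leaf}, with the proper finite branched covering $\beta:Y\to\tS$ and the factorization $B=\beta\circ r$. Then $\beta^{-1}(\wt K)$ is compact and maps onto $\wt K$. A standard fact for finite-to-one open closed maps, or equivalently a local argument with local sheets near a nonbranch point of $\wt K$, gives a component of $\beta^{-1}(\wt K)$ that is a nontrivial continuum. Concretely:
\begin{itemize}
\item choose $x\in\wt K$ away from the finite discrete branch set, or, if $\wt K$ lies in the branch set, note that it would then be a point;
\item take an evenly covered disk $V$ around $x$;
\item boundary bumping (Lemma~\ref{lem:nadler}) gives a nontrivial subcontinuum of $\wt K\cap\overline V'$ for a smaller disk $V'$, and this subcontinuum lifts homeomorphically to a sheet of $Y$.
\end{itemize}
Because $r:\widetilde\Sigma\to Y$ is a covering, this continuum, being inessential in a small disk, lifts homeomorphically to a continuum $\wt Q\subset\widetilde\Sigma$. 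Then $\wt Q$ is nontrivial and $B\wt Q\subset\wt K$.

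The second step is to conclude. Since $h$ is infinitely continuum-wise expansive, $\wt Q$ is dynamically unbounded for $H$. Proposition~\ref{prop:visible_dynamical_growth} then shows that $B\wt Q$ is dynamically unbounded for $F$. Since $F^j B\wt Q\subset F^j\wt K$, we get $\mathscr D_F(\wt K)\geq \mathscr D_F(B\wt Q)=\infty$. As $K$ was arbitrary, $f$ is infinitely continuum-wise expansive.

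The main obstacle is the first step: getting a nontrivial lifted continuum through the branched cover. This is handled by working away from the finitely many branch points, where $b$ is a local homeomorphism. The only subtle case is when every point of $K$ is a branch value, and that is impossible for a nontrivial continuum because the branch set $X$ is finite.
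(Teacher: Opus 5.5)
Your proof is correct and follows the route the paper intends: the paper gives no separate proof and states the corollary as an immediate consequence of Proposition~\ref{prop:visible_dynamical_growth}. Your local lifting through the sheets of $\beta$ and then through the covering $r$ spells out the implicit step of finding a nontrivial continuum $\wt Q\subset\widetilde\Sigma$ with $B\wt Q$ inside a lift of $K$. Two small corrections, neither of which affects the argument: the branch set $\wt\pi^{-1}(X)$ in $\tS$ is discrete rather than finite, though it meets the compact set $\wt K$ in finitely many points. And an essential $K$ has no compact lift, but it is then trivially dynamically unbounded.
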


\subsection{Descending the cellular decomposition}

\begin{lemma}\label{lem:finite_cellular_descent}
Let a finite group $H$ act by orientation-preserving homeomorphisms
on a closed orientable surface $\Sigma$, and let $\mathcal D$ be an
$H$-invariant cellular decomposition. If $b:\Sigma\to\Sigma/H$ is
the quotient, then the distinct sets $b(C)$, $C\in\mathcal D$,
form a cellular decomposition. If $\mathcal D$ is upper
semicontinuous, so is this decomposition.
\end{lemma}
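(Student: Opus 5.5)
The plan is to check in turn that the sets $b(C)$ partition $\Sigma/H$, that each is a cellular continuum, and that the decomposition is upper semicontinuous. The step I expect to be the main obstacle is cellularity when $C$ has a nontrivial stabilizer in $H$.

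\emph{Partition.} Since $\mathcal D$ is $H$-invariant, each $hC$ with $h\in H$ is again an element of $\mathcal D$. Hence
\[
b^{-1}(b(C))=\bigcup_{h\in H}hC
\]
is a finite union of elements of $\mathcal D$. Consequently $b(C)=b(C')$ if and only if $C'=hC$ for some $h\in H$, and otherwise $b(C)\cap b(C')=\varnothing$. So the distinct sets $b(C)$ partition $\Sigma/H$. Each $b(C)$ is a continuum, being the continuous image of one.

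\emph{Upper semicontinuity.} This is formal. Let $W\supset b(C)$ be open. Then $b^{-1}(W)$ is an open set containing the finitely many elements $hC$. Upper semicontinuity of $\mathcal D$ gives a $\mathcal D$-saturated open set $V$ with
\[
\bigcup_h hC\subset V\subset b^{-1}(W).
\]
Then $V'=\bigcap_{h}hV$ is open, $H$-invariant, $\mathcal D$-saturated, and contains $C$. Since $b$ is open, $b(V')$ is an open set, saturated for the new decomposition, with $b(C)\subset b(V')\subset W$.

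\emph{Cellularity, setup.} I will use the characterization that a continuum in a surface is cellular if and only if it has an open disk neighbourhood $U$ with $U\setminus C$ connected. Equivalently, it is a nonseparating continuum in a planar neighbourhood, and Riemann mapping or the Moore theory gives the nested disks. Let $H_C=\{h\in H: hC=C\}$ be the stabilizer of $C$. For $h\notin H_C$ we have $hC\cap C=\varnothing$, because distinct elements of $\mathcal D$ are disjoint. By compactness, choose an open disk $U\supset C$ with $U\setminus C$ connected, small enough that $hU\cap U=\varnothing$ for every $h\notin H_C$.

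\emph{Cellularity, trivial stabilizer.} If $H_C=\{\Id\}$, then $b|_U$ is injective, continuous and open. It is therefore a homeomorphism onto an open disk $b(U)$, and $b(U)\setminus b(C)=b(U\setminus C)$ is connected. So $b(C)$ is cellular.

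\emph{Cellularity, nontrivial stabilizer.} In general I need an $H_C$-invariant open disk neighbourhood of $C$. Let $O$ be the component containing $C$ of $\bigcap_{h\in H_C}hU$. This set is open, connected and $H_C$-invariant, and it lies in the disk $U$, so it is inessential. Let $O^\ast$ be the union of $O$ with all components of $\Sigma\setminus O$ that are contained in some disk. When $\Sigma$ has genus $\geq1$, this means all components except the unique essential one. This filling is canonical, so $O^\ast$ is still $H_C$-invariant. It is also simply connected, hence an open disk. The sphere case is treated the same way, filling all components but the one containing a chosen point far from $C$, which can be taken $H_C$-invariant after shrinking. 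By Kerékjártó--Brouwer, the finite group $H_C$ of orientation-preserving homeomorphisms of the open disk $O^\ast$ is conjugate to a group of rotations. Hence $O^\ast/H_C$ is an open disk, and it embeds via $b$ as the open set $b(O^\ast)$. This uses $hO^\ast\cap O^\ast=\varnothing$ for $h\notin H_C$, which holds once $U$ is small enough.

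\emph{Cellularity, conclusion.} The continuum $C$ is cellular, so it does not separate the disk $O^\ast$. Therefore $b(O^\ast)\setminus b(C)=b(O^\ast\setminus C)$ is connected. Thus $b(C)$ is a nonseparating continuum in an open disk, hence cellular.

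The obstacle is precisely the construction of the invariant disk $O^\ast$ in the last two steps, together with the identification of $O^\ast/H_C$ with a disk. If the filling argument proves delicate, I would try first to replace it by averaging an $H_C$-invariant metric and taking a small round-ish sublevel neighbourhood, then filling.
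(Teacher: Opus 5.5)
Your proposal is correct and follows essentially the paper's route. You reduce to the stabilizer $H_C$, take the component containing $C$ of $\bigcap_{h\in H_C}hU$ as an $H_C$-invariant disk neighbourhood disjoint from its other translates, and use the fact that finite orientation-preserving disk actions are conjugate to rotations to see that a neighbourhood of $b(C)$ is an open disk. The differences are minor: you get simple connectivity by filling rather than by working in a common planar chart, you cite Ker\'ekj\'art\'o directly rather than the paper's prime-end/Constantin--Kolev argument, and you conclude with the nonseparating-continuum-in-a-disk criterion rather than by producing arbitrarily small disk neighbourhoods.
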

\begin{proof}
If $b(C)$ meets $b(C')$, then $C$ meets a translate of $C'$, so
they are the same decomposition element up to $H$. The images thus
form a partition by continua. Fix $C$ and its stabilizer $H_C$ {---if it is trivial, then $b$ is a homeomorphism in a neighbourhood of $C$ and the assertion follows---}.
Cellularity gives arbitrarily small disk neighbourhoods $D$ of $C$.
Choose $D$ so that all $hD$, $h\in H_C$, lie in a common disk
neighbourhood of $C$, while $D$ is disjoint from its translates
outside $H_C$. The component containing $C$ of
$\bigcap_{h\in H_C}hD$ is an $H_C$-invariant open disk: in that
common planar chart the complements of the finitely many disks
are connected and share the exterior, so each intersection component
is simply connected. To see that its quotient is a disk, extend the finite action to the
prime-end closed disk. The boundary action is faithful: a periodic
disk homeomorphism equal to the identity on the boundary is the
identity, by \cite[Proposition 3.2]{constantinkolev94periodic}.
A finite orientation-preserving circle group is cyclic, and its
generator is conjugate on the disk to a rotation by
\cite[Theorem 3.1]{constantinkolev94periodic}. The interior quotient
is therefore an open disk. Thus $b(C)$ has arbitrarily small disk
neighbourhoods and is cellular. {Upper semicontinuity follows directly from the definition}.
\end{proof}

{This allows us to project the essential factor through the branched covering.}

\begin{corollary}\label{cor:PARFcommute}
{
The essential cell-like semiconjugacy $\wh q$ of $\wh f$ descends to a}
cell-like semiconjugacy $q_b:S\to S_b$ with uniformly bounded
lifted fibres. Its factor $g_b$ is isotopic to $f$ after marking
$S_b$ as $S$, and the diagram satisfies
\[
 q_b b=b_b\wh q,\qquad
 b_b\wh g=g_b b_b,\qquad q_b f=g_b q_b,
\]
where $b_b$ is a finite branched covering and $\wh g$ is the
essential factor upstairs. Every fibre of $q_b$ is dynamically
bounded for $f$.
\end{corollary}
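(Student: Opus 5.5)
We first set up the upstairs objects. By Proposition~\ref{prop:PARFcover}, $\wh f$ on the closed surface $\wh S$ is isotopic to the genuine pseudo-Anosov $\wh A$. Theorem~\ref{thmA:pA} (in the form of Proposition~\ref{prop:splitting} and Remark~\ref{rem:canonical_model}) then gives the essential cellular, upper semicontinuous decomposition $\mathcal D$ of $\wh S$, with quotient map $\wh q$ and factor $\wh g$.

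The first step is $\mathcal D$-invariance under the deck group $H$ of $b$. Each $h\in H$ commutes with $\wh f$ up to the covering structure: $b h=b$ and $bh\wh f=b\wh f h$. The intended consequence is that $h$ maps $\wh f$-dynamically bounded continua to dynamically bounded continua. I would try the following. Take $h\wh f h^{-1}$, which is another lift of $f$ through $b$, and use that $H$ is characteristic/normal (Proposition~\ref{prop:PARFcover}) so that we may replace $\wh f$ by a lift normalizing $H$. The cleaner route is to avoid this altogether. Proposition~\ref{prop:visible_dynamical_growth} says that a continuum $\wh K$ is $\wh f$-dynamically bounded if and only if $b\wh K$ is $f$-dynamically bounded. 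Since $b(h\wh K)=b\wh K$, dynamical boundedness is $H$-invariant, and therefore so are the equivalence classes. Thus $\mathcal D$ is $H$-invariant, and Lemma~\ref{lem:finite_cellular_descent} (orientation preservation holds for deck transformations of an orientable branched cover) shows that the sets $b(C)$ form an upper semicontinuous cellular decomposition $\mathcal D_b$ of $S$.

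Next we build the quotients. By the Roberts--Steenrod theorem, $S_b:=S/\mathcal D_b$ is a closed surface homeomorphic to $S$; let $q_b$ be the quotient map. Since $\mathcal D$ is $H$-invariant, $H$ acts on $\wh S/\mathcal D$ by homeomorphisms. The map $b_b:\wh S/\mathcal D\to S_b$, $\wh q(x)\mapsto q_b(b(x))$, is well defined and continuous (it factors through quotient topologies), and it identifies $S_b$ with the orbit space of this induced finite action. Hence $b_b$ is a finite branched covering, branched over the images of fibres meeting $b^{-1}(X)$ or having nontrivial stabilizer. The identity $q_b b=b_b\wh q$ holds by construction. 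Since $f$ permutes the elements of $\mathcal D_b$ (because $b\wh f=fb$ and $\wh f$ permutes $\mathcal D$), we obtain $g_b$ with $q_bf=g_bq_b$, and then $b_b\wh g=g_bb_b$ follows by composing.

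It remains to handle the fibres and the isotopy class. For the fibre bounds, each $C\in\mathcal D$ has lifted diameter at most $M$ and is dynamically bounded for $\wh f$. By Proposition~\ref{prop:visible_dynamical_growth} (the easy direction, Lemma~\ref{lem:PARFmetric}), $b(C)$ has a compact lift of diameter at most $J_B(M)$, and that lift is dynamically bounded for $f$. For the marking, $q_b$ is cellular, so Remark~\ref{rem:canonical_model} applies verbatim: approximate $q_b$ by a homeomorphism and identify $S_b$ with $S$ so that $q_b\simeq\Id$; Remark~\ref{rem:isotopic_factor} then gives $g_b$ isotopic to $f$.

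The main obstacle is the $H$-invariance of $\mathcal D$. This is exactly where the projected-growth result Proposition~\ref{prop:visible_dynamical_growth} is essential, and I would check carefully that it applies to $h\wh K$ with the same lift $B$ (continuity of $b$ on translates). A secondary technical point is verifying that $b_b$ is a genuine branched covering rather than merely a finite-to-one quotient.
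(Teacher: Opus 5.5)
Your proposal is correct and takes the paper's route: the $H$-invariant cellular decomposition descends by Lemma~\ref{lem:finite_cellular_descent}, and the fibres are uniformly bounded and dynamically bounded for $f$ by Lemma~\ref{lem:PARFmetric}. The paper leaves the $H$-invariance of the upstairs decomposition implicit, and your argument via Proposition~\ref{prop:visible_dynamical_growth} supplies it; the detail you flagged is settled by noting that for any lift $\wt h$ of a deck transformation of $b$ one has $B\wt h=TB$ for some $T\in\Gamma$, so the images of $\wt K$ and $\wt h\wt K$ differ only by a deck translate.
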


\begin{proof}
	{Lemma \ref{lem:finite_cellular_descent} gives the descent of the cell-like semiconjugacy. The last assertion follows from Lemma \ref{lem:PARFmetric}.}
\end{proof}

\begin{proposition}\label{prop:PARFchaotic}
The factor $g_b$ is topologically mixing, infinitely continuum-wise
expansive, and tight. It has local {rotational} horseshoes with
periodic realization, dense periodic points, and an atomless
full-support invariant probability. For every pair of periodic
points, the connected stable and unstable sets have a topologically
transverse intersection in every nonempty open set. 
\end{proposition}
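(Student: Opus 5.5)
The plan is to push every property of $\wh g$ (the essential factor upstairs, for which Theorem~\ref{thmA:pA} holds since $\wh f$ is isotopic to the genuine pseudo-Anosov $\wh A$) down through the finite branched covering $b_b:\wh S\to S_b$ of Corollary~\ref{cor:PARFcommute}. We use the semiconjugacy $b_b\wh g=g_b b_b$ with $b_b$ finite-to-one, open and surjective. The statement then splits into two kinds of properties. The first kind are \emph{image-stable} properties, which pass to factors by open finite maps. The second kind is expansiveness, which needs the dynamical-growth result.

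\textbf{Infinite continuum-wise expansiveness.} I would apply Corollary~\ref{cor:finite_factor_ice} with $\Sigma=\wh S$, $h=\wh g$, and $b=b_b$. Here $\wh g$ is isotopic to the closed pseudo-Anosov $\wh A$ and is infinitely continuum-wise expansive by Lemma~\ref{lem:essential_factor_ice}. We need $\wh S$ hyperbolic and $S_b$ of genus $\ge1$, which holds because $S_b\cong S$. A detail to check is that a nontrivial continuum $K\subset S_b$ is the image of a nontrivial continuum upstairs. I would take a component of $b_b^{-1}(K)$: since $b_b$ is a finite branched cover, hence open and closed, each component of the preimage maps onto $K$. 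Proposition~\ref{prop:visible_dynamical_growth} then shows that its dynamical unboundedness is visible downstairs. Tightness follows verbatim from Proposition~\ref{prop:Tight}, whose proof used only infinite continuum-wise expansiveness and compactness.

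\textbf{Mixing and periodic points.} Mixing and density of periodic points pass to factors under the surjection $b_b$. For open $U,V\subset S_b$, the sets $b_b^{-1}U$ and $b_b^{-1}V$ are nonempty open, and mixing of $\wh g$ gives $\wh g^k b_b^{-1}U\cap b_b^{-1}V\neq\varnothing$ for large $k$. Projecting gives $g_b^kU\cap V\neq\varnothing$. Density of periodic points follows by projecting periodic points of $\wh g$.

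\textbf{Horseshoes and invariant measure.} For local rotational horseshoes, I would choose a small open $U\subset S_b$ and an open set $\wh U\subset b_b^{-1}(U)$ avoiding the finitely many branch points, on which $b_b$ is injective. Proposition~\ref{prop:dense_symbolic_horseshoes} gives a rectangle $\wt R$ inside a lift of $\wh U$ with $G^j\wt R\dagger W_l\wt R$. Composing with $B$ (injective near $\wt R$ by construction) and using equivariance of crossings (Corollary~\ref{cor:crossing_basic_properties}) yields $F_b$-crossings with the deck transformations $B_*(W_l)$. The genuine difficulty is that distinct upstairs deck elements may become equal downstairs. To handle this, I would invoke the fact that the translates $B(W_l\wt R)$ are disjoint whenever the $W_l\wt R$ project to distinct points of $\tS_b$. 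Alternatively, I would fall back on the inessential horseshoe of Proposition~\ref{prop:inessential_topological_horseshoe} if some labels collapse; Lemma~\ref{lem:HPR_positioning} supplies infinitely many labels, so choosing them with distinct images under the finite-index map avoids this. The atomless full-support measure is then obtained exactly as in Proposition~\ref{prop:AreaPreserving}, or by pushing forward the one for $\wh g$.

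\textbf{Transverse intersections.} For periodic $p,q$ of $g_b$, I would pick preimages that are periodic for $\wh g$, possibly after taking a power. Connected stable continua map to connected stable continua, since stability is preserved by Lemma~\ref{lem:PARFmetric}. A transverse intersection of $\mathcal C^s(\hat p)$ and $\mathcal C^u(\hat q)$ from Proposition~\ref{prop:dense_heteroclinic}, placed inside an injectivity region $\wh U$, maps homeomorphically to one in $U$.

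I expect the main obstacle to be the injectivity bookkeeping. This means avoiding branch points and ensuring the deck labels stay distinct, so that the horseshoes remain rotational downstairs.
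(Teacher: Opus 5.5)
Your route is the paper's. You get infinite continuum-wise expansiveness from Corollary~\ref{cor:finite_factor_ice} applied to $b_b$ and $\wh g$, and tightness from Proposition~\ref{prop:Tight}. Mixing and the measure are pushed through the surjection $b_b$. Horseshoes and transverse intersections are built upstairs in a sheet of $b_b$ that avoids the branch set, and then projected.

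The step that fails as written is the one you single out yourself: distinctness of the downstairs deck labels. ``Choosing them with distinct images under the finite-index map'' is a non sequitur. The homomorphism $B_*$ on deck groups has finite-index image, but it is not injective once $b_b$ really branches (compare Euler characteristics via Riemann--Hurwitz). Surface groups are torsion-free, so $\ker B_*$ is infinite. The infinitely many labels of Lemma~\ref{lem:HPR_positioning} could therefore a priori all lie in one coset of it. Your other two options do not close this. Disjointness of the sets $B(W_l\wt R)$ is just a restatement of distinctness of the labels. Proposition~\ref{prop:inessential_topological_horseshoe} produces an inessential horseshoe, not a rotational one in the sense of Definition~\ref{def:rotational_horseshoe}.

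A repair goes as follows. By minimality and compactness there is $\ell$ such that every arc of $L^u_0$ of $\mu^u$-length $\ell$ meets $\Gamma\wt\tau$, and each such crossing gives an admissible label. If only finitely many values $B_*(W)$ occurred, Lemma~\ref{lem:PARFmetric} would keep $B(L^u_0)$ within bounded distance of the finitely many compact sets $B_*(W)B(\wt\tau)$. This contradicts Lemma~\ref{lem:visible_leaf}.

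Transporting the crossings through $B$ is also not an instance of Corollary~\ref{cor:crossing_basic_properties}, since $B$ is not a homeomorphism. It works for a different reason. $B$ is injective on $\wt R$ and on each $W_l\wt R$, and $BG^j=G_b^jB$ for the compatible lift $G_b$ of $g_b^n$. Hence the horizontal arcs of $G_b^j(B\wt R)$ are $B$-images of horizontal arcs of $G^j\wt R$.

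The paper avoids labels altogether. It only observes that the compact $\wh g^{nj}$-invariant horseshoe $\Lambda$ lies in the sheet $\wh V$. So $b_b|_{\Lambda}$ conjugates it to $g_b^{nj}|_{b_b(\Lambda)}$, which carries the symbolic dynamics and periodic realization downstairs. It does not check that the projected labels stay distinct.

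Finally, for expansiveness, note that a component of $b_b^{-1}(K)$ can be essential in $\wh S$ even when $K$ is not. Apply Proposition~\ref{prop:visible_dynamical_growth} to a nontrivial compact subcontinuum of one of its lifts instead; its $B$-image lies in a lift of $K$.
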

\begin{proof}
Mixing passes from $\wh g$ to its continuous surjective factor
$g_b$, by Proposition~\ref{prop:mixing}.

Apply Corollary~\ref{cor:finite_factor_ice} to the finite
branched covering $b_b$ and the essential factor $\wh g$ upstairs.
The latter is infinitely cw-expansive and remains in the closed
pseudo-Anosov isotopy class, so $g_b$ is infinitely cw-expansive.
In particular it is ordinarily cw-expansive, and
Proposition~\ref{prop:Tight} gives tightness.

For the horseshoe assertion, choose an open disk inside the given
open set avoiding the finite branch set, and one inverse component
on which $b_b$ is a homeomorphism.
Proposition~\ref{prop:dense_symbolic_horseshoes} supplies a compact
horseshoe for an iterate inside this component. It is conjugate to its projected
system, since all of its return iterates stay inside it{.} 
The measure assertion follows either from
Proposition~\ref{prop:AreaPreserving}, or by pushing
forward a good invariant measure of $\wh g$: finiteness of fibres
preserves atomlessness and surjectivity preserves full support.

For the homoclinic-class assertion, let $p,q\in\Per(g_b)$ and let
$U\subset S_b$ be nonempty and open. Choose $r\geq1$ with
$g_b^r(p)=p$ and $g_b^r(q)=q$. The homeomorphism $\wh g^r$
permutes each of the finite fibres $b_b^{-1}(p)$ and
$b_b^{-1}(q)$. Thus any choices $\wh p\in b_b^{-1}(p)$ and
$\wh q\in b_b^{-1}(q)$ are periodic for $\wh g$. Choose an open disk $V\subset U$ avoiding the singular values
of $b_b$, and a component $\wh V$ of $b_b^{-1}(V)$ on which
$b_b$ is a homeomorphism. Apply
Proposition~\ref{prop:dense_heteroclinic} upstairs to
$\wh p,\wh q$ and $\wh V$. Its construction gives a rectangle
$\wh R\subset\wh V$ with transverse crossing stable and
unstable continua $\wh C^s\subset\mathcal C^s(\wh p)$ and
$\wh C^u\subset\mathcal C^u(\wh q)$. {Projecting gives the desired transverse intersections}. 

\end{proof}

\begin{proposition}\label{prop:PARFsplitting}
Every PARF homeomorphism $f$ satisfies $(\Bnd)$. The fibres of
$q_b$ are exactly its maximal dynamically bounded continua, so
the intrinsic essential factor is $g_b$.
\end{proposition}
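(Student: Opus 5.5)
The plan is to compare two decompositions of $S$: the fibres of $q_b$, and the maximal dynamically bounded continua of $f$. We show they coincide. Property $(\Bnd)$ then follows from the uniform lifted bound on the fibres of $q_b$ given by Corollary~\ref{cor:PARFcommute}.

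\emph{Step 1: every fibre of $q_b$ is contained in a maximal dynamically bounded continuum.} This is the last assertion of Corollary~\ref{cor:PARFcommute}. Each fibre is a continuum, dynamically bounded for $f$, with a uniformly bounded lift.

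\emph{Step 2: every dynamically bounded continuum of $f$ lies in a single fibre of $q_b$.} Let $K\subset S$ be a dynamically bounded continuum for $f$. If $q_bK$ were nontrivial, it would be a nontrivial continuum of $g_b$. The lift of $q_b$ is at bounded distance from the identity, by the analogue of \eqref{eq:Phibounded} for the marked quotient. Hence $q_bK$ is dynamically bounded for $g_b$, exactly as in Lemma~\ref{lem:quotient_dynamically_bounded}. This contradicts infinite continuum-wise expansiveness of $g_b$ (Proposition~\ref{prop:PARFchaotic}, which rests on Corollary~\ref{cor:finite_factor_ice}). So $q_bK$ is a point, and $K$ lies in one fibre.

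\emph{Step 3: conclusion.} Combining the two steps, each fibre of $q_b$ is a dynamically bounded continuum, and any dynamically bounded continuum meeting it is contained in it. Thus the fibres are exactly the maximal dynamically bounded continua, and they are the classes of $\sim_f$. A dynamically bounded lifted continuum lies in a lifted fibre, whose diameter is uniformly bounded. This gives $(\Bnd)$ with $M_f$ equal to the fibre bound, independently of the metric choice by Definition~\ref{def:UB}. Now Section~\ref{sec:essfactor} applies to $f$ verbatim: its quotient decomposition is the one defined by $q_b$, so the intrinsic essential factor is conjugate to $g_b$, with the same marking.

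\emph{Main obstacle.} The step needing care is the bounded-displacement property of the lift of $q_b$ in Step~2. It should follow from uniformly bounded lifted fibres together with the marking $q_b\simeq\Id$ from Remark~\ref{rem:canonical_model}. That remark used the Fanoni--Hensel--Le Roux approximation, which applies to cellular maps, and Lemma~\ref{lem:finite_cellular_descent} supplies the cellularity of $q_b$.

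I would first try to avoid this issue by arguing through $b$ and $\wh q$ directly. Lift $K$ to a component $\wh K$ of $b^{-1}(K)$. The component $\wh K$ is dynamically bounded for $\wh f$ by Proposition~\ref{prop:visible_dynamical_growth}. By the essential splitting upstairs, $\wh K$ lies in one fibre of $\wh q$, so $K=b(\wh K)$ lies in one fibre of $q_b$. The one point to check is that components of preimages of continua under the branched covering are continua projecting onto $K$. This holds because $b$ is open, closed and finite-to-one.
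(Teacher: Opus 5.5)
Your main argument (Steps 1--3) is correct and is essentially the paper's proof: take the equivariant lift of the marked $q_b$ at bounded distance from the identity. Infinite continuum-wise expansiveness of $g_b$ (Proposition~\ref{prop:PARFchaotic}) then puts every dynamically bounded lifted continuum inside a lifted fibre. Combining this with the dynamical boundedness and uniform lifted bound of the fibres from Corollary~\ref{cor:PARFcommute} gives $(\Bnd)$ and identifies the two decompositions. A small correction to your ``main obstacle'' paragraph: bounded displacement of the lift needs only $q_b\simeq\Id$ and compactness of $S$, not the fibre bound.

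Your detour through $b$ and $\wh q$ has a gap you did not flag. A component $\wh K$ of $b^{-1}(K)$ need not a priori be inessential in $\wh S$, since an arc joining two branch points can lift to an essential closed curve. So before invoking Proposition~\ref{prop:visible_dynamical_growth} you must rule this out. One way: every compact subcontinuum of a lift of $\wh K$ has $B$-image inside a lift of $K$, so it is dynamically bounded and has diameter at most $M_{\wh f}$. Together with Lemma~\ref{lem:large_compact_pieces}, this forces the lifts of $\wh K$ to be compact.
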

\begin{proof}
Mark $q_b$ as homotopic to the identity and take its equivariant
lift $\Phi_b$ at bounded distance from the identity, as in
\eqref{eq:Phibounded}. If {$\wt Q\subset\tS$} is a dynamically bounded
continuum for a lift of $f$, then {$\Phi_b(\wt Q)$} is dynamically
bounded for the compatible lift of $g_b$. The latter is infinitely
cw-expansive by Proposition~\ref{prop:PARFchaotic}, so
{$\Phi_b(\wt Q)$} is a point. Thus {$\wt Q$} lies in a lifted fibre of $q_b$.
The uniform fibre bound of Corollary~\ref{cor:PARFcommute}
proves $(\Bnd)$, with
\[
 M_f\leq J_{\widetilde b}(M_{\wh f}).
\]
Every fibre is itself dynamically bounded. Therefore the
intrinsic {decomposition elements} and the fibres of $q_b$ coincide.
\end{proof}

\begin{proof}[Proof of Theorem~\ref{thmB:PARF}]
Proposition~\ref{prop:PARFcover} supplies the regular branched
lift. Corollary~\ref{cor:PARFcommute} gives the descended
cellular factor, and Proposition~\ref{prop:PARFchaotic} gives
its infinite continuum-wise expansiveness, tightness, and
mixing, local {rotational} horseshoes and a good invariant measure.
Periodic realization gives density of periodic points, and
the Oxtoby--Ulam argument gives the conservative conjugacy.
Proposition~\ref{prop:PARFsplitting} identifies it
with the intrinsic quotient.
\end{proof}

\subsection{Uniformly bounded periodic islands}
\label{subsec:periodic_islands}

A \emph{periodic island} here means any periodic open topological
disk $U\subset S$: there is $k\geq1$ with $f^k(U)=U$.
No regularity of its boundary or maximality among periodic disks
is required. Its \emph{{lifted diameter}} is
\[
 \mathcal D(U)=\diam(\widetilde U),
\]
where $\widetilde U$ is any connected lift. Deck invariance makes
this independent of the lift.

\begin{lemma}
\label{lem:inessential_fixed_original}
For every $f$ in Theorem~\ref{thmA:pA} or \ref{thmB:PARF},
the set $\Fix(f^k)$ is inessential for every $k\geq1$.
\end{lemma}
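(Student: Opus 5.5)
The plan is to show that every fixed point of $f^k$ lies in a single fibre class of the essential quotient, that these classes are inessential, and that only finitely many classes can be involved. First I replace $f$ by $f^k$. By Lemma~\ref{lem:powers_UB}, dynamical boundedness and property $(\Bnd)$ do not change under powers, so the essential decomposition of $f^k$ coincides with that of $f$. Moreover, $f^k$ is again isotopic to a pseudo-Anosov map, possibly relative to a finite set. It therefore suffices to treat $k=1$.

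Fix $z\in\Fix(f)$ and a lift $\tilde z$. Then $\tf(\tilde z)=T\tilde z$ for some deck transformation $T$. Replacing $\tf$ by the lift $T^{-1}\tf$, which still satisfies $(\Bnd)$, the point $\tilde z$ becomes fixed and the singleton $\{\tilde z\}$ is dynamically bounded. For each lift $F$ of $f$, consider
\[
E_F=\{\tilde w : F\tilde w=\tilde w\}.
\]
I claim that each connected component of $E_F$ is a dynamically bounded continuum. Indeed, each iterate of a component equals the component itself, so its diameter is constant. If a component were unbounded, Lemma~\ref{lem:large_compact_pieces} would give subcontinua of arbitrarily large diameter, all fixed by $F$ and hence dynamically bounded. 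This contradicts $(\Bnd)$, so every component has diameter at most $M_f$. Since $F$ commutes with the deck group up to conjugation, $\Gamma$ permutes lifts of $f$, and $\Fix(f)=\bigcup_F \wt\pi(E_F)$.

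Next I use Nielsen theory to bound the number of relevant lift classes. I expect this to be the main obstacle. In the closed pseudo-Anosov case, a lift $F$ with a fixed point is isotopic to a lift $A'$ of $A$. Its compactified action on $\partial\tS$ is either fixed-point-free or has finitely many attracting and repelling points. Militon's structure, via Lemma~\ref{lem:induced_actions}, shows that all fixed points of $F$ lie in $\eta^s(\xi)\cap\eta^u(\xi')$, where $(\xi,\xi')$ is the unique pair of fixed classes for $A'_*$. Uniqueness holds because $A'_*$ contracts one dual tree by $\lambda$ and expands the other by $\lambda^{-1}$. Hence $E_F$ lies in a single set of the form $W^s\cap W^u$. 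By Corollary~\ref{cor:dynamically_bounded_continua}, this set has diameter at most $4C$.

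So $E_F$ is contained in one ball of radius $4C$, and therefore in one lifted set of bounded diameter. Finiteness of Nielsen classes then gives finitely many $\Gamma$-orbits of such lifts $F$. Consequently $\Fix(f)$ is covered by finitely many projections of sets of diameter at most $4C$. Each such projection lies in an inessential filled neighbourhood: take the filling of a small neighbourhood of the relevant fibre, using Lemma~\ref{lem:inessential_classes}, and shrink it so that it does not self-overlap under deck translation. A finite disjoint union of inessential disks is inessential. To make these disks disjoint, I merge any overlapping ones; overlapping pieces come from the same $\Gamma$-orbit only if they coincide, by Lemma~\ref{lem:inessential_classes}.

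In the PARF case, I apply the same argument to $\wh f$ on the branched cover $\wh S$, which is covered by the closed pseudo-Anosov case of Proposition~\ref{prop:PARFcover}. I then project through $b$. By Lemma~\ref{lem:PARFmetric}, bounded lifted sets go to bounded lifted sets. The finite branch set $X$ adds finitely many points, each of which is trivially inessential, so the union remains inessential.
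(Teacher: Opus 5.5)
The genuine gap is in the PARF case. First, a fixed point of $f$ off $X$ lifts only to a fixed point of \emph{some} lift $h\wh f$, with $h$ in the deck group of $b$, so each of these lifts has to be treated, not just $\wh f$. The more serious problem is that ``bounded lifted sets go to bounded lifted sets'' does not give inessentiality:

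\begin{itemize}
\item a compact subset of $S$ with a bounded lift to $\tS$ can be essential, because the lift may meet its deck translates;
\item the $b$-image of an inessential set of $\wh S$ can be essential in $S$, since an arc in $\wh S$ may project to an essential loop.
\end{itemize}

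What you actually need downstairs is a lift that is disjoint from its nontrivial $\Gamma$-translates, and your closed-case mechanism for this does not transfer. If $f\in\Homeo_0(S)$ is PARF, the lift $F$ given by an identity isotopy commutes with $\Gamma$, so $\Fix(F)$ is $\Gamma$-invariant and unbounded whenever it is nonempty. Finally, the pieces $b(\Fix(h\wh f))$ and the points of $X$ may intersect, and a union of intersecting inessential compact sets need not be inessential.

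The same objection applies to your ``merging'' of overlapping neighbourhoods in the closed case: two disks can merge into an annulus. There it is harmless only because distinct fixed point classes are disjoint compact sets. Also in the closed case, the disjointness $TE_F\cap E_F=\varnothing$ does not follow from Lemma~\ref{lem:inessential_classes}, because $E_F$ need not be a single $\sim$-class. It comes from Nielsen theory instead: $TE_F=\Fix(TFT^{-1})$, two distinct lifts have no common fixed point, and a nontrivial $T$ commuting with $F$ would force the bounded nonempty set $E_F$ to contain an unbounded $\langle T\rangle$-orbit.

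The missing step is already in your second paragraph, and it is exactly the paper's proof. It needs only $(\Bnd)$, which holds in the PARF case by Proposition~\ref{prop:PARFsplitting}. Suppose $\Fix(f^k)$ were essential.

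\begin{itemize}
\item By compactness some connected component $Z'$ is essential. Otherwise you could split $\Fix(f^k)$ into finitely many disjoint clopen pieces with pairwise disjoint inessential neighbourhoods.
\item Hence the components of $\wt\pi^{-1}(Z')$ are unbounded.
\item On such a component one has $\tf^k(\wt w)=T\wt w$ for a single deck transformation $T$, since the deck element is locally constant. So the component is pointwise fixed by the lift $T^{-1}\tf^k$.
\item Lemma~\ref{lem:large_compact_pieces} then gives arbitrarily large dynamically bounded continua, contradicting $(\Bnd)$.
\end{itemize}

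This argument treats both cases at once. Your Militon/Nielsen detour is valid in the closed case, and even shows that each lifted fixed point class $E_F$ has diameter at most $4C$, but it is not needed.
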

\begin{proof}
{Suppose that $Z = \Fix(f^k)$ is essential for some $k {\geq} 1$. {Since $Z$ is compact, there exists an essential connected component $Z'$ of $Z$, and} every connected component of the lift $\wt Z'$ to the universal covering is unbounded ---otherwise it would be compact and $Z'$ would be inessential---. By Lemma \ref{lem:large_compact_pieces}, this would give arbitrarily large dynamically bounded continua for $f$, which is a contradiction {since} $f$ satisfies (UB).}
\end{proof}

\begin{remark}[PARF and fully essential dynamics]
\label{rem:PARF_fully_essential}
Every identity-isotopic PARF homeomorphism has fully essential
dynamics in the sense of \cite{koropeckital18fully}. To see this,
\cite[Theorem A]{guiheneufmiliton23hyperbolic} gives
$\operatorname{int}\rho^{\rm erg}_{H_1}(f)\ne\varnothing$.
The Misiurewicz-Ziemian rotation set contains this set. It therefore
cannot be contained in a finite union of proper linear subspaces.
By \cite[Lemma 6.1]{koropeckital18fully}, this rules out a
homotopically bounded invariant connected open set which is
essential but not fully essential. By
\cite[Lemma 6.4]{koropeckital18fully}, it also rules out uniformly
bounded displacement. Finally,
Lemma~\ref{lem:inessential_fixed_original} rules out essential
fixed-point sets of iterates. Thus none of the three exceptional
alternatives used to define non-fully-essential dynamics in
\cite[Theorem C]{koropeckital18fully} occurs.
\end{remark}

\begin{proof}[Proof of Theorem~\ref{thmD:periodic_islands}]
Let $U$ be a periodic open disk and choose $k\geq1$ with
$f^k(U)=U$. The map $f^k$ is nonwandering and isotopic to the
identity, and Lemma~\ref{lem:inessential_fixed_original} makes
$\Fix(f^k)$ inessential. The restriction to the invariant open
disk $U$ is nonwandering, so $U$ has no wandering cross-cuts.
The invariant-disk theorem of Koropecki and Tal
\cite[Theorem A]{koropeckital18fully} therefore gives
$\mathcal D(U)<\infty$.

Let $F$ be the preferred lift of $f$, which commutes with every
deck transformation, and write $F^k\widetilde U=T\widetilde U$.
For $m\in\mathbb Z$ and $0\leq r<k$,
\[
 F^{mk+r}\widetilde U=T^mF^r\widetilde U.
\]
The finitely many sets $F^r\widetilde U$ have finite diameter:
$\overline{\widetilde U}$ is compact in the proper lifted metric.
Thus every compact arc in $\widetilde U$ is dynamically bounded.
By Proposition~\ref{prop:PARFsplitting}, its diameter is at most
the uniform bound $M_f$ for dynamically bounded continua.
Any two points of $\widetilde U$ can be joined by a compact arc
in $\widetilde U$. {Taking the supremum shows that $U$ is dynamically bounded, as well as its closure $\overline{U}$}.
\end{proof}

The {torus case was already settled} in \cite[Theorem E]{garciatal24chaotic}, {answering} \cite[Question G]{koropeckital13strictly} in the
rotation-interior setting. In higher genus,
\cite{koropeckital18fully} {gives a uniform bound for fixed islands} in the setting of fully essential dynamics. By
Remark~\ref{rem:PARF_fully_essential},
 the identity-isotopic PARF
maps considered here already belong to that setting. Under the
nonwandering hypothesis, Theorem~\ref{thmD:periodic_islands}
makes the diameter bound independent of the period.

\section{Rotational invariants of the quotient}\label{sec:rotational}

In this section $q:S\to S$ is any of the marked cellular
semiconjugacies constructed above: the essential quotient in the
closed pseudo-Anosov case, the descended PARF quotient $q_b$, or
the intrinsic PARF quotient identified in Proposition~\ref{prop:PARFsplitting}. Write
$qf=gq$ and let $\Phi$ be its equivariant lift. We use the cellularity and bounded displacement of the quotient
map to compare these invariants.

\subsection{Cell-like fibres and the fine curve graph}

Recall that a compact subset of a surface is cellular if it is the intersection of a nested sequence of closed disks; by \cite[Lemma 2.6]{fanonihenselleroux26translation} an inessential filled continuum of $S$ is cellular. {A continuous map between surfaces is \emph{cell-like} when its fibres are cellular.} Recall that the semiconjugacies from Theorems \ref{thmA:pA} and \ref{thmB:PARF} are cell-like.

Let $(\mathcal{N})\mathcal{C}^\dagger(S)$ be the fine (nonseparating) curve graph of $S$ and $\mathrm{tl}(\cdot)$ the stable translation length of the {induced} action of a homeomorphism, as in \cite{bowdenhenselwebb22quasimorphisms, fanonihenselleroux26translation}.

\begin{proposition}\label{prop:finecurve}
	Let $g$ be the essential factor of $f$. Then, $\mathrm{tl}(f) = \mathrm{tl}(g)$.
\end{proposition}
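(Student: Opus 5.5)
The plan is to reduce the statement to the cell-like invariance theorem of Fanoni, Hensel and Le Roux \cite[Theorem 1.2(1)]{fanonihenselleroux26translation}, which compares the stable translation lengths of two homeomorphisms related by a cell-like semiconjugacy homotopic to the identity. Its proof is stated for the fine curve graph, and, as noted after Theorem~\ref{thmC:rotational}, the same argument applies to the nonseparating subgraph. Almost all the work is therefore in checking that the semiconjugacy $qf=gq$ of Theorems~\ref{thmA:pA} and \ref{thmB:PARF} satisfies those hypotheses.

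First, $q$ is cell-like. In the closed pseudo-Anosov case, Proposition~\ref{prop:splitting} shows that every fibre is an inessential filled continuum, hence cellular by \cite[Lemma 2.6]{fanonihenselleroux26translation}. In the PARF case, cellularity of the fibres of $q_b$ is given by Lemma~\ref{lem:finite_cellular_descent} and Corollary~\ref{cor:PARFcommute}, and Proposition~\ref{prop:PARFsplitting} identifies $q_b$ with the intrinsic quotient. Second, $q$ is a uniform limit of homeomorphisms (Remark~\ref{rem:canonical_model}, via \cite[Theorem 2.7]{fanonihenselleroux26translation}), and the marking makes $q$ homotopic to the identity. Third, $g$ is isotopic to $f$ (Remark~\ref{rem:isotopic_factor}), so both act on the same fine curve graph of $S$.

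The underlying mechanism, which I would recall rather than reprove, runs as follows. Given an essential simple closed curve $\alpha$ and $\varepsilon>0$, choose a homeomorphism $q_\varepsilon$ that is $\varepsilon$-close to $q$. Because the fibres of $q$ are cellular, preimages under $q$ of small neighbourhoods of curves can be pushed into curves. One then shows that $\dd_{\mathcal C^\dagger}(f^n\alpha,\alpha)$ and $\dd_{\mathcal C^\dagger}(g^n q_\varepsilon\alpha, q_\varepsilon\alpha)$ differ by a constant that does not depend on $n$. Dividing by $n$ and letting $n\to\infty$ gives equality of the stable translation lengths.

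The step I expect to be the main obstacle is confirming that the fine nonseparating graph version holds. In particular, the curve-modification step must preserve nonseparation. If the cited theorem does not state this explicitly, I would check it in its proof: isotoping a curve within a small neighbourhood through the approximating homeomorphisms $q_\varepsilon$ does not change its homology class, and this is enough for the nonseparating subgraph.
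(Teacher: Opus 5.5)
Your reduction matches the paper's argument: check that $q$ is cell-like (its fibres are inessential filled continua, hence cellular), that $q$ is homotopic to the identity and that $g\simeq f$, and then invoke the cell-like invariance of stable translation length due to Fanoni, Hensel and Le Roux. One correction concerns which graph the cited theorem covers: the paper applies \cite[Theorem~1.2(1)]{fanonihenselleroux26translation} directly to the nonseparating fine curve graph and treats the full fine curve graph with \cite[Lemma~3.2]{fanonihenselleroux26translation}, so the nonseparating case you flag as the main obstacle needs no extra check, and it is the full graph that needs the additional lemma.
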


\begin{proof}
{
For the nonseparating graph, apply
\cite[Theorem~1.2(1)]{fanonihenselleroux26translation}.
For the full graph it is enough to use
\cite[Lemma~3.2]{fanonihenselleroux26translation}.
}
\end{proof}

\subsection{Homological rotation sets}

Choose smooth closed one-forms representing a basis of $H^1(S;\mathbb R)$.
Their integrals on the simply connected universal cover define a
continuous map $\mathcal A:\tS\to H_1(S;\mathbb R)$ with
\[
 \mathcal A(Tx)=\mathcal A(x)+[T].
\]
{
For $f\in\Homeo_0(S)$, fix an identity isotopy and let $F$ be
its lift starting at the identity. Then $F$ commutes with
every deck transformation. When $S$ has genus at least two,
it is the unique lift with this property. On the torus this
does not determine a unique lift, and we keep $F$ fixed.
Choose the compatible lift $G$ of $g$ satisfying
$\Phi F=G\Phi$.

The function $v_F(x)=\mathcal A(F\wt x)-\mathcal A(\wt x)$
is independent of the lift $\wt x$ of $x$ and is continuous.
Its orbit averages give the usual homological rotation sets;
replacing it by a bounded fundamental-domain cocycle changes
the sums only by a bounded coboundary. Denote the
Misiurewicz--Ziemian accumulation set by $\rho_{H_1}(F)$ and
the set of integrals over ergodic invariant probabilities by
$\rhoe_{H_1}(F)$. Use the analogous notation for $G$. For an equivalent topological definition, see \cite[Definition 1.3]{garciaguiheneuflessa23tracking}.
}

\begin{proposition}\label{prop:homologicalrotation}
{
For an identity-isotopic $f$, a marked cellular factor as above,
and the specified compatible lifts $F,G$,
\[
 \rho_{H_1}(F)=\rho_{H_1}(G),\qquad
 \rhoe_{H_1}(F)=\rhoe_{H_1}(G).
\]
}
\end{proposition}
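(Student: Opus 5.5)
The plan is to compare the displacement cocycles $v_F$ and $v_G$ through the bounded-displacement lift $\Phi$, and then to transport invariant measures along the semiconjugacy. By \eqref{eq:Phibounded}, $\sup_x \dd(\Phi x,x)=B_\Phi<\infty$ and $\Phi F=G\Phi$. Hence $\Phi F^k=G^k\Phi$ for all $k$. Since $\mathcal A$ is equivariant, $\mathcal A(Tx)=\mathcal A(x)+[T]$, and continuous, it is coarsely Lipschitz on $\tS$. Thus $|\mathcal A(\Phi y)-\mathcal A(y)|\le B'$ for a constant $B'$ depending only on $B_\Phi$. For $\wt x\in\tS$ this gives
\[
 \Bigl|\bigl(\mathcal A(G^k\Phi\wt x)-\mathcal A(\Phi\wt x)\bigr)-\bigl(\mathcal A(F^k\wt x)-\mathcal A(\wt x)\bigr)\Bigr|\le 2B',
\]
so the Birkhoff sums satisfy $S_k v_G(q x)=S_k v_F(x)+O(1)$, uniformly in $x$ and $k$.

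For the Misiurewicz--Ziemian sets, take an accumulation point $\lim S_{k_i}v_F(x_i)/k_i$ of $F$. It is then also an accumulation point of the corresponding averages of $G$ along the points $q x_i$. Conversely, surjectivity of $q$ lets us lift any sequence $y_i$ used for $G$ to points $x_i$ with $q x_i=y_i$. So $\rho_{H_1}(F)=\rho_{H_1}(G)$ directly from the uniform estimate.

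For the ergodic sets, note that the uniform estimate gives $\int v_G\,\dd(q_*\mu)=\int v_F\,\dd\mu$ for every $f$-invariant probability $\mu$. Indeed, average $S_k v_F/k$ against $\mu$ and use invariance, which gives $\int v_F\,\dd\mu=\int S_kv_F/k\,\dd\mu$, and similarly for $q_*\mu$. If $\mu$ is ergodic then $q_*\mu$ is ergodic, which gives $\rhoe_{H_1}(F)\subset\rhoe_{H_1}(G)$.

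The converse inclusion is the main obstacle: one must lift each ergodic $g$-invariant $\nu$ to an ergodic $f$-invariant $\mu$ with $q_*\mu=\nu$. I would proceed in three steps.
\begin{enumerate}
 \item The set of $f$-invariant probabilities projecting to $\nu$ is nonempty. To see this, take any probability $\mu_0$ with $q_*\mu_0=\nu$ (it exists since $q$ is a continuous surjection of compact spaces, for instance via a measurable section), and form the Cesàro averages $\frac1n\sum_{r<n} f^r_*\mu_0$. They still project to $\nu$ because $q f^r=g^r q$ and $\nu$ is $g$-invariant. Any weak-$\ast$ limit is $f$-invariant and projects to $\nu$ by continuity of $q_*$.
 \item This set is a weak-$\ast$ compact convex set, so it has an extreme point $\mu$.
 \item The extreme point $\mu$ is ergodic. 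If $\mu=t\mu_1+(1-t)\mu_2$ with invariant $\mu_i$, then $\nu=tq_*\mu_1+(1-t)q_*\mu_2$. Ergodicity of $\nu$ forces $q_*\mu_i=\nu$, so each $\mu_i$ lies in the set. Extremality then gives $\mu_i=\mu$.
\end{enumerate}
With this lift, $\int v_F\,\dd\mu=\int v_G\,\dd\nu$, proving $\rhoe_{H_1}(G)\subset\rhoe_{H_1}(F)$.

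The only care needed beyond this is in the torus case. There the lift $F$ is not canonical, so we must check that $G$, defined by $\Phi F=G\Phi$, is well defined. It is: $\Phi$ is surjective with fibres the lifted classes, and these classes are permuted by $F$. Moreover, $G$ is again the lift given by an identity isotopy, up to the same deck convention, so the rotation sets being compared are the intended ones.
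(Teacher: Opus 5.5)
Your proposal is correct and follows essentially the same route as the paper. The paper writes your uniform $O(1)$ error as the exact coboundary $v_G(qx)=v_F(x)+u(fx)-u(x)$, with $u(x)=\mathcal A(\Phi\wt x)-\mathcal A(\wt x)$ continuous on $S$, and uses surjectivity of $q$ for the Misiurewicz--Ziemian sets; for the reverse ergodic inclusion it takes the ergodic decomposition of a Ces\`aro-averaged invariant lift of $\nu$ rather than an extreme point of the set of invariant lifts, and the two arguments are interchangeable.
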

\begin{proof}
{
Use the compatible lifts $F,G$ fixed above.
} The function
$u(x)=\mathcal A(\Phi\wt x)-\mathcal A(\wt x)$ is a well-defined
bounded continuous function on $S$. The semiconjugacy gives the
exact identity
\begin{equation}\label{eq:rotation_coboundary}
 {v_G(qx)=v_F(x)+u(fx)-u(x).}
\end{equation}
Hence corresponding orbit sums differ by at most $2\|u\|_\infty$.
Surjectivity of $q$ gives both inclusions of the accumulation sets.

An ergodic invariant measure for $f$ projects to an ergodic
invariant measure for $g$, and the coboundary has integral zero.
Conversely, lift an ergodic invariant probability $\nu$ for $g$
to any probability on $S$, average its iterates under $f$, and
take a weak limit. This produces an invariant lift $\mu$.
In its ergodic decomposition almost every component still
projects to $\nu$, since $\nu$ is an extreme invariant measure.
Equation~\eqref{eq:rotation_coboundary} gives the same rotation
vector for such a component. This proves the ergodic equality.
\end{proof}

\subsection{Quotient-homology rotation with a specified lift}

Let $L:H_1(S;\mathbb R)\to V$ be a linear quotient with
$L\circ f_*=L$. The canonical example is the projection onto
\[
 V=H_1(S;\mathbb R)/\operatorname{im}(f_*-\Id).
\]
Fix a lift $F$ of $f$ and the compatible lift $G$ of $g$ satisfying
$\Phi F=G\Phi$. Define
\[
 v_{F,L}(x)=L\bigl(\mathcal A(F\wt x)-\mathcal A(\wt x)\bigr).
\]
If $\wt x$ is replaced by $T\wt x$, the change is
$L(f_*[T]-[T])=0$. Thus this is a well-defined continuous cocycle.
Write $\rho_L(F)$ and $\rho_L^{\rm erg}(F)$ for its Misiurewicz-Ziemian
and ergodic-integral rotation sets.

\begin{proposition}\label{prop:transverserotation}
Under these hypotheses, 
\[
 \rho_L(F)=\rho_L(G),\qquad
 \rho_L^{\rm erg}(F)=\rho_L^{\rm erg}(G).
\]
If $f$ preserves a reduction multicurve, so does the isotopy
class of $g$.
\end{proposition}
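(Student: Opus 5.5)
The plan is to copy the proof of Proposition~\ref{prop:homologicalrotation}, with the cocycle $v_{F,L}$ in place of $v_F$. First I will check that
\[
 u_L(x)=L\bigl(\mathcal A(\Phi\wt x)-\mathcal A(\wt x)\bigr)
\]
is a well-defined bounded continuous function on $S$. Replacing $\wt x$ by $T\wt x$ changes $\mathcal A(\Phi T\wt x)-\mathcal A(T\wt x)$ by $[T]-[T]=0$, because $\Phi$ is $\Gamma$-equivariant. So $u_L$ is even defined before applying $L$, and no hypothesis on $f_*$ is needed here. Boundedness follows from \eqref{eq:Phibounded} together with the Lipschitz control of $\mathcal A$ on a fundamental domain, using compactness and equivariance.

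The second step is the coboundary identity. I need $v_{G,L}$ to be well defined, which requires $L\circ g_*=L$. This holds because $g$ is isotopic to $f$ (Remark~\ref{rem:isotopic_factor}), so $g_*=f_*$ on $H_1$. Using $\Phi F=G\Phi$ and writing $\mathcal A(G\Phi\wt x)-\mathcal A(\Phi\wt x)$ as a telescoping sum through $\mathcal A(F\wt x)$ and $\mathcal A(\wt x)$, I obtain the exact identity
\[
 v_{G,L}(qx)=v_{F,L}(x)+u_L(fx)-u_L(x).
\]
Here $\Phi\wt x$ is a lift of $qx$, so the left-hand side is indeed $v_{G,L}$ evaluated at $qx$.

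The rest follows as before. Birkhoff sums along corresponding orbits differ by at most $2\|u_L\|_\infty$, and $q$ is surjective, so every $g$-orbit segment is the image of an $f$-orbit segment. This gives both inclusions for the Misiurewicz--Ziemian sets. For the ergodic sets, an ergodic $f$-measure pushes forward to an ergodic $g$-measure, and the coboundary integrates to zero. Conversely, for an ergodic $\nu$ of $g$ I build an invariant lift by averaging and taking weak limits. Since $\nu$ is extremal, almost every ergodic component of this lift projects to $\nu$ and has the same $L$-rotation vector.

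The last assertion is the only point that is not a transcription. If $f$ preserves a reduction multicurve up to isotopy, then so does every map in its isotopy class. The isotopy class of $g$ equals that of $f$ by Remark~\ref{rem:isotopic_factor}, and preserving a multicurve up to isotopy is a property of the mapping class, so the statement is immediate. The main thing to watch is the well-definedness of $v_{G,L}$. This depends on $g_*=f_*$, which needs $q\simeq\Id$ as marked in Remark~\ref{rem:canonical_model}. It also depends on choosing $G$ compatible with $\Phi$ rather than an arbitrary lift, since a different lift changes the rotation sets by a translation.
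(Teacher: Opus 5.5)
Your proposal is correct and follows the paper's proof. You apply $L$ to the coboundary identity \eqref{eq:rotation_coboundary} with $u_L$, rerun the Misiurewicz--Ziemian and ergodic-lifting arguments, and deduce the multicurve assertion from $g\simeq f$; your explicit check that $v_{G,L}$ is well defined because $g_*=f_*$ is a detail the paper uses without stating.
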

\begin{proof}
Apply $L$ to the computation in
\eqref{eq:rotation_coboundary}, using
$u_L(x)=L(\mathcal A(\Phi\wt x)-\mathcal A(\wt x))$.
This gives the same bounded coboundary, and the Misiurewicz-Ziemian
and ergodic-lifting arguments apply verbatim. Preservation of
the multicurve is an isotopy-class statement and follows from
$g\simeq f$.
\end{proof}

\begin{remark}\label{rem:rotation_normalization}

For a deck transformation $T$, replacing $F$ by $TF$ gives
\[
 v_{TF,L}=v_{F,L}+L([T]),\qquad
 \rho_L(TF)=\rho_L(F)+L([T]),
\]
and the same translation formula holds for the ergodic
rotation set. Thus these sets are unchanged whenever
$L([T])=0$. This includes all
$T$ with $[T]\in\operatorname{im}(f_*-\Id)$. A reference lift must still be specified.
\end{remark}

\begin{proof}[Proof of Theorem~\ref{thmC:rotational}]
The assertions are Propositions~\ref{prop:finecurve},
\ref{prop:homologicalrotation}, and \ref{prop:transverserotation},
respectively.
\end{proof}

\bibliographystyle{amsalpha}
\bibliography{fchg_biblio}

\end{document}